\theoremstyle{plain}
\newtheorem{theorem}{Theorem}[section]                                          
\newtheorem{proposition}[theorem]{Proposition}                          
\newtheorem{lemma}[theorem]{Lemma}
\newtheorem{corollary}[theorem]{Corollary}
\theoremstyle{definition}
\newtheorem{definition}[theorem]{Definition}
\theoremstyle{remark}
\newtheorem{remark}[theorem]{Remark}
\newtheorem{example}[theorem]{Example}
\newtheorem{assumption}[theorem]{Assumption}
\makeatletter \@addtoreset{equation}{section} \makeatother
\newcommand{\caract}{\mathbbm{1}}
\newcommand{\calF}{\mathcal{F}}
\newcommand{\calA}{\mathcal{A}}
\newcommand{\calB}{\mathcal{B}}
\newcommand{\calP}{\mathcal{P}}
\newcommand{\defeq}{:=}
\newcommand{\N}{\mathbb{N}}     
\newcommand{\Q}{\mathbb{Q}}     
\newcommand{\R}{\mathbb{R}}     
\newcommand{\Prob}{\mathbb{P}}  
\newcommand{\Exp}{\mathbb{E}}   
\newcommand{\goth}[1]{\mathfrak{#1}} 
\newcommand{\ind}[2]{\mathbbm{1}_{#1}\left( #2 \right)}          
\newcommand{\inner}[2]{\left\langle #1 \, , \, #2 \right\rangle} 
\newcommand{\innerp}[2]{\langle #1 \, , \, #2 \rangle} 
\newcommand{\norm}[1]{\left\|#1\right\|}              
\newcommand{\triplet}[3]{\left( #1, #2, #3 \right) }             
\newcommand{\ProbSpace}{\triplet{\Omega}{\mathcal{F}}{\Prob}}    
\newcommand{\abs}[1]{\left| #1 \right|}                          
\newcommand{\quadraVari}[1]{\left\langle  #1  \right\rangle } 
\newcommand{\fle}{\rightarrow}
\newcommand{\operQuadraVari}[1]{\left\langle \!\left\langle #1  \right\rangle \!\right\rangle} 
\newcommand\restr[2]{{
  \left.\kern-\nulldelimiterspace 
  #1 
  \vphantom{\big|} 
  \right|_{#2} 
  }}
\title{Cylindrical Martingale-Valued Measures, Stochastic Integration and SPDEs}
\author{S. Cambronero\,\orcidlink{0000-0001-6758-4942}$^1$}
\author{D. Campos\,\orcidlink{0000-0002-3608-1151}$^2$}
\author{C. A. Fonseca-Mora\,\orcidlink{0000-0002-9280-8212}$^3$}
\author{D. Mena\,\orcidlink{0000-0002-9443-391X}$^4$}
\address{Centro de Investigaci\'{o}n en Matem\'{a}tica Pura y Aplicada \\ Escuela de Matem\'{a}tica, Universidad de Costa Rica}
\email{$^1$ santiago.cambronero@ucr.ac.cr} 
\email{$^2$ josedavid.campos@ucr.ac.cr}
\email{$^3$ christianandres.fonseca@ucr.ac.cr}
\email{$^4$ dario.menaarias@ucr.ac.cr}
\begin{document}
\emergencystretch 3em

\subjclass[2020]{60H05, 60H15, 60B11, 60G48} 
\keywords{cylindrical martingale-valued measures; quadratic variation; stochastic integration; stochastic partial differential equations}

\begin{abstract}
We develop a theory of Hilbert-space valued stochastic integration with respect to cylindrical martingale-valued measures. As part of our construction, we expand the concept of quadratic variation, introduced by Veraar and Yaroslavtsev (2016), to the  case of cylindrical martingale-valued measures that are allowed to have discontinuous paths (this is carried out within the context of separable Banach spaces). Our theory of stochastic integration is applied to address the existence and uniqueness of solutions to stochastic partial differential equations in Hilbert spaces. \end{abstract}

\maketitle



\tableofcontents

\section{Introduction}

In recent years, there has been an increasing interest in the usage of cylindrical stochastic processes as models for the perturbation of infinite dimensional systems, in particular, of stochastic partial differential equations (e.g.  \cite{FerrarioOlivera:2019, KumarRiedle:2020, LiuZhai:2016, PriolaZabczyk:2011, Riedle:2015, Riedle:2018, SunXieXie:2020, VeraarYaroslavtsev:2016}). In most of these works, the cylindrical process is a cylindrical martingale, or more  generally a cylindrical semimartingale. A cylindrical martingale $M$ on a Banach space $X$ is a linear operator such that, for each $x^* \in X^*$, $M(x^*)$ is a (real-valued) martingale. Usually, suitable continuity conditions are required for $M$. 

Another popular alternative for the modeling of the noise of a stochastic partial differential equation is a martingale-valued measure (e.g. \cite{Applebaum:2006, ChongKevei:2019, ConusDalang:2008, Dalang:1999, DalangMueller:2003, FoondunKhoshnevisan:2009}). This concept was introduced by Walsh in \cite{Walsh:1986} and was motivated by the space-time Gaussian white noise (see Example \ref{ex:whitenoisemeasure}). Roughly speaking, a martingale-valued measure is a family $(M(t,A):t \geq 0, A \in \mathcal{A})$ such that $(M(t,A):t \geq 0)$ is a real-valued square integrable martingale for each $A \in \mathcal{A}$ and $M(t, \cdot)$ is an $L^{2}$-valued finitely additive measure on $\mathcal{A}$ for each $t \geq 0$. Here $\mathcal{A}$ is a ring of Borel subsets of a topological space $U$. See Definition \ref{defiHValuedlOrthoMartinValuMeasure} for further details. 

Motivated by the above, in this paper we introduce a theory of stochastic integration with respect to  cylindrical martingale-valued measures and study some of its properties. We apply our theory to study existence and uniqueness for some classes of stochastic partial differential equations (SPDEs). One distinguishing feature of our work is that our cylindrical martingale-valued measures are allowed to have discontinuous paths.

The  concept of a cylindrical martingale-valued measure is a hybrid between the definitions of cylindrical martingale and of martingale-valued measure. Indeed, in this case we have a family $(M(t,A):t \geq 0, A \in \mathcal{A})$ such that, for each $A \in \mathcal{A}$, $(M(t,A):t \geq 0)$ is a cylindrical square integrable martingale on a Banach space $X$ and $M(t, \cdot)$ is an $L^{2}$-valued finitely additive measure on $\mathcal{A}$ for each $t \geq 0$. The definition of cylindrical martingale-valued measures is introduced in Section \ref{sectCylinMartingValued}. We explore the usefulness of this concept by giving some examples.   
 
A particular family of cylindrical martingale-valued measures with independent increments was introduced in earlier works \cite{AlvaradoFonseca:2021, FonsecaMora:2018}. In these definitions the authors assumed the existence of a  particularly simple covariance structure, which was suitable to study L\'{e}vy processes (see  Examples \ref{exampleCMVMWithFamilySeminorms} and \ref{exampleLevyHValuedMVM}). 
The existence of the quadratic variation of a (general) cylindrical martingale-valued measure has not been addressed, and this is precisely the first main objective of our work. 

The definition of a quadratic variation for a cylindrical martingale-valued measure requires the introduction of new concepts. Indeed, since these objects are not stochastic processes (cylindrical or classical), we cannot
directly apply the theory of  quadratic variation for Banach space-valued martingales nor that of  cylindrical martingales which is currently available in the literature, for example, in the works in \cite{DiGirolamiFabbriRusso:2014, Dinculeanu:2000, MetivierPellaumail, MikuleviciusRozovskii:1998, Ondrejat:2005, VeraarYaroslavtsev:2016}.  

The construction and study of properties of the quadratic variation for a cylindrical martingale-valued measure is carried out in Section \ref{sectQuadraticVariation}. We start in Section \ref{subsectDefiQuadraVariation} with the definition of a quadratic variation. 
Since our definition of cylindrical martingale-valued measure encloses both the concept of cylindrical martingale and of martingale-valued measure, we found it convenient to formulate our definition of quadratic variation as a supremum of a family of measures (see Section \ref{sectPreliminars}). This follows what is done by Veraar and Yaroslavtsev in \cite{VeraarYaroslavtsev:2016} (there for cylindrical square-integrable martingales with continuous paths), but taking this family of measures as the intensity measures (defined in Section \ref{sectHilbertMartValuedMeasures}) for the real-valued martingale-valued measures $(M(t,A)(x^*): t \geq 0, A \in \mathcal{A})$, where $x^{*} \in X^{*}$ (see Definition \ref{defiQuadraticVariation} for further details). 

Our definition allows the existence of several quadratic variation processes. In order to establish sufficient conditions for uniqueness, we introduce the sequential boundedness property on the family of intensity measures $(\nu_{x^*}: x^* \in X^*)$ for $M(t,A)(x^*)$ (Definition \ref{defiQuadraticVariation}). This property helps us to obtain  a unique simple expression for our quadratic variation $\operQuadraVari{M}$, as a supremum of measures on a countable and dense collection of elements in the unit sphere of $X^*$ (see Theorems \ref{theoSuffiCondiExistQuadraVariat} and \ref{theoUniquenessQuadraticVariation}). The sequential boundedness property is obtained in a natural way in the case of cylindrical martingales with continuous paths (see Example \ref{examCylinMartinContPaths}), thus, generalizing the work in \cite{VeraarYaroslavtsev:2016} for this case.
Moreover, the sequential boundedness property is a weaker condition than the uniform continuity of the family of intensity measures (see
Proposition \ref{propUniformUCPImpliesWeakBoundedness}).

The characterizations obtained in the previous results allow us to explicitly construct the quadratic variation in some cases, for example, in the case of a cylindrical L\'evy process (see 
Example \ref{examCMVMCylindriLevyProcesses}) or for Hilbert-space valued martingale-valued measures determined by a L\'evy process (see  Examples \ref{exampleCMVMWithFamilySeminorms} and \ref{exampleLevyHValuedMVM}).  We also include some examples in which some cylindrical martingale measures do not have a quadratic variation (see Example \ref{examCounterExampleCMVMStochasticIntegral} and Remark \ref{remaNotExistQuaVariaCyliPoissonIntegral}).

In Section \ref{subsectQuadraVariationoperator},  the main purpose is to obtain a Radon-Nikodym representation $ d\alpha_M = Q_M\, d\operQuadraVari{M} $ (see Theorem \ref{theoExistenCovariaOperatorQ}), where $Q_M$ takes values in $\mathcal{L}(X^*, X^{**})$ and the vector-valued measure $\alpha_M$  corresponds to a covariation operator \eqref{eqDefiAlphaMRandomMeasure}.   
Our approach is similar in nature to the one in \cite{VeraarYaroslavtsev:2016}, however, we use a (random) vector-valued measure instead of vector-valued stochastic process, thus implying substantial differences in the proof. Examples of our result to  cylindrical and Hilbert space valued L\'{e}vy process are given (see Examples
\ref{exampleCylinMartingaleDeltaLocBounCova}, \ref{exampleCMVMWithFamilySeminorms} and \ref{exampleLevyHValuedMVM}). 

Section \ref{subSectCylindricalPRM} is devoted to the study of cylindrical martingale-valued measures defined by Poisson random measures. We show that the concept of `cylindrical Poisson random measure' cannot be defined (under reasonable assumptions) but that this  does not prevent us from defining a cylindrical martingale valued measure which uses a Poisson random
measure as a building block. To illustrate this, we give two examples of constructions for which the quadratic variation $\operQuadraVari{M}$ exists and can be calculated explicitly (see Examples \ref{exampleLinearCPoissonRM} and \ref{exampleNonLinearCylinPRM}). 

In Section \ref{sectStochasticIntegration} we develop a theory of Hilbert-space valued stochastic integration for  operator-valued processes  with respect to a cylindrical martingale-valued measure which possesses a quadratic variation. 

The construction of the integral is carried out in Section \ref{subsecConstrIntegral} and for simple integrands follows a global radonification approach. To be more specific, we consider Hilbert spaces $H$ and $G$, and a simple integrand of the form $\Phi(r,\omega,u)= \mathbbm{1}_{]a,b]}(r) \mathbbm{1}_{F}(\omega) \mathbbm{1}_{A}(u) S$,  
where $0 \leq a< b \leq T$, $F \in \mathcal{F}_{a}$, $A \in \mathcal{A}$ and $S$ is a Hilbert-Schmidt operator from $H$ into $G$. Then we define
\begin{equation} \label{eqIntroDescriInteg}
\left(\int_{0}^{t}\int_{U} \Phi(r,u) M(dr,du),g \right)_{G} = \mathbbm{1}_{F} (M(b \wedge t,A)-M(a \wedge t,A))(S^{*}g),
\end{equation}
for every $g \in G$ and $t \in [0,T]$. Observe that because $S$ is a Hilbert-Schmidt operator from $H$ into $G$, the equality above defines a $G$-valued square integrable martingale with second moments. With the help of the operator $Q_{M}$ one can extend the stochastic integral to a large class of integrands (see Definition \ref{defiNewIntegrand}) that satisfy an It\^{o} isometry (see \eqref{theoItoIsometry}). Some explicit calculations for our stochastic integral are given in Examples \ref{exampleStochIntegHilbertLevyProcess} and \ref{exampleStochIntegLinearCylinPoissonRM}.

Some of the basic properties of the stochastic integral are given in Section \ref{subsecPropeIntegral}, and a further extension to a larger class of integrands is given in Section \ref{subsecExtensIntegral}. Furthermore, we prove a stochastic Fubini theorem in Section \ref{subsecStochFubini}. The theory developed in this article extends the corresponding theory in \cite{AlvaradoFonseca:2021}, therefore generalizes some other theories of stochastic integration in Hilbert spaces currently available in the literature (see the discussion in Section 4.2 in \cite{AlvaradoFonseca:2021}). Since our theory of quadratic variation was developed under the context of Banach spaces, we hope we could extend the definition of the stochastic integral to some classes of Banach spaces, for example to UMD spaces as in \cite{VeraarYaroslavtsev:2016}. 

As an application of our theory of stochastic integration, in Section \ref{SectSPDEs} we study the existence and uniqueness of weak and mild solutions to the following class of stochastic partial differential equations:
\begin{equation*}
  dX_t = \left[ AX_t + B(t,X_t)\right]dt + \int_U F(t,u,X_t) M(dt,du). 
\end{equation*}
Here $A$ is the infinitesimal  generator for a $G$-valued $C_{0}$-semigroup, $M$ is a cylindrical  martingale-valued measure on $H$, and $B$ and $F$ are coefficients satisfying suitable conditions.  In Example \ref{Ex:LevynoiseSPDE}, we analyze the case of a stochastic heat equation with linear Lévy noise and Dirichlet boundary conditions, which provides examples of general discontinuous martingales, for which our theory applies.

Sufficient conditions for equivalence between weak and mild solutions are given in Section \ref{subsectWeakMildSoluc}.  Finally, in Section \ref{subsecExistUniqueSPDEs} we introduce some growth and Lipschitz type conditions for the coefficients $B$ and $F$ in order to show the existence and uniqueness of weak and mild solutions to stochastic partial differential equations with multiplicative cylindrical martingale-valued measure noise (see Theorem \ref{theoExisteUniqueness} and Example \ref{examplExisteUniqSPDE}).

\section{Preliminaries and Notation}
\label{sectPreliminars}

\subsection{Stochastic and cylindrical processes}

For a Banach space $X$ we denote by $X^*$ its (strong) dual space. In this work, the norm of the underlying Banach space will be often denoted by $\norm{\cdot}$, but when it is necessary to emphasize the space, we use the notation $\norm{\cdot}_{X}$.  We will use the notation $H$ for a Hilbert space with inner product $(\cdot,\cdot)_{H}$.  We identify the dual of a Hilbert space with the space itself. For a Hausdorff topological space $U$, its Borel $\sigma$-algebra will be denoted by $\calB(U)$.

For any two Banach spaces $X$ and $Y$, the Banach space of bounded linear operators from $X$ into $Y$ will be denoted by  $\mathcal{L}(X,Y)$. We will denote the space of real-valued bounded bilinear forms on $X \times Y$ by $\goth{Bil}(X,Y)$. Trace class operators on a Hilbert space $H$ will be denoted by $\mathcal{L}_{1}(H)$.

Throughout this work we assume that $(\Omega, \mathcal{F}, \Prob)$ is a complete probability space equipped with a filtration $(\mathcal{F}_{t})_{t \in \R_{+}}$ that satisfies the \emph{usual conditions}, i.e. it is right continuous and $\mathcal{F}_{0}$ contains all $\Prob$-null sets.
We denote by $L^{0} (\Omega, \mathcal{F}, \Prob)$ the space of equivalence classes of real-valued random variables defined on $(\Omega, \mathcal{F}, \Prob)$. The space $L^{0} (\Omega, \mathcal{F}, \Prob)$ will be always equipped with the topology of convergence in probability and in this case it is a complete, metrizable, topological vector space.

Let $X$ be a Banach space and let $(S,\Sigma)$ be a measurable space. A function $\alpha: \Sigma \to X$ is called a {\it vector measure}, if whenever $E_1$ and $E_2$ are disjoint members of $\Sigma$ then $\alpha(E_1 \cup E_2)=\alpha(E_1)+\alpha(E_2)$. If, in addition, $\displaystyle \alpha \Bigl( \bigcup_{n=1}^\infty E_n\Bigr)=\sum_{n=1}^\infty \alpha(E_n)$ in the norm topology of $X$ for all collection $(E_n)$ of pairwise disjoint members of $\Sigma$, then $\alpha$ is called a {\it countably additive vector measure} or simply, $\alpha$ is countably additive. In any case, the {\it variation} of $\alpha$ is the extended non-negative function $|\alpha|$ whose value on a set $E \in \Sigma$ is defined by
$$|\alpha|(E)=\sup_{\Pi}\sum_{A \in \Pi}\|\alpha(A)\|,$$
where the supremum is taken over all finite partitions $\Pi$ of $E$ by members of $\Sigma$. If $|\alpha|(S)<\infty$, then $\alpha$ will be called a {\it vector measure of bounded variation}. (See \cite{DiestelUhl:1977} for further details).

A mapping $\mu: \Omega \times \Sigma \rightarrow [0,\infty] $ will be called a \emph{random measure} if for all $E \in \Sigma$, $\omega \mapsto \mu(\omega,E)$ is measurable and for (almost) all $\omega \in \Omega$, $\mu(\omega, \cdot) $ is a measure on $(S,\Sigma)$.

The predictable $\sigma$-algebra on $\Omega \times [0,\infty)$ (see Chapter 10 in \cite{Kallenberg:2021}) is denoted by $\mathcal{P}$ and for any $T>0$ we denote by $\mathcal{P}_{T}$ the restriction of $\mathcal{P}$ to $\Omega \times [0,T]$. Let $(S,\Sigma)$ be a measurable space and let $(\mu_{t}: t \geq 0)$ be a family of random measures on $(S,\Sigma)$. We say that $(\mu_{t}: t \geq 0)$ is \emph{predictable} if for any given $A \in S$ the mapping $(\omega, t) \mapsto \mu_{t}(\omega)(A)$ is predictable.  

 Let $I_{T}=[0,T]$ for $0<T<\infty$ or $I_{T}=[0,\infty)$ for $T=\infty$. In either case, denote by $\mathcal{M}_{T}^{2}$ the linear space of all the real-valued, c\`{a}dl\`{a}g, mean-zero, square integrable martingales on the time interval $I_{T}$. The space $\mathcal{M}_{T}^{2}$ is Banach when equipped with the norm 
$$
\norm{m}_{\mathcal{M}_{T}^{2}}= \left( \sup_{t \in I_{T}} \Exp \left[  \abs{m(t)}^{2} \right] \right)^{1/2}.
$$

Let $X$ be a Banach space. A \emph{cylindrical random variable} on $X$ is a linear and continuous operator $Z:X^* \rightarrow L^{0} (\Omega, \mathcal{F}, \Prob)$. A family of cylindrical random variables $Z=(Z_{t}: t \in I_{T})$ on $X$ is called a \emph{cylindrical stochastic process} on $X$. A cylindrical stochastic process  $M=(M_{t}: t \in I_{T})$ is called a \emph{cylindrical mean-zero square integrable martingale} on $X$ is for every $x^* \in X^*$ we have $M(x^*) \in \mathcal{M}_{T}^{2}$. 

Below we review some basic properties of the quadratic (co)variation. See Section 20 in \cite{Metivier} for proofs and further details.   

Let $(H, (\cdot, \cdot)_{H})$ be a separable Hilbert space and let $ {Y}=( {Y}_{t}: t \geq 0)$ and $ {Z}=( {Z}_{t}: t \geq 0)$ denote two $H$-valued square integrable martingales. There exists a unique (up to indistinguishability) predictable, real-valued process $\innerp{ {Y}}{ {Z}}$ with paths of finite variation such that $\innerp{ {Y}}{ {Z}}_{0}=0$ and $( {Y}, {Z})_{H} - \innerp{ {Y}}{ {Z}}$ is a martingale. We call $\inner{ {Y}}{ {Z}}$ the \emph{(predictable) quadratic covariation of $ {Y}$ and $ {Z}$}. We denote $\innerp{ {Y}}{ {Y}}$ by $\langle  {Y} \rangle$ and call it the \emph{(predictable) quadratic variation of $ {Y}$}; this process is increasing. One can show that for any given $t \geq 0$
\begin{equation}\label{eqPredQuadVariaAsLimitOverPartition}
\sum_{j=1}^{\infty} \Exp \left[ \left(   {Y}_{t \wedge t^{n}_{j+1}}-  {Y}_{t \wedge t^{n}_{j}},   {Z}_{t \wedge t^{n}_{j+1}}-  {Z}_{t \wedge t^{n}_{j}} \right)_{H} \, \vline \, \mathcal{F}_{t^{n}_{j}} \right] \overset{\Prob}{\rightarrow} \innerp{  {Y}}{  {Z}}_{t},     
\end{equation}
where $\{0=t^{n}_{0}< t^{n}_{1} < \cdots < t^{n}_{k} < \cdots \}_{n\geq 1}$ is a sequence of partitions of $[0, \infty)$ such that $t^{n}_{j} \rightarrow \infty$ as $j \rightarrow \infty$ and $\delta_{n}=\sup_{j} \abs{ t^{n}_{j+1}- t^{n}_{j}} \rightarrow 0 $ as $n \rightarrow \infty$. 
For any $h \in H$, one can easily check that $Y(h)\defeq ((  {Y}_{t},h)_{H}: t \geq 0)$ is a square integrable real-valued martingale, hence has (predictable) quadratic variation $\quadraVari{Y(h)}$. We can relate $\langle   {Y} \rangle$ and $\quadraVari{Y(h)}$ via \eqref{eqPredQuadVariaAsLimitOverPartition} as follows.  

Given $h,g \in H$, for any sequence of partitions $\{0=t^{n}_{0}< t^{n}_{1} < \cdots < t^{n}_{k} < \cdots \}_{n\geq 1}$  of $[0, \infty)$ as described above, we have
\begin{multline*}
\sum_{j=1}^{\infty} \Exp \left[ \left( Y(h)_{t \wedge t^{n}_{j+1}}-Y(h)_{t \wedge t^{n}_{j}} \right) \left(  Y(g)_{t \wedge t^{n}_{j+1}}-Y(g)_{t \wedge t^{n}_{j}} \right)\, \vline \, \mathcal{F}_{t^{n}_{j}} \right] \\
\leq 
\norm{h} \norm{g} \sum_{j=1}^{\infty} \Exp \left[ \norm{   {Y}_{t \wedge t^{n}_{j+1}}-  {Y}_{t \wedge t^{n}_{j}}}^{2} \, \vline \, \mathcal{F}_{t^{n}_{j}} \right].     
\end{multline*} 
Then taking limits as $n \rightarrow \infty$ and by the uniqueness of limits in probability we conclude from 
\eqref{eqPredQuadVariaAsLimitOverPartition} that $\Prob$-a.e.
\begin{equation}\label{eqReqlVsHilbertPrediQuadrVaria}
\inner{Y(h)}{Y(g)}_{t} \leq \norm{h} \norm{g} \langle   {Y} \rangle_{t}.     
\end{equation}

\subsection{Supremum of measures}
\label{sectsupmeasures}
In this section we summarize some results on the supremum of a family of measures introduced in \cite{VeraarYaroslavtsev:2016}, and try to write them in a more general context to make them useful when dealing with quadratic variations. For measures $\mu$ and $\nu$ defined on a measurable space $(S,\Sigma)$, we write $\nu \leq \mu$ if
$$
\forall A \in \Sigma \quad \nu(A) \leq \mu(A).
$$
This defines a partial ordering on the class $\mathcal{M}_{+}(S,\Sigma)$ of all measures on $(S,\Sigma)$. Given a family $(\mu_\alpha)_{\alpha\in\Lambda}$ of measures on $(S,\Sigma)$, there is always a supremum, that is, there exists $\check{\mu} \in \mathcal{M}_{+}(S,\Sigma)$ that satisfies
\begin{enumerate}
\item For each $\alpha\in \Lambda$, $\mu_\alpha \leq \check{\mu}$
\item If $\mu \in \mathcal{M}_{+}(S,\Sigma)$ and $\mu_\alpha \leq \mu$ for each $\alpha\in\Lambda$, then $\check{\mu} \leq \mu$.
\end{enumerate}
The following lemma establishes the existence we have just mentioned. We write it in the generality we need. We omit the proof, since the one in \cite[Lemma 2.6]{VeraarYaroslavtsev:2016}  works perfectly well if we only start with a family of sub-additive set functions. We refer to \cite{CCFM:SUP} for a more general treatment of the concept. 

\begin{lemma}
\label{lemmaSupExplicit}
Let $(\mu_\alpha)_{\alpha\in\Lambda}$ be a family of non-negative, finitely sub-additive set functions on a measurable space $(S,\Sigma)$. For $A\in\Sigma$, let $\mathscr{P}(A)$ be the collection of finite partitions of $A$,  by elements of $\Sigma$. Define the set function 
$\check{\mu}$ by 
\begin{equation}\label{supremumofmeasures}
\check{\mu}(A) = \sup_{\Pi\in \mathscr{P}(A)}\,\, \sum_{C\in\Pi}\,\, \sup_{\alpha\in\Lambda} \mu_\alpha(C), \qquad A \in \Sigma.
\end{equation}
Then $\check{\mu}$ is the smallest (in particular, unique) finitely additive measure on $(S,\sigma)$ that satisfies $\check{\mu}\geq \mu_\alpha$ for each $\alpha\in \Lambda$. If each $\mu_\alpha$ is sub-additive, then $\check{\mu}$ is a measure on $(S, \Sigma)$.
\end{lemma}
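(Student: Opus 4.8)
The plan is to verify the two claimed properties of $\check\mu$ as defined by \eqref{supremumofmeasures}: first that it is a finitely additive set function dominating each $\mu_\alpha$ and is the smallest such, and second that under full sub-additivity it is countably additive, hence a genuine measure. I will treat these separately, since the finitely additive statement needs only finite sub-additivity of the $\mu_\alpha$, while countable additivity requires the stronger hypothesis.

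First I would establish that $\check\mu$ dominates each $\mu_\alpha$: for a fixed $\alpha$ and $A \in \Sigma$, the trivial partition $\Pi = \{A\}$ is in $\mathscr P(A)$, so the supremum over partitions is at least $\sup_{\beta} \mu_\beta(A) \geq \mu_\alpha(A)$. For minimality, suppose $\mu \in \mathcal M_+(S,\Sigma)$ satisfies $\mu \geq \mu_\alpha$ for all $\alpha$; then for any partition $\Pi \in \mathscr P(A)$ and any $C \in \Pi$ we have $\sup_\alpha \mu_\alpha(C) \leq \mu(C)$ (taking a sup over $\alpha$ preserves the bound), and summing over the partition together with additivity of $\mu$ gives $\sum_{C \in \Pi} \sup_\alpha \mu_\alpha(C) \leq \sum_{C\in\Pi} \mu(C) = \mu(A)$; taking the supremum over $\Pi$ yields $\check\mu(A) \leq \mu(A)$. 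The core work is finite additivity of $\check\mu$ itself. I would fix disjoint $A, B \in \Sigma$ and prove $\check\mu(A \cup B) = \check\mu(A) + \check\mu(B)$ by showing both inequalities. For $\leq$, given a partition $\Pi$ of $A \cup B$, refine it to $\Pi' = \{C \cap A, C \cap B : C \in \Pi\}$, a partition of $A \cup B$ that splits into a partition of $A$ and a partition of $B$; here finite sub-additivity of each $\mu_\alpha$ gives $\mu_\alpha(C) \leq \mu_\alpha(C \cap A) + \mu_\alpha(C \cap B)$, so passing to the sup over $\alpha$ and summing shows the $\Pi$-sum is bounded by $\check\mu(A) + \check\mu(B)$. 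For $\geq$, given partitions $\Pi_A$ of $A$ and $\Pi_B$ of $B$, their union is a partition of $A \cup B$, so the combined sum is at most $\check\mu(A \cup B)$; taking independent suprema over $\Pi_A$ and $\Pi_B$ gives the reverse inequality.

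For the final assertion, assuming each $\mu_\alpha$ is genuinely (countably) sub-additive, I would upgrade finite additivity to countable additivity. Having finite additivity already, it suffices to prove either continuity from below or countable sub-additivity of $\check\mu$ (the latter combined with finite additivity and monotonicity forces countable additivity). I would aim for countable sub-additivity: given $A = \bigcup_{n} A_n$ with the $A_n$ pairwise disjoint, take any partition $\Pi$ of $A$; for each $C \in \Pi$, $\{C \cap A_n\}_n$ is a countable partition of $C$, and countable sub-additivity of $\mu_\alpha$ gives $\mu_\alpha(C) \leq \sum_n \mu_\alpha(C \cap A_n) \leq \sum_n \check\mu(A_n \cap C)$. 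Summing over the finite partition $\Pi$ and interchanging the (finite) sum over $C$ with the series over $n$ should bound the $\Pi$-sum by $\sum_n \check\mu(A_n)$, whence $\check\mu(A) \leq \sum_n \check\mu(A_n)$.

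The main obstacle I anticipate is the interchange of suprema and sums in the countable case, specifically justifying that $\sup_\alpha \sum_n (\cdots) \leq \sum_n \sup_\alpha(\cdots)$ type manipulations and the exchange of the finite $C$-sum with the infinite $n$-series without losing the bound; passing the supremum over $\alpha$ inside an infinite sum is exactly where countable (rather than merely finite) sub-additivity is indispensable, and one must be careful that the estimate $\mu_\alpha(C \cap A_n) \leq \check\mu(A_n \cap C)$ holds uniformly before summing. A secondary subtlety, which I would address by appealing to the domination and minimality already proven, is that once countable sub-additivity holds one still invokes that $\check\mu$ is finitely additive and dominates the $\mu_\alpha$ to conclude it is the smallest countably additive majorant as well.
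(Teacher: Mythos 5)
Your proof is correct and is essentially the argument the paper relies on: the paper omits the proof of Lemma \ref{lemmaSupExplicit}, deferring to Lemma 2.6 of \cite{VeraarYaroslavtsev:2016}, whose argument (the trivial partition for domination, refinement by intersection with $A$ and $B$ for sub-additivity of $\check{\mu}$, unions of partitions for super-additivity, and in the countable case finite additivity of $\check{\mu}$ on each $A_n$ combined with super-additivity from nonnegativity) is exactly what you reconstruct, with the only new ingredient being the use of finite sub-additivity of each $\mu_\alpha$ in place of additivity, precisely where you use it. Your resolution of the sum--supremum interchange --- bounding $\mu_\alpha(C\cap A_n)\leq \check{\mu}(C\cap A_n)$ uniformly in $\alpha$ \emph{before} summing over $n$, then swapping the finite sum over $C\in\Pi$ with the nonnegative series --- correctly disposes of the obstacle you flagged, so there is no gap.
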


The functions $\mu_\alpha$ might be all finite and still have $\check{\mu}(A) =\infty$ at each nonempty set; an easy example can be obtained with $S$ finite and $\mu_n = n \mu_0$, $\mu_0$ being the counting measure. But if there is a finite measure dominating each $\mu_\alpha$, it follows that $\check{\mu}$ is a finite measure. \smallskip

We call $\check{\mu}$ the supremum measure of the family $(\mu_\alpha)$ and denote it by $\displaystyle
\sup_{\alpha\in\Lambda} \mu_\alpha.$
Notice that
$$
\left( \sup_{\alpha\in\Lambda} \mu_\alpha \right)(A) \geq \sup_{\alpha\in \Lambda} \mu_\alpha(A).
$$
The right hand side is computed, for each $A$, as the classical supremum of a set of real numbers, which in general does not define a measure, not even for a family of two measures.

The following lemma is a generalization of Lemma 2.8 in \cite{VeraarYaroslavtsev:2016} for the case of a family of random measures. 
\begin{lemma}
\label{lemmIntegralOfSupremum}
Let $(S,\Sigma,\nu)$ be a measure space, $(\Omega,\calF,\mathbb{P})$ a probability space. Let $\mathscr{F}$ be a family of measurable functions from $\Omega\times  S$ into $[0,\infty]$ and $(f_j)_{j\in\N}$ a sequence in $\mathscr{F}$. Define $\overline{f} = \sup_{j\geq 1} f_j $ and assume that $\sup_{f\in \mathscr{F}} f = \overline{f}$. For each $f\in \mathscr{F}$ let $\mu_f$ be the random measure defined by
$$
\mu_f(\omega,B) =\int_B f(\omega,\cdot) d\nu, \quad B\in \Sigma.
$$
If we define $\check{\mu}(\omega,\cdot) := \sup_{f\in \mathscr{F}} \mu_f(\omega,\cdot)$, then $\check{\mu}(\omega,\cdot) = \sup_{j\geq 1} \mu_{f_j}(\omega,\cdot)$ and
\begin{equation}
\label{supremum of random measures}
\check{\mu}(\omega,B) = \int_B \overline{f}(\omega,\cdot) d\nu.
\end{equation}
In particular $\check{\mu}$ is a random measure on $(S,\Sigma, \nu)$.
\end{lemma}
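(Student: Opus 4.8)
The plan is to fix $\omega\in\Omega$ throughout and prove the integral representation \eqref{supremum of random measures} first, deducing the other two assertions from it. The pointwise hypothesis $\overline f=\sup_{f\in\mathscr F} f$ is what links the abstract supremum measure of Lemma \ref{lemmaSupExplicit} to the concrete density $\overline f$, so everything hinges on sandwiching $\check\mu(\omega,\cdot)$ between two copies of $\lambda(\omega,B):=\int_B\overline f(\omega,\cdot)\,d\nu$. Note first that each $\mu_f(\omega,\cdot)$ is a genuine (hence sub-additive) measure, so by the last sentence of Lemma \ref{lemmaSupExplicit} the set function $\check\mu(\omega,\cdot)$ is a measure.

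For the upper bound I would note that, since $f(\omega,\cdot)\le\overline f(\omega,\cdot)$ for every $f\in\mathscr F$, each $\mu_f(\omega,\cdot)$ is dominated by the measure $\lambda(\omega,\cdot)$. By the minimality clause of Lemma \ref{lemmaSupExplicit} (the supremum measure is the smallest measure dominating the family), this forces $\check\mu(\omega,\cdot)\le\lambda(\omega,\cdot)$.

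The reverse inequality is where the countable sequence $(f_j)$ and the explicit formula \eqref{supremumofmeasures} enter, and this is the step I expect to be the main obstacle. I would set $g_N:=\max_{1\le j\le N}f_j$, so that $g_N\uparrow\overline f$ pointwise. Fixing a set $C\in\Sigma$, I build a finite measurable partition $C=\bigsqcup_{j=1}^N A_j$ on which $g_N$ is realized by $f_j$ --- concretely $A_j=\{u\in C: f_j(\omega,u)\ge f_k(\omega,u)\ \forall k\le N\text{ and }f_j(\omega,u)>f_k(\omega,u)\ \forall k<j\}$, the usual tie-breaking making the pieces disjoint and measurable. On this partition
\[
\int_C g_N\,d\nu=\sum_{j=1}^N\int_{A_j}f_j\,d\nu=\sum_{j=1}^N\mu_{f_j}(\omega,A_j)\le\sum_{j=1}^N\sup_{f\in\mathscr F}\mu_f(\omega,A_j)\le\check\mu(\omega,C),
\]
the last inequality holding because $\{A_j\}$ is one admissible partition in the supremum defining $\check\mu(\omega,C)$ via \eqref{supremumofmeasures}. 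Letting $N\to\infty$ and invoking monotone convergence gives $\int_C\overline f(\omega,\cdot)\,d\nu\le\check\mu(\omega,C)$; taking $C=B$ closes the sandwich and proves \eqref{supremum of random measures}.

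Finally I would read off the remaining claims. Running the identical argument with $\mathscr F$ replaced by the countable family $\{f_j\}_{j\ge1}$ (whose pointwise supremum is again $\overline f$, and in whose lower-bound step only the $f_j$ were used) shows that $\sup_{j\ge1}\mu_{f_j}(\omega,\cdot)$ also equals $\lambda(\omega,\cdot)$, hence coincides with $\check\mu(\omega,\cdot)$; this is the first assertion. For the random-measure property, $\overline f$ is jointly measurable on $\Omega\times S$ as a countable supremum of the $f_j$, so $\omega\mapsto\int_B\overline f(\omega,\cdot)\,d\nu$ is measurable by the standard Tonelli measurability of partial integrals, while for each fixed $\omega$ the set function $B\mapsto\int_B\overline f(\omega,\cdot)\,d\nu$ is a measure by countable additivity of the integral. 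The only mild subtlety to double-check is this joint measurability, but it is immediate from the assumed measurability of the $f_j$.
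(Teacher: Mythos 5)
Your proof is correct, but it takes a different route from the paper: the paper's proof is a two-line delegation, fixing $\omega$ and invoking Lemma 2.8 of \cite{VeraarYaroslavtsev:2016} for both identities, then using Tonelli's theorem for measurability in $\omega$ exactly as you do in your final paragraph. What you have done is unfold the cited lemma into a self-contained sandwich argument: the upper bound $\check{\mu}(\omega,\cdot)\leq\lambda(\omega,\cdot):=\int_{\cdot}\overline{f}(\omega,\cdot)\,d\nu$ via the minimality clause of Lemma \ref{lemmaSupExplicit}, and the lower bound via the max-attainment partition $C=\bigsqcup_{j\leq N}A_j$ (your tie-breaking sets are disjoint, measurable, and exhaust $C$, since every point has a smallest index realizing $\max_{j\leq N}f_j$), the explicit formula \eqref{supremumofmeasures} applied to this single admissible partition, and monotone convergence as $N\to\infty$. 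Deducing the identity $\check{\mu}(\omega,\cdot)=\sup_{j\geq 1}\mu_{f_j}(\omega,\cdot)$ by rerunning the same argument for the countable subfamily is clean, since your lower bound only ever used the $f_j$. What your version buys is transparency: it makes visible exactly where the hypothesis $\sup_{f\in\mathscr{F}}f=\overline{f}$ and the countable sequence enter, at the cost of reproving a known result; the paper's version buys brevity at the cost of sending the reader to the reference. On the measurability step you and the paper make the identical appeal to Tonelli (joint measurability of $\overline{f}$ as a countable supremum, then measurability of the partial integral), so there is no gap there relative to the paper's own standard of rigor.
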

\begin{proof}
For fixed $\omega$ we apply Lemma $2.8$ in \cite{VeraarYaroslavtsev:2016} to obtain both identities. Tonelli's theorem allows us to conclude that each $\mu_f$ is a random measure, so the same is true for $\check{\mu}$ because of
(\ref{supremum of random measures}). 
\end{proof}

In the case of a countable number of measures, Lemma $2.9$ of \cite{VeraarYaroslavtsev:2016} gives us an expression for $\check{\mu}$ as the limit
$$
\check{\mu}(A) = \lim_{N\fle \infty} \left(\sup_{1\leq n\leq N} \mu_n \right)(A).
$$

Remark 2.10 in \cite{VeraarYaroslavtsev:2016} also shows that, in the case $S=\R$ and $\Sigma = \mathcal{B}_{\R}$ one has
$$
\check{\mu}(a,b] = \sup_{\Pi \in \mathscr{R}} \sum_{C \in \Pi} \sup_{\alpha} \mu_{\alpha}(C),
$$
where $\mathscr{R}$ is the family of  all Riemann-type partitions of $(a,b]$, and the endpoints of the sub-intervals in such partitions can be taken rational (with the possible exception of $a$ and $b$). These ideas have the following useful corollary, which we present in a bit more general setting, as will be needed afterwards. 

\begin{corollary}
\label{CoroSupPredRandMeas}
The supremum of a countable family of random measures over $(\R,\mathcal{B}_{\R})$ is itself a  random measure. Moreover, if $\mu_n(\omega,\cdot) \leq \mu_{n+1}(\omega,\cdot)$ for each $n$, then for a.e. $\omega$
$$
\left( \sup_{n\in\N} \mu_n(\omega,\cdot) \right)(A) = \lim_{n\fle\infty} \mu_n(\omega,A), \quad A\in \calB(\R).
$$
\end{corollary}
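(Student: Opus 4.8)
The plan is to establish the two assertions in turn, treating the general (unordered) countable family first and then reading off the monotone case as a specialization. Throughout, fix a countable family $(\mu_n)_{n\in\N}$ of random measures on $(\R,\calB(\R))$ and write $\check{\mu}(\omega,\cdot)=\sup_{n}\mu_n(\omega,\cdot)$ for the supremum measure. For each fixed $\omega$ every $\mu_n(\omega,\cdot)$ is a measure, hence in particular countably sub-additive, so Lemma \ref{lemmaSupExplicit} already guarantees that $\check{\mu}(\omega,\cdot)$ is a genuine measure; the only thing left for the first assertion is measurability of $\omega\mapsto\check{\mu}(\omega,A)$. I would first obtain this on half-open intervals, where the Riemann-partition formula for the supremum measure on $\R$ recalled from \cite{VeraarYaroslavtsev:2016} applies: for $a<b$,
\[
\check{\mu}(\omega,(a,b]) = \sup_{\Pi\in\mathscr{R}}\ \sum_{C\in\Pi}\ \sup_{n}\mu_n(\omega,C),
\]
where $\mathscr{R}$ is the \emph{countable} family of rational Riemann partitions of $(a,b]$. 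Since each inner $\sup_n\mu_n(\omega,C)$ is a countable supremum of measurable functions, each partition contributes a finite sum, and the outer supremum runs over the countable set $\mathscr{R}$, the right-hand side is measurable in $\omega$. Thus $\omega\mapsto\check{\mu}(\omega,(a,b])$ is measurable for every $a<b$.

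The main work, and the principal obstacle, is to upgrade interval-measurability to measurability on all of $\calB(\R)$. A direct Dynkin-system argument on the $\pi$-system of half-open intervals breaks at the proper-difference step, because $\check{\mu}$ may be infinite (indeed it may be infinite on every interval, as the remark following Lemma \ref{lemmaSupExplicit} illustrates), and $\infty-\infty$ is undefined. To circumvent this I would pass through the finite maxima $\nu_N(\omega,\cdot):=\sup_{1\le n\le N}\mu_n(\omega,\cdot)$. By Lemma $2.9$ of \cite{VeraarYaroslavtsev:2016} the sequence $(\nu_N)_N$ is increasing and $\check{\mu}(\omega,A)=\lim_{N\to\infty}\nu_N(\omega,A)$ for every $A$. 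Each $\nu_N$ is dominated by $\sum_{n=1}^N\mu_n$, hence is finite on every bounded interval in the settings of interest; fixing the exhausting sequence $(-k,k]\uparrow\R$ on which $\nu_N$ is finite, the class $\{A\in\calB(\R): \omega\mapsto\nu_N(\omega,A\cap(-k,k])\ \text{is measurable}\}$ is now a genuine $\lambda$-system (the difference step is legitimate by finiteness, increasing limits are handled by continuity from below), and since it contains the half-open intervals it exhausts $\calB(\R)$. Letting $k\to\infty$ via continuity from below yields measurability of $\nu_N$ on every Borel set. The delicate point here is precisely the finiteness bookkeeping, which is exactly why one reduces to the $\nu_N$ rather than working with $\check{\mu}$ directly.

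Finally I would assemble the pieces. Because $\check{\mu}(\omega,A)=\lim_N\nu_N(\omega,A)$ is an increasing limit, measurability of each $\nu_N$ gives measurability of $\check{\mu}$, and since an increasing limit of measures is again a measure (interchanging the increasing limit with the countable sum of a disjoint decomposition is justified by monotone convergence), this completes the first assertion. For the \emph{moreover} statement, if the family is already increasing then the smallest measure dominating $\mu_1\le\cdots\le\mu_N$ is $\mu_N$ itself, so $\nu_N=\mu_N$ and Lemma $2.9$ of \cite{VeraarYaroslavtsev:2016} specializes to $\bigl(\sup_{n\in\N}\mu_n(\omega,\cdot)\bigr)(A)=\lim_{N\to\infty}\mu_N(\omega,A)$ for a.e.\ $\omega$ and all $A\in\calB(\R)$, which is the claimed identity; as the limit is monotone it coincides with $\sup_N\mu_N(\omega,A)$.
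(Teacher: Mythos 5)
Your route is essentially the one the paper gestures at: the corollary is stated without proof, as a consequence of Lemma \ref{lemmaSupExplicit}, the rational Riemann-partition formula quoted from Remark 2.10 of \cite{VeraarYaroslavtsev:2016}, and Lemma 2.9 there, and most of your argument is sound. The measure property for fixed $\omega$, the measurability of $\omega\mapsto\check{\mu}(\omega,(a,b])$ via the countable family of rational partitions, the reduction $\check{\mu}=\lim_N \nu_N$ with $\nu_N=\sup_{1\leq n\leq N}\mu_n$, and the identification $\nu_N=\mu_N$ in the increasing case are all correct. In fact the \emph{moreover} part needs none of the interval machinery: for an increasing family the pointwise limit $\lambda(\omega,A):=\lim_n\mu_n(\omega,A)$ is itself a measure (interchange the increasing limit with the series over a disjoint decomposition by monotone convergence) and is evidently the least measure dominating every $\mu_n$; this yields the identity on \emph{all} of $\calB(\R)$ and Borel measurability simultaneously, with no finiteness hypothesis at all.

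There is, however, a genuine gap in your first assertion, precisely at the point you flagged: you claim each $\nu_N\leq\sum_{n=1}^N\mu_n$ is ``finite on every bounded interval in the settings of interest.'' The corollary carries no such hypothesis. Random measures in this paper are $[0,\infty]$-valued; the paper notes immediately after Lemma \ref{lemmaSupExplicit} that $\check{\mu}$ can be infinite on every nonempty set, and indeed a single $\mu_n$ may already be infinite on every interval (counting measure on $\R$). Without local finiteness of the $\mu_n$, the proper-difference step of your $\lambda$-system fails, and no soft monotone-class argument can rescue it: a non-$\sigma$-finite measure on $\R$ is not determined by its values on half-open intervals, so measurability on intervals does not by itself propagate to $\calB(\R)$. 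What you have actually proved is the corollary for random measures that are finite on bounded intervals (equivalently, $\sigma$-finite along a fixed exhaustion such as $(-k,k]\uparrow\R$), together with the fully general monotone case. This covers every application in the paper — the increasing families $\hat{\nu}^n_t$ built in the appendix proof of Theorem \ref{theoExistMeasMuHValuedMVM} and the suprema of intensity measures in Theorem \ref{theoUniquenessQuadraticVariation} — but to obtain the statement as written you must either make the local-finiteness hypothesis explicit or supply a genuinely different argument for the Borel measurability of $\sup_{1\leq n\leq N}\mu_n$ when the $\mu_n$ are not locally finite.
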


The following equivalence for an infinite sum of measures is not difficult to establish.
\begin{corollary}
\label{CoroSumRandMeas}
The infinite sum of random measures is a random measure. Moreover
$$
\left( \sum_{n=1}^\infty \mu_n\right)(A) =  \sum_{n=1}^\infty \mu_n(A) = \left(\sup_{F} \sum_{n\in F} \mu_n \right)(A)
$$
where $F$ runs over the finite subsets of $\mathbb{N}$.
\end{corollary}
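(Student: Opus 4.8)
The plan is to reduce the whole statement to the monotone case already settled in Corollary \ref{CoroSupPredRandMeas}. First I would introduce the partial sums $S_N \defeq \sum_{n=1}^{N}\mu_n$. Each $S_N$ is a \emph{finite} sum of random measures, hence itself a random measure: for fixed $A$ the map $\omega \mapsto S_N(\omega,A)=\sum_{n=1}^{N}\mu_n(\omega,A)$ is measurable as a finite sum of measurable functions, and for a.e.\ $\omega$ the set function $A\mapsto S_N(\omega,A)$ is a measure, being a finite sum of measures. Since every $\mu_n$ is nonnegative, the sequence is increasing, $S_N(\omega,\cdot)\leq S_{N+1}(\omega,\cdot)$ for all $N$.

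Next I would apply the monotone part of Corollary \ref{CoroSupPredRandMeas} to the chain $(S_N)_{N\geq 1}$. This yields at once that $\sup_N S_N$ is a random measure and that, for a.e.\ $\omega$ and every $A\in\calB(\R)$,
\[
\left( \sup_{N} S_N(\omega,\cdot) \right)(A) = \lim_{N\fle\infty} S_N(\omega,A) = \lim_{N\fle\infty}\sum_{n=1}^{N}\mu_n(\omega,A) = \sum_{n=1}^{\infty}\mu_n(\omega,A).
\]
In particular the naive pointwise sum $A\mapsto\sum_{n=1}^{\infty}\mu_n(\omega,A)$ coincides with a genuine random measure, which is exactly the assertion that $\sum_{n=1}^{\infty}\mu_n$ is a random measure together with the first displayed identity $\bigl(\sum_{n=1}^{\infty}\mu_n\bigr)(A)=\sum_{n=1}^{\infty}\mu_n(A)$. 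Note that the fact that the infinite pointwise sum is countably additive for fixed $\omega$ comes for free here, since it is encoded in the conclusion that $\sup_N S_N$ is a random measure.

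For the remaining equality I would compare the two families $\{S_N: N\geq 1\}$ and $\bigl\{\sum_{n\in F}\mu_n: F\subseteq\N \text{ finite}\bigr\}$, both consisting of random measures (the second is a countable family, as $\N$ has only countably many finite subsets). Because the $\mu_n$ are nonnegative, for every finite $F$ one has $\sum_{n\in F}\mu_n\leq S_{\max F}$, while conversely $S_N=\sum_{n\in\{1,\dots,N\}}\mu_n$ belongs to the second family. Hence the two families admit exactly the same measures as upper bounds, so by the least-upper-bound characterization of the supremum measure in Lemma \ref{lemmaSupExplicit}\,(i)--(ii) their supremum measures agree: $\sup_F \sum_{n\in F}\mu_n = \sup_N S_N$. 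Combined with the previous step this gives $\bigl(\sup_F\sum_{n\in F}\mu_n\bigr)(A)=\sum_{n=1}^{\infty}\mu_n(A)$, closing the chain of equalities.

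There is no real obstacle here beyond careful bookkeeping; the substance is carried entirely by Corollary \ref{CoroSupPredRandMeas}. The one point requiring attention is conceptual rather than computational, namely to keep the three incarnations of the infinite sum aligned: the pointwise numerical series $\sum_n\mu_n(\omega,A)$, the increasing limit of the partial-sum random measures $S_N$, and the order-theoretic supremum measure $\sup_F\sum_{n\in F}\mu_n$. Once the cofinality observation matching the two index families is in place, these three coincide and the corollary follows.
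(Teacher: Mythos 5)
Your proof is correct, and since the paper itself omits the argument (stating only that the equivalence ``is not difficult to establish''), your route is exactly the intended one: increasing partial sums $S_N$, the monotone part of Corollary \ref{CoroSupPredRandMeas} to identify $\sup_N S_N$ with the pointwise series and certify it as a random measure, and the cofinality of $\{S_N\}$ within the countable family of finite-subset sums so that the least-upper-bound characterization in Lemma \ref{lemmaSupExplicit} forces the two supremum measures to coincide. Nothing needs to be added.
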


\section{Hilbert space-valued martingale-valued measures} \label{sectHilbertMartValuedMeasures}

Let $H$ be a separable Hilbert space. In this section we recall the definition of $H$-valued martingale-valued measures, originally introduced by Walsh in \cite{Walsh:1986} in the real-valued setting. 

Assume  that $U$ is a Hausdorff topological space which is Lusin in the sense that it is homeomorphic to a Borel subset of the line. 
Let $ \mathcal{A}$ be a ring of Borel subsets of $U$. We also consider an underlying probability space $(\Omega,\calF,\Prob)$.

\begin{definition}
\label{definitionSigmafiniteMeasure}
A set function $N:\Omega\times\calA\fle H$ is called a $\sigma$-finite $L^2 (\Omega, \mathcal{F}, \Prob; H)$-valued measure if it satisfies:
\begin{enumerate}
    \item $N$ is $L^2$-valued: \label{H-valuedCondition}$\Exp[\norm{N(A)}^2] <\infty$ for every $A\in \calA$. 
    \item $N$ is finitely additive: \label{finiteAdditivity} if $A,B\in\calA$ are disjoint then $\Prob$-a.e.
    $$N(A\cup B) = N(A) + N(B).$$
    \item \label{sigmaFiniteCondition} $N$ is $\sigma$-finite: there exists a sequence $(U_{n})_{n \in \N}$ in $\mathcal{B}(U)$ such that $U_n \uparrow U$, $\calB(U_n) \subseteq \calA$ for each $n$ and
    $$
    \sup_{A \in \calB(U_n)} \Exp [ \norm{N(A)}^{2}] < \infty.
    $$
    \item \label{countableAditivity} $N$ is \emph{countably additive} on each $U_n$ (as an $L^2 (\Omega, \mathcal{F}, \Prob; H)$-valued function): For each sequence $(A_{j})_{j \in \N}$ in $\mathcal{B}(U_{n})$ decreasing to $\emptyset$, we have $\Exp[\norm{N(A_{j})}^{2}] \fle 0$.
    \item \label{extensionCondition} For each $A\in \calA$, $N(A)$ is the $L^2$-limit of $N(A\cap U_n)$.
\end{enumerate}
\end{definition}

As we mentioned before, this definition is based on that of \cite{Walsh:1986}. We have adapted it to the Hilbert space-valued case and implemented some minor modifications. For instance, property \ref{extensionCondition}, which we made part of the definition, is treated in \cite{Walsh:1986} as an extension procedure. Such an extension could actually result in a modification of $N$ on some sets $A\in \calA$, or might even reduce the original ring\footnote{Consider $U=\R$ and 
$\calA = \{ A\in \calB(\R):A\text{ is bounded or } A^c \text{ is bounded} \}$.
Define the deterministic set function $N(A) = 0$ for $A$ bounded and $N(A)=1$ for $A^c$ bounded. This clearly satisfies properties (i)-(iv) in Definition \ref{definitionSigmafiniteMeasure}, with $U_n=[-n,n]$. The extension suggested in \cite{Walsh:1986} implies redefining $N(A) =0$ for each $A\in \calB(\R)$. In the same context, define $N(A) = |A|$ for $A$ bounded and $N(A)=1-|A^c|$ when $A^c$ is bounded. In this case, the $L^2$-limit of $N(A\cap U_n)$ exists if, and only if, $|A|<\infty$.}. We do observe that this condition would be superfluous if, for instance, $N$ is countably additive on $\mathcal{A}$.

\begin{remark}\label{remarkBorelSigmaAlgCountablyGenerated}
Since our space is Lusin, there exists a sequence of sets that generates $\mathcal{B}(U)$.  Given a $\sigma$-finite mapping $N$, by taking intersections with the generating countable collection if necessary, we can assume the sequence $(U_n)$ generates $\mathcal{B}(U)$, instead of being increasing.
\end{remark}

\begin{definition}\label{defiHValuedlOrthoMartinValuMeasure}
An $H$-valued \emph{orthogonal martingale-valued measure} is a collection 
$$
M=(M(t,A): t \geq 0, A \in \mathcal{A})
$$ 
of $H$-valued random variables satisfying:
\begin{enumerate} 
\item For any $A \in \mathcal{A}$, $M(0,A)= 0$ $\Prob$-a.e.
\item For any $A \in \mathcal{A}$, $M(A) = (M(t,A))_{t \geq 0}$, is a mean-zero, square integrable $H$-valued martingale.
\item If $t>0$, $M(t,\cdot): \mathcal{A} \rightarrow L^{2}(\Omega, \mathcal{F}, \Prob; H)$ is a $\sigma$-finite $L^{2}(\Omega, \mathcal{F}, \Prob; H)$-valued measure. 
\item For $t > 0$, $\inner{M(A)}{M(B)}_{t}=0$ whenever $A$ and $B$ in $\mathcal{A}$ are disjoint.
\end{enumerate}
\end{definition}

By the classical definition, $\quadraVari{M(A)}_t$ is increasing in $t$ for fixed $A$. It is also increasing in $A$ for fixed $t$, according to the following.

\begin{lemma}
\label{lemmaAdditiveVariation}
If $M$ is an $H$-valued orthogonal martingale-valued measure, 
$\langle M(\cdot) \rangle_t$ is additive on $\calA$. More precisely, given 
$A,B \in \calA$ disjoint we have $\langle M(A\cup B) \rangle_t = \langle M(A) \rangle_t + \langle M(B) \rangle_t$ $\Prob$-a.e.
Moreover, for $A \subseteq B$ and $t\geq 0$ we have $\langle M(A) \rangle_t \leq  \langle M(B) \rangle_t$, $\Prob$-a.e.
\end{lemma}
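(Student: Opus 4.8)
The plan is to reduce the additivity of $A \mapsto \langle M(A)\rangle_t$ to two ingredients already at hand: the bilinearity of the $H$-valued quadratic covariation, together with the orthogonality property (iv) of Definition \ref{defiHValuedlOrthoMartinValuMeasure}. First I would record that for $H$-valued square integrable martingales $Y$ and $Z$ the covariation $\innerp{Y}{Z}$ is symmetric and bilinear. Symmetry is immediate from $(Y,Z)_H = (Z,Y)_H$ and the uniqueness of the covariation. For bilinearity, given martingales $Y_1, Y_2, Z$, the process $(Y_1+Y_2, Z)_H - (\innerp{Y_1}{Z} + \innerp{Y_2}{Z})$ equals $[(Y_1,Z)_H - \innerp{Y_1}{Z}] + [(Y_2,Z)_H - \innerp{Y_2}{Z}]$, a sum of two martingales and hence a martingale, while $\innerp{Y_1}{Z} + \innerp{Y_2}{Z}$ is predictable, of finite variation, and null at $0$; the defining uniqueness property then forces $\innerp{Y_1+Y_2}{Z} = \innerp{Y_1}{Z} + \innerp{Y_2}{Z}$.

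Next I would upgrade the finite additivity of $M(t, \cdot)$ to an identity between martingales. Property (ii) of Definition \ref{definitionSigmafiniteMeasure} gives, for disjoint $A, B \in \calA$, that $M(t, A\cup B) = M(t,A) + M(t,B)$ holds $\Prob$-a.e. for each fixed $t$. Choosing a countable dense set $D \subseteq [0,\infty)$, this equality holds simultaneously for all $t \in D$ off a single $\Prob$-null set, and the c\`{a}dl\`{a}g paths then extend it to all $t \geq 0$ by right continuity. Thus $M(A\cup B)$ and $M(A) + M(B)$ are indistinguishable. Expanding by the bilinearity just established, for each $t$ we obtain $\Prob$-a.e.
\begin{equation*}
\langle M(A\cup B)\rangle_t = \langle M(A)\rangle_t + 2\innerp{M(A)}{M(B)}_t + \langle M(B)\rangle_t,
\end{equation*}
and the cross term vanishes identically by orthogonality (property (iv)), yielding the claimed additivity $\langle M(A\cup B)\rangle_t = \langle M(A)\rangle_t + \langle M(B)\rangle_t$.

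For the monotonicity statement I would use that $\calA$ is a ring, so for $A \subseteq B$ the set $B \setminus A$ lies in $\calA$ and is disjoint from $A$ with $B = A \cup (B\setminus A)$. Applying the additivity just proved gives $\langle M(B)\rangle_t = \langle M(A)\rangle_t + \langle M(B\setminus A)\rangle_t$, and since the quadratic variation of any $H$-valued square integrable martingale is an increasing process null at the origin, hence non-negative, we conclude $\langle M(A)\rangle_t \leq \langle M(B)\rangle_t$ $\Prob$-a.e.

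The argument is essentially bookkeeping; the only delicate point is the passage from the \emph{for each fixed $t$, $\Prob$-a.e.} finite additivity to an indistinguishability of processes, which is where the c\`{a}dl\`{a}g regularity of the martingales $M(A)$ is used. Everything else follows formally from bilinearity, orthogonality, and the non-negativity of the quadratic variation.
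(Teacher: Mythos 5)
Your proof is correct and follows essentially the same route as the paper's: expand $\langle M(A\cup B)\rangle_t$ bilinearly, kill the cross term $\innerp{M(A)}{M(B)}_t$ by the orthogonality property (iv), and deduce monotonicity from $B = A \cup (B\setminus A)$ together with non-negativity of the quadratic variation. The paper states this in two lines; your additional care (verifying bilinearity of the covariation, upgrading the fixed-$t$ additivity $M(t,A\cup B)=M(t,A)+M(t,B)$ to an identity of processes) is sound but not a different method.
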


\begin{proof}
If $A,B \in \calA$ are disjoint, we have $\Prob$-a.e.
$$
\langle M(A\cup B) \rangle_t = \langle M(A) \rangle_t + \langle M(B) \rangle_t + 2 \langle M(A),M(B) \rangle_t = \langle M(A) \rangle_t + \langle M(B) \rangle_t 
$$
because of the orthogonality. The final assertion follows easily from this.
\end{proof}

Let us consider an $H$-valued orthogonal martingale-valued measure $M$. We fix an interval $[0,T]$ and define, following \cite{Walsh:1986}:
\begin{equation}
\label{eqDefMuWalsh}
  \mu(A) = \Exp [ \norm{M(T,A)}^2] = \Exp [ \langle M(A) \rangle_T ], \quad \forall A \in \mathcal{A}.  
\end{equation}
Let $(U_n)$ be the sequence corresponding to $M(T,\cdot)$ in \ref{sigmaFiniteCondition} of Definition \ref{definitionSigmafiniteMeasure}. Being the map $t\mapsto \Exp[\norm{M_t(A)}^2]$ increasing, the same family $(U_{n})$ works for all $M(t,\cdot)$, $0 \leq t \leq T$. In fact, by Lemma \ref{lemmaAdditiveVariation} $\mu$ is finitely additive and
$$
  \sup_{A\in\calB(U_n)} \Exp [\norm{M(t,A)}^{2}] = \Exp [\norm{M(t,U_n)}^{2}] \leq \Exp [\norm{M(T,U_n)}^{2}] = \mu(U_n) < \infty.
$$
We summarize it in the following lemma.

\begin{lemma}\label{lemmaHValuedMeasMuFiniteAdditive}
Let $M$ be an $H$-valued orthogonal martingale-valued measure. The finitely additive measure $\mu$ defined on $\calA$ by \eqref{eqDefMuWalsh}, is $\sigma$-finite. Moreover, $\mu$ is a (countably additive) measure on each $(U_n,\calB(U_n))$.
\end{lemma}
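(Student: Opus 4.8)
The plan is to observe that the finite additivity of $\mu$ on $\calA$ (via Lemma \ref{lemmaAdditiveVariation}) and its $\sigma$-finiteness have essentially been recorded in the discussion preceding the statement, so the only substantive point is the countable additivity of $\mu$ as a measure on each fixed $(U_n,\calB(U_n))$. First I would note that on this space $\mu$ is a genuine finite set function: since $U_n\in\calB(U_n)\subseteq\calA$, and by the monotonicity in Lemma \ref{lemmaAdditiveVariation} we have $\mu(A)\leq\mu(U_n)=\Exp[\norm{M(T,U_n)}^2]<\infty$ for every $A\in\calB(U_n)$. Thus $\mu$ restricted to $\calB(U_n)$ is a non-negative, finitely additive set function of finite total mass defined on a genuine $\sigma$-algebra.

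The key reduction is the standard equivalence for finite finitely additive set functions on a $\sigma$-algebra: such a set function is countably additive if and only if it is continuous from above at the empty set, i.e. $\mu(A_j)\to 0$ whenever $(A_j)$ is a decreasing sequence in $\calB(U_n)$ with $\bigcap_j A_j=\emptyset$. I would then verify this continuity directly. For such a sequence $(A_j)$, the expression $\mu(A_j)=\Exp[\norm{M(T,A_j)}^2]$ is exactly the quantity appearing in property \ref{countableAditivity} of Definition \ref{definitionSigmafiniteMeasure}, applied to the $\sigma$-finite $L^2$-valued measure $N=M(T,\cdot)$. That property asserts precisely that $\Exp[\norm{M(T,A_j)}^2]\to 0$, whence $\mu(A_j)\to 0$ and continuity from above at $\emptyset$ holds; combining this with the finiteness of $\mu(U_n)$ and $U_n\uparrow U$ records the $\sigma$-finiteness as well.

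I do not anticipate a real obstacle here: the content is simply the translation of the $L^2$-valued countable additivity of $M(T,\cdot)$ into the scalar countable additivity of $\mu$. The only points requiring a little care are to confirm that $\calB(U_n)$ is indeed a $\sigma$-algebra, so that the continuity-at-$\emptyset$ criterion applies rather than merely its ring version, and to use the identity $\mu(A)=\Exp[\norm{M(T,A)}^2]=\Exp[\langle M(A)\rangle_T]$ from \eqref{eqDefMuWalsh} to pass freely between the two descriptions of $\mu$.
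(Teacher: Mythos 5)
Your proof is correct and follows essentially the same route as the paper: finite additivity and the bound $\mu(U_n)<\infty$ come from Lemma \ref{lemmaAdditiveVariation} and the displayed estimate preceding the lemma, while countable additivity on each $(U_n,\calB(U_n))$ reduces, via the continuity-at-$\emptyset$ criterion for finite finitely additive set functions on a $\sigma$-algebra, to property \ref{countableAditivity} of Definition \ref{definitionSigmafiniteMeasure} applied to $N=M(T,\cdot)$. The paper leaves this last step implicit (the lemma is stated as a summary of the preceding discussion), and your write-up supplies exactly the intended argument.
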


The following theorem is essentially an $H$-valued version of Proposition $2.7$ of \cite{Walsh:1986}. Its proof can be carried out by following the steps on that reference, with minor changes. We have decided to include a self-contained proof in the Appendix \ref{appendix} by two reasons. First, there are details in the original proof that we consider important to emphasize and, second, the ideas of the proof will be crucial to other results in this paper.

\begin{theorem}\label{theoExistMeasMuHValuedMVM}
Let $(M_t(A): 0\leq t\leq T,A\in {\mathcal A} )$ be an orthogonal $H$-valued martingale-valued measure, where $\calA$ is a sub-ring of $\calB(U)$. Then there exists a family $(\nu_t,0\leq t\leq T)$ of random $\sigma$-finite measures on $(U,\calB(U))$ such that:
\begin{enumerate}
\item[(i)] $(\nu_t,0\leq t\leq T)$ is predictable.
\item[(ii)] For all $A\in \calA$, $t\mapsto \nu_{t}(A)$ is right-continuous and increasing.
\item[(iii)] For all $0 \leq t\leq T$ and $A\in {\mathcal A}$ we have $\Prob(\nu_t(A)=\langle M(A) \rangle_t) =1$.
\end{enumerate}
\end{theorem}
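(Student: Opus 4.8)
The plan is to construct the random measures $\nu_t$ by building the quadratic variation $\langle M(A)\rangle_t$ simultaneously for all $A$ in a way that is jointly measurable and additive in $A$. The starting point is the finitely additive $\sigma$-finite measure $\mu$ from Lemma \ref{lemmaHValuedMeasMuFiniteAdditive}, together with the classical predictable quadratic variation $\langle M(A)\rangle_t$ for each fixed $A$ guaranteed by the theory recalled in Section \ref{sectPreliminars}. The obstacle is that the standard predictable quadratic variation is defined only up to indistinguishability for each fixed $A$, so selecting versions that are \emph{simultaneously} additive in $A$ over the whole (uncountable) ring $\calA$ and jointly measurable in $(\omega,t)$ requires care.

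First I would reduce to a countable generating structure. By Remark \ref{remarkBorelSigmaAlgCountablyGenerated} we may take a countable collection $(U_n)$ generating $\calB(U)$, and inside each $U_n$ work with a countable algebra $\calA_0$ generating $\calB(U_n)$. For each $A\in\calA_0$ fix one version of the increasing, predictable, right-continuous process $t\mapsto \langle M(A)\rangle_t$. Using Lemma \ref{lemmaAdditiveVariation}, additivity $\langle M(A\cup B)\rangle_t = \langle M(A)\rangle_t + \langle M(B)\rangle_t$ and monotonicity $\langle M(A)\rangle_t\leq\langle M(B)\rangle_t$ for $A\subseteq B$ hold $\Prob$-a.e.\ for each fixed pair. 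Since $\calA_0$ is countable, I can intersect these null sets and obtain a single $\Prob$-full set on which these relations hold simultaneously for \emph{all} $A,B\in\calA_0$ and all rational $t$, then extend to all $t$ by right-continuity. This gives, for a.e.\ $\omega$ and each $t$, a genuine finitely additive nonnegative set function $A\mapsto\langle M(A)\rangle_t$ on the ring $\calA_0$.

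Next I would promote this finitely additive set function on $\calA_0$ to a countably additive measure on $\calB(U_n)$. The natural device is the supremum-of-measures machinery from Section \ref{sectsupmeasures}: define, for fixed $\omega$ and $t$, a set function on $\calB(U_n)$ by $\nu_t(\omega,B) = \sup_{\Pi}\sum_{C\in\Pi}\langle M(C)\rangle_t$ over finite partitions $\Pi$ of $B$ into elements of $\calA_0$, invoking Lemma \ref{lemmaSupExplicit} to get a measure. Countable additivity should follow from continuity at $\emptyset$, which I would derive from the $\sigma$-finiteness and countable additivity properties of $N=M(t,\cdot)$ in Definition \ref{definitionSigmafiniteMeasure} (parts \ref{sigmaFiniteCondition}--\ref{extensionCondition}): domination by the finite measure $\mu$ on each $U_n$ (Lemma \ref{lemmaHValuedMeasMuFiniteAdditive}) controls the tails, and the $L^2$-countable additivity of $M$ forces $\langle M(A_j)\rangle_t\to 0$ in expectation, hence along a subsequence almost surely, for $A_j\downarrow\emptyset$. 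Finally I would verify the three asserted properties: right-continuity and monotonicity in $t$ (item (ii)) transfer directly from the chosen processes; predictability (item (i)) follows because each $\langle M(A)\rangle$ is predictable and $\nu_t(\cdot,B)$ is built from these by the countable supremum-and-sum operations in \eqref{supremumofmeasures}, using Corollary \ref{CoroSupPredRandMeas} and Corollary \ref{CoroSumRandMeas} to preserve predictability; and the identification $\nu_t(A)=\langle M(A)\rangle_t$ a.e.\ for each fixed $A\in\calA$ (item (iii)) holds on $\calA_0$ by construction and extends to all of $\calA$ through the approximation $M(A)=L^2\text{-}\lim M(A\cap U_n)$ of Definition \ref{definitionSigmafiniteMeasure}\ref{extensionCondition} together with continuity of the quadratic variation under $L^2$ convergence of martingales.

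The hard part will be the passage from finite additivity on the countable algebra to countable additivity of the measure $\nu_t$ for a.e.\ fixed $\omega$, simultaneously for all $t$: one must show that the almost-sure continuity at $\emptyset$ can be arranged on a single null-exceptional set independent of the decreasing sequence $(A_j)$, which is where the uniform control provided by $\sigma$-finiteness and the dominating measure $\mu$ becomes essential, and where the argument most closely follows—yet must adapt—the reasoning behind Proposition 2.7 of \cite{Walsh:1986}.
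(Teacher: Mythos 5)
There is a genuine gap, and it sits exactly where you flag ``the hard part'': the passage from finite additivity on the countable algebra $\calA_{0}$ to countable additivity on $\calB(U_n)$. Two concrete problems. First, Lemma \ref{lemmaSupExplicit} cannot perform this extension: that lemma takes a family of set functions each defined on the \emph{whole} $\sigma$-algebra and produces their supremum, whereas your formula $\sup_{\Pi}\sum_{C\in\Pi}\quadraVari{M(C)}_t$ over finite partitions of $B$ by elements of $\calA_{0}$ is vacuous for $B\notin\calA_{0}$ --- an algebra contains all finite unions of its members, so a Borel set outside $\calA_{0}$ admits no finite partition by elements of $\calA_{0}$ at all. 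The correct device would be Carath\'eodory extension, which requires $A\mapsto\quadraVari{M(A)}_t$ to be a premeasure, i.e.\ continuous at $\emptyset$ on $\calA_{0}$. Second --- and this is the real obstruction --- your argument for that continuity (``$L^2$-countable additivity forces $\quadraVari{M(A_j)}_t\to 0$ in expectation, hence a.s.\ along a subsequence'') produces an exceptional null set \emph{per decreasing sequence} $(A_j)$. The family of decreasing sequences in a countable algebra is uncountable, so these null sets cannot be intersected, and domination of the expectations by the finite measure $\mu$ does not help pointwise in $\omega$, since for fixed $\omega$ the set function need not be absolutely continuous with respect to $\mu$. No mechanism in your sketch closes this.

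The paper's proof avoids the difficulty rather than attacking it, and this is the missing idea: after the Lusin reduction (your reduction to $U_n$ and to finite $\mu$ matches its Steps 1--3), it works on compact subsets of the real line and encodes the candidate measures in the random distribution function $F_t(x)=\quadraVari{M((-\infty,x])}_t$, regularized to $\overline{F}_t(x)=\inf\{F_s(y): x<y\in\Q,\ t<s\in\Q\}$, which is increasing and right-continuous in $x$ for \emph{every} $\omega$. The Lebesgue--Stieltjes measure $\nu_t$ induced by $\overline{F}_t$ is then countably additive pointwise in $\omega$ with no continuity-at-$\emptyset$ argument whatsoever; the order structure of $\R$ delivers $\sigma$-additivity for free. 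The identification (iii) for general Borel $A$ --- which your sketch also leaves open, since agreement with $\quadraVari{M(\cdot)}_t$ on a generating algebra does not automatically transfer to the generated $\sigma$-algebra precisely because countable additivity of $A\mapsto\quadraVari{M(A)}_t$ is not known a priori --- is handled in the paper by a monotone class argument on the class $\mathcal{G}$ of sets $A$ for which $\norm{M_t(A)}^2-\nu_t(A)$ is a martingale, with orthogonality used to close $\mathcal{G}$ under complements. To salvage your outline you would have to replace the supremum-extension step by this distribution-function regularization (or by a compact-class inner-regularity criterion for countable additivity); either way, the topological reduction is not bookkeeping but the load-bearing part of the proof.
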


Next corollary constructs a random measure $\nu$ on $\mathcal{B}(\R_{+}) \otimes \mathcal{B}(U)$, based on the family $(\nu_t: t\geq 0)$ of random random measures on $\mathcal{B}(U)$. We will say that such a random measure $\nu$ is {\em predictable} if the family $(\nu_t)$ is.

\begin{corollary} \label{coroExistMeasMuHValuedMVM}
Given an $H$-valued orthogonal martingale-valued measure $M=(M(t,A): t \geq 0, A \in \mathcal{A})$, there exists a predictable $\sigma$-finite random measure $\nu$ on $\mathcal{B}(\R_{+}) \otimes \mathcal{B}(U)$ such that for every $t \geq 0$ and $A \in \calA$,
\begin{equation*}
\Prob \left( \{ \omega \in \Omega : \nu(\omega)\left( [0,t] \times A \right)  = \quadraVari{ M(A) }_{t}(\omega)  \} \right)=1.  \end{equation*}
\end{corollary}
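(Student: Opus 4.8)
The plan is to construct the desired random measure $\nu$ on the product space $\mathcal{B}(\R_+) \otimes \mathcal{B}(U)$ out of the family $(\nu_t: t \geq 0)$ furnished by Theorem \ref{theoExistMeasMuHValuedMVM}, whose existence is guaranteed for each finite horizon $[0,T]$. First I would address the passage from the finite-time family to one defined for all $t \geq 0$. For each integer $m \geq 1$, Theorem \ref{theoExistMeasMuHValuedMVM} applied with $T=m$ yields a predictable family $(\nu^{(m)}_t: 0 \leq t \leq m)$ of random $\sigma$-finite measures on $(U, \calB(U))$ satisfying properties (i)-(iii). For $m < m'$, both families represent $\quadraVari{M(A)}_t$ for $A \in \calA$ and $t \leq m$, so by uniqueness (the predictable quadratic variation is unique up to indistinguishability, and $\calA$ is rich enough to determine a $\sigma$-finite measure on the countably generated $\calB(U)$ of Remark \ref{remarkBorelSigmaAlgCountablyGenerated}) the restrictions agree outside a null set. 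Gluing along a common null set produces a single predictable family $(\nu_t: t \geq 0)$ with $t \mapsto \nu_t(A)$ right-continuous and increasing, and $\Prob(\nu_t(A) = \quadraVari{M(A)}_t)=1$ for all $t \geq 0$, $A \in \calA$.

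Next I would build the product measure. For fixed $\omega$ outside the exceptional null set, the map $t \mapsto \nu_t(\omega)(\cdot)$ is an increasing, right-continuous family of $\sigma$-finite measures on $\calB(U)$. The natural candidate is the measure $\nu(\omega)$ on $\mathcal{B}(\R_+) \otimes \mathcal{B}(U)$ determined on rectangles by
\begin{equation*}
\nu(\omega)\bigl( (s,t] \times A \bigr) = \nu_t(\omega)(A) - \nu_s(\omega)(A), \qquad 0 \leq s < t, \ A \in \calB(U).
\end{equation*}
The standard route is to fix a countable generating ring for $\calB(U)$ (from Remark \ref{remarkBorelSigmaAlgCountablyGenerated}) and exhaust $U$ by the sets $U_n$ on which the relevant measures are finite; on each strip $\R_+ \times U_n$ one obtains a genuine finite (or locally finite) product-type measure from the distribution function $t \mapsto \nu_t(\omega)(A)$, using the Carathéodory extension theorem combined with right-continuity to guarantee countable additivity in the time variable. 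Letting $n \to \infty$ assembles the $\sigma$-finite measure $\nu(\omega)$ on the full product space. With $s = 0$ and $\nu_0 = 0$, evaluating on $[0,t] \times A$ recovers $\nu_t(\omega)(A)$, which is $\Prob$-a.e. equal to $\quadraVari{M(A)}_t(\omega)$, giving the asserted identity.

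Finally I would verify measurability and predictability. That $\omega \mapsto \nu(\omega)(B)$ is measurable for each $B$ follows by a monotone-class / Dynkin argument: the claim holds for product rectangles $(s,t] \times A$ because $\omega \mapsto \nu_t(\omega)(A)$ is measurable (indeed predictable) by Theorem \ref{theoExistMeasMuHValuedMVM}(i), and the class of sets $B$ for which measurability holds is closed under the relevant operations and contains a generating $\pi$-system, hence is all of $\mathcal{B}(\R_+) \otimes \mathcal{B}(U)$. Predictability of $\nu$, in the sense defined just before the corollary statement, reduces precisely to predictability of the family $(\nu_t)$, which is property (i) of the theorem.

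The main obstacle I anticipate is the gluing/consistency step across the horizons $T=m$ together with the clean construction of the product measure $\nu(\omega)$ from the one-parameter family. The subtlety is that Theorem \ref{theoExistMeasMuHValuedMVM} only controls $\nu_t(A)$ for $A \in \calA$, whereas the product measure must be defined on all of $\mathcal{B}(U)$; one must invoke the $\sigma$-finiteness and the countable generation of $\calB(U)$ to extend consistently and to pin down a single null set valid simultaneously for the countably many rectangles, all while preserving joint measurability in $\omega$. The time-direction countable additivity hinges on the right-continuity in property (ii), so that step must be handled with care rather than treated as routine.
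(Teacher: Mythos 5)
Your proposal is correct and takes essentially the same approach as the paper, whose proof is just a two-sentence sketch: a ``standard argument'' extends the finite-horizon families of Theorem \ref{theoExistMeasMuHValuedMVM} to all $t \geq 0$, and $\nu$ is then defined by $\nu([0,t]\times A)=\nu_t(A)$ and extended using the fact that these rectangles generate $\mathcal{B}(\R_+)\otimes\mathcal{B}(U)$. You have simply filled in the details the paper leaves implicit --- the gluing across horizons $T=m$, the Carath\'eodory extension on the strips $\R_+\times U_n$ (where $\calB(U_n)\subseteq\calA$ makes the right-continuity in $t$ available for all relevant sets), and the measurability and predictability checks --- all of which are sound.
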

\begin{proof} First, by a standard argument we can construct a family  $(\nu_t: t\geq 0 )$ of random $\sigma$-finite measures on $(U,{\mathcal B}(U))$ satisfying the statements in Theorem \ref{theoExistMeasMuHValuedMVM} for each $T>0$. We can therefore define $\nu$ by  first setting $\nu([0,t] \times A)=\nu_{t}(A)$ for $t \geq 0$ and $A \in \mathcal{A}$ and then extending it to $\mathcal{B}(\R_{+}) \otimes \mathcal{B}(U)$, using the fact that the collection of all rectangles $[0,t] \times A$ generates the product $\sigma$-algebra.
\end{proof}

\begin{remark}
\label{nuregularizesquadvar}
The previous corollary provides us with a regularization for the random mapping $(t,A) \mapsto \quadraVari{M(A)}_t$, which in general, does not have to behave like a random measure.    If such mapping is verified to be already predictable and countably additive on the measurable rectangles of $\mathcal{B}([0,T]) \otimes \mathcal{B}(U)$, then the previous construction is not needed.
\end{remark}

The following theorem takes advantage of ideas on the proof of Theorem \ref{theoExistMeasMuHValuedMVM}. It receives an inequality between two random measures that is satisfied almost everywhere for each individual rectangle $(s,t]\times A$, $A \in \calA$, and shows that it actually holds almost everywhere all over $\calB(\R_{+}) \otimes \calB(U)$. The result is not classic, because it is not clear at the beginning that we can pick a countable number of elements of $\calA$ that generate $\calB(U)$. The fact that $\calA$ contains each $\calB(U_n)$ fills the gap.

\begin{theorem}\label{theoremLemmaRemark}
Suppose $\nu_1$ and $\nu_2$ are random measures on $\calB(\R_+)\otimes \calB(U)$ such that, for each $A\in \calA$ and $0\leq s < t$
$$
\Prob(\nu_1((s,t]\times A) \leq \nu_2((s,t]\times A) ) = 1.
$$
Then $\nu_1 \leq \nu_2$. More precisely
$$
\Prob \left(\nu_1(C) \leq \nu_2(C), \forall C \in \calB(\R_+)\otimes \calB(U) \right) = 1.
$$
\end{theorem}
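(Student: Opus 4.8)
The plan is to replace the uncountable family of rectangles in the hypothesis by a \emph{countable} generating ring sitting inside $\calA$, so that all the exceptional null sets can be gathered into a single one, and then to promote the inequality from this ring to the whole product $\sigma$-algebra by an outer-regularity (Carath\'{e}odory) argument. A naive Dynkin/monotone-class argument is \emph{not} available here: the collection $\{C:\nu_1(\omega,C)\le\nu_2(\omega,C)\}$ fails to be stable under proper differences, and indeed domination on a generating $\pi$-system need not survive to the generated $\sigma$-algebra (a three-point example already shows this).

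\textbf{A countable generating ring inside $\calA$.} First I would produce a countable subfamily of $\calA$ generating $\calB(U)$. Since $U$ is Lusin, so is each $U_n$, hence $\calB(U_n)$ is countably generated; choose a countable generator $\mathcal{G}_n\subseteq\calB(U_n)\subseteq\calA$ of $\calB(U_n)$ and set $\mathcal{G}=\bigcup_n(\mathcal{G}_n\cup\{U_n\})$. Because the $U_n$ generate $\calB(U)$ (Remark \ref{remarkBorelSigmaAlgCountablyGenerated}) and each element of $\mathcal{G}$ lies in some $\calB(U_n)\subseteq\calB(U)$, we get $\sigma(\mathcal{G})=\calB(U)$ with $\mathcal{G}$ countable and $\mathcal{G}\subseteq\calA$. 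Let $\mathcal{A}_0\subseteq\calA$ be the (countable) ring generated by $\mathcal{G}$; the inclusion $\calB(U_n)\subseteq\calA$ is what guarantees that the generators, and hence all of $\mathcal{A}_0$, stay inside $\calA$. Writing $\mathcal{I}=\{(s,t]:s,t\in\Q_{+}\cup\{0\},\ s<t\}$ for the countable semiring of rational half-open intervals (which generates $\calB(\R_{+})$), the products $(s,t]\times A$ with $(s,t]\in\mathcal{I}$ and $A\in\mathcal{A}_0$ form a countable semiring $\mathcal{D}$, each of whose members has $A\in\calA$. I let $\mathcal{R}_0$ be the countable ring generated by $\mathcal{D}$, whose elements are finite disjoint unions of such rectangles, and record that $\sigma(\mathcal{R}_0)=\calB(\R_{+})\otimes\calB(U)$.

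\textbf{A single null set.} For each $(s,t]\times A\in\mathcal{D}$ the hypothesis gives $\Prob\bigl(\nu_1((s,t]\times A)\le\nu_2((s,t]\times A)\bigr)=1$. Intersecting these countably many full-probability events with the full-probability event on which $\nu_1(\omega,\cdot)$ and $\nu_2(\omega,\cdot)$ are genuine measures yields $\Omega_0$ with $\Prob(\Omega_0)=1$. For $\omega\in\Omega_0$, finite additivity of both measures propagates the inequality from $\mathcal{D}$ to every finite disjoint union, so $\nu_1(\omega,R)\le\nu_2(\omega,R)$ for all $R\in\mathcal{R}_0$.

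\textbf{From the ring to the $\sigma$-algebra.} Fix $\omega\in\Omega_0$. Here I would invoke $\sigma$-finiteness: the exhausting sequence $(0,k]\times U_n\in\mathcal{R}_0$ has finite $\nu_2(\omega,\cdot)$-measure (in the applications this value is $\langle M(U_n)\rangle_k(\omega)$, a.s.\ finite), so $\nu_2(\omega,\cdot)$ is $\sigma$-finite on $\mathcal{R}_0$ and, by the Carath\'{e}odory extension theorem, coincides on $\sigma(\mathcal{R}_0)$ with the outer measure it induces from $\mathcal{R}_0$; thus $\nu_2(\omega,C)=\inf\sum_i\nu_2(\omega,R_i)$, the infimum over countable $\mathcal{R}_0$-covers $(R_i)$ of $C$. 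For any such cover, monotonicity and countable subadditivity of the measure $\nu_1(\omega,\cdot)$ give $\nu_1(\omega,C)\le\sum_i\nu_1(\omega,R_i)\le\sum_i\nu_2(\omega,R_i)$, and taking the infimum over covers yields $\nu_1(\omega,C)\le\nu_2(\omega,C)$. As this holds for every $\omega\in\Omega_0$ and every $C\in\sigma(\mathcal{R}_0)$, the claim $\Prob(\nu_1(C)\le\nu_2(C),\ \forall C)=1$ follows. The main obstacle is exactly this last passage: because domination of measures does not transfer along a Dynkin scheme, the argument must be routed through $\sigma$-finiteness and outer regularity of $\nu_2$; the secondary, paper-flagged subtlety is securing the countable generating ring $\mathcal{A}_0$ inside $\calA$, which is precisely where $\calB(U_n)\subseteq\calA$ together with the Lusin property is indispensable.
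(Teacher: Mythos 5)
Your proof is correct and matches the rigor of the paper's, but it is organized along a genuinely different route. The paper argues in two stages: for fixed times it transports the measures $B \mapsto \nu_j((s,t]\times B)$ to the real line via the Lusin embedding, restricts to each $U_n$ (this is where $\calB(U_n)\subseteq\calA$ enters for them), upgrades domination on rational intervals to domination on all of $\calB(\R)$ for fixed $\omega$, recovers $\calB(U)$ by taking suprema along $U_n \uparrow U$, and only then extends in the time variable, dismissing that last step as ``classic.'' You instead take fully seriously the observation the paper makes just \emph{before} the statement --- that $\calB(U_n)\subseteq\calA$ is what allows one to pick a countable subfamily of $\calA$ generating $\calB(U)$ --- and run a single global extension: a countable generating ring $\mathcal{R}_0$ of product sets inside the hypothesis class, one exceptional null set, and then Carath\'eodory outer regularity of $\nu_2$ combined with monotonicity and countable subadditivity of $\nu_1$. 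Your route buys a cleaner one-shot argument (the null sets are gathered exactly once), avoids the homeomorphism transfer to $\R$, and makes explicit a hypothesis that both proofs actually use but the statement omits: $\sigma$-finiteness of $\nu_2$ along ring elements, i.e.\ a.s.\ finiteness on the rectangles $(0,k]\times U_n$. Without it the extension step fails in \emph{both} proofs: on $\R$, take $\mu_1=\delta_{\sqrt{2}}$ and $\mu_2(B)=\infty$ if $B\cap\Q\neq\emptyset$, $\mu_2(B)=0$ otherwise; then $\mu_1\leq\mu_2$ on every interval but $\mu_1(\{\sqrt{2}\})>\mu_2(\{\sqrt{2}\})$. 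In the intended application $\nu_2$ is an intensity measure, finite on these rectangles, so your parenthetical appeal to $\quadraVari{M(U_n)}_k<\infty$ is the right fix.

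One shared blind spot, which does not count against you since the paper's proof has it too: the hypothesis only involves rectangles $(s,t]\times A$ with $s\geq 0$, so neither your covers nor the paper's rational rectangles ever see the slice $\{0\}\times U$. Your outer-measure identity for $\nu_2$ is strictly valid only on the $\sigma$-ring of sets covered by elements of $\mathcal{R}_0$, i.e.\ measurable subsets of $(0,\infty)\times U$, and indeed the conclusion as literally stated can fail on the zero slice (take $\nu_1$ a unit point mass at $(0,u_0)$ and $\nu_2=0$). This is invisible in the application because the intensity measures satisfy $\nu(\{0\}\times A)=\quadraVari{M(A)}_0=0$, but if you wanted your write-up airtight you would either restrict the conclusion to $C\subseteq (0,\infty)\times U$ or add the hypothesis $\nu_j(\{0\}\times U)=0$.
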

\begin{proof}
First, for fixed $s,t$ define $\mu_j(B) = \nu_j((s,t]\times B)$, for $j = 1,2$, $B\in \calB(U)$. As in the proof of Theorem \ref{theoExistMeasMuHValuedMVM}, we can assume that $U$ is a Borel set on the line. We consider the measures $\mu_{j,n}$ obtained by restricting $\mu_j$ to $(U_n,\calB(U_n))$ and see them as defined in $(\R,\calB(\R))$. To be precise, $\mu_{j,n}(B) :=\mu_j(B\cap U_n)$. Since $\calB(U_n) \subseteq \calA$, for $a < b$ we have $\Prob$-a.e.
$$ \mu_{1,n}((a,b]) \leq  \mu_{2,n}((a,b]).$$
It follows that
$$
\Prob \left( \mu_{1,n}((a,b]) \leq \mu_{2,n}((a,b]),\, \forall a,b\in\Q \right) =1
$$
and this is enough to conclude (working with fixed $\omega$) that
$$
\Prob \left( \mu_{1,n}(B) \leq \mu_{2,n}(B), \, \forall B\in \calB(\R) \right) =1.
$$
By the continuity of both measures we obtain $\Prob$-a.e.
$$
\mu_{1}(B) = \sup_n \mu_{1,n}(B) \leq \sup_n \mu_{2,n}(B) = \mu_2(B),\quad \forall B\in \calB(U).
$$
We have shown that, for fixed $s,t$
$$
\Prob(\nu_1((s,t]\times B)\leq \nu_2((s,t]\times B),\forall B\in\calB(U))=1.
$$
The remaining argument is classic using the right continuity of $\nu_{\cdot}$, working with fixed $\omega$. In fact, the rectangles $(s,t]\times B$ with $s,t$ rational and $B\in \calB(U)$ generate $\calB(\R_+)\otimes\calB(U)$.
\end{proof}

\begin{remark}\label{RemarkUniquenessIntensityMeasure}
The previous theorem clearly holds with equality. More precisely, if for given $s,t$ and $A\in \calA$, we have $\nu_1((s,t]\times A) = \nu_2((s,t]\times A)$ $\Prob$-a.e., then $\nu_1 = \nu_2$. This, in particular, implies  the uniqueness of $\nu$ in Corollary \ref{coroExistMeasMuHValuedMVM} and justifies  Definition \ref{defIntensityMeasure} below. It is important to recall that two random measures $\mu$ and $\nu$  are considered the same when $\Prob (\mu(C)=\nu(C) \text{ for all measurable } C ) =1 $.
\end{remark}

The following theorem extends the result to compare two random signed measures, or the absolute value of a random signed measure and a random measure. This result will be needed in the proofs of Lemmas \ref{lemmaAlphaMBilinearFormOnQSpanF} and \ref{lemmaContiAlphaMBilinearForm}.

\begin{theorem}\label{theoremComparisonSignedMeasuresOnRing}
    Suppose $\alpha$ and $\beta$ are random signed measures defined on $\calB(\R_+)\otimes \calB(U)$. 
    \begin{enumerate}
        \item If for each $A\in \calA$ and $0\leq s < t$
    $$
     \Prob(\alpha((s,t]\times A) \leq \beta((s,t]\times A) ) = 1
    $$
    then $\alpha \leq \beta$. 
    \item If for each $A\in \calA$ and $0\leq s < t$
    $$
     \Prob(\alpha((s,t]\times A) = \beta((s,t]\times A) ) = 1
     $$
     then $\alpha = \beta$. 
     \item If for each $A\in \calA$ and $0\leq s < t$
    $$
     \Prob( |\alpha((s,t]\times A)| \leq \beta((s,t]\times A) ) = 1
    $$
then $|\alpha| \leq \beta$. 
    \end{enumerate}
\end{theorem}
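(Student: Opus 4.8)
The plan is to reduce each part to Theorem~\ref{theoremLemmaRemark} by decomposing the random signed measures into their positive and negative (Jordan) parts and applying the already-established comparison for random measures. The only subtlety is that the Jordan decomposition is a set-wise, $\omega$-by-$\omega$ construction, while our hypotheses are stated rectangle-by-rectangle almost everywhere, so I must be careful to move between these two viewpoints through the generating ring $\calA$.

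For part (i), I would write $\alpha = \alpha^+ - \alpha^-$ and $\beta = \beta^+ - \beta^-$ using the Jordan decomposition of each random signed measure (applied for each fixed $\omega$). The hypothesis $\alpha((s,t]\times A) \leq \beta((s,t]\times A)$ $\Prob$-a.e.\ is equivalent to $\alpha^+((s,t]\times A) + \beta^-((s,t]\times A) \leq \beta^+((s,t]\times A) + \alpha^-((s,t]\times A)$ $\Prob$-a.e.\ for each $A \in \calA$ and $0 \leq s < t$. Both sides are now genuine random \emph{measures} (finite sums of random measures are random measures, by the analogue of Corollary~\ref{CoroSumRandMeas}), so Theorem~\ref{theoremLemmaRemark} applies directly and yields $\alpha^+ + \beta^- \leq \beta^+ + \alpha^-$ as random measures over all of $\calB(\R_+)\otimes \calB(U)$, $\Prob$-a.e. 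Rearranging gives $\alpha \leq \beta$ in the stated almost-sure sense. Part (ii) follows from part (i) by applying it twice, with the roles of $\alpha$ and $\beta$ interchanged, using that the hypothesis is symmetric.

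For part (iii), I would exploit that $|\alpha((s,t]\times A)| \leq \beta((s,t]\times A)$ $\Prob$-a.e.\ gives, for each fixed $A \in \calA$ and $0 \leq s < t$, both $\alpha((s,t]\times A) \leq \beta((s,t]\times A)$ and $-\alpha((s,t]\times A) \leq \beta((s,t]\times A)$ almost surely. Applying part (i) to the pairs $(\alpha,\beta)$ and $(-\alpha,\beta)$ yields $\alpha \leq \beta$ and $-\alpha \leq \beta$ on all of $\calB(\R_+)\otimes \calB(U)$, $\Prob$-a.e. The conclusion $|\alpha| \leq \beta$ then requires passing from these two inequalities on arbitrary measurable sets to the inequality for the variation measure $|\alpha|$. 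Here I would use the Hahn decomposition: for a.e.\ $\omega$ there is a measurable set $P$ with $\alpha^+(C) = \alpha(C \cap P)$ and $\alpha^-(C) = -\alpha(C \cap P^c)$, so that $|\alpha|(C) = \alpha(C \cap P) - \alpha(C \cap P^c) \leq \beta(C \cap P) + \beta(C \cap P^c) = \beta(C)$, using $\alpha \leq \beta$ on $C \cap P$ and $-\alpha \leq \beta$ on $C \cap P^c$.

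The main obstacle I anticipate is the measurability bookkeeping in the Jordan/Hahn decompositions: the positive part $\alpha^+$, negative part $\alpha^-$, and the Hahn set $P$ all depend on $\omega$, and I must ensure they define genuine \emph{random} (signed) measures so that the preceding comparison theorems, which are stated for random measures, legitimately apply. The safest route is to observe that the hypotheses, being stated $\Prob$-a.e.\ on the countable generating collection of rectangles (with $s,t$ rational and $A$ ranging over the countably generated $\calB(U_n) \subseteq \calA$), allow me to fix a single full-measure event on which all the inequalities hold simultaneously for every generating rectangle, and then to argue entirely $\omega$-by-$\omega$ on that event using classical Jordan/Hahn theory for ordinary signed measures. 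Once the argument is pointwise in $\omega$, the decompositions are unambiguous and no separate measurability verification is needed beyond what Theorem~\ref{theoremLemmaRemark} already supplies.
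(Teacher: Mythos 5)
Your proof is correct and rests on the same two pillars as the paper's own: the Jordan decomposition to reduce the comparison of signed measures to a comparison of nonnegative random measures, and then Theorem \ref{theoremLemmaRemark} applied rectangle-by-rectangle. The one substantive difference is in part (i): the paper splits into cases according to which Jordan part of $\alpha$ is finite, applying Theorem \ref{theoremLemmaRemark} to $\alpha^{+} \leq \alpha^{-}+\beta$ when $\alpha^{-}$ is finite and to $(\alpha^{+}-\beta)^{+} \leq \alpha^{-}$ otherwise, whereas you use the symmetric rearrangement $\alpha^{+}+\beta^{-} \leq \beta^{+}+\alpha^{-}$, which handles both signed measures at once and eliminates the case split (it also sidesteps the slight awkwardness that, for $\beta$ signed, the paper's $\alpha^{-}+\beta$ is not obviously a nonnegative measure). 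The price is that your word ``equivalent'' hides the very issue the paper's case split is designed to dodge: a signed measure may take infinite values, so the passage from $\alpha(R) \leq \beta(R)$ to $\alpha^{+}(R)+\beta^{-}(R) \leq \beta^{+}(R)+\alpha^{-}(R)$ on a rectangle $R$, and the final rearrangement back from the measure inequality to $\alpha \leq \beta$, each need a one-line extended-real check (if $\alpha^{-}(R)=\infty$ or $\beta^{+}(R)=\infty$ both inequalities hold trivially; otherwise the hypothesis forces all four terms finite and ordinary arithmetic applies). That check succeeds, so there is no gap, but you should record it explicitly. For part (iii) your Hahn-decomposition computation spells out the step from $-\beta \leq \alpha \leq \beta$ to $|\alpha| \leq \beta$ that the paper leaves entirely implicit, and your closing remark — that the $\omega$-measurability of the Jordan parts can be bypassed by fixing a single full-measure event over the countable generating rectangles and running the proof of Theorem \ref{theoremLemmaRemark} pointwise — is a legitimate reading that matches the paper's implicit stance, since the paper likewise applies Theorem \ref{theoremLemmaRemark} to $\omega$-dependent objects such as $(\alpha^{+}-\beta)^{+}$ without comment. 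Part (ii) is handled identically in both arguments.
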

\begin{proof}$\,$\\
\begin{enumerate}
    \item[$(i)$] If $\alpha^{-}$ is finite, it follows by applying Theorem \ref{theoremLemmaRemark} to the inequality
    $$\alpha^{+} \leq \alpha^{-}+\beta.$$
    Otherwise, $\alpha^{+}$ must be finite and we apply Theorem \ref{theoremLemmaRemark} to the inequality
    $$(\alpha^{+} - \beta)^{+} \leq \alpha^{-}.$$
    \item[$(ii)$] Apply $(i)$ to the inequalities $\alpha\leq\beta$ and $\beta\leq\alpha$.
    \item[$(iii)$] Apply $(i)$ to the inequalities $- \beta \leq \alpha$ and $\alpha \leq \beta$.
\end{enumerate}
\end{proof}

\begin{definition}
    \label{defIntensityMeasure}
    The unique random predictable $\sigma$-finite measure $\nu$ given in Corollary \ref{coroExistMeasMuHValuedMVM} will be called  the intensity measure of $M$.
\end{definition}

\begin{example}\label{ex:whitenoisemeasure}
Let $(U,\calB(U),\lambda)$ be a $\sigma-$finite space. A (Gaussian) \emph{white noise measure} based on $\lambda$ is a random set function $W$ on the sets $A\in {\mathcal U}$ of finite $\lambda-$measure, such that
\begin{enumerate}
\item $W(A)$ is a ${\mathcal N}(0,\lambda(A))$ random variable.
\item If $A\cap B=\emptyset$, then $W(A)$ and $W(B)$ are independent and
$$
W(A\cup B)=W(A)+W(B).
$$
Equivalently $\Exp[ W(A)W(B)] = \Exp[ |W(A\cap B)|^2 ]= \lambda(A\cap B)$. 
\end{enumerate}

Consider a white noise measure $W$ on $(\R_+\times U,\calB(\R_+)\otimes \calB(U),Leb\otimes \lambda)$. This means, $W$ is defined at least on the cylinder sets $(s,t]\times A$, with $0\leq s \leq t$ and $A\in \calA$. Here, $\calA$ is the ring of finite $\lambda$-measure sets on $\calB(U)$. 

We define $M_t(A) = W([0,t] \times A )$ and observe that $M_t(A) \sim \mathcal{N}(0,t\lambda(A))$. Thus $(M_t(A): t\geq 0, A \in \calA)$ is clearly a martingale measure with respect to its natural filtration. Besides
$$
A\cap B =\emptyset \Rightarrow M_t(A) \text{ and } M_t(B) \text{ are independent },
$$
 hence orthogonal. We also have $\quadraVari{M(A)}_t =t\lambda(A)$. Given $U_n \uparrow U$ with $\lambda(U_n) < \infty$, it is clear that $\calB(U_n) \subseteq \calA$  and 
$$
\sup_{A\in \calB(U_n)} \Exp|M_t(A)|^2 = t\lambda(U_n)  < \infty.
$$
The martingale measure $M$ is also countably additive on each $U_n$. In fact, if $(A_j)$ is a sequence on $\calB(U_n)$ and $A_j \downarrow \emptyset$, then $\lambda(A_j) \fle 0$ and consequently $\Exp|M_t(A_j)|^2 \leq T\lambda(A_j) \fle 0$. Finally, given $A\in\calA$ we have
$$
\Exp|M_t(A) - M_t(A\cap U_n)|^2 = \Exp|M_t(A\setminus U_n)|^2 = t\lambda(A\setminus U_n) \fle 0.
$$
We have shown that each  $M_{t}(\cdot)$ satisfies Definition \ref{definitionSigmafiniteMeasure} and therefore, $M$ satisfies condition (iii) in Definition \ref{defiHValuedlOrthoMartinValuMeasure}. Conditions (i), (ii) and (iv) are immediate. We conclude that $M$ is a (real-valued) orthogonal martingale-valued measure.

    \begin{enumerate}[label=(\alph*)]
        \item Notice the additivity of $\quadraVari{M(\cdot)}_t$ as mentioned on Lemma \ref{lemmaAdditiveVariation}.
        \item Notice that the measure $\mu$ of Lemma \ref{lemmaHValuedMeasMuFiniteAdditive} in this case is $\mu(A) = T\lambda(A)$.
        \item The family of measures constructed in Theorem \ref{theoExistMeasMuHValuedMVM} is given by $\nu_t(A) = t\lambda(A)$. Also, in Corollary \ref{coroExistMeasMuHValuedMVM}, the intensity measure is
        $\nu((s,t]\times A) = (t-s)\lambda(A)$, that is $\nu = Leb\otimes \lambda$.
    \end{enumerate}

\end{example}

\section{Cylindrical orthogonal martingale-valued Measures}
\label{sectCylinMartingValued}

Let $X$ be a Banach space with separable dual $X^*$. The following definition extends that of orthogonal martingale-valued measure to the cylindrical context.

\begin{definition}\label{defiCylindricalOrthoMartinValuMeasure}
A \emph{cylindrical orthogonal martingale-valued measure} on $X^*$ is a collection $M=(M(t,A): t \geq 0, A \in \mathcal{A})$ of cylindrical random variables on $X$ such that:
\begin{enumerate} 
\item For each $A \in \mathcal{A}$, $M(0,A)(x^*)= 0$ $\Prob$-a.e. for all $x^* \in X^*$. \label{timezerocomvmdef}
\item For each $A \in \mathcal{A}$, $M(A) = (M(t,A): t \geq 0)$, is a cylindrical mean-zero square integrable martingale, and for each $t > 0$ and $A \in \calA$, the map 
$$ 
M(t,A): X^* \rightarrow L^{0} \ProbSpace
$$ 
is continuous. \label{cylindricalMartingale}
\item If $t>0$ and $x^* \in X^*$, $M(t,\cdot)(x^*): \mathcal{A} \rightarrow L^{2} \ProbSpace$ is a $\sigma$-finite $L^{2}$-valued measure. \label{fixedtimeismeasure}
\item If $t>0$ and $x^* \in X^*$, $\inner{M(A)(x^*)}{M(B)(x^*)}_{t}=0$ whenever $A,B\in \mathcal{A}$ are disjoint. \label{orthogonality}
\end{enumerate}
\end{definition}

\begin{remark}\label{remarkL2ContinuityCMVM}
Let $t>0$ and $A \in \mathcal{A}$. Our assumption \ref{cylindricalMartingale} imply that the linear mapping $M(t,A): X^* \rightarrow L^2 \ProbSpace$ is continuous. In fact, assume that $x^{*}_{n} \rightarrow x^*$ in $X^*$ and $M(t,A)(x_n^*) \rightarrow Y$ in $L^2 \ProbSpace$. By \ref{cylindricalMartingale} we have $M(t,A)(x^{*}_{n}) \rightarrow M(t,A)(x^{*})$ in probability. On the other hand, since $L^2$ convergence implies convergence in probability we have $M(t,A)(x_n^*) \rightarrow Y$ in probability. By uniqueness of limits $Y = M(t,A)(x^{*})$ $\Prob$-a.e. and the closed graph theorem finishes the work.  
\end{remark}

\begin{remark}\label{remarkNotationRealQuadraVariM}
For any $x^* \in X^*$ and $A \in \mathcal{A}$, by \ref{cylindricalMartingale} the process 
$$
M(A)(x^*) = (M(t,A)(x^*) : t \geq 0)
$$
is a real-valued square-integrable martingale, so the brackets in \ref{orthogonality} correspond to the (real) covariation of two real-valued processes. We will also use the notation $\langle M(A)(x^*) \rangle_t$ for the (real) quadratic variation, when it exists.
\end{remark}

\begin{example}\label{exampleCylinMartingaleDelta}
Consider a finite set $U=\{a_{1}, \ldots, a_{n}\}$. In this case we can take $\mathcal{A}=2^U$, which corresponds to the discrete topology. For each $k=1, \cdots, n$, let $Z^k =(Z^{k}_{t}: t \geq 0)$ be a cylindrical c\`{a}dl\`{a}g zero-mean square integrable martingale such that for each $t \geq 0$ the mapping $Z^{k}_{t}: X^* \rightarrow L^{0} \ProbSpace$ is continuous. Assume moreover that for each $x^* \in X^*$, the real-valued martingales $(Z^{k} (x^{*}))_{k=1}^{n}$ are orthogonal. 

Define a family $M=(M(t,A): t \geq 0, A \in \mathcal{A})$ by the prescription:
$$M(t,A) (x^*) = \sum_{k=1}^{n} Z^{k}_{t}(x^*) \delta_{a_{k}}(A), \quad \forall \, x \in X^*, \, t \geq 0, \, A \in \mathcal{A}. $$
Since every $A \in \mathcal{A}$ is either the empty set or has a finite number of elements, it is not difficult to check that (i)-(iv) in 
Definition \ref{defiCylindricalOrthoMartinValuMeasure} are satisfied. Hence $M$ defined above is a  cylindrical orthogonal martingale-valued measure.
\end{example}

\begin{example}\label{exampleCylinMartingaleIntegral}
 Let $Z: X^* \rightarrow \mathcal{M}^{2}_{\infty}$ be a continuous linear operator, in particular a cylindrical  c\`{a}dl\`{a}g zero-mean square integrable martingale on $X$. Let $g: \R_{+} \times \Omega \rightarrow U$ be a predictable process and let $\mathcal{A}=\mathcal{B}(U).$    

 Define a family $M=(M(t,A): t \geq 0, A \in \mathcal{A})$ by the prescription:
 \begin{equation*}
M(t,A)(x^*) =\int_{0}^{t} \, \mathbbm{1}_{A}(g(s)) \, dZ(x^*)_s,  \quad \forall x^* \in X^*, \, t \geq 0, \, A \in \mathcal{A}.      
 \end{equation*}
 It is clear that for all $A \in \mathcal{A}$ and $x^* \in X^*$ we have $M(0,A)(x^*)=0$ $\Prob$-a.e., so Definition \ref{defiCylindricalOrthoMartinValuMeasure}(i) is satisfied.  
 
 The linearity of $Z$ shows that each $M(t,A)$ defines a cylindrical random variable on $X$. Since $\mathbbm{1}_{A}(g(s,\omega))$ is a real-valued bounded predictable process, then $M(t,A) (x^*)$ is a real-valued zero-mean square integrable c\`{a}d\`{a}g martingale. Furthermore, by the It\^{o} isometry we have
 \begin{eqnarray*}
  \Exp \left[ \abs{M(t,A) (x^*)}^{2} \right]
  & = & \Exp \left[ \abs{ \int_{0}^{t} \, \mathbbm{1}_{A}(g(s)) \, dZ(x^*)_s }^{2} \right] \\
 & = &  \Exp \int_{0}^{t} \, \mathbbm{1}_{A}(g(s)) \, d\quadraVari{Z(x^*)}_s \\
 & \leq & \Exp \left[ \quadraVari{Z(x^*)}_t \right] = \Exp \left[ \abs{Z(x^*)_{t}}^2 \right].    
 \end{eqnarray*}
Hence showing that for each $t >0$ and $A \in \mathcal{A}$ the mapping $M(t,A): X^* \rightarrow L^{2}\ProbSpace$ is continuous. 
This shows Definition \ref{defiCylindricalOrthoMartinValuMeasure}(ii) is satisfied. 

Now we check Definition \ref{defiCylindricalOrthoMartinValuMeasure}(iii). In fact, let $t>0$ and $x^* \in X^*$.  We have proved above that $\Exp \left[ \abs{M(t,A)( x^*)}^{2} \right] < \infty$ for each $A \in \mathcal{A}$. Moreover if $A \cap B = \emptyset$, $A , B \in \mathcal{A}$, then 
\begin{eqnarray*}
M(t,A \cup B)(x^*)
& = & \int_{0}^{t} \, \mathbbm{1}_{A \cup B}(g(s)) \, dZ(x^*)_s \\ 
& = & \int_{0}^{t} \, \mathbbm{1}_{A}(g(s)) \, dZ(x^*)_s + \int_{0}^{t} \, \mathbbm{1}_{B}(g(s)) \, dZ(x^*)_s \\
& = & M(t,A)(x^*) + M(t,B)(x^*).
\end{eqnarray*}
Furthermore, each $M(t,\cdot)( x^*)$ is easily seem to be $\sigma$-finite $L^2$-valued function by taking $U_{n}=U$ for all $n \in \N$. 

Finally, given $t>0$ and $x^* \in X^*$, for $A , B \in \mathcal{A}$ satisfying $A \cap B = \emptyset$ we have, by the theory of stochastic integration, that  
\begin{eqnarray*}
\Exp \left[ \inner{M(A)(x^*)}{M(B)(x^*)}_{t} \right]
& = & \Exp \left[ M(t,A )(x^*) \cdot M(t,B)(x^*) \right] \\
& = & \Exp \int_{0}^{t} \mathbbm{1}_{A}(g(s)) \cdot \mathbbm{1}_{B}(g(s)) \, d\quadraVari{Z(x^*)}_s =0.
\end{eqnarray*}
Hence Definition \ref{defiCylindricalOrthoMartinValuMeasure}(iv) is satisfied.  
\end{example}

We finish this section with a result that allows us to look at a Hilbert space-valued martingale-valued measure as a cylindrical orthogonal martingale-valued measure.  
 
\begin{proposition}\label{propHValuedMVMDefinesCylinMVM}
Let $H$ be a separable Hilbert space and let $\tilde{M}=(\tilde{M}(t,A): t \geq 0, A \in \mathcal{A})$ be an $H$-valued orthogonal martingale-valued measure. Assume further that for every $A, B \in \mathcal{A}$ disjoint and $h \in H$, the real-valued martingales $(\tilde{M}(A),h)_{H}$ and $(\tilde{M}(B),h)_{H}$ are orthogonal. Then $\tilde{M}$ induces a  cylindrical orthogonal martingale-valued measure $M=(M(t,A): t \geq 0, A \in \mathcal{A})$ on $H$ by means of the prescription 
\begin{equation}\label{eqInducedCMVMDefinedByHValuedMVM}
M(t,A)(\omega)(h)\defeq (\tilde{M}(t,A)(\omega),h)_{H}, \quad \forall \omega \in \Omega, t \geq 0, A \in \mathcal{A}, h \in H.    
\end{equation} 
\end{proposition}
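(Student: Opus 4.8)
The plan is to verify directly the four defining properties of Definition \ref{defiCylindricalOrthoMartinValuMeasure}, using the identification $H^*=H$ so that each $M(t,A)$ is to be checked as an operator on $H$. The single estimate doing most of the work is, for $h\in H$, $t\geq 0$ and $A\in\calA$, the Cauchy--Schwarz bound
$$
\Exp\left[ \abs{M(t,A)(h)}^2 \right] = \Exp\left[ \abs{(\tilde{M}(t,A),h)_H}^2 \right] \leq \norm{h}^2\, \Exp\left[ \norm{\tilde{M}(t,A)}^2 \right] < \infty,
$$
finiteness coming from Definition \ref{defiHValuedlOrthoMartinValuMeasure}. This shows at once that $h\mapsto M(t,A)(h)$ is a bounded linear operator from $H$ into $L^2\ProbSpace$, hence continuous into $L^0\ProbSpace$; combined with the linearity inherited from the inner product, it already delivers the cylindrical random variable structure and the continuity requested in \ref{cylindricalMartingale}.

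Property \ref{timezerocomvmdef} is immediate, since $\tilde{M}(0,A)=0$ $\Prob$-a.e. gives $M(0,A)(h)=(\tilde{M}(0,A),h)_H=0$. For \ref{cylindricalMartingale}, I would observe that for fixed $h$ the process $M(A)(h)=((\tilde{M}(t,A),h)_H : t\geq 0)$ is obtained by applying the bounded linear functional $(\cdot,h)_H$ to the $H$-valued mean-zero square integrable martingale $\tilde{M}(A)$; hence it is a real mean-zero square integrable martingale (the martingale property is preserved by a bounded linear functional, while mean-zero and square integrability pass through the displayed estimate), and the continuity of $M(t,A)$ was already noted.

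For \ref{fixedtimeismeasure} I would fix $t>0$ and $h\in H$ and check the five conditions of Definition \ref{definitionSigmafiniteMeasure} for the set function $N(\cdot):=M(t,\cdot)(h)$, reusing the localizing sequence $(U_n)$ associated with $\tilde{M}(t,\cdot)$. Finite additivity \ref{finiteAdditivity} and the extension property \ref{extensionCondition} are inherited because $(\cdot,h)_H$ is linear and $L^2$-continuous, so it commutes with finite sums and with the $L^2$-limits $N(A\cap U_n)\to N(A)$; the $L^2$-property \ref{H-valuedCondition}, the $\sigma$-finiteness \ref{sigmaFiniteCondition}, and the countable additivity on each $U_n$ in \ref{countableAditivity} all follow by multiplying the corresponding estimates for $\tilde{M}$ by $\norm{h}^2$.

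The only genuinely non-automatic step is \ref{orthogonality}, and this is exactly where the extra hypothesis enters, which I expect to be the main point to emphasize. It is worth stressing that the orthogonality property (iv) of the $H$-valued measure, which concerns the $H$-valued covariation $\inner{\tilde{M}(A)}{\tilde{M}(B)}$, does \emph{not} by itself control the scalar covariation $\inner{(\tilde{M}(A),h)_H}{(\tilde{M}(B),h)_H}$ of the coordinate processes: the former is essentially a trace-type quantity, the latter a full bilinear expression in $h$, and the two brackets are distinct objects. Since $M(A)(h)=(\tilde{M}(A),h)_H$ and $M(B)(h)=(\tilde{M}(B),h)_H$, the required identity $\inner{M(A)(h)}{M(B)(h)}_t=0$ for disjoint $A,B$ is precisely the assumed orthogonality of $(\tilde{M}(A),h)_H$ and $(\tilde{M}(B),h)_H$, so \ref{orthogonality} holds by hypothesis and the verification is complete.
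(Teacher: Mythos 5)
Your proposal is correct and follows essentially the same route as the paper, which simply states that the four properties of Definition \ref{defiCylindricalOrthoMartinValuMeasure} are easy to verify and singles out the continuity of $M(t,A): H \to L^0\ProbSpace$ via a Chebyshev-type estimate --- your Cauchy--Schwarz bound $\Exp[\abs{M(t,A)(h)}^2] \leq \norm{h}^2\,\Exp[\norm{\tilde{M}(t,A)}^2]$ is exactly that estimate in $L^2$ form. Your added observation that the $H$-valued orthogonality in Definition \ref{defiHValuedlOrthoMartinValuMeasure}(iv) does not by itself yield the scalar orthogonality needed for Definition \ref{defiCylindricalOrthoMartinValuMeasure}\ref{orthogonality} (so the extra hypothesis is genuinely needed) is accurate and consistent with the remark the paper makes immediately after the proposition.
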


\begin{proof}
It is easy to verify each of the statements of Definition \ref{defiCylindricalOrthoMartinValuMeasure}. Note, in particular, that for each $t>0$ and $A \in \mathcal{A}$, the map $M(t,A): H \to L^0(\Omega, \mathcal{F}, \mathbbm{P})$ is continuous  by using a Chebyshev-type argument. Indeed for any $\epsilon>0$, one can see that
$$\mathbbm{P} ( |(\tilde{M}(t,A),h_n-h)| \geq \epsilon ) \leq \frac{1}{\epsilon^2} \cdot \| h_n-h\|^2_H \, \mathbbm{E}[\|\tilde{M}(t,A) \|^2] \to 0,$$
whenever $h_n \to h$ in $H$.
\end{proof}

\begin{remark} Let $H$ be a separable Hilbert space and let $\tilde{M}=(\tilde{M}(t,A): t \geq 0, A \in \mathcal{A})$ be an $H$-valued martingale-valued measure. Let $(h_{n})_{n \geq 1}$ be a orthonormal basis in $H$ and for every $n \geq 1$ let ${\tilde{M}}^{n}(t,A)=(\tilde{M}(t,A),h_{n})_{H}$. Using the result of Lemma 2.2 in \cite{RozovskyLototsky:2018} one can show that for every $A, B \in \mathcal{A}$, $t \geq 0$, we have
$$ \quadraVari{\tilde{M}(A),\tilde{M}(B)}_{t}=\sum_{n \geq 1} \quadraVari{\tilde{M}^{n}(A),\tilde{M}^{n}(B)}_{t}.$$
Therefore, if we assume that for every $A, B \in \mathcal{A}$ disjoint and $h \in H$, the real-valued martingales $(\tilde{M}(A),h)_{H}$ and $(\tilde{M}(B),h)_{H}$ are orthogonal, we conclude that $\tilde{M}(A)$ and $\tilde{M}(B)$ are orthogonal (as $H$-valued martingales). Thus $\tilde{M}$ is orthogonal. 
\end{remark}

\section{Construction of the quadratic variation} \label{sectQuadraticVariation}

\subsection{Definition and properties of the quadratic variation}\label{subsectDefiQuadraVariation}

In this section we define the (predictable) quadratic variation of a cylindrical martingale-valued measure. Our definition is based on a extension of the definition of the quadratic variation as a supremum of measures introduced by Veraar and Yaroslavtsev in \cite{VeraarYaroslavtsev:2016} in the case of cylindrical continuous local martingales. In our case, we found convenient to formulate our definition in terms of the family of intensity measures defined by the family of real-valued martingale-valued measures $(M(t,A)(x^*): t \geq 0, A \in \mathcal{A})$. The existence of such a family of measures is guaranteed by the following result. 

\begin{lemma}\label{lemmaExistMeasMuRealQuadraVaria}
For every $x^* \in X^*$, there exists a random predictable $\sigma$-finite measure $\nu_{x^*}$ on $\mathcal{B}(\R_{+}) \otimes \mathcal{B}(U)$ such that for every $t \geq 0$ and $A \in \calA$,
\begin{equation}
\label{eqAlmosSureSetMeasureNuhA}
\Prob \left( \{ \omega \in \Omega : \nu_{x^*}(\omega)\left( [0,t] \times A \right)  = \quadraVari{M(A)(x^*)}_{t}(\omega)  \} \right)=1. 
\end{equation}
\end{lemma}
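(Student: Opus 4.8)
The plan is to reduce this to the already-established Hilbert space-valued result by fixing $x^*$ and viewing the real-valued family $(M(t,A)(x^*): t \geq 0, A \in \calA)$ as a one-dimensional (that is, $H = \R$) orthogonal martingale-valued measure in the sense of Definition \ref{defiHValuedlOrthoMartinValuMeasure}.

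First I would fix $x^* \in X^*$ and set $M^{x^*}(t,A) := M(t,A)(x^*)$. The goal of this first step is to verify that $M^{x^*} = (M^{x^*}(t,A): t \geq 0, A \in \calA)$ satisfies the four conditions of Definition \ref{defiHValuedlOrthoMartinValuMeasure} with $H = \R$. Each of these is a direct transcription of the corresponding item of Definition \ref{defiCylindricalOrthoMartinValuMeasure}: condition \ref{timezerocomvmdef} gives $M^{x^*}(0,A) = 0$ $\Prob$-a.e.; condition \ref{cylindricalMartingale} (together with Remark \ref{remarkNotationRealQuadraVariM}) makes each $M^{x^*}(A) = (M(t,A)(x^*): t \geq 0)$ a real-valued, mean-zero, square integrable martingale; condition \ref{fixedtimeismeasure} says precisely that $M^{x^*}(t,\cdot) = M(t,\cdot)(x^*): \calA \to L^{2}\ProbSpace$ is a $\sigma$-finite $L^{2}$-valued measure in the sense of Definition \ref{definitionSigmafiniteMeasure}; and condition \ref{orthogonality} is exactly the orthogonality $\langle M^{x^*}(A), M^{x^*}(B) \rangle_t = 0$ for disjoint $A,B$. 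Hence $M^{x^*}$ is an $\R$-valued orthogonal martingale-valued measure.

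With this identification in hand, I would simply apply Corollary \ref{coroExistMeasMuHValuedMVM} to $M^{x^*}$. Since for $H = \R$ the bracket $\langle M^{x^*}(A) \rangle_t$ coincides with the real quadratic variation $\quadraVari{M(A)(x^*)}_t$, the corollary furnishes a predictable $\sigma$-finite random measure $\nu_{x^*}$ on $\mathcal{B}(\R_{+}) \otimes \mathcal{B}(U)$ satisfying $\nu_{x^*}([0,t] \times A) = \quadraVari{M(A)(x^*)}_t$ $\Prob$-a.e. for every $t \geq 0$ and $A \in \calA$, which is exactly \eqref{eqAlmosSureSetMeasureNuhA}. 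The uniqueness of $\nu_{x^*}$, should we wish to record it, is provided by Remark \ref{RemarkUniquenessIntensityMeasure}.

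I do not anticipate a genuine obstacle: the whole content of the lemma is the observation that fixing a single functional $x^*$ collapses the cylindrical object onto a classical real-valued martingale-valued measure, after which the heavy lifting has already been carried out in Theorem \ref{theoExistMeasMuHValuedMVM} and Corollary \ref{coroExistMeasMuHValuedMVM}. The only point requiring a little care is the bookkeeping in the first step, namely confirming that the $\sigma$-finiteness and $L^{2}$-measure structure (items \ref{sigmaFiniteCondition}--\ref{extensionCondition} of Definition \ref{definitionSigmafiniteMeasure}) transfer verbatim; because those properties are imposed functional-by-functional in Definition \ref{defiCylindricalOrthoMartinValuMeasure}\ref{fixedtimeismeasure}, this is immediate and the reduction is clean.
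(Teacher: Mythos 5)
Your proposal is correct and is exactly the paper's own argument: the paper's proof consists of the single observation that Corollary \ref{coroExistMeasMuHValuedMVM} applies to the $\R$-valued orthogonal martingale-valued measure $(M(t,A)(x^*): t \geq 0, A \in \calA)$ for each fixed $x^*$. Your additional verification that the four conditions of Definition \ref{defiHValuedlOrthoMartinValuMeasure} transfer from Definition \ref{defiCylindricalOrthoMartinValuMeasure} is the bookkeeping the paper leaves implicit, and it is carried out correctly.
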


\begin{proof}
    It is enough to apply Corollary \ref{coroExistMeasMuHValuedMVM} to the $\R$-valued orthogonal martingale-valued measure $M(A)(x^*)$, for each $x^*\in X^*$. 
\end{proof}

Basic properties of $\quadraVari{M(\cdot)(x^*)}$ are inherited by $\nu_{x^*}$. For instance, given a real number $c$, we have $\nu_{cx^*} = c^2\nu_{x^*}$ (as random measures). In fact, given $t$ and $A$ we have, $\Prob$-a.e.
$$
c^2 \nu_{x^*}([0,t]\times A)= c^2 \quadraVari{M(A)(x^*)}_t = \quadraVari{cM(A)(x^*)}_t = \quadraVari{M(A)(cx^*)}_t.
$$
Uniqueness of the intensity measure gives the identity (Remark \ref{RemarkUniquenessIntensityMeasure}).

\begin{example}\label{exampleContinuityConditionCylinMartingaleDelta}
Let $M$ denote the cylindrical martingale-valued measure defined in Example \ref{exampleCylinMartingaleDelta}. For every $x^{*} \in X^{*}$ our assumption that the real-valued martingales $(Z^{k} (x^{*}))_{k=1}^{n}$ are orthogonal implies that $\quadraVari{M(A)(x^{*})}_{t} = \sum_{k=1}^{n} \quadraVari{Z^{k}(x^{*})}_{t} \delta_{a_{k}}(A)$ for every $t>0 $ and $A \in \mathcal{A}=\mathcal{B}(U)$. Hence $$\nu_{x^{*}}(ds,du)=\sum_{k=1}^{n} \lambda_{\quadraVari{Z^{k}(x^{*})}}(ds) \delta_{a_{k}}(du) ,$$ 
where $\lambda_{\quadraVari{Z^{k}(x^{*})}}$ denotes the \emph{Lebesgue-Stieltjes} measure associated to $\quadraVari{Z^{k}(x^{*})}$. 
\end{example}

We are ready to introduce our definition of quadratic variation for $X$, which is an extension of Definition 3.4 in \cite{VeraarYaroslavtsev:2016}. First, we will need the following terminology: let $(S,\Sigma)$ be a measurable space and let $\mathcal{M}_{+}(S,\Sigma)$ be the set of all positive measures on $(S,\Sigma)$. For $\eta, \zeta: \Omega \rightarrow \mathcal{M}_{+}(S,\Sigma)$ we say that $\eta \leq \zeta $ if $\eta(\omega) \leq \zeta (\omega)$ for $\Prob$-a.e. $\omega \in \Omega$. The relation ``$\leq $'' defines a partial order in $\mathcal{M}_{+}(S,\Sigma)$. 

\begin{definition}\label{defiQuadraticVariation}
We say that $M$   \emph{has a quadratic variation} if there exists a random measure $\eta: \Omega \rightarrow \mathcal{M}_{+}(\R_{+} \times U, \mathcal{B}(\R_{+}) \otimes \mathcal{B}(U) )$ such that 
\begin{enumerate}    
    \item Given $t \geq 0$ and $A \in \mathcal{A}$, for $\Prob$-a.e. $\omega \in \Omega$ we have $\eta(\omega)([0,t] \times A) < \infty$.
    \item \label{propertyUpperBoundNuX} $\eta$ is a minimal element (in the partial order ``$\leq $'') for the collection of all the random measures $\zeta: \Omega \rightarrow \mathcal{M}_{+}(\R_{+} \times U, \mathcal{B}(\R_{+}) \otimes \mathcal{B}(U) )$ with the property:   $\forall\, x^* \in X^*$ with $\| x^* \| = 1$,  $\nu_{x^*} \leq \zeta$.
\end{enumerate}
We say that $\eta$ is a \emph{quadratic variation} for $M$. 
\end{definition}

The reader might notice that in the definition above,  the possibility that M has more than one quadratic variation is left open. Later in this section we introduce a sufficient condition for the existence of a unique quadratic variation. In the following result we provide a necessary condition for the existence of a quadratic variation for $M$.

\begin{proposition}\label{propQuadraticVariaMaximalOverCountableSupremum}
Assume that $M$ has a quadratic variation $\eta$. Let $(x^{*}_{n})_{n \geq 1}$ be a subset  of the unit sphere in $X^{*}$ and let $\mu \defeq \sup_{n \geq 1} \nu_{x^{*}_{n}}$. Then  $\mu \leq \eta$.    
\end{proposition}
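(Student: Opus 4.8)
The plan is to reduce the statement to the universal (minimality) property of the supremum measure recalled just after Lemma~\ref{lemmaSupExplicit}, applied pathwise for a.e.\ $\omega$. The only genuine point requiring care is the passage from the almost-sure domination of each individual $\nu_{x_n^*}$ to a single almost-sure event on which the pathwise supremum can be controlled, and this is precisely where the countability of the family $(x^*_n)_{n\geq 1}$ enters.

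First I would observe that, since $\eta$ is a quadratic variation for $M$, property~\ref{propertyUpperBoundNuX} of Definition~\ref{defiQuadraticVariation} forces $\eta$ itself to belong to the collection of random measures $\zeta$ satisfying $\nu_{x^*}\leq\zeta$ for every $x^*$ in the unit sphere of $X^*$; indeed, a minimal element of a collection is in particular a member of that collection. Because each $x^*_n$ lies in the unit sphere, this yields $\nu_{x^*_n}\leq\eta$ for every $n\geq 1$.

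Next I would unwind the meaning of ``$\leq$'' for random measures. For each $n$ there is a $\Prob$-null set $N_n$ such that $\nu_{x^*_n}(\omega)\leq\eta(\omega)$, as an inequality of measures on $(\R_+\times U,\, \mathcal{B}(\R_+)\otimes\mathcal{B}(U))$, for every $\omega\notin N_n$. Setting $N=\bigcup_{n\geq 1}N_n$, which is still $\Prob$-null because the union is countable, I obtain a single full-measure event on whose complement the inequalities $\nu_{x^*_n}(\omega)\leq\eta(\omega)$ hold \emph{simultaneously} for all $n$.

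Finally, I would fix $\omega\notin N$ and argue pathwise. Recall that $\mu=\sup_{n\geq 1}\nu_{x^*_n}$ is a genuine random measure by Corollary~\ref{CoroSupPredRandMeas}, and that its value $\mu(\omega)$ is the supremum measure of the countable family $(\nu_{x^*_n}(\omega))_{n\geq 1}$ in the sense of Lemma~\ref{lemmaSupExplicit}. Since the measure $\eta(\omega)$ dominates every $\nu_{x^*_n}(\omega)$, the minimality property~(2) of the supremum measure stated after Lemma~\ref{lemmaSupExplicit} gives $\mu(\omega)\leq\eta(\omega)$. As this holds for every $\omega\notin N$, we conclude $\mu\leq\eta$ in the partial order of random measures, as required. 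The main (and essentially only) obstacle is the null-set bookkeeping just described: once the countably many inequalities are arranged to hold on one common full-measure event, the conclusion is immediate from the defining universal property of the supremum measure.
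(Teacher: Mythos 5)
Your proof is correct and follows essentially the same route as the paper's: extract the full-measure events $\Omega_n$ on which $\nu_{x_n^*}(\omega)\leq\eta(\omega)$, intersect over the countable index set, and invoke the minimality (least upper bound) property of the supremum measure pathwise. Your added remark that minimality of $\eta$ in Definition~\ref{defiQuadraticVariation} entails membership in the dominating collection simply makes explicit a step the paper leaves implicit.
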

\begin{proof}
Since $\eta$ is a quadratic variation for $M$, for every $n \geq 1$ there exists $\Omega_{n} \subseteq \Omega$, with $\Prob(\Omega_{n})=1$ and  $\nu_{x^{*}_{n}}(\omega) \leq \eta(\omega)$ for every $\omega \in \Omega_{n}$. Let $\Omega_{0}= \bigcap_{n \geq 1} \Omega_{n}$. Then $\Prob(\Omega_{0})=1$ and by the definition of supremum of measures we have $\mu(\omega) \leq \eta(\omega)$ for every $\omega \in \Omega_{0}$.     
\end{proof}

We now tackle the issue of existence and uniqueness of a quadratic variation for $M$. Our key property is described as follows:

\begin{definition}
\label{defiSeqBoundProperty}
 We say that the family of intensity measures $(\nu_{x^{*}}: x^{*} \in X^{*})$ satisfies the \emph{sequential boundedness property} if whenever  $(x^{*}_{n})$ is dense in the unit sphere, $\norm{x^*}=1$ and $t>0$, there exists $\Omega_{x^*} \subseteq \Omega$ with $\Prob(\Omega_{x^*})=1$, such that for $0 \leq s <t$, $A \in \mathcal{A}$ and $\omega \in \Omega_{x^*}$,
 
\begin{equation}
  \label{eqsequentialBoundednessProper}
  \nu_{x^{*}}(\omega)((s,t] \times A) \leq  \sup_{n \geq 1} \nu_{x^{*}_{n}}(\omega) ((s,t] \times A). 
\end{equation}
 \end{definition}

Note that the supremum in (\ref{eqsequentialBoundednessProper}) is a classical supremum of real numbers, which can be infinite in some cases.

\begin{remark}
\label{remaSufCondSeqBound}
 It is enough to verify \eqref{eqsequentialBoundednessProper} for sequences that converge to $x^*$ in the unit sphere. In fact, if that is true and $(x_n^*)$ is dense in the unit sphere, there is a subsequence $(x_{n_k})$ converging to $x^*$. Then $\Prob$-a.e. we have, for $0 \leq s < t$ and $A\in \calA$,
 $$
 \nu_{x^*}((s,t]\times A) \leq \sup_{k} \nu_{x_{n_k}^*}((s,t]\times A) \leq \sup_{n} \nu_{x_{n}^*}((s,t]\times A).
 $$
 \end{remark}

The following result provides a sufficient condition for the sequential boundedness property to hold.  

\begin{proposition}\label{propUniformUCPImpliesWeakBoundedness}
Assume the family of intensity measures $(\nu_{x^*}: x^* \in X^*)$ satisfies the following condition: given $x^{*}_{n} \rightarrow x^*$ in the unit sphere of $X^{*} $, for every $t>0$ we have
\begin{equation}\label{eqLocalUniformConverInProbabilityMeasuresNu}
\sup_{A \in \mathcal{A}} \sup_{0 \leq s \leq t} \abs{ \nu_{x^{*}_{n}}((s,t] \times A) - \nu_{x^{*}}((s,t] \times A)} \overset{\Prob}{\rightarrow} 0, \quad \mbox{as } n \rightarrow \infty.       
\end{equation}    
Then $(\nu_{x^{*}}: x^{*} \in X^{*})$ satisfies the sequential boundedness property.
\end{proposition}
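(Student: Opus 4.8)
The plan is to reduce to a sequence that actually converges to $x^*$ by invoking Remark \ref{remaSufCondSeqBound}, and then to upgrade the convergence in probability in \eqref{eqLocalUniformConverInProbabilityMeasuresNu} to an almost-sure statement along a subsequence. Fix $x^*$ with $\norm{x^*}=1$ and $t>0$. By Remark \ref{remaSufCondSeqBound}, it suffices to prove \eqref{eqsequentialBoundednessProper} under the extra assumption that $(x^*_n)$ itself converges to $x^*$ in the unit sphere, so I would assume this throughout. Note also that each $\nu_{x^*}((s,t]\times A)$ is $\Prob$-a.e. finite, being dominated by $\nu_{x^*}([0,t]\times A)=\quadraVari{M(A)(x^*)}_t$, so the differences appearing below are well defined.

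The device that makes the argument work is to collapse \eqref{eqLocalUniformConverInProbabilityMeasuresNu} into a single random variable. First I would set
\[
S_n \defeq \sup_{A\in\calA}\ \sup_{0\leq s\leq t}\abs{\nu_{x^*_n}((s,t]\times A)-\nu_{x^*}((s,t]\times A)},
\]
so that hypothesis \eqref{eqLocalUniformConverInProbabilityMeasuresNu} is exactly $S_n\overset{\Prob}{\rightarrow}0$. Since convergence in probability always admits an almost surely convergent subsequence, I would choose indices $(n_k)$ with $S_{n_k}\to 0$ $\Prob$-a.e., and let $\Omega_{x^*}$ be the full-measure set on which this holds, intersected with the (countably many) full-measure sets on which the $\nu_{x^*_n}$ and $\nu_{x^*}$ are genuine measures.

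On the single set $\Omega_{x^*}$ the remainder is deterministic. For $\omega\in\Omega_{x^*}$, every $0\leq s<t$ and every $A\in\calA$, the bound $\abs{\nu_{x^*_{n_k}}(\omega)((s,t]\times A)-\nu_{x^*}(\omega)((s,t]\times A)}\leq S_{n_k}(\omega)\to 0$ forces $\nu_{x^*_{n_k}}(\omega)((s,t]\times A)\to\nu_{x^*}(\omega)((s,t]\times A)$. Passing to the limit along the subsequence and then bounding by the supremum over the whole sequence gives
\begin{multline*}
\nu_{x^*}(\omega)((s,t]\times A)=\lim_{k\to\infty}\nu_{x^*_{n_k}}(\omega)((s,t]\times A)\\
\leq\sup_{k\geq 1}\nu_{x^*_{n_k}}(\omega)((s,t]\times A)\leq\sup_{n\geq 1}\nu_{x^*_n}(\omega)((s,t]\times A),
\end{multline*}
which is precisely \eqref{eqsequentialBoundednessProper} on $\Omega_{x^*}$.

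The step I expect to demand the most attention is the order of quantifiers rather than any one estimate: the sequential boundedness property requires a single exceptional-null set that serves all rectangles $(s,t]\times A$ simultaneously, whereas proving the inequality separately for each $(s,A)$ would threaten an uncountable union of null sets. This is exactly why the hypothesis is stated with the suprema over $A$ and $s$ placed \emph{inside} the probability; the resulting scalar $S_n$ dominates all deviations at once, so a single almost-sure subsequential limit $S_{n_k}(\omega)\to 0$ transfers to convergence of $\nu_{x^*_{n_k}}(\omega)((s,t]\times A)$ for all $(s,A)$ with no additional exceptional sets. I would make this uniformity the explicit hinge of the write-up.
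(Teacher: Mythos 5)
Your proposal is correct and follows essentially the same route as the paper's proof: both collapse hypothesis \eqref{eqLocalUniformConverInProbabilityMeasuresNu} into the single scalar $S_n$ (the paper calls it $X_n$), extract an almost-surely convergent subsequence, pass to the limit along it to obtain \eqref{eqsequentialBoundednessProper} simultaneously for all rectangles $(s,t]\times A$ on one full-measure set, and invoke Remark \ref{remaSufCondSeqBound} to reduce from dense sequences to convergent ones. Your explicit remark about the quantifier order --- that the suprema sit inside the probability precisely so that one null set serves all $(s,A)$ at once --- is the correct reading of why the paper's argument works, even though the paper leaves it implicit.
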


\begin{proof}
Let $X_n = \sup_{A \in \mathcal{A}} \sup_{0 \leq s \leq t} \abs{ \nu_{x^{*}_{n}}((s,t] \times A) - \nu_{x^{*}}((s,t] \times A)}$. Since $X_n \overset{\Prob}{\rightarrow} 0$, there exists a sequence of positive integers $n_k \uparrow\infty$ such that $X_{n_k}\fle 0,\, \Prob$-a.e. In particular, there exists $\Omega_{x^*} \subseteq \Omega$ with $\Prob(\Omega_{x^*})=1$ such that, for $0\leq s<t$, $A\in \calA$ and $\omega\in \Omega_{x^*}$
$$
\nu_{x^{*}}(\omega)((s,t] \times A) = 
\lim_{k \rightarrow \infty} \nu_{x^{*}_{n_k}}(\omega)((s,t] \times A) \leq \sup_{n \geq 1} \nu_{x^{*}_{n}}(\omega)((s,t] \times A).
$$
The last remark finishes the work.
\end{proof}

Now we provide a sufficient condition for the existence and uniqueness of a quadratic variation for $M$.

\begin{theorem}\label{theoSuffiCondiExistQuadraVariat}
Let $(x^{*}_{n})_{n \geq 1}$ be a dense subset of the unit sphere in $X^{*}$ and let $\mu=\sup_{n \geq 1} \nu_{x^{*}_{n}}$. Assume 
\begin{enumerate}
    \item Given $t \geq 0$ and $A \in \mathcal{A}$, $\Prob$-a.e. we have $\mu([0,t] \times A)< \infty$. \label{FiniteonRectangles}
    \item $(\nu_{x^{*}}: x^{*} \in X^{*})$ satisfies the sequential boundedness property.
\end{enumerate}
Then $\mu$ is a quadratic variation for $M$. In particular, the quadratic variation is unique (any quadratic variation equals $\mu$ $\Prob$-a.e.)
\end{theorem}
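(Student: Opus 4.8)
The plan is to show that the candidate $\mu = \sup_{n \geq 1} \nu_{x^*_n}$ satisfies both conditions of Definition \ref{defiQuadraticVariation}, and then separately argue minimality is automatic (so that $\mu$ is in fact the unique quadratic variation). By Corollary \ref{CoroSupPredRandMeas} (applied after identifying $U$ with a Borel subset of the line and using that the $\nu_{x^*_n}$ are predictable random measures), $\mu$ is itself a predictable random measure on $\mathcal{B}(\R_+)\otimes\mathcal{B}(U)$, so it is a legitimate candidate. Assumption \ref{FiniteonRectangles} is precisely condition (i) of Definition \ref{defiQuadraticVariation}, so that is immediate.

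The core of the argument is to verify the upper-bound property in Definition \ref{defiQuadraticVariation}\eqref{propertyUpperBoundNuX}: for every $x^*$ with $\norm{x^*}=1$ we must show $\nu_{x^*}\leq \mu$. This is where the sequential boundedness property does the work. Fix such an $x^*$ and a countable dense sequence $(x^*_n)$ in the unit sphere. By Definition \ref{defiSeqBoundProperty}, for each fixed $t>0$ there is a full-probability set $\Omega_{x^*}$ on which, for all $0\leq s<t$ and all $A\in\calA$,
$$
\nu_{x^*}(\omega)((s,t]\times A) \leq \sup_{n\geq 1}\nu_{x^*_n}(\omega)((s,t]\times A) \leq \mu(\omega)((s,t]\times A),
$$
the last inequality being the defining property of the supremum measure (Lemma \ref{lemmaSupExplicit}). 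Running over a countable exhausting sequence of times $t$ (say $t\in\N$) and intersecting the corresponding full-probability sets, I obtain a single full-probability set on which $\nu_{x^*}((s,t]\times A)\leq \mu((s,t]\times A)$ holds for all rational $s<t$ and all $A\in\calA$. I would then invoke Theorem \ref{theoremLemmaRemark}: since the inequality between the two random measures $\nu_{x^*}$ and $\mu$ holds a.e. on each rectangle $(s,t]\times A$ with $A\in\calA$, it propagates to $\nu_{x^*}\leq\mu$ as random measures on all of $\mathcal{B}(\R_+)\otimes\mathcal{B}(U)$. This confirms that $\mu$ lies in the collection described in \eqref{propertyUpperBoundNuX}.

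It remains to check that $\mu$ is \emph{minimal} in that collection, and this is the step I expect to be the cleanest rather than the hardest. If $\zeta$ is any random measure in the collection, then $\nu_{x^*_n}\leq\zeta$ for every $n$ (each $x^*_n$ is on the unit sphere), so by the universal property of the supremum measure (Lemma \ref{lemmaSupExplicit}(2), applied $\omega$-wise on a full-probability set obtained by intersecting the countably many a.e. domination sets) we get $\mu=\sup_n \nu_{x^*_n}\leq\zeta$. Thus $\mu$ is not merely minimal but is a \emph{least} element, which immediately yields uniqueness: any other quadratic variation $\eta$ is also a minimal element of the same collection and satisfies $\mu\leq\eta$, while minimality of $\eta$ forces $\eta\leq\mu$, hence $\eta=\mu$ $\Prob$-a.e.

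The main obstacle I anticipate is bookkeeping with null sets and the quantifier order: the sequential boundedness property as stated fixes $t$ before producing $\Omega_{x^*}$, so passing to a statement valid for \emph{all} $s<t$ and all $A$ simultaneously requires a countable exhaustion in $t$ and a careful appeal to the right-continuity built into the random measures (via Theorem \ref{theoremLemmaRemark}) to upgrade from rational rectangles to arbitrary ones. The genuinely substantive input—turning the pointwise-in-$x^*$ domination into a single measure dominating the entire uncountable family $(\nu_{x^*})_{\norm{x^*}=1}$—is exactly what the sequential boundedness hypothesis is designed to supply, reducing the uncountable family to the countable dense one.
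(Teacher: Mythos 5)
Your proposal is correct and follows essentially the same route as the paper's proof: sequential boundedness plus the supremum-measure property give the per-rectangle domination $\nu_{x^*}((s,t]\times A)\leq\mu((s,t]\times A)$ almost surely, Theorem \ref{theoremLemmaRemark} upgrades this to $\nu_{x^*}\leq\mu$ on all of $\mathcal{B}(\R_+)\otimes\mathcal{B}(U)$, and minimality/uniqueness follow because any dominating $\zeta$ must dominate each $\nu_{x^*_n}$ and hence $\mu$, making $\mu$ a least element. The only remark worth making is that your intermediate countable-exhaustion step is superfluous (Theorem \ref{theoremLemmaRemark} needs only rectangle-by-rectangle a.e.\ domination, with the null set allowed to depend on the rectangle, which sequential boundedness supplies directly), and if one did want a single null set for all rational rectangles, the exhaustion should run over rational right endpoints $t\in\Q_+$ rather than $t\in\N$, since the inequality in Definition \ref{defiSeqBoundProperty} is tied to the fixed right endpoint $t$.
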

\begin{proof} 
We must check that $\mu$ satisfies Definition \ref{defiQuadraticVariation}. In fact, by the sequential boundedness property, for all $x^* \in X^*$ with $\norm{x^*}=1$ we have that  $\Prob$-a.e., for $0 \leq s <t$ and $A \in \mathcal{A}$
\begin{equation*}
 \nu_{x^{*}}((s,t] \times A) \leq   \sup_{n\geq 1} \nu_{x_{n}^{*}}((s,t] \times A)  \leq   \mu((s,t] \times A).    
\end{equation*}
By Theorem \ref{theoremLemmaRemark} this inequality extends to $\mathcal{B}(\R_{+}) \otimes \mathcal{B}(U)$, then we have that for every $x^*$ in the unit ball of $X^*$,  $\nu_{x^*} \leq \mu$  $\Prob$-a.e., and by \ref{FiniteonRectangles}, $\mu$ is finite on the required rectangles.  

Let $\zeta$ be a random measure on $\mathcal{B}(\R_{+}) \otimes \mathcal{B}(U)$ such that $\nu_{x^*} \leq \zeta $ whenever $\norm{x^*}=1$. It follows that $\Prob$-a.e. $\nu_{x^*_n} \leq \zeta$ for every $n \geq 1$; by definition of supremum of measures, $\mu \leq \zeta$.  Therefore, $\mu$ is a quadratic variation  for $M$. Finally, given any quadratic variation $\eta$ we have $\mu \leq \eta$. Since $\eta$ is minimal, this implies $\eta=\mu$ $\Prob$-a.e. 
\end{proof}

\begin{definition}
If $M$ has a unique quadratic variation, we will denote it by $\operQuadraVari{M}$ and refer to it as \emph{the quadratic variation of $M$}.     
\end{definition}

As the next result shows, in the presence of the sequential boundedness property for the family of intensity measures, if a quadratic variation exists, it is unique. Other properties of the quadratic variation are given. 
       
\begin{theorem}\label{theoUniquenessQuadraticVariation}
Assume $M$ has a quadratic variation and $(\nu_{x^{*}}: x^{*} \in X^{*})$ satisfies the sequential boundedness property. Then $M$ has a unique quadratic variation $\operQuadraVari{M}$. Moreover, for any dense subset $(x^{*}_{n})_{n \geq 1}$ of the unit sphere in $X^{*}$ we have $\operQuadraVari{M}=\sup_{n \geq 1} \nu_{x^{*}_{n}}$ $\Prob$-a.e.  
 In particular $\operQuadraVari{M}$ is a predictable random measure and $\Prob$-a.e.,  
 \begin{equation}\label{eqRepreQuadraVariaAsSupremum}
 \operQuadraVari{M}([0,t] \times A)=\sup_{\Pi \in \mathscr{R}([0,t]\times A)} \sum_{C \in \Pi} \sup_{n\geq 1} \nu_{x_{n}^{*}}(\omega) \left( C \right),   \quad \forall t \geq 0, A \in \mathcal{B}(U),
 \end{equation}
 where $\mathscr{R}([0,t]\times A)$ is the family of partitions of $[0,t]\times A$ of the form 
 \begin{equation}\label{Rationalpartition}
   \Pi = \{ (t_{i-1}, t_i] \times A_j:  1 \leq i \leq k, 1 \leq j \leq m, \ k,m \in \N \},   
 \end{equation}
 where $0=t_{0} < t_{1} < \cdots < t_{k}=t$ are rational (with the possible exception of $t$), the sets $A_1,\ldots,A_{m}$ form a partition of $A \in \mathcal{B}(U)$, and $(x^{*}_{n})_{n \geq 1}$ is a dense subset of the unit sphere in $X^{*}$.
\end{theorem}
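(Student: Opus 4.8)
The plan is to reduce the whole statement to Theorem \ref{theoSuffiCondiExistQuadraVariat}. Fix a dense subset $(x^{*}_{n})_{n \geq 1}$ of the unit sphere of $X^{*}$ and set $\mu = \sup_{n \geq 1} \nu_{x^{*}_{n}}$, the supremum measure. The sequential boundedness property is assumed, so the only hypothesis of Theorem \ref{theoSuffiCondiExistQuadraVariat} left to verify is \ref{FiniteonRectangles}, the finiteness of $\mu$ on the rectangles $[0,t]\times A$ with $A\in\calA$. This is exactly where the standing assumption that $M$ possesses a quadratic variation is used: calling such a quadratic variation $\eta$, Proposition \ref{propQuadraticVariaMaximalOverCountableSupremum} gives $\mu\leq\eta$, and then the finiteness condition (i) of Definition \ref{defiQuadraticVariation} yields, for each $t\geq 0$ and $A\in\calA$, that $\mu([0,t]\times A)\leq\eta([0,t]\times A)<\infty$ $\Prob$-a.e. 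Both hypotheses of Theorem \ref{theoSuffiCondiExistQuadraVariat} now hold, so that theorem shows at once that $\mu$ is the unique quadratic variation of $M$; we set $\operQuadraVari{M}=\mu=\sup_{n\geq 1}\nu_{x^{*}_{n}}$.

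Independence of the chosen dense sequence is then immediate: for any other dense sequence in the unit sphere the same reasoning produces a quadratic variation of $M$ equal to its supremum measure, and by the uniqueness just obtained this coincides with $\operQuadraVari{M}$ $\Prob$-a.e. This establishes $\operQuadraVari{M}=\sup_{n\geq1}\nu_{x^{*}_{n}}$ for an arbitrary dense sequence.

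For the explicit formula \eqref{eqRepreQuadraVariaAsSupremum} I would, for $\Prob$-a.e.\ fixed $\omega$, apply the explicit description of the supremum measure in Lemma \ref{lemmaSupExplicit} to the countable family $(\nu_{x^{*}_{n}}(\omega))_{n\geq 1}$ at $B=[0,t]\times A$, obtaining $\operQuadraVari{M}(\omega)([0,t]\times A)=\sup_{\Pi}\sum_{C\in\Pi}\sup_{n}\nu_{x^{*}_{n}}(\omega)(C)$, the supremum ranging over all finite measurable partitions $\Pi$ of $B$. The content of \eqref{eqRepreQuadraVariaAsSupremum} is that $\Pi$ may be restricted to the product partitions \eqref{Rationalpartition}. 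Since refining a partition can only increase $\sum_{C\in\Pi}\sup_n\nu_{x^{*}_n}(C)$ (the pointwise supremum of measures is sub-additive), and the product partitions in \eqref{Rationalpartition} become arbitrarily fine and generate $\calB(\R_+)\otimes\calB(U)$, the two suprema can be shown to agree; the reduction of the time-endpoints to rationals is the one-dimensional argument of Remark 2.10 in \cite{VeraarYaroslavtsev:2016}, while the spatial sets may be drawn from a fixed countable algebra generating $\calB(U)$ because $U$ is Lusin (Remark \ref{remarkBorelSigmaAlgCountablyGenerated}). With $\Pi$ thus ranging over a countable collection, and each $(\omega,t)\mapsto\nu_{x^{*}_{n}}(\omega)((s,t]\times A_j)$ predictable by Lemma \ref{lemmaExistMeasMuRealQuadraVaria}, the right-hand side of \eqref{eqRepreQuadraVariaAsSupremum} is a countable supremum of finite sums of countable suprema of predictable maps, whence $\operQuadraVari{M}$ is a predictable random measure.

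I expect the main obstacle to be precisely the claim that the partition supremum in \eqref{eqRepreQuadraVariaAsSupremum} is attained along a countable cofinal family, which is what makes the expression manifestly $\calP$-measurable. The delicate part is the spatial reduction, and the clean way to carry it out is $\omega$-wise through Lemma \ref{lemmIntegralOfSupremum}: putting $\sigma=\sum_{n\geq1}2^{-n}\nu_{x^{*}_{n}}$, which is finite on each $[0,t]\times U_m$ since $\mu\leq\eta$, and $f_n=d\nu_{x^{*}_{n}}/d\sigma$, one has $\mu=(\sup_n f_n)\,\sigma$, so that $\operQuadraVari{M}([0,t]\times A)=\int_{[0,t]\times A}\sup_n f_n\,d\sigma$; a refining (martingale-type) argument then shows that the grid sums associated with the partitions \eqref{Rationalpartition} exhaust this integral, and the countable generation of $\calB(U)$ guarantees that countably many such partitions suffice. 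This $\omega$-wise density computation is what underpins both the representation and, through it, the predictability.
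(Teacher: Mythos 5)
Your proof is correct and follows essentially the same route as the paper: the uniqueness and the identification $\operQuadraVari{M}=\sup_{n\geq 1}\nu_{x^{*}_{n}}$ are obtained exactly as in the paper's proof, by feeding the finiteness on rectangles supplied by Proposition \ref{propQuadraticVariaMaximalOverCountableSupremum} (using the assumed quadratic variation $\eta$) into Theorem \ref{theoSuffiCondiExistQuadraVariat}. For the representation \eqref{eqRepreQuadraVariaAsSupremum} and the predictability, the paper simply invokes Lemma \ref{lemmaSupExplicit} and Corollary \ref{CoroSupPredRandMeas}; your density argument via Lemma \ref{lemmIntegralOfSupremum} and the rational-partition reduction merely spells out the mechanism underlying those citations, so it is additional detail rather than a different approach.
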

\begin{proof}
Let $(x^{*}_{n})_{n \geq 1}$ be a dense subset of the unit sphere and let $\mu =\sup_{n \geq 1} \nu_{x^{*}_{n}}$. 

Let $\eta$ be a quadratic variation for $M$. 
Then, by  Proposition \ref{propQuadraticVariaMaximalOverCountableSupremum}, $\mu = \sup_{n \geq 1} \nu_{x^{*}_{n}}$ satisfies for all $t \geq 0$, $A \in \mathcal{A}$, $\Prob$-a.e. 
$\mu([0,t] \times A) \leq \eta([0,t] \times A)< \infty$. Hence by Theorem \ref{theoSuffiCondiExistQuadraVariat} $\mu$ is a quadratic variation and we have $\eta=\mu$ $\Prob$-a.e. This shows uniqueness of the quadratic variation and by definition $\operQuadraVari{M} = \mu$.

Moreover, by Lemma \ref{lemmaSupExplicit}, $\operQuadraVari{M}$ takes the form \eqref{eqRepreQuadraVariaAsSupremum}  and it is a predictable random measure by Corollary \ref{CoroSupPredRandMeas}. 
\end{proof}

\begin{remark}\label{partitionssameindex}
Note that any partition descibed as in \eqref{Rationalpartition} can be written in the form $\{ (s_j, t_j] \times A_j : j = 1, \ldots, m \}$, where the numbers $s_j$ and $t_j$ are rational, $A_1, \ldots, A_m$ form a partition of $A$, and also the following is satisfied: 
\begin{equation} \label{separationPartitionIntervalsSimpleForm}
\mbox{for } k \neq j, \    \mbox{if} \  (s_{k},t_{k}] \cap   (s_{j},t_{j}] \neq \emptyset, \  \mbox{then}  \  (s_{k},t_{k}]= (s_{j},t_{j}] \mbox{ and } A_{k} \cap A_{j}= \emptyset.
\end{equation}  
\end{remark}

\begin{remark}
Under the assumptions in Theorem \ref{theoUniquenessQuadraticVariation}, notice that by Lemma \ref{lemmaExistMeasMuRealQuadraVaria} for any given $t \geq 0$ and  $A \in \mathcal{A}$ we have $\Prob$-a.e. 
 \begin{equation}\label{eqRepreQuadraVariaAsSupremQuadraVariations}
 \operQuadraVari{M}([0,t] \times A)=\sup_{\Pi}\sum_{j=1}^{m_{\Pi}} \sup_{n \geq 1} \left( \quadraVari{M(A_{j})(x^{*}_{n})}_{t_{j}}- \quadraVari{M(A_{j})(x^{*}_{n})}_{s_{j}} \right), 
 \end{equation}
 where the supremum is taken over all partitions $\Pi$ in the form described in Remark \ref{partitionssameindex}.    
\end{remark}

\begin{example}\label{examCylinMartinContPaths}
Let $Z=(Z_{t}: t \geq 0)$ be a cylindrical  continuous zero-mean square integrable martingale. Consider a one-point set $U=\{a\}$ and let $M$ be the corresponding cylindrical orthogonal martingale-valued measure of Example \ref{exampleCylinMartingaleDelta}. We shall verify that the family of intensity measures $(\nu_{x^{*}}: x^* \in X^*)$ of $M$ satisfies the sequential boundedness property. 

Let $x^{*} \in X^{*} $. By Example \ref{exampleContinuityConditionCylinMartingaleDelta},for all $0 \leq s < t$, $A \in \mathcal{A}= 2^{U}$ we have 
\begin{equation}\label{eqMeasureNuXContinuousMartingale}
 \nu_{x^{*}}((s,t],A)= \lambda_{\quadraVari{Z(x^{*})}}((s,t])\delta_{a}(A)=(\quadraVari{Z(x^{*})}_{t}-\quadraVari{Z(x^{*})}_{s}) \delta_{a}(A). 
\end{equation}
Assume $x^{*}_{n} \rightarrow x^*$ and let $t>0$. Since $Z$ defines a continuous  linear operator from $X^*$ into the space $\mathcal{M}^{2,c}_{t}$ of real-valued continuous square integrable martingales on $[0,t]$, then $\quadraVari{Z(x_{n}^{*})} \rightarrow \quadraVari{Z(x^{*})}$ in probability uniformly on $[0,t]$ (see Proposition 18.6 in \cite{Kallenberg:2021}).  Hence 
\begin{flalign*}
&\sup_{A \in \mathcal{A}} \sup_{0 \leq s \leq t} \abs{ \nu_{x^{*}_{n}}((s,t] \times A) - \nu_{x^{*}}((s,t] \times A)} \\
& = \sup_{0 \leq s \leq t} \abs{ (\quadraVari{Z(x_{n}^{*})}_{t}-\quadraVari{Z(x_{n}^{*})}_{s}) - (\quadraVari{Z(x^{*})}_{t}-\quadraVari{Z(x^{*})}_{s})}  \overset{\Prob}{\rightarrow} 0,    
\end{flalign*}
as $n \rightarrow \infty$. By Proposition \ref{propUniformUCPImpliesWeakBoundedness} the family $(\nu_{x^{*}}: x^* \in X^*)$  satisfies the sequential boundedness property. 

Notice that by \eqref{eqMeasureNuXContinuousMartingale}, Theorem  \ref{theoSuffiCondiExistQuadraVariat} and Theorem \ref{theoUniquenessQuadraticVariation} we have  that $M$ has (a necessarily unique) quadratic variation  if and only if for some (equivalently for any) dense subset $(x^{*}_{n})_{n \geq 1}$ of the unit sphere in $X^{*}$ we have $\Prob$-a.e. 
\begin{equation}\label{eqFiniteSupRemaContinuousMarting}
 \left(\sup_{n \geq 1} \lambda_{\quadraVari{Z(x_{n}^{*})}}\otimes \delta_{a}\right)([0,t] \times A)< \infty, \quad  \forall t \geq 0, \, A \in 2^{U}.   
\end{equation}  
In such a case we have $\operQuadraVari{M}=\sup_{n \geq 1} \lambda_{\quadraVari{Z(x_{n}^{*})}}\otimes \delta_{a}$ $\Prob$-a.e. 

Observe that since $2^{U}=\{\emptyset, \{a\}\}$, then given $(x^{*}_{n})_{n \geq 1}$ as above, \eqref{eqFiniteSupRemaContinuousMarting} holds true if and only if $\Prob$-a.e. $\left(\sup_{n \geq 1} \lambda_{\quadraVari{Z(x_{n}^{*})}} \right)([0,t])< \infty$, $\forall t \geq 0$. By Remark 2.10 in \cite{VeraarYaroslavtsev:2016} this is equivalent to the existence of a non-decreasing right-continuous process $F: \R_{+} \times \Omega \rightarrow \R_{+}$ such that $\Prob$-a.e. $\lambda_{F}= \sup_{n \geq 1} \lambda_{\quadraVari{Z(x_{n}^{*})}}$. 

The above observation can be thought as a generalization of Proposition 3.7 in \cite{VeraarYaroslavtsev:2016} in the case of a cylindrical continuous square integrable martingale. 
\end{example}

\begin{example}\label{examCMVMCylindriLevyProcesses}
Let $Z=(Z_{t}: t \geq 0)$ be a cylindrical zero-mean square integrable L\'{e}vy process in $X$, i.e. for every $d \in \N$, $x_{1}^{*}, \cdots, x_{d}^{*} \in X^*$ the $\R^{d}$-valued stochastic process $(Z_{t}(x_{1}^{*}), \cdots, Z_{t}(x_{d}^{*}): t \geq 0)$ is a  L\'{e}vy process in $\R^{d}$, and $\Exp [Z_{t}(x^{*}) ] = 0 $ and $\Exp [ \abs{Z_{t}(x^{*})}^{2} ]< \infty$ for every $t \geq 0$, $x^{*} \in X^{*}$. We will always assume for such a $Z$ that the mapping $Z_{t}: X^* \rightarrow L^{0} \ProbSpace$ is continuous. 

With the above hypothesis, it follows by Theorem 4.7 in \cite{ApplebaumRiedle:2010} that there exists a positive symmetric operator $Q:X^{*} \rightarrow X^{**}$, called the the \emph{covariance operator} of $Z$, defined by $(Qx^{*})y^*=\Exp \left[ Z_{1}(x^{*}) Z_{1}(y^{*}) \right] $ $\forall x^{*}, y^{*} \in X^{*}$. The operator $Q$ is linear and continuous by Proposition III.1.1 in \cite{VakhaniaTarieladzeChobanyan}. For every $x^{*} \in X^{*}$, observe that $\quadraVari{Z(x^{*})}_{t} = t \Exp \left[\abs{Z_{1}(x^{*})}^{2} \right]= t ( Q(x^{*} ) x^{*})$. 

Now let $n \in \N$, $U=\{a_{1}, \cdots, a_{n}\}$, and  $Z^{1}, \cdots, Z^{n}$ be cylindrical zero-mean square integrable L\'{e}vy processes in $X$ with corresponding covariance operators $Q^{1}, \cdots, Q^{n}$. Assume moreover that for each $x^* \in X^*$, the real-valued martingales $(Z^{k} (x^{*}))_{k=1}^{n}$ are orthogonal. Let $M$ be as  defined in Example \ref{exampleCylinMartingaleDelta}. We will show that $M$ has a quadratic variation. 

By Example \ref{exampleContinuityConditionCylinMartingaleDelta} we  have for every $x^{*} \in X^{*} $  
$$ \nu_{x^{*}}(ds,du)=\sum_{k=1}^{n} \lambda_{\quadraVari{Z^{k}(x^{*})}}(ds) \delta_{a_{k}}(du) = \sum_{k=1}^{n}  ( Q^{k} (x^{*} ) x^{*}) ds\delta_{a_{k}}(du) . $$
From the above it is immediate that for any $x^*, y^* \in X^*$, 
$$ \sup_{A \in \mathcal{A}} \sup_{0 \leq s \leq t} \abs{ \nu_{x^{*}}((s,t] \times A) - \nu_{y^{*}}((s,t] \times A)}  \leq t \left(\sum_{k=1}^{n} \norm{Q^{k}} \right) (\norm{x^{*}}+\norm{y^*}) \norm{x^{*}-y^{*}}$$
and hence the family of measures $\nu_{x^{*}}$ satisfies \eqref{eqLocalUniformConverInProbabilityMeasuresNu}, therefore it has the sequential boundedness property by Proposition \ref{propUniformUCPImpliesWeakBoundedness}. 

Let $(x_{m}^{*})_{m \geq 1} $ be a dense subset of the unit sphere in $X^{*}$. According to Theorem \ref{theoSuffiCondiExistQuadraVariat} we must show that $\sup_{m \geq 1} \nu_{x_{m}^{*}}$ defines a random positive measure which is finite on the rectangles $[0,t] \times A$. 

In fact, let $t \geq 0$ and $A \in \mathcal{A}=2^{U}$.
Take a partition $\{ (s_j, t _j] \times A_j \}$ of $[0,t] \times A$, that satisfies \eqref{separationPartitionIntervalsSimpleForm}, furthermore, we can take each $A_j$ as a singleton.  Then we have
\begin{eqnarray*}
 \sum_{j=1}^{N} \sup_{m \geq 1} \nu_{x_{m}^{*}} \left( (s_j, t_{j}] \times A_{j} \right)
 & = &  \sum_{j=1}^{N} \sup_{m \geq 1}  \sum_{k=1}^{n} (t_{j}-s_j) ( Q^{k} (x_{m}^{*} ) x_{m}^{*}) \delta_{a_{k}}(A_{j}) \\
 & = & \sum_{j=1}^{N} (t_{j}-s_j) \sum_{k=1}^{n} \sup_{m \geq 1} ( Q^{k} (x_{m}^{*} ) x_{m}^{*}) \delta_{a_{k}}(A_{j}) \\
 & = & \sum_{j=1}^{N} (t_{j}-s_j) \sum_{k=1}^{n} \norm{Q^{k}} \delta_{a_{k}}(A_{j}) \\
  & = & \sum_{k=1}^{n} \norm{Q^{k}} \sum_{j=1}^{N} (t_{j}-s_j)  \delta_{a_{k}}(A_{j}) 
\end{eqnarray*}
Let $\mu$ be the measure on $(\R_{+} \times U, \mathcal{B}(\R_{+}) \otimes \mathcal{B}(U))$ given by $\mu(ds,du)=\sum_{k=1}^{n} \norm{Q^{k}} ds \delta_{a_{k}}(du)$. Then it is clear that $\mu([0,t] \times A)< \infty $ for every $t \geq 0$ and $A \in \mathcal{B}(U)$ and from the calculations above we have $\mu=\sup_{m \geq 1} \nu_{x^{*}_{m}}$. Hence by Theorem \ref{theoSuffiCondiExistQuadraVariat} we conclude that $M$ has quadratic variation and $\operQuadraVari{M}(ds,du)= \sum_{k=1}^{n} \norm{Q^{k}} ds \delta_{a_{k}}(du)$. 
\end{example}

As the next example shows, it is not true in general that every  $M$ defined as in Example \ref{exampleCylinMartingaleDelta}  has quadratic variation.

\begin{example}\label{examCounterExampleCMVMStochasticIntegral}
For $M$ as in Example \ref{exampleCylinMartingaleIntegral} we have 
$$
|M(t,A)(x^*)|^2-\int_0^t \caract_A(g(u))d\langle Z(x^*)\rangle_u
$$
is a martingale, so 
$$
\langle M(A)(x^*)\rangle_t= \int_0^t \caract_A(g(u))d\langle Z(x^*)\rangle_u.
$$
As this process is predictable and $\sigma$-additive, we have 
$$
\nu_{x^*}( (s,t]\times A) = \int_s^t \caract_A(g(u))d\langle Z(x^*)\rangle_u.
$$
We now describe a particular $Z$ for which $M$ does not have a quadratic variation. 

Suppose $X=L^2[0,1]$. Let $\mathcal{A} =\mathcal{B}(X)$ and
$$
Z(h)_t = \int_0^t h(s)dW_s
$$
for some (real) standard Brownian motion $W$. We thus have 
$$
d\langle Z(h) \rangle_t = h^2(t) dt.
$$
Notice that (\ref{eqLocalUniformConverInProbabilityMeasuresNu}) is satisfied in this case. In fact
\begin{eqnarray*}
\sup_{A \in \mathcal{A}} \sup_{0\leq s\leq t} \left| \nu_{h_n}((s,t]\times A) - \nu_{h}((s,t]\times A) \right|
 & = & \sup_{A \in \mathcal{A}} \sup_{0\leq s\leq t} \left| \int_s^t \caract_A(g(u)) (h_n^2(u)-h^2(u)) du \right| \\
 & \leq & \int_0^t |h_n^2(u)-h^2(u)|du \fle 0 
\end{eqnarray*}
whenever $h_n\fle h$. 

If $M$ had a quadratic variation it should satisfy \eqref{eqRepreQuadraVariaAsSupremum}, however, the later does not hold in general. For example, we can fix $A=U$ and consider a dyadic partition $\mathcal{D}_k$ for $[0,1]$ of level $k$, that is, by $2^k$ subintervals of length $2^{-k}$.  Let $\{ h_n : n \geq 1 \}$ be the $L^2$-normalized Haar basis; this family is dense on the unit ball of $L^2[0,1]$, and since each $h_n$ is a norm one function supported on some dyadic interval $I_n$, we have
$$
\nu_{h_n}(I_n \times U) \geq \int_{I_n} h_n^2(u)du = 1, \qquad \forall\, n \geq 1.
$$
Therefore, 
$$
\sum_{I \in \mathcal{D}_k} \sup_{n \geq 1} \nu_{h_n}( I \times U) \geq 2^k.
$$
Note that this sum is over one of the partitions considered in \eqref{eqRepreQuadraVariaAsSupremum}, therefore, the right hand side of that equation should be infinite. 
\end{example}

\subsection{The quadratic variation operator measure}
\label{subsectQuadraVariationoperator}

Throughout this section we assume that $M$ has a  quadratic variation and the family of intensity measures satisfies the sequential boundedness property. By Theorem \ref{theoUniquenessQuadraticVariation} the quadratic variation of $M$ is unique. We use the following notation, for the covariation of two real-valued processes $X$ and $Y$: 
$$\inner{X}{Y}_{s}^{t} \defeq \inner{X}{Y}_{t}-\inner{X}{Y}_{s}.$$ 

Set $T>0$. For any given $0 \leq s \leq t \leq T$ and $A \in \calA$, the mapping 
$ (x^* ,y^*) \mapsto  \quadraVari{M(A)(x^*),M(A)(y^*)}_{s}^{t}$ defines a bilinear form on $X^* \times X^*$  taking values in the space of real-valued random variables. Conversely, for any given $x^*, y^* \in X^*$,  $(s,t] \times A \mapsto \quadraVari{M(A)(x^*),M(A)(y^*)}_{s}^{t}$ defines a finitely additive random signed measure on the ring of subsets of $[0,T] \times U$ generated by the sets of the form $\{ 0 \} \times A$, and $(s,t] \times A$ for $ 0 \leq s < t \leq T$, $A \in \mathcal{A}$.   

Define $\alpha_{M}$ for every $x^*,y^* \in X^*$ as the random set function on $\mathcal{B}([0,T]) \otimes \mathcal{B}(U)$ given by
\begin{equation}\label{eqDefiAlphaMRandomMeasure}
 \alpha_{M}(\omega)(C)(x^*,y^*)=\tfrac{1}{4} \left( \nu_{x^*+y^*}(C)- \nu_{x^*-y^*}(C) \right),\quad C \in \mathcal{B}([0,T]) \otimes \mathcal{B}(U).
\end{equation}
We will assume that for every  $x^*, y^* \in X^*$, $\alpha_M(\omega)(\cdot)(x^*, y^*)$ is a well defined signed measure on $\mathcal{B}([0,T]) \otimes \mathcal{B}(U)$. We will show (see Theorem \ref{theoAlphaMBilinearVectorMeasure} below) that under mild assumptions on $\operQuadraVari{M}$ we have that $\alpha_{M}$ extends to a $\goth{Bil}(X^*,X^*)$-valued measure such that for each rectangle $(s,t] \times A$,  $\alpha_{M}(\omega)((s,t] \times A)(x^*,y^*)$ equals $\quadraVari{M(A)(x^*),M(A)(y^*)}_{s}^{t}(\omega)$ $\Prob$-a.e.  Our first step is the following:

\begin{theorem}\label{theoAlphaContinuousBilinearFormOnTheRing} For $\Prob$-a.e. $\omega \in \Omega$, 
$ \alpha_{M}(\omega)((s,t]\times A) \in \goth{Bil}(X^*,X^*)$  and 
$$\norm{\alpha_{M}(\omega)((s,t]\times A)}_{\goth{Bil}(X^*,X^*)} \leq \operQuadraVari{M}(\omega)((s,t]\times A),$$
for all $0 \leq s \leq t$, $A \in \calA$.
\end{theorem}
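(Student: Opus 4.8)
The plan is to identify the polarized quantity in \eqref{eqDefiAlphaMRandomMeasure} with the increment of the real covariation, and then to exploit that on the diagonal it reproduces the intensity measure, so that the form is positive semidefinite and its norm is governed by a supremum of the $\nu_{x^*}$. Writing $C=(s,t]\times A$, the polarization together with the linearity of the cylindrical martingale $M(A)$ (Definition \ref{defiCylindricalOrthoMartinValuMeasure}) and the bilinearity of the real quadratic covariation gives, for each fixed $x^*,y^*$ and $\Prob$-a.e.,
\[
\alpha_{M}(C)(x^*,y^*)=\inner{M(A)(x^*)}{M(A)(y^*)}_{s}^{t}.
\]
In particular $\alpha_{M}(C)(x^*,x^*)=\nu_{x^*}(C)\geq 0$, and since $\nu_{-z}=\nu_{z}$ the form is symmetric. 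Once bilinearity and boundedness are secured for a.e.\ $\omega$, the form is symmetric and positive semidefinite, so by the Cauchy--Schwarz inequality for such forms its norm in $\goth{Bil}(X^*,X^*)$ equals $\sup_{\norm{x^*}\leq 1}\nu_{x^*}(\omega)(C)$; this is the quantity I must bound.

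First I would fix a countable dense subset $(x^{*}_{n})_{n\geq 1}$ of the unit sphere and argue pair by pair. For each $(n,m)$ the Kunita--Watanabe inequality gives, $\Prob$-a.e.\ and for all $0\leq s\leq t$,
\[
\abs{\inner{M(A)(x^{*}_{n})}{M(A)(x^{*}_{m})}_{s}^{t}}\leq \sqrt{\nu_{x^{*}_{n}}(C)}\,\sqrt{\nu_{x^{*}_{m}}(C)}\leq \operQuadraVari{M}(C),
\]
where the last step uses $\nu_{x^{*}_{n}},\nu_{x^{*}_{m}}\leq\operQuadraVari{M}$, established for each fixed index in Theorem \ref{theoSuffiCondiExistQuadraVariat}. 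Thus $\abs{\alpha_{M}((s,t]\times A)(x^{*}_{n},x^{*}_{m})}\leq\operQuadraVari{M}((s,t]\times A)$ $\Prob$-a.e.\ for each rectangle with $A\in\calA$, and Theorem \ref{theoremComparisonSignedMeasuresOnRing}(iii) upgrades this to the inequality of random signed measures $\abs{\alpha_{M}(\cdot)(x^{*}_{n},x^{*}_{m})}\leq\operQuadraVari{M}$. Intersecting the countably many full-measure sets over all pairs $(n,m)$, together with the countably many homogeneity relations $\nu_{cz}=c^{2}\nu_{z}$ and the bounds $\nu_{z}\leq\operQuadraVari{M}$ for $z$ ranging over the (countable) normalized sums and differences $(x^{*}_{n}\pm x^{*}_{m})/\norm{x^{*}_{n}\pm x^{*}_{m}}$, yields a single set $\Omega^{*}$ with $\Prob(\Omega^{*})=1$ on which all these relations hold simultaneously for every rectangle $C$.

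On $\Omega^{*}$ I would then produce a Lipschitz estimate on the dense family: writing $\nu_{x^{*}_{n}}(C)-\nu_{x^{*}_{m}}(C)=\inner{M(A)(x^{*}_{n}-x^{*}_{m})}{M(A)(x^{*}_{n}+x^{*}_{m})}_{s}^{t}$, Kunita--Watanabe together with homogeneity and the bounds just collected gives
\[
\abs{\nu_{x^{*}_{n}}(\omega)(C)-\nu_{x^{*}_{m}}(\omega)(C)}\leq 2\,\norm{x^{*}_{n}-x^{*}_{m}}\,\operQuadraVari{M}(\omega)(C).
\]
Hence $x^{*}\mapsto\nu_{x^{*}}(\omega)(C)$ is uniformly continuous on the dense set, extends continuously to the unit sphere and, by homogeneity, to all of $X^{*}$; consequently $(x^{*},y^{*})\mapsto\alpha_{M}(\omega)(C)(x^{*},y^{*})=\tfrac14(\nu_{x^{*}+y^{*}}(\omega)(C)-\nu_{x^{*}-y^{*}}(\omega)(C))$ is continuous, and being bilinear on the dense family it is bilinear on $X^{*}\times X^{*}$. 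Passing to the supremum over the dense family by continuity and invoking the representation $\operQuadraVari{M}=\sup_{n}\nu_{x^{*}_{n}}$ of Theorem \ref{theoUniquenessQuadraticVariation},
\[
\norm{\alpha_{M}(\omega)(C)}_{\goth{Bil}(X^*,X^*)}=\sup_{\norm{x^{*}}\leq 1}\nu_{x^{*}}(\omega)(C)=\sup_{n\geq 1}\nu_{x^{*}_{n}}(\omega)(C)\leq\operQuadraVari{M}(\omega)(C),
\]
for every $\omega\in\Omega^{*}$ and every rectangle $(s,t]\times A$ with $A\in\calA$, which is the claim.

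The main obstacle is the passage from ``$\Prob$-a.e.\ for each fixed vector'' to ``for $\Prob$-a.e.\ $\omega$, simultaneously for all unit vectors'': the sphere is uncountable, whereas each identity used (polarization, Kunita--Watanabe, $\nu_{x^{*}}\leq\operQuadraVari{M}$, homogeneity) is available only off an $x^{*}$-dependent null set. The device that resolves this is to confine every a.e.\ statement to the countable dense family and its countable arithmetic, and to transport control to arbitrary unit vectors via the Lipschitz estimate. The genuinely delicate point there is to verify that the continuous extension built from the dense family actually coincides with the intensity measure $\nu_{x^{*}}(\omega)(C)$ for every $x^{*}$ on the single set $\Omega^{*}$, rather than merely $\Prob$-a.e.\ for each fixed $x^{*}$.
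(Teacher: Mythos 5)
Your proof is correct and follows essentially the same route as the paper's: a countable dense family (the paper uses the $\Q$-span $F$ of a linearly independent sequence with dense span), the Kunita--Watanabe inequality on rectangles, Theorem \ref{theoremComparisonSignedMeasuresOnRing} to upgrade the per-rectangle a.e.\ bounds to all of $\calB([0,T])\otimes\calB(U)$, intersection of countably many null sets, and extension by density --- the paper extends the uniformly bounded bilinear form directly from $F\times F$ (Lemmas \ref{lemmaAlphaMBilinearFormOnQSpanF}--\ref{lemmaContiAlphaMBilinearForm}), whereas you extend the diagonal $x^{*}\mapsto\nu_{x^{*}}(\omega)(C)$ via a Lipschitz estimate and polarize, a cosmetic difference. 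The ``delicate point'' you flag at the end does not affect the statement as the paper proves it: there too, $\alpha_{M}(\omega)((s,t]\times A)$ on all of $X^{*}\times X^{*}$ is simply the continuous extension from $F\times F$, and its coincidence with \eqref{eqDefiAlphaMRandomMeasure} for arbitrary fixed $x^{*},y^{*}$ is only ever asserted $\Prob$-a.e.\ per pair (cf.\ \eqref{eqDefiAlphaMCovariation}).
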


The proof of Theorem \ref{theoAlphaContinuousBilinearFormOnTheRing} will be carried out in several steps.  Fix $(x_{n}^*)_{n \geq 1} \subseteq X^*$ a set of linearly independent vectors such that $\mbox{span}(x_{n}^*)_{n \geq 1} $ is dense in $X^*$. Let $F=(y_{m}^*)_{m \geq 1}$ be the $\Q$-span of $(x_{n}^*)_{n \geq 1}$. 

Fix $0 \leq s \leq t$, $A \in \calA$. Denote by $\widehat{\Omega}(s,t,A)$ the set of all $\omega \in \Omega$ such that $(x^*,y^*) \mapsto \inner{M(A)(x^*) }{M(A)(y^*)}^{t}_{s}(\omega)$ is a bilinear form on $F \times F$. By the countability of $F$ and a.s. linearity of the (real) quadratic convariation, we have $\Prob ( \widehat{\Omega}(s,t,A) )=1$. 

Similarly, let $\tilde{\Omega}(s,t,A)$ be the set of all $\omega \in \Omega$ such that for all $x^*, y^* \in F$ we have
\begin{equation}\label{eqPolarizaNuEqualQuadraticCovariation}
\tfrac{1}{4} \left( \nu_{x^*+y^*}(\omega)((s,t]\times A) - \nu_{x^*-y^*}(\omega)((s,t]\times A) \right)
= \inner{M(A)(x^*) }{M(A)(y^*)}^{t}_{s}(\omega).
\end{equation}
By the countability of $F$ and Lemma \ref{lemmaExistMeasMuRealQuadraVaria}, we have $\Prob ( \tilde{\Omega}(s,t,A) )=1$. 
Moreover, notice that $\forall \omega \in \tilde{\Omega}(s,t,A)$ we have by \eqref{eqDefiAlphaMRandomMeasure} and \eqref{eqPolarizaNuEqualQuadraticCovariation} that
\begin{equation}\label{eqIdentityAlphaMAndQuadraticCovariation}
 \alpha_{M}(\omega)((s,t]\times A)(x^*,y^*)= \inner{M(A)(x^*) }{M(A)(y^*)}^{t}_{s}(\omega), \quad \forall x^*, y^* \in F.     
\end{equation}
Let  $\Lambda(s,t,A) \defeq \{ \omega :  \alpha_{M}(\omega)((s,t]\times A) \text{ is a bilinear form on } F \times F \}$. Since $ \widehat{\Omega}(s,t,A) \cap \tilde{\Omega}(s,t,A) \subseteq \Lambda(s,t,A)$ we conclude that $\Prob (\Lambda(s,t,A))=1$. 

\begin{lemma}\label{lemmaAlphaMBilinearFormOnQSpanF}
For $\Prob$-a.e. $\omega \in \Omega$,    $ \alpha_{M}(\omega)(C)$ is a bilinear form on $F\times F$ for all $C \in \mathcal{B}([0,T]) \otimes \mathcal{B}(U)$.
\end{lemma}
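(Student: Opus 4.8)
The plan is to promote the bilinearity we already control on rectangles $(s,t]\times A$ with $A\in\calA$ to all of $\mathcal{B}([0,T])\otimes\mathcal{B}(U)$ by means of the signed-measure comparison result, Theorem \ref{theoremComparisonSignedMeasuresOnRing}. The decisive structural fact is that $F$ is countable, so that the assertion ``$\alpha_M(\omega)(C)$ is bilinear on $F\times F$ for all $C$'' reduces to verifying countably many scalar identities, each of which can be treated separately and then combined by intersecting full-measure events.

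First I would reduce bilinearity to additivity. Since $F$ is a $\Q$-vector space, a map $F\times F\to\R$ that is additive in each argument is automatically $\Q$-bilinear, because additivity forces $\Q$-homogeneity (this is a purely algebraic fact, holding pointwise for each fixed $C$ and $\omega$). Hence it suffices to produce a single full-measure event on which, for all $a,b,c\in F$ and all $C$,
$$
\alpha_M(\omega)(C)(a+b,c)=\alpha_M(\omega)(C)(a,c)+\alpha_M(\omega)(C)(b,c),
$$
together with the symmetric statement in the second slot. For a fixed triple $(a,b,c)\in F^3$ I would introduce the defect set function
$$
\gamma_{a,b,c}(\omega)(C)\defeq \alpha_M(\omega)(C)(a+b,c)-\alpha_M(\omega)(C)(a,c)-\alpha_M(\omega)(C)(b,c).
$$
By the standing assumption that each $\alpha_M(\omega)(\cdot)(x^*,y^*)$ is a signed measure, $\gamma_{a,b,c}(\omega)$ is a finite linear combination of signed measures and is therefore itself a random signed measure on $\mathcal{B}([0,T])\otimes\mathcal{B}(U)$.

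Next I would show that $\gamma_{a,b,c}$ vanishes on the rectangles. For fixed $0\leq s<t$ and $A\in\calA$, on the full-measure event $\Lambda(s,t,A)$ the form $\alpha_M(\omega)((s,t]\times A)$ is bilinear on $F\times F$, as was established above via \eqref{eqIdentityAlphaMAndQuadraticCovariation} and the a.s.\ bilinearity of the real covariation; in particular it is additive, so $\gamma_{a,b,c}(\omega)((s,t]\times A)=0$ there. Thus $\Prob(\gamma_{a,b,c}((s,t]\times A)=0)=1$ for every such rectangle, and applying Theorem \ref{theoremComparisonSignedMeasuresOnRing}(ii) with $\alpha=\gamma_{a,b,c}$ and $\beta\equiv 0$ yields $\gamma_{a,b,c}=0$ as random signed measures, i.e.\ $\Prob(\gamma_{a,b,c}(C)=0\text{ for all }C)=1$. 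Intersecting over the countably many triples $(a,b,c)\in F^3$ (for both slots), and further with the countably many full-measure events on which $\alpha_M(\omega)(\cdot)(x^*,y^*)$ is a genuine signed measure for $x^*,y^*\in F$, produces a single set of probability one on which $\alpha_M(\omega)(C)$ is additive, hence $\Q$-bilinear, on $F\times F$ simultaneously for every $C$.

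The step I expect to demand the most care is the passage from rectangles to all of $\mathcal{B}([0,T])\otimes\mathcal{B}(U)$, feeding into Theorem \ref{theoremComparisonSignedMeasuresOnRing}. The rectangles $(s,t]\times A$ with $A\in\calA$ need not generate $\mathcal{B}(U)$, so a naive monotone-class or $\pi$–$\lambda$ extension is unavailable; this is exactly the gap bridged by that theorem through the sequence $(U_n)$ with $\calB(U_n)\subseteq\calA$. Consequently the genuine content lies in setting up $\gamma_{a,b,c}$ as a bona fide random signed measure and in checking the hypotheses of Theorem \ref{theoremComparisonSignedMeasuresOnRing} cleanly, while the remainder is routine bookkeeping over the countable set $F$.
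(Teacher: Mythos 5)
Your proof is correct and is essentially the paper's own argument: both verify the scalar identities on the rectangles $(s,t]\times A$ via the full-measure events $\Lambda(s,t,A)$, upgrade them to all of $\mathcal{B}([0,T])\otimes\mathcal{B}(U)$ with Theorem \ref{theoremComparisonSignedMeasuresOnRing}(ii), and intersect the countably many full-measure sets indexed by $F$. The only cosmetic differences are that you package each identity as a single defect measure $\gamma_{a,b,c}$ compared with $\beta\equiv 0$ (where the paper compares the two sides of the identity directly, for each $(r,x^*,y^*,z^*)\in\Q\times F^3$) and that you recover $\Q$-homogeneity from additivity by pure algebra rather than verifying it scalar by scalar; note that both versions rely equally on the standing assumption that the relevant finite linear combinations of the signed measures $\alpha_M(\omega)(\cdot)(x^*,y^*)$ are well defined (no $\infty-\infty$), which is guaranteed on each $[0,T]\times U_n$ by $\sigma$-finiteness.
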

\begin{proof}
Fix $r \in \Q$ and $x^*,y^*,z^* \in F$. 
For $0 \leq s \leq t$, $A \in \calA$ and $\omega \in \Lambda(s,t,A)$  we have
$$
\alpha_{M}(\omega)((s,t]\times A)(rx^*+y^*,z^*)= r\alpha_{M}(\omega)((s,t]\times A)(x^*,z^*) + \alpha_{M} (\omega)((s,t]\times A)(y^*,z^*).     
$$
Then, by Theorem \ref{theoremComparisonSignedMeasuresOnRing} we have the existence of a set $\Lambda_{1}(r, x^*, y^*, z^*) \subseteq \Omega$ with probability 1 such that 
$$\alpha_{M}(\omega)(\cdot)(r x^* + y^*,z^*)= r \alpha_{M}(\omega)(\cdot)(x^*,z^*)+ \alpha_{M} (\omega)(\cdot)(y^*,z^*),$$
for all $\omega \in \Lambda_{1}(r,x^*,y^*,z^*)$. Likewise we can show the existence of a set $\Lambda_{2}(r,x^*,y^*,z^*) \subseteq \Omega$ with probability 1 such that 
$$\alpha_{M}(\omega)(\cdot)(x^*,ry^*+ z^*)= r\alpha_{M}(\omega)(\cdot)(x^*,y^*) + \alpha_{M} (\omega)(\cdot)(x^*,z^*),$$
for all $\omega \in \Lambda_{2}(r,x^*,y^*,z^*)$. Setting
$$ \Lambda = \bigcap_{(r,x^*,y^*,z^*)\in \Q  \times F^3} \Lambda_{1}(r,x^*,y^*,z^*) \cap \Lambda_{2}(r,x^*,y^*,z^*) ,$$
we obtain a subset of $\Omega$ of probability 1 for which  $ \alpha_{M}(\omega)(C)$ is a bilinear form on $F\times F$ for all $C \in \mathcal{P}_{T} \otimes \mathcal{B}(U)$.
\end{proof}

\begin{lemma}\label{lemmaContiAlphaMDependsOnRectangle}
For every $0 \leq s \leq t$ and $A \in \calA$ we have $\Prob$-a.e. 
$$\abs{\alpha_{M}(\omega)((s,t] \times A)(x^*,y^*)} \leq \operQuadraVari{M}((s,t] \times A)\norm{x^*}\norm{y^*}, \quad \forall \, x^*,y^* \in F.$$ 
\end{lemma}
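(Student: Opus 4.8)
The plan is to recognize that the asserted bound is, at its core, a Cauchy--Schwarz inequality for the positive semidefinite covariation form, which I then combine with the scaling relation $\nu_{cx^*}=c^2\nu_{x^*}$ and the bound $\nu_{z^*}\leq\operQuadraVari{M}$ for unit vectors $z^*$. Fix $0\leq s\leq t$ and $A\in\calA$. On the probability-one set $\widehat{\Omega}(s,t,A)\cap\tilde{\Omega}(s,t,A)$, the identity \eqref{eqIdentityAlphaMAndQuadraticCovariation} gives, for all $x^*,y^*\in F$,
\begin{equation*}
\alpha_{M}(\omega)((s,t]\times A)(x^*,y^*)=\inner{M(A)(x^*)}{M(A)(y^*)}^{t}_{s}(\omega)=:B_{\omega}(x^*,y^*),
\end{equation*}
and $B_\omega$ is a symmetric bilinear form on $F\times F$.

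First I would observe that $B_\omega$ is positive semidefinite: for each $x^*\in F$ one has $B_\omega(x^*,x^*)=\quadraVari{M(A)(x^*)}^{t}_{s}(\omega)\geq 0$, since the quadratic variation of the real-valued martingale $M(A)(x^*)$ has nondecreasing paths. Consequently the elementary Cauchy--Schwarz inequality for a positive semidefinite symmetric bilinear form applies, yielding
\begin{equation*}
\abs{B_\omega(x^*,y^*)}\leq \sqrt{B_\omega(x^*,x^*)}\,\sqrt{B_\omega(y^*,y^*)},\qquad \forall\,x^*,y^*\in F.
\end{equation*}
The usual discriminant argument proves this; since $F$ is only a $\Q$-vector space, I would run it over rational $\lambda$ and use continuity of $\lambda\mapsto B_\omega(x^*+\lambda y^*,x^*+\lambda y^*)$ to pass to all real $\lambda$. (Alternatively one may invoke the Kunita--Watanabe inequality directly.)

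It remains to bound the diagonal. By Lemma \ref{lemmaExistMeasMuRealQuadraVaria}, for each fixed $x^*\in F$ we have $B_\omega(x^*,x^*)=\nu_{x^*}(\omega)((s,t]\times A)$ for $\Prob$-a.e. $\omega$. For $x^*\neq 0$, the scaling relation $\nu_{x^*}=\norm{x^*}^2\nu_{x^*/\norm{x^*}}$ together with $\nu_{x^*/\norm{x^*}}\leq \operQuadraVari{M}$ (valid for the unit vector $x^*/\norm{x^*}$ by Theorem \ref{theoUniquenessQuadraticVariation}) gives, $\Prob$-a.e.,
\begin{equation*}
\nu_{x^*}((s,t]\times A)\leq \norm{x^*}^2\,\operQuadraVari{M}((s,t]\times A),
\end{equation*}
the case $x^*=0$ being trivial. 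Substituting into the Cauchy--Schwarz bound yields $\abs{B_\omega(x^*,y^*)}\leq\norm{x^*}\norm{y^*}\operQuadraVari{M}(\omega)((s,t]\times A)$, which is the claim.

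The exceptional events above are all indexed by the countable set $F$ (or by pairs in $F\times F$), so the only bookkeeping point — and the main, though mild, obstacle — is to intersect these countably many probability-one sets into a single $\Omega_0$ with $\Prob(\Omega_0)=1$ on which the inequality holds simultaneously for every $x^*,y^*\in F$. This is precisely where the restriction to the countable $\Q$-span $F$, rather than all of $X^*$, is essential.
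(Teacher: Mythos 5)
Your proof is correct and follows essentially the same route as the paper's: both rest on the Kunita--Watanabe/Cauchy--Schwarz bound $\abs{\inner{M(A)(x^*)}{M(A)(y^*)}_{s}^{t}} \leq \sqrt{\quadraVari{M(A)(x^*)}_{s}^{t}}\sqrt{\quadraVari{M(A)(y^*)}_{s}^{t}}$, the scaling identity $\nu_{x^*}=\norm{x^*}^2\nu_{x^*/\norm{x^*}}$ combined with $\nu_{z^*}\leq\operQuadraVari{M}$ on the unit sphere, and a countable intersection of exceptional sets indexed by $F$ (the paper packages the last step via the set $\Gamma(s,t,A)$ on which $\operQuadraVari{M}=\sup_{z^*\in\widetilde{F}}\nu_{z^*}$ for the normalized countable family $\widetilde{F}$). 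Your elementary discriminant derivation of Cauchy--Schwarz over rational $\lambda$ is a harmless substitute for the paper's direct citation of Kunita--Watanabe, which you also note as an alternative.
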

\begin{proof}
Fix $0 \leq s \leq t$ and $A \in \calA$. Let $\widetilde{F} = \left\{ \frac{x^*}{\| x^* \|} : 0\neq x^* \in F \right\}$, this set is countable and dense in the unit sphere of $X^*$.  By Theorem \ref{theoUniquenessQuadraticVariation}, there is a set $\Gamma(s,t,A) \subseteq \Omega$ of probability 1 such that 
$ \left( \sup_{x^* \in \widetilde{F}} \nu_{x^{*}} \right)((s,t] \times A) =  \operQuadraVari{M}((s,t] \times A)$. 

If $0 \neq x^* \in F$, for $\omega \in  \widehat{\Omega}(s,t,A) \cap \tilde{\Omega}(s,t,A)  \cap \Gamma(s,t,A)$ we have by \eqref{eqPolarizaNuEqualQuadraticCovariation} that
\begin{eqnarray*}
\operQuadraVari{M}((s,t] \times A) 
& \geq & \nu_{\frac{x^*}{\norm{x^{*}}}}((s,t] \times A) = \quadraVari{M(A)\left( \frac{x^*}{\norm{x^{*}}} \right)}_{s}^{t} \\
& = & \frac{1}{\norm{x^{*}}^2}\quadraVari{M(A)(x^*)}_{s}^{t} =  \frac{1}{\norm{x^{*}}^2} \nu_{x^*}((s,t] \times A),
\end{eqnarray*}
and thus $\nu_{x^*}((s,t] \times A) \leq \operQuadraVari{M}((s,t] \times A) \norm{x^{*}}^2 $.

Now, for  $\omega \in  \widehat{\Omega}(s,t,A) \cap \tilde{\Omega}(s,t,A)  \cap \Gamma(s,t,A)$, and any $0 \neq x^* , y^* \in F$ we have by   \eqref{eqIdentityAlphaMAndQuadraticCovariation}, the Kunita-Watanabe inequality (e.g. see Theorem 11.4.1 in \cite{CohenElliott:2015}, p.240), and \eqref{eqPolarizaNuEqualQuadraticCovariation}, that 
\begin{eqnarray*}
\abs{\alpha_{M}(\omega)((s,t]\times A)(x^*,y^*)}
& = &  \abs{ \inner{M(A)(x^*) }{M(A)(y^*)}_{s}^{t}} \\ 
& \leq & \sqrt{ \quadraVari{ M(A)(x^*)}_{s}^{t} } \cdot \sqrt{ \quadraVari{ M(A)(y^*)}_{s}^{t}} \\
& = & \sqrt{ \nu_{x^*}((s,t] \times A) } \cdot \sqrt{ \nu_{y^*}((s,t] \times A)} \\
& \leq & \operQuadraVari{M}((s,t] \times A)\norm{x^*}\norm{y^*}.
\end{eqnarray*}
This finishes the proof, since $\Prob( \widehat{\Omega}(s,t,A) \cap \tilde{\Omega}(s,t,A) \cap \Gamma(s,t,A))=1$. 
\end{proof}

\begin{lemma}\label{lemmaContiAlphaMBilinearForm}
For $\Prob$-a.e. $\omega \in \Omega$, 
$$ \abs{\alpha_{M}(\omega)(C)(x^*,y^*)} \leq \operQuadraVari{M}(C)\norm{x^*}\norm{y^*}, \quad \forall \, C \in \mathcal{B}([0,T]) \otimes \mathcal{B}(U), \, x^*,y^* \in F.$$
\end{lemma}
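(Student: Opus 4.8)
The plan is to fix a single pair $x^*, y^* \in F$, view both sides of the claimed inequality as random (signed) measures in the set variable $C$, and then upgrade the rectangle-wise bound of Lemma \ref{lemmaContiAlphaMDependsOnRectangle} to a bound valid over all of $\mathcal{B}([0,T]) \otimes \mathcal{B}(U)$ by invoking the comparison result Theorem \ref{theoremComparisonSignedMeasuresOnRing}. The countability of $F$ will then let me move the quantifier ``$\forall\, x^*, y^*$'' inside the almost-sure statement.

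First I would fix $x^*, y^* \in F$ and introduce $\alpha(\omega)(\cdot) \defeq \alpha_M(\omega)(\cdot)(x^*, y^*)$ together with $\beta(\omega)(\cdot) \defeq \norm{x^*}\norm{y^*}\,\operQuadraVari{M}(\omega)(\cdot)$. Here $\alpha$ is a random signed measure by the standing assumption (its measurability in $\omega$ coming from that of the $\nu$'s through the polarization formula \eqref{eqDefiAlphaMRandomMeasure}), while $\beta$ is a random measure because $\operQuadraVari{M}$ is one and $\norm{x^*}\norm{y^*}$ is a constant. From Lemma \ref{lemmaContiAlphaMDependsOnRectangle}, specialized to this fixed pair, I obtain for each $0 \leq s \leq t$ and $A \in \calA$ that
$$
\Prob\left( \abs{\alpha((s,t]\times A)} \leq \beta((s,t]\times A) \right) = 1.
$$

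Next I would apply Theorem \ref{theoremComparisonSignedMeasuresOnRing}(iii) to the pair $\alpha, \beta$ (working on $[0,T]$, where the cited argument applies with no change) to conclude that $\abs{\alpha} \leq \beta$; that is, there is a set $\Omega_{x^*, y^*}$ of full probability on which $\abs{\alpha_M(\omega)(C)(x^*, y^*)} \leq \norm{x^*}\norm{y^*}\operQuadraVari{M}(\omega)(C)$ for every $C \in \mathcal{B}([0,T]) \otimes \mathcal{B}(U)$. Finally I would set $\Omega_0 = \bigcap_{(x^*, y^*) \in F\times F} \Omega_{x^*, y^*}$; since $F$ is countable, so is $F \times F$, whence $\Prob(\Omega_0)=1$, and on $\Omega_0$ the inequality holds simultaneously over all $C$ and all pairs in $F \times F$, which is precisely the assertion.

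I do not expect a genuine analytic obstacle here: the entire content is the interchange of quantifiers granted by countability of $F$ together with the passage from rectangles to arbitrary measurable sets granted by the comparison theorem. The one point deserving care is the verification that $\alpha_M(\cdot)(x^*, y^*)$ truly qualifies as a random signed measure, so that the Jordan decomposition used inside the proof of Theorem \ref{theoremComparisonSignedMeasuresOnRing} is available; this is immediate from \eqref{eqDefiAlphaMRandomMeasure} and Lemma \ref{lemmaExistMeasMuRealQuadraVaria}.
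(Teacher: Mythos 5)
Your proposal is correct and takes essentially the same route as the paper, whose proof of this lemma consists precisely of citing Lemma \ref{lemmaContiAlphaMDependsOnRectangle} for the rectangle-wise bound and Theorem \ref{theoremComparisonSignedMeasuresOnRing}(iii) to extend it to all of $\mathcal{B}([0,T]) \otimes \mathcal{B}(U)$. Your explicit steps—fixing a pair in $F \times F$, verifying that $\alpha_{M}(\cdot)(x^*,y^*)$ is a random signed measure via the standing assumption and \eqref{eqDefiAlphaMRandomMeasure}, and intersecting over the countably many pairs—merely spell out what the paper leaves implicit.
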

\begin{proof}
It follows from Theorem \ref{theoremComparisonSignedMeasuresOnRing} and Lemma \ref{lemmaContiAlphaMDependsOnRectangle}. 
\end{proof}

\begin{proof}[Proof of Theorem \ref{theoAlphaContinuousBilinearFormOnTheRing}]  
By Lemmas \ref{lemmaAlphaMBilinearFormOnQSpanF}  and \ref{lemmaContiAlphaMBilinearForm} there exists $\Omega_{0} \subseteq \Omega$ with probability 1, such that for each $\omega \in \Omega_{0}$, we have    $ \alpha_{M}(\omega)((s,t]\times A) \in \goth{Bil}(F,F)$  and 
$$\norm{\alpha_{M}(\omega)((s,t]\times A)}_{\goth{Bil}(F,F)} \leq \operQuadraVari{M}(\omega)((s,t]\times A) <\infty,$$
for all $0 \leq s \leq t$, $A \in \calA$. Since $\alpha_{M}(\omega)((s,t]\times A)$ is bounded on $F\times F$, it can be extended to $X^* \times X^*$. 
\end{proof}

\begin{theorem}
\label{theoAlphaMBilinearVectorMeasure}
Assume that, for $\Prob$-a.e. $\omega \in \Omega$,
\begin{equation}\label{eqBoundedrangequadraticvariation}
    \sup_{A \in \calA} \operQuadraVari{M}(\omega)([0,T]\times A) <\infty.
\end{equation}
Then there exists a random $\goth{Bil}(X^*,X^*)$-valued measure $\alpha_{M}$ defined on $\mathcal{B}([0,T]) \otimes \mathcal{B}(U)$ such that for all $x^*,y^* \in X^*$, $0 \leq s \leq t\leq T$ and $A \in \mathcal{A}$, $\Prob$-a.e. $\omega \in \Omega$,
\begin{equation}
\label{eqDefiAlphaMCovariation}
\alpha_{M}(\omega)((s,t] \times A)(x^*,y^*) \\ 
=  \quadraVari{M(A)(x^*),M(A)(y^*)}_{s}^{t}(\omega).
\end{equation}
\end{theorem}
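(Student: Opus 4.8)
The plan is to fix $\omega$ in a single full-measure event and construct $\alpha_M(\omega)$ as a genuine $\goth{Bil}(X^*,X^*)$-valued measure on $\mathcal{B}([0,T])\otimes\mathcal{B}(U)$, and then to recover the identity \eqref{eqDefiAlphaMCovariation} and the measurability in $\omega$ at the end. The decisive preliminary observation is that hypothesis \eqref{eqBoundedrangequadraticvariation} upgrades $\operQuadraVari{M}(\omega)$ from a merely $\sigma$-finite to a \emph{finite} measure on $[0,T]\times U$: choosing an increasing sequence $U_n\uparrow U$ with $U_n\in\calA$ (available by the $\sigma$-finiteness structure of Definition \ref{definitionSigmafiniteMeasure}), one gets $\operQuadraVari{M}(\omega)([0,T]\times U_n)\leq\sup_{A\in\calA}\operQuadraVari{M}(\omega)([0,T]\times A)<\infty$, and continuity from below yields $\operQuadraVari{M}(\omega)([0,T]\times U)<\infty$. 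I would record this first, since it is what drives every subsequent step.

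First I would reuse the countable $\Q$-span $F$ fixed before Theorem \ref{theoAlphaContinuousBilinearFormOnTheRing} and take the full-measure event $\Omega_0$ on which $\operQuadraVari{M}(\omega)$ is a finite measure, each $\nu_{z^*}(\omega)$ ($z^*\in F$) is a measure, and the conclusions of Lemmas \ref{lemmaAlphaMBilinearFormOnQSpanF} and \ref{lemmaContiAlphaMBilinearForm} hold for every $C\in\mathcal{B}([0,T])\otimes\mathcal{B}(U)$. For $\omega\in\Omega_0$ and such a $C$, the map $(x^*,y^*)\mapsto\alpha_M(\omega)(C)(x^*,y^*)$ is $\Q$-bilinear on $F\times F$ and bounded there by $\operQuadraVari{M}(\omega)(C)\norm{x^*}\norm{y^*}<\infty$; hence it extends uniquely to a continuous $\R$-bilinear form on $X^*\times X^*$, which I take to be $\alpha_M(\omega)(C)$, and the estimate $\norm{\alpha_M(\omega)(C)}_{\goth{Bil}(X^*,X^*)}\leq\operQuadraVari{M}(\omega)(C)$ is preserved.

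Next I would establish additivity. Finite additivity on $F\times F$ is inherited from that of the scalar signed measures $C\mapsto\nu_{x^*+y^*}(\omega)(C)-\nu_{x^*-y^*}(\omega)(C)$ and passes to $X^*\times X^*$ by density, so $C\mapsto\alpha_M(\omega)(C)$ is a finitely additive $\goth{Bil}(X^*,X^*)$-valued set function. Countable additivity in the $\goth{Bil}(X^*,X^*)$-norm is then a direct consequence of the norm domination: for pairwise disjoint $(C_k)$ with union $C$, finite additivity gives $\alpha_M(\omega)(C)-\sum_{k=1}^{N}\alpha_M(\omega)(C_k)=\alpha_M(\omega)\big(\bigcup_{k>N}C_k\big)$, whose norm is at most $\operQuadraVari{M}(\omega)\big(\bigcup_{k>N}C_k\big)$, and this tends to $0$ because $\operQuadraVari{M}(\omega)$ is finite. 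This is precisely the place where the finiteness secured above is indispensable.

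Finally I would pass from $F$ to arbitrary $x^*,y^*$ in \eqref{eqDefiAlphaMCovariation}. On a rectangle $(s,t]\times A$ the identity holds for $x^*,y^*\in F$ by \eqref{eqIdentityAlphaMAndQuadraticCovariation}; both sides are continuous in $(x^*,y^*)$ --- the left by construction and the right via the Kunita--Watanabe inequality together with $\nu_{x^*}((s,t]\times A)\leq\operQuadraVari{M}((s,t]\times A)\norm{x^*}^2$ --- so taking $F\ni x_k^*\to x^*$, $y_k^*\to y^*$ and letting $k\to\infty$ on $\Omega_0$ yields the identity $\Prob$-a.e. For measurability it suffices to note that $\omega\mapsto\alpha_M(\omega)(C)(x^*,y^*)=\tfrac14\big(\nu_{x^*+y^*}(\omega)(C)-\nu_{x^*-y^*}(\omega)(C)\big)$ is measurable because the $\nu_{z^*}$ are random measures, and that each $\alpha_M(\omega)(C)$ is determined by its countably many values on $F\times F$. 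I expect the main obstacle to be the countable-additivity step, and specifically making transparent that \eqref{eqBoundedrangequadraticvariation} turns the a priori only $\sigma$-finite $\operQuadraVari{M}(\omega)$ into a finite measure; once that is in hand, the rest is density and domination bookkeeping.
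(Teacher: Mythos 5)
Your proof is correct, but it takes a genuinely different route from the paper's. The paper restricts $\alpha_M$ to the ring $\mathcal{R}_T$ generated by the sets $\{0\}\times A$ and $(s,t]\times A$, observes via Theorem \ref{theoAlphaContinuousBilinearFormOnTheRing} and \eqref{eqBoundedrangequadraticvariation} that it is weakly countably additive there with bounded range, invokes the Carath\'eodory--Hahn--Kluv\'anek extension theorem to pass to the generated $\sigma$-ring $\mathcal{S}_T$, and finally uses $U=\bigcup_n U_n$ with $U_n\in\calA$ to identify $\mathcal{S}_T$ with $\calB([0,T])\otimes\calB(U)$. You bypass the extension theorem entirely: since the paper's standing assumption after \eqref{eqDefiAlphaMRandomMeasure} already makes each scalar set function $C\mapsto\alpha_M(\omega)(C)(x^*,y^*)$ a signed measure on the full product $\sigma$-algebra, there is nothing to extend in $C$, only in $(x^*,y^*)$, and you obtain \emph{norm} countable additivity of $C\mapsto\alpha_M(\omega)(C)\in\goth{Bil}(X^*,X^*)$ directly from the domination $\norm{\alpha_M(\omega)(C)}\leq\operQuadraVari{M}(\omega)(C)$ of Lemma \ref{lemmaContiAlphaMBilinearForm} together with continuity from above of $\operQuadraVari{M}(\omega)$. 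Your opening observation is exactly right and is where \eqref{eqBoundedrangequadraticvariation} enters both arguments: the sequence $U_n\uparrow U$, $U_n\in\calA$, turns the uniform bound over $\calA$ into finiteness of $\operQuadraVari{M}(\omega)$ on $[0,T]\times U$ (the paper uses the same sets instead to see that $\mathcal{S}_T$ is a $\sigma$-algebra, and the bound to control the range). What your route buys: a more elementary argument, a strictly stronger conclusion (norm countable additivity, and in fact bounded variation, since $\sum_i\norm{\alpha_M(C_i)}\leq\operQuadraVari{M}(C)$ over any finite partition), and no need to worry whether bounded range suffices for the Kluv\'anek theorem in the non-reflexive space $\goth{Bil}(X^*,X^*)$; what it costs is a heavier reliance on the standing assumption that \eqref{eqDefiAlphaMRandomMeasure} involves no $\infty-\infty$ on general Borel sets --- but the paper makes that assumption too, so this is not a gap. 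Your closing steps (density of $F\times F$ for the identity \eqref{eqDefiAlphaMCovariation} via Kunita--Watanabe and $\nu_{z^*}\leq\norm{z^*}^2\operQuadraVari{M}$ on rectangles, and measurability in $\omega$ through the countably many evaluations on $F\times F$) are sound, with all null sets controlled because only countably many dual vectors are involved.
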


\begin{proof}
By Theorem \ref{theoAlphaContinuousBilinearFormOnTheRing} we have that, $\Prob$-a.e. $\alpha_{M}: \mathcal{R}_T \rightarrow \goth{Bil}(X^*,X^*)$ is a weakly countably additive measure, where $\mathcal{R}_T$ denotes the ring of subsets of $[0,T] \times U$ generated by the sets of the form $\{ 0 \} \times A$, and $(s,t] \times A$ for $ 0 \leq s \leq t \leq T$, $A \in \mathcal{A}$. 
By Theorem \ref{theoAlphaContinuousBilinearFormOnTheRing} and Equation \eqref{eqBoundedrangequadraticvariation}, $\Prob$-a.e. the range of $\alpha_{M}$ on $\mathcal{R}_T$ is bounded in $\goth{Bil}(X^*,X^*)$. 
Then by the Carath\'{e}odory-Hahn-Kluvanek extension theorem (see \cite{Kluvanek:1972}; see also Theorem I.5.2 in \cite{DiestelUhl:1977}, p.27)  $\alpha_{M}$ has a unique countably additive extension to a $ \goth{Bil}(X^*,X^*)$-valued measure on the $\sigma$-ring $\mathcal{S}_{T}$ generated by the ring $\mathcal{R}_T$. 

As for our final step, we will show that $\mathcal{S}_{T}= \calB([0,T]) \otimes \mathcal{B}(U)$. To see why this is true, recall that by definition $\mathcal{S}_{T}$ is the smallest $\sigma$-ring containing $\mathcal{R}_T$, in particular $\mathcal{S}_{T}$ is closed under countable unions. Therefore, because $U_{n} \in \mathcal{A}$, $\forall n \in \N$ and $U = \bigcup_{n \in \N} U_{n}$, we have $[0,T] \times U \in \mathcal{S}_{T}$. Hence, $\mathcal{S}_{T}$ is a $\sigma$-algebra and consequently $\mathcal{S}_{T}= \calB([0,T]) \otimes \mathcal{B}(U)$. 
\end{proof}


We are ready to introduce the \emph{quadratic variation operator measure}.
The  random measure $\alpha_{M}$ induces a random measure $\Gamma_{M}$ 
on $\calB([0,T]) \otimes \mathcal{B}(U)$ taking values in $\mathcal{L}(X^*,X^{**})$ by means of the prescription 
\begin{equation}\label{eqDefiMeasureGammaM}
\inner{\Gamma_{M}(\omega)(C)x^*}{y^*}=\alpha_{M}(\omega)(C)(x^*,y^*), \quad \forall\,  \, x^*, y^* \in X^*, \, C \in \calB([0,T]) \otimes \mathcal{B}(U).     
\end{equation}
The reader must be aware that on the left hand side of \eqref{eqDefiMeasureGammaM}, $\inner{\cdot}{\cdot}$ corresponds to the duality relation for the pair $(X^*, X^{**})$.

From Theorem \ref{theoAlphaContinuousBilinearFormOnTheRing} we get the following useful inequality
\begin{equation}
\label{eqAcotGammaDoleans}
\abs{\inner{\Gamma_{M}(\omega)(\cdot)x^*}{y^*}} \leq \norm{x^*}\norm{y^*} \operQuadraVari{M}(\omega)(\cdot) \text{ on }\ \calB([0,T]) \otimes \mathcal{B}(U).
\end{equation}

The proof of the existence of the operator-valued quadratic variation and some of its properties are contained in the following theorem.  

\begin{theorem}\label{theoExistenCovariaOperatorQ}
Let $T>0$, and assume that $M$ satisfies \eqref{eqBoundedrangequadraticvariation}. 
Then there exists a  process $Q_{M}: \Omega \times [0,T]  \times U \rightarrow \mathcal{L}(X^*, X^{**})$ such that $\Prob$-a.e. $\omega \in \Omega$
\begin{equation}\label{existenceofQ}
\inner{\Gamma_{M}(\omega)(C)x^*}{y^*}= \int_{C} \inner{Q_{M}(\omega,r,u)x^*}{y^*} \, \operQuadraVari{M}(\omega)(dr,du)    
\end{equation}
for all $x^*,y^* \in X^*$, $C \in \mathcal{B}([0,T]) \otimes \mathcal{B}(U)$. Moreover the following properties hold:
\begin{enumerate}
    \item For every $x^*, y^* \in X^*$, the mapping $(\omega,r,u) \mapsto \inner{Q_{M}(\omega,r,u)x^*}{y^*}$ is predictable, that is, $\calP_T \otimes \calB(U)$-measurable. \label{Qpredictable}
    \item $\Prob$-a.e. $\omega \in \Omega$, $Q_{M}(\omega,\cdot,\cdot)$ is positive and symmetric $\operQuadraVari{M}$-a.e. \label{Qpositiveandsymmetric}
    \item $\Prob$-a.e. $\omega \in \Omega$, $\norm{Q_{M}(\omega,\cdot,\cdot)}_{\mathcal{L}(X^*,X^{**})}=1$ $\operQuadraVari{M}$-a.e. \label{normoneoftheoperatorQ}
\end{enumerate}
\end{theorem}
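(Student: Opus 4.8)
The plan is to realize $Q_M$ as a Radon-Nikodym density of the $\goth{Bil}(X^*,X^*)$-valued measure $\alpha_M$ with respect to the scalar measure $\operQuadraVari{M}$, working first against a countable dense set of test covectors and then bootstrapping to an operator by bilinearity. As in the proof of Theorem \ref{theoAlphaContinuousBilinearFormOnTheRing}, I would fix the countable $\Q$-vector space $F$ that is dense in $X^*$. First note that \eqref{eqBoundedrangequadraticvariation} together with $U_n \uparrow U$, $U_n \in \calA$, forces $\operQuadraVari{M}(\omega)([0,T]\times U) = \sup_n \operQuadraVari{M}(\omega)([0,T]\times U_n) < \infty$ $\Prob$-a.e., so $\operQuadraVari{M}(\omega)$ is a \emph{finite} measure on $[0,T]\times U$. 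For each $x^*,y^* \in F$, the bound in Theorem \ref{theoAlphaContinuousBilinearFormOnTheRing} gives $\abs{\alpha_{M}(\omega)(\cdot)(x^*,y^*)} \leq \norm{x^*}\norm{y^*}\,\operQuadraVari{M}(\omega)$ as measures, so the scalar signed random measure $C \mapsto \inner{\Gamma_M(\omega)(C)x^*}{y^*} = \alpha_M(\omega)(C)(x^*,y^*)$ is absolutely continuous with respect to $\operQuadraVari{M}(\omega)$. The Radon-Nikodym theorem then yields a density $q_{x^*,y^*}(\omega,\cdot,\cdot)$, unique $\operQuadraVari{M}(\omega)$-a.e., with $\abs{q_{x^*,y^*}} \leq \norm{x^*}\norm{y^*}$ $\operQuadraVari{M}(\omega)$-a.e.

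To secure predictability I would not invoke the abstract theorem blindly, but construct each $q_{x^*,y^*}$ as a limit of ratios $\alpha_M(\omega)(C)(x^*,y^*)/\operQuadraVari{M}(\omega)(C)$ over a refining sequence of predictable partitions of $[0,T]\times U$ (a martingale/Lebesgue-differentiation scheme). Since $\alpha_M(\cdot)((s,t]\times A)(x^*,y^*)$ agrees $\Prob$-a.e. with the predictable process $\quadraVari{M(A)(x^*),M(A)(y^*)}_{s}^{t}$ and $\operQuadraVari{M}$ is a predictable random measure, each ratio, and hence its limit, is $\calP_T \otimes \calB(U)$-measurable. Next, by the uniqueness of Radon-Nikodym derivatives, the relations $q_{rx^*+y^*,z^*} = r\,q_{x^*,z^*}+q_{y^*,z^*}$, $q_{x^*,ry^*+z^*}=r\,q_{x^*,y^*}+q_{x^*,z^*}$, $q_{x^*,y^*}=q_{y^*,x^*}$ and $q_{x^*,x^*}\geq 0$ each hold $\operQuadraVari{M}(\omega)$-a.e. for fixed arguments (the last two because quadratic covariation is symmetric and quadratic variation is increasing). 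Intersecting the corresponding full-measure sets over $r\in\Q$ and the countably many tuples from $F$, I obtain a set of full $\operQuadraVari{M}$-measure on which $(x^*,y^*)\mapsto q_{x^*,y^*}(\omega,r,u)$ is a symmetric, positive, $\Q$-bilinear form on $F\times F$ bounded by $\norm{x^*}\norm{y^*}$.

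On that full-measure set the bounded $\Q$-bilinear form extends uniquely to a bounded bilinear form on $X^*\times X^*$ of norm at most $1$, which defines $Q_M(\omega,r,u)\in\mathcal{L}(X^*,X^{**})$ via $\inner{Q_M(\omega,r,u)x^*}{y^*}=q_{x^*,y^*}(\omega,r,u)$ (set $Q_M\defeq 0$ off the full-measure set). Then \eqref{existenceofQ} holds for $x^*,y^*\in F$ by construction and extends to all of $X^*\times X^*$ by approximating arbitrary covectors by elements of $F$: the integrand is bounded by $\norm{x^*}\norm{y^*}$ and $\operQuadraVari{M}(\omega)$ is finite, so dominated convergence applies, while the left-hand side is continuous in $(x^*,y^*)$ because $\alpha_M(\omega)(C)$ is a bounded bilinear form. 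Property \ref{Qpredictable} is inherited from the predictable construction of the densities, and property \ref{Qpositiveandsymmetric} from the symmetry and positivity of $q$. For property \ref{normoneoftheoperatorQ}, observe that by \eqref{eqDefiAlphaMCovariation} one has $\alpha_M(\omega)((s,t]\times A)(x^*,x^*)=\quadraVari{M(A)(x^*)}_s^t=\nu_{x^*}((s,t]\times A)$ for every $x^*\in X^*$; since these agree on rectangles they agree as measures (Theorem \ref{theoremLemmaRemark}), so \eqref{existenceofQ} identifies $\inner{Q_M(\omega,r,u)x^*}{x^*}$ as the $\operQuadraVari{M}(\omega)$-density of $\nu_{x^*}(\omega)$. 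Taking a dense sequence $(x_n^*)$ in the unit sphere, Theorem \ref{theoUniquenessQuadraticVariation} gives $\operQuadraVari{M}=\sup_n\nu_{x_n^*}$, and Lemma \ref{lemmIntegralOfSupremum} forces $\sup_n \inner{Q_M x_n^*}{x_n^*}=1$ $\operQuadraVari{M}$-a.e. As $Q_M$ is positive and symmetric, $\norm{Q_M}=\sup_{\norm{x^*}=1}\inner{Q_M x^*}{x^*}$, which by continuity equals $\sup_n\inner{Q_M x_n^*}{x_n^*}=1$; combined with $\norm{Q_M}\leq 1$ this gives \ref{normoneoftheoperatorQ}.

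The main obstacle I anticipate is the predictable Radon-Nikodym step: the classical theorem is applied $\omega$-by-$\omega$ and produces no joint measurability in $\omega$, so one must manufacture a single $\calP_T\otimes\calB(U)$-measurable version of each density. The refining-partition (martingale) construction resolves this because every approximating ratio is manifestly predictable, but one must verify that the limit exists $\operQuadraVari{M}$-a.e. (a differentiation argument relative to the random measure $\operQuadraVari{M}$) and that the null sets accumulated over the countable family $F$ and the scalars $\Q$ can be absorbed into a single $\Prob$-null set. A secondary, more routine point is promoting the a.e.\ scalar bilinearity on $F$ to a genuine operator-valued map, which the continuity/extension argument above handles.
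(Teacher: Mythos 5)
Your proposal is correct and follows essentially the same route as the paper's proof: Radon--Nikodym densities dominated via Theorem \ref{theoAlphaContinuousBilinearFormOnTheRing}, predictable versions obtained as limits of ratios over refining predictable partitions (the paper's Radchenko-style argument, with Parthasarathy's differentiation theorem supplying the a.e.\ convergence you flag as the main obstacle), and the lift to $\mathcal{L}(X^*,X^{**})$ by the countable-dense-set construction that the paper outsources to Dinculeanu. Your derivation of property (iii) via Lemma \ref{lemmIntegralOfSupremum} is only a minor variant of the paper's contradiction argument on the level set $\{\norm{Q_M}\leq\beta\}$; both rest on the identification $\operQuadraVari{M}=\sup_{n}\nu_{x_n^*}$ from Theorem \ref{theoUniquenessQuadraticVariation}.
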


\begin{proof} First observe that by \eqref{eqAcotGammaDoleans}, there is a full probability set $\Omega_0 \subseteq \Omega$  such that for $\omega \in \Omega_0$ we have $\inner{\Gamma_{M}(\omega)x^*}{y^*} \ll \operQuadraVari{M}(\omega)$ on $ \mathcal{B}([0,T]) \otimes \mathcal{B}(U)$   for each $x^*, y^* \in X^*$, and hence there is a Radon-Nikodyn density $q_{x^{*},y^{*}}(\omega)$ of the real-valued measure $\inner{\Gamma_{M}(\omega)x^*}{y^*} $ with respect to $\operQuadraVari{M}(\omega)$. This density is $\calB([0,T]) \otimes \calB(U)$-measurable and satisfies $\abs{q_{x^{*},y^{*}}(\omega)(r,u)} \leq \norm{x^{*}}\norm{y^{*}}$, $\operQuadraVari{M}(\omega)$-a.e. 

Since $X^*$ is separable, one can choose a modification of $q_{x^{*},y^{*}}$, such that for every $x^*, y^{*} \in X^*$ the following holds true (see the construction in the proof of Theorem 1.2.34 in \cite{Dinculeanu:2000}, p.37): 
\begin{enumerate}[label=\alph*)]
    \item For  every $(\omega,r,u)$ and $x^{*} \in X^{*}$, the mapping $q_{x^{*}}(\omega)(r,u): y^{*} \mapsto q_{x^{*},y^{*}}(\omega)(r,u)$ is a continuous linear form on $X^*$ satisfying $\inner{q_{x^{*}}(\omega)(r,u)}{y^{*}}=q_{x^{*},y^{*}}(\omega)(r,u)$ and  $\norm{q_{x^{*}}(\omega)(r,u)} \leq \norm{x^*}$. 
    \item For each $(\omega,r,u)$, the mapping $Q_{M}(\omega,r,u): x^{*} \mapsto q_{x^{*}}(\omega)(r,u)$ belongs to $\mathcal{L}(X^*, X^{**})$,  $ \inner{Q_{M}(\omega,r,u)x^*}{y^*} = q_{x^{*},y^{*}}(\omega)(r,u)$ for each $x^*  , y^* \in X^*$ and $\norm{Q_{M}} \leq 1$. 
\end{enumerate}

By the properties described above, we get that the mapping  
$Q_{M}:  \Omega \times \R_{+} \times U \rightarrow \mathcal{L}(X^{*},X^{**})$ satisfies \eqref{existenceofQ}.

To prove \ref{Qpredictable}, we use a modification of the proof for Theorem 3 in \cite{Radchenko:1989}.  Let $\mathcal{G} = \{ G_k : k \geq 1 \}$ be the collection of rectangles of the form $(s,t]\times A$, where $s < t$ are rational, and $A$ belongs to a countable family of sets that generates $\calB(U)$; this collection generates $\calB = \calB([0,T]) \otimes \calB(U)$.  For $n \in \mathbb{N}$, let $\Pi_n$ denote the finest partition of $[0,T]\times U$ by sets of the (finite) algebra generated by $\mathcal{G}_n = \{ G_k : 1 \leq k \leq n \}$.  Note that $\Pi_n \subseteq \Pi_{n+1}$ for every $n$, and $\sigma(\bigcup_{n \geq 1} \Pi_n) = \bigcup_{n \geq 1}\sigma(\Pi_n) =   \calB$.  Also, if $C \in \Pi_n$, then $C$ is of the form $(s,t] \times A$, where $s,t \in \mathbb{Q}$ and $A \in \calB(U)$.

For $\omega \in \Omega_0$ and $x^*, y^* \in X^*$, define the sequence of functions $( \phi_n : n\geq 1 )$ by 
$$
\phi_n(\omega)(r,u) = \frac{ \inner{\Gamma_{M}(\omega)(C)x^*}{y^*} }{ \operQuadraVari{M}(\omega)(C) }, \quad \mbox{if } (r,u) \in C \in \Pi_n, \ \operQuadraVari{M}(\omega)(C) > 0.
$$
For fixed $\omega \in \Omega_0$ and $\lambda > 0$, the level set $\{ (r,u) \in [0,T]\times U : \phi_n(\omega)(r,u) > \lambda  \}$ is the (finite) union of those sets $C$ in $\Pi_n$ that satisfy
$$
\inner{\Gamma_{M}(\omega)(C)x^*}{y^*} > \lambda \operQuadraVari{M}(\omega)(C), \quad \operQuadraVari{M}(\omega)(C) > 0. 
$$
If $C = (s,t] \times A$, the predictability of both measures tells us that the set of possible values $\omega$ for which the previous conditions are satisfied is $\calF_s$-measurable, which implies that the functions $\phi_n$ are predictable. 

Finally, by Theorem 48.3 in \cite{Parthasarathy:1978}, we have that the sequence $\phi_n(\omega)$ converges to $q_{x^*, y^*}(\omega)$ $\operQuadraVari{M}(\omega)$-a.e.  Therefore the map $(\omega, r, u) \mapsto q_{x^*, y^*}(\omega)(r,u)$ is predictable, and so is $\inner{Q_M (\cdot) x^*}{y^*}$. 

By the construction of $Q_M$ and the predictability proven in the previous step, \ref{Qpositiveandsymmetric} holds true.

We have, by the previous construction, that for all $\omega \in \Omega_0$, $\norm{Q_{M}(\omega,r,u)} \leq 1$ $\forall (r,u)\in [0,T]\times U$. We claim that for all $\omega \in \Omega_0$, $\norm{Q_{M}(\omega,\cdot,\cdot)} = 1$, $\operQuadraVari{M}$-a.e. on $[0,T] \times U$. 

Suppose that there is $0< \beta < 1$ such that the set $C = \{ (r,u) \in [0,T] \times U : \| Q_M(\omega, r, u) \| \leq \beta \}$ has positive $\operQuadraVari{M}$-measure. Let $(x_n^*)$ be any dense sequence on the unitary sphere. Since by Theorem \ref{theoUniquenessQuadraticVariation} $ \operQuadraVari{M}(\omega)= \sup_{\| x_n^* \| = 1 } \nu_{x_n^{*}}(\omega)  = \sup_{n\geq 1} \alpha_{M}(\omega)(x_n^{*},x_n^{*})$.   
We have, by \eqref{eqDefiMeasureGammaM} and \eqref{existenceofQ} that  
\begin{align*}
\alpha_{M}(\omega)(C)(x_n^{*},x_n^{*}) & =  \inner{\Gamma_M(\omega)(C)x_n^*}{x_n^*} \\
& = \int_{C}  \inner{Q_{M}(\omega,r,u)x_n^*}{x_n^*} \, \operQuadraVari{M}(\omega)(dr,du) \leq \operQuadraVari{M}(C)
\end{align*}
Taking the supremum over $n$, we obtain $\operQuadraVari{M}(C) \leq \beta \operQuadraVari{M}(C)$, which is only possible if $\operQuadraVari{M}(C) = 0$.  Therefore \ref{normoneoftheoperatorQ} is satisfied.
\end{proof}

In many practical situations one can compute $Q_{M}$ via the identity 
\begin{equation}\label{eqAlphaMCalculationQM}
\alpha_{M}(\omega)( (s,t] \times A ) (x^{*},x^{*}) = \int_{(s,t] \times A}  \inner{Q_{M}(\omega,r,u)x^*}{x^*} \, \operQuadraVari{M}(\omega)(dr,du), 
 \end{equation}
by calculating $\alpha_{M}$ and $\operQuadraVari{M}$ beforehand. This idea is explored in the following examples.

\begin{example}\label{exampleCylinMartingaleDeltaLocBounCova}
Let $T>0$ and let $M$ denotes the cylindrical martingale-valued measure defined in Example \ref{examCMVMCylindriLevyProcesses}. We have 
$$ \nu_{x^{*}}(\omega)(ds,du)=\sum_{k=1}^{n} \lambda_{\quadraVari{Z^{k}(\omega)(x^{*})}}(ds) \delta_{a_{k}}(du) = \sum_{k=1}^{n}  ( Q^{k} (x^{*} ) x^{*}) ds\delta_{a_{k}}(du) , $$
and 
$$\operQuadraVari{M}(\omega)(ds,du)= \sum_{k=1}^{n} \norm{Q^{k}} ds \delta_{a_{k}}(du).$$ 
Observe that \eqref{eqBoundedrangequadraticvariation} is satisfied, since
$$\sup_{A \in \calA} \operQuadraVari{M}(\omega)([0,T]\times A) \leq T \sum_{k=1}^{n} \norm{Q^{k}} < \infty. $$
Then there exists a process $Q_{M}: \Omega \times [0,T]  \times U \rightarrow \mathcal{L}(X^*, X^{**})$ satisfying the conditions in Theorem \ref{theoExistenCovariaOperatorQ}.

Notice that by \eqref{eqDefiAlphaMRandomMeasure} we have 
$$ \alpha_{M}(\omega)( (s,t] \times A) (x^{*},x^{*})=  \tfrac{1}{4}\nu_{2 x^{*}}(\omega)((s,t] \times A))
= (t-s) \sum_{k=1}^{n} (Q^{k}(x^{*})x^{*}) \delta_{a_{k}}(A).$$

Then by \eqref{eqAlphaMCalculationQM} we conclude
$$ Q_{M}(\omega, r, u) = \sum_{k=1}^{n} \frac{Q^{k}}{\norm{Q^{k}}} \mathbbm{1}_{\{a_{k}\}}(u). $$
\end{example}

\begin{example}\label{exampleCMVMWithFamilySeminorms}
Let $H$ be a separable Hilbert space. In \cite{AlvaradoFonseca:2021} they consider cylindrical martingale-valued measures  for which the following properties are satisfied:
\begin{enumerate} 
\item \label{indepIncrementsCMVM} For $0\leq s < t$, $(M(t,A)- M(s,A))(h)$ is independent of $\mathcal{F}_{s}$, for all $A \in \mathcal{A}$, $h \in H$. 
\item \label{nuclearCovarianceCMVM} For each $A \in \mathcal{A}$ and $0 \leq s < t$, 
$$
\Exp \left[ \abs{ (M(t,A)- M(s,A))(h)}^{2} \right] = \int_{s}^{t} \int_{A} q_{r,u}(h)^{2} \gamma(du) m(dr) , \quad \forall \, h \in H.
$$
where 
\begin{enumerate}
	\item  $m$ is a $\sigma$-finite measure on $(\R_{+},\mathcal{B}(\R_{+}))$, finite on bounded intervals,
 	\item $\gamma$ is a $\sigma$-finite measure on $(U, \mathcal{B}(U))$ satisfying $\gamma(A)< \infty$, $\forall \, A \in \mathcal{A}$,
	\item $\{q_{r,u}: r \in \R_{+}, \, u \in U \}$ is a family of  continuous Hilbertian semi-norms on $H$, such that for each $h_{1}$, $h_{2}$ in $H$, the map $(r,u) \mapsto q_{r,u}(h_{1},h_{2})$ is $\mathcal{B}(\R_{+}) \otimes \mathcal{B}(U)/ \mathcal{B}(\R_{+})$-measurable and  bounded on $[0,T] \times U$ for all $T>0$. Here, $q_{r,u}(\cdot,\cdot)$ denotes the continuous positive, symmetric, bilinear form on $H \times H$  associated to $q_{r,u}(\cdot) $.  
\end{enumerate} 
\end{enumerate}

One can infer from \ref{indepIncrementsCMVM} and \ref{nuclearCovarianceCMVM} above that the
family of intensity measures of $M$ is of the form:
$$ \nu_{h}(\omega)(C) = \int_{C}
q_{r,u}(h)^{2} (m \otimes \gamma) (dr,du), \quad \forall \, C \in \mathcal{B}(\R_{+}) \otimes \mathcal{B}(U),$$

We shall prove that the family $(\nu_{h})$ satisfies the sequential boundedness property. 
By Lemma 3.5 in \cite{AlvaradoFonseca:2021} for any given $T>0$ there exists $K=K(T)>0$ such that 
\begin{equation}\label{eqUniforBoundedSeminormsQru}
q_{r,u}(\cdot) \leq K \norm{\cdot}, \quad \forall (r,u) \in [0,T] \times U.   
\end{equation}
Let $(h_n)$ be a sequence on the unit sphere such that $h_n\fle h$. 
\begin{eqnarray*}
    \abs{\nu_{h_n}(\omega)(C)- \nu_{h}(\omega)(C)} & \leq & 
    \int_C \abs{q_{r,u}(h)^2 - q_{r,u}(h_n)^2} (m\otimes \gamma)(dr,du) \\
 & \leq & 2K^2 \norm{h_n-h} (m\otimes \gamma)(C) \fle 0.
\end{eqnarray*}
It follows that
$$
\nu_{h}(\omega)(C) = \lim \nu_{h_n}(\omega)(C) \leq \sup_{n\geq 1}\nu_{h_n}(\omega)(C).
$$
By Remark \ref{remaSufCondSeqBound} we conclude that $(\nu_{h})$ satisfies the sequential boundedness property.

Now we prove that $M$ has a quadratic variation. First notice that for all $C \in \mathcal{B}(\R_{+}) \otimes \mathcal{B}(U)$, 
$$  \nu_{h}(\omega)(C) \leq  K^{2} \int_{C}
\norm{h}^{2} (m \otimes \gamma ) (dr,du) = K^{2} \norm{h}^{2} (m \otimes \gamma)(C). $$
We can construct a measure that dominates each $\nu_{h}$ for $\norm{h}=1$. In fact, for each $n \in \N$ let $K_{n}$ with the property \eqref{eqUniforBoundedSeminormsQru}.   Define a measure $\kappa $ on  $\mathcal{B}(\R_{+}) \otimes \mathcal{B}(U)$ by the prescription
$$ \kappa (C)= \sum_{n \in \N} K_{n} (m \otimes \gamma)(C \cap ([n-1,n) \times U)).$$
Observe that for $t>0$ and $A \in \mathcal{A}$, if $n-1 \leq  t <n$ then 
$$ \kappa([0,t] \times A) \leq \left( \max_{1 \leq k \leq n} K_{n} \right) m([0,t]) \gamma(A) < \infty. $$

Moreover, from the calculations above we have $\nu_{h} \leq \kappa$ on $\mathcal{B}(\R_{+}) \otimes \mathcal{B}(U)$  $\forall \norm{h}=1$. Therefore 
$$
\sup_{n\geq 1} \nu_{h_n} \leq \sup_{\norm{h}=1} \nu_h \leq \kappa.
$$
Then, by Theorem \ref{theoSuffiCondiExistQuadraVariat}, $M$ has a unique quadratic variation  $\operQuadraVari{M}=\sup_{n\geq 1} \nu_{h_n}$. Furthermore by Lemma \ref{lemmIntegralOfSupremum}, we have 
$$ \operQuadraVari{M}(C)=\sup_{n\geq 1} \nu_{h_n}(C)=
\int_{C} \sup_{n\geq 1} q_{r,u}(h_n)^{2} (m \otimes \gamma ) (dr,du),$$
for all $C \in \mathcal{B}(\R_{+}) \otimes \mathcal{B}(U)$. 
If we further assume that 
$$
\int_{0}^{T} \int_{U} \sup_{n} q_{r,u}(h_n)^{2} \gamma(du) m(dr) < \infty,
$$
(this holds true if, for example, $\gamma(U)< \infty$ since by \eqref{eqUniforBoundedSeminormsQru} we have $\mu_{M} (\Omega \times [0,T] \times U) \leq K m([0,T])\gamma(U)$) then \eqref{eqBoundedrangequadraticvariation} is satisfied, and hence the conditions in Theorem \ref{theoExistenCovariaOperatorQ}.  Our next goal is to calculate the  process $Q_{M}: \Omega \times [0,T]  \times U \rightarrow \mathcal{L}(H, H)$; 
to do this, notice that by definition we have 
$$ \alpha_{M}(\omega)((s,t] \times A) (h,h)=  \tfrac{1}{4} \nu_{2 h}((s,t] \times A)
= \int_{s}^{t} \int_{A} q_{r,u}(h)^{2} \gamma(du) m(dr).$$

Then by \eqref{eqAlphaMCalculationQM} we conclude that $Q_{M}(\omega, r, u) \in \mathcal{L}(H,H)$ is defined via  the identity
$$ (Q_{M}(\omega, r, u) h,g)_{H} = \frac{q_{r,u}(h,g)}{\norm{ q_{r,u}(\cdot,\cdot)}_{\goth{Bil}(H,H)}}, \quad \forall h,g \in H,$$
where $q_{r,u}(\cdot,\cdot)$ denotes the continuous positive, symmetric, bilinear form on $H \times H$ associated to the continuous Hilbertian semi-norm $q_{r,u}$. 
\end{example}

\begin{example}\label{exampleLevyHValuedMVM} In this example we recall a construction of a cylindrical martingale-valued measures defined by an $H$-valued c\`{a}dl\`{a}g L\'{e}vy process $L=(L_{t}: t \geq 0)$. For definitions and notation we refer to  Example 3.2 in \cite{AlvaradoFonseca:2021}. 
Assume $L$ has a L\'{e}vy-It\^{o} decomposition
\begin{equation}\label{eqLevyItoDecomp}
L_{t}= t \xi +W_{t}+\int_{\norm{h} <1} \, h \,  \widetilde{N}(t,dh)+ \int_{\norm{h} \geq 1} \, h \, N(t,dh). 
\end{equation}
Let $\lambda$ denotes the L\'{e}vy measure of $L$. Recall  $\lambda$ is a Borel measure on $H$ with $\lambda( \{ 0 \})=0$, and  $\displaystyle{ \int_{H} \norm{h}^{2} \wedge 1 \,  \lambda (dh)< \infty}$.  Let $Q$ denotes the positive trace class operator corresponding to the covariance operator of the Wiener process $W=(W_{t}:t \geq 0)$.

Let $U \in \mathcal{B}(H)$ be such that $0 \in U$ and $\int_{U} \, \norm{u}^{2} \lambda(du)< \infty$. Take $ \mathcal{A}= \{ A \subseteq U : A - \{0\} \textup{ is bounded below}\}$ and let $M=(M(t,A): t \geq 0, A \in \mathcal{A})$ be given by
\begin{equation} \label{levyMartValuedMeasExam} 
M(t,A) = W_{t} \delta_{0}(A) + \int_{A \backslash \{0 \}} \,  u \, \widetilde{N}(t,du), \quad \forall \, t \geq 0, \, A \in \mathcal{A}. 
\end{equation}    
As shown in Example 3.2 in \cite{AlvaradoFonseca:2021}, $M$ given by \eqref{levyMartValuedMeasExam} is a 
particular case of the cylindrical martingale-valued measures introduced in Example \ref{exampleCMVMWithFamilySeminorms}. Indeed, with the notation introduced above we can take $m$ as the Lebesgue measure on $(\R_{+}, \mathcal{B}(\R_{+}))$, $\gamma= \delta_{0}+\restr{\lambda}{U}$, and the family $\{q_{r,u}: r \in \R_{+}, \, u \in U \}$ is given by
$$
q_{r,u}(h)^{2}=
\begin{cases}
(h,Qh)_{H}, \mbox{ if } u =0, \\
(u,h)_{H}^{2}, \mbox{ if } u \in U \setminus \{0\}. 
\end{cases}
$$ 
Then  
\begin{equation*}\label{eqMeasuresNuhLevyCMVM}
\nu_{h}((s,t] \times A)(\omega)=(t-s)   \left[ (h,Qh)_{H} \delta_{0}(A) + \int_{A \backslash \{0 \}} (u,h)_{H}^{2}  \, \lambda(du) \right]. 
\end{equation*}
and the quadratic variation of $M$ is given by 
\begin{equation}\label{eqQuadraticVariationLevyCMVM}
\operQuadraVari{M}((s,t] \times A) = (t-s) \left[ \norm{Q} \delta_{0}(A) + \int_{A \backslash \{0\}} \, \norm{u}^{2} \, \lambda(du) \right].  
\end{equation}
Furthermore, 
$$\int_{0}^{T} \int_{U} \sup_{n} q_{r,u}(h_n)^{2} \gamma(du) m(dr)   \leq   T   \left[ \norm{Q}_{\mathcal{L}_{1}(H)}  + \int_{U} \norm{u}^{2}  \, \lambda(du) \right]  < \infty. $$ 
Thus condition \eqref{eqBoundedrangequadraticvariation} is satisfied. 
Then we can apply Theorem \ref{theoExistenCovariaOperatorQ} to obtain the existence of the corresponding process $Q_{M}: \Omega \times [0,T]  \times U \rightarrow \mathcal{L}(X^*, X^{**})$.  
In this case
\begin{eqnarray*}
\alpha_{M}(\omega)((s,t] \times A) (h,h)
& = & (t-s) \left[ (h,Qh)_{H} \delta_{0}(A) + \int_{A \backslash \{0 \}} (u,h)_{H}^{2}  \, \lambda(du) \right], 
\end{eqnarray*}

Then by \eqref{eqAlphaMCalculationQM} we have
\begin{equation}\label{eqOperatorQMLevyCMVM}
Q_{M}(\omega, r, u) = \frac{Q}{\norm{Q}} \mathbbm{1}_{\{0\}}(u) + P_{u} \mathbbm{1}_{U \setminus \{0\}}(u),     
\end{equation}
where for $u \neq 0$,  $P_{u} \in \mathcal{L}(H)$ is given by 
$$ P_{u}(h)= \frac{(u,h)_{H} }{\norm{u}^2} u, \quad \forall h \in H. $$
\end{example}

Other examples of cylindrical martingale-valued measures with the conditions given in Example \ref{exampleCMVMWithFamilySeminorms} can be found in Section 3 in \cite{AlvaradoFonseca:2021}.

\subsection{A discussion on cylindrical Poisson random measures}\label{subSectCylindricalPRM}

Poisson random measures provide a very common type of noise used in the modeling of stochastic ordinary and partial differential equations (see \cite{BrzezniakLiuZhu:2014, KallianpurXiong, MandrekarRudiger, PeszatZabczykSPDE, WangYangZhaiZhang:2024, WuYangWangSong:2021} to cite but a few). For that reason, it is customary to discuss on the possibility of introducing cylindrical versions of Poisson random measures that could fit into our theory of cylindrical orthogonal martingale-valued measures. We are not aware of any work on the literature that introduces such concept. 

From now on in this Section, $U$ is a Lusin space with corresponding ring $\mathcal{A}$ as the ones considered in Section \ref{sectHilbertMartValuedMeasures}. Let $E=\R_{+} \times U$ and $\mathcal{E}=\mathcal{B}(\R_{+}) \otimes \mathcal{B}(U)$. Consider the following conditions for a mapping $N:\Omega \times \mathcal{E} \times X^{*} \rightarrow \N_{0}$.
\begin{enumerate}
    \item \label{measurePropPRM} $N(\omega,\cdot,x^{*})$ is a measure on $\mathcal{E}$ for every $\omega \in \Omega$ and $x^{*} \in X^{*}$.
    \item $N(\cdot, [0,t] \times A, x^{*})$ is $\mathcal{F}_{t}$-measurable and has a Poisson distribution with intensity $t \lambda_{A,x^{*}}$ for each $t >0$, $A \in \mathcal{A}$ and $x^{*} \in  X^{*}$.  
    \item For every $x^{*} \in  X^{*}$, $N(\cdot, \cdot, x^{*})$ is an independently scattered measure, that is,  $N(\cdot, [0,s_{1}] \times A_{1}, x^{*}), \dots, N(\cdot, [0,s_{n}] \times A_{n}, x^{*})$ are independent for disjoint $A_{1}, \dots, A_{n} \in \mathcal{A}$ and any choice of $s_{1}, \dots, s_{n} \in \R_{+}$.  
    \item \label{cylinPropertyPRM} For any given $t \geq 0$, $ A \in \mathcal{A}$, the mapping $x^{*} \mapsto N(\cdot, [0,t] \times A, x^{*})$ is linear and continuous from $X^*$ into $L^{0}(\Omega, \mathcal{F}, \Prob)$. 
\end{enumerate}
It is reasonable to define a cylindrical Poisson random measure as one satisfying conditions (i) - (iv). Although that seems reasonable, there is an incompatibility between the linearity in $x^{*}$ (the cylindrical property of $N$) and the fact that $N$ is $\N_{0}$-valued (it is Poisson distributed).  In fact, let $x^{*} \in X^{*}$ ($x^{*}\neq 0$), $t> 0$ and $A \in \mathcal{A}$. Observe that by \ref{cylinPropertyPRM} we must have $N(\cdot, [0,t] \times A, x^{*}) + N(\cdot, [0,t] \times A, -x^{*})= N(\cdot, [0,t] \times A, 0)=0$ $\Prob$-a.e., therefore either $N(\cdot, [0,t] \times A, x^{*})$ or $N(\cdot, [0,t] \times A, x^{*})$ must take negative values, which is a contradiction. A similar contradiction is reached, for instance, by comparing $\alpha N(\cdot, [0,t] \times A, x^{*})$ with $N(\cdot, [0,t] \times A, \alpha x^{*})$ for real $\alpha \notin \N_{0}$. 

The arguments in the above paragraph show that one can not define a cylindrical version of the Poisson random measure satisfying \ref{measurePropPRM}-\ref{cylinPropertyPRM} above. However, the above does not prevent us from defining a cylindrical martingale valued measure which uses a Poisson random measure as a building block. Below we illustrate this idea with a couple of examples wherein the corresponding cylindrical martingale-valued measure possesses a quadratic variation in the sense of Definition \ref{defiQuadraticVariation}. 

\begin{example}\label{exampleLinearCPoissonRM}
Let $\lambda$ be a measure on $U$ for which there exists a sequence $U_{n} \in \mathcal{A}$, $n \in \N$, such that $U_{n} \nearrow U$ and $\lambda(U_{n})<\infty$. Let $p=(p(\omega,s))$ be a stationary Poisson point processes with characteristic (intensity) measure $\lambda$. The corresponding Poisson random measure $N$, given by 
\begin{equation} \label{eqDefiPoissonRMPointProcess}
N(\omega, [0,t] \times A)=\sum_{0 \leq s \leq t} \mathbbm{1}_{A}(p(\omega,s)), \quad \forall \omega \in \Omega, \, t \geq 0, \, A \in \mathcal{B}(U),    
\end{equation}
satisfies $N(dtdu)=dt\lambda(du)$. We assume $N(\omega, [0,t] \times A)$ is finite (hence Poisson distributed) for every $A \in \mathcal{A}$. The corresponding compensated Poisson random measure is given by 
$$ \tilde{N}([0,t]\times A)=N([0,t] \times A)-t \lambda(A).$$ 

Let $X$ be a Banach space with separable dual $X^{*}$ and let $\xi \in X$, $\xi \neq 0$. Generalizing the construction in Example 3.4 in \cite{ApplebaumRiedle:2010}, define $M=(M(t,A): t \geq 0, A \in \mathcal{A})$ by
$$ M(t,A)(x^{*})=\inner{\xi}{x^{*}}\tilde{N}(\cdot, [0,t]\times A), \quad \forall t \geq 0, \, A \in \mathcal{A}, \, x^{*} \in X^{*}.$$
One can easily check that $M$ is an orthogonal cylindrical martingale-valued measure. Moreover, since
$$ \nu_{x^{*}}((s,t]\times A)=\abs{\inner{\xi}{x^{*}}}^{2} (t-s) \lambda(A),$$
the quadratic variation of $M$ is given by 
\begin{equation} \label{eqQuadraVariaExamCylPoissonRM}
 \operQuadraVari{M}((s,t]\times A)=\norm{\xi}_{X}^{2} (t-s) \lambda(A),     
\end{equation}
that is, $\operQuadraVari{M}(ds,du)=\norm{\xi}_{X}^{2} (\mbox{Leb} \otimes \lambda )(ds,du)$.

Assume moreover that the measure $\lambda$ satisfies $\sup_{A \in \calA} \lambda(A)<\infty$. Then, the quadratic variation $\operQuadraVari{M}$ of $M$ satisfies \eqref{eqBoundedrangequadraticvariation}. By Theorem \ref{theoAlphaMBilinearVectorMeasure} we have 
$$ \alpha_{M}((s,t] \times A)(x^{*},y^{*})=\inner{\xi}{x^{*}} \inner{\xi}{y^{*}}(t-s)\lambda(A).$$
Then by Theorem \ref{theoExistenCovariaOperatorQ} the corresponding process $Q_{M}: \Omega \times [0,T]  \times U \rightarrow \mathcal{L}(X^*, X^{**})$ is given by
\begin{equation}\label{operatorQMExampleCylinPoissonRM}
Q_{M}(\omega,r,u)x^{*}=\frac{\inner{\xi}{x^{*}}\xi}{\norm{\xi}_{X}^{2}}.     
\end{equation}
\end{example}

\begin{example}\label{exampleNonLinearCylinPRM}
    Let $\rho$ be a Radon $\sigma$-finite measure on $U$ such that $\rho(\{ x\})=0$ for each $x\in U$. By using Exercise $7.11$ in \cite{Folland:1999}, and the regularity of $\rho$, it can be shown that any finite measure set can be expressed as a countable union of sets of measure less than $1$. Let $\tilde{N}$ be a compensated Poisson random measure on $[0, \infty) \times U$, with intensity measure $\textup{Leb} \otimes \rho$ (see for example \cite[Section 6.1]{PeszatZabczykSPDE}).   For $p > 1$ consider the space $X = L^1(U) + L^p(U)$ with the norm 
     $$ \norm{ \phi }_{L^1(U) + L^p(U)} = \inf \{ \norm{ f }_{L^1(U)} + \norm{ g }_{L^p(U)}: \phi=f+g, \, f \in L^1(U), \, g \in L^p(U) \}, $$
    which is a separable Banach space with dual $X^* = L^{\infty}(U) \cap L^q(U)$, with norm $\|\! \cdot \! \|_{L^{\infty}(U) \cap L^q(U)} = \max\{ \|\! \cdot \! \|_{L^{\infty}(U)} , \|\! \cdot \! \|_{L^q(U)} \}$, where $q$ is the conjugate index of $p$ (see \cite[Chapter 4]{BennettSarpley:1988}).  Let $\beta \in L^2(U)$, and for $t > 0$, $A \in \mathcal{A}$ and $\varphi \in X^*$ define
    $$
    M(t, A)(\varphi) = \int_{0}^{t}\!\! \int_A \varphi(u)\beta(u) \tilde{N}(dr, du).
    $$
    By the square integrability of $\beta$ and the boundedness of $\varphi$, the previous integral is well defined.  It is not difficult to see that $M$ is a cylindrical orthogonal martingale-valued measure on $X^*$.  Its family of intensity measures (and real quadratic variation) is given by (see \cite[{Section 7.3}]{PeszatZabczykSPDE})
    \begin{equation}\label{eqIntensMeasurExaIntegPoissonRM}
    \nu_{\varphi}((0,t] \times A) = \Exp[|M(t, A)(\varphi)|^2] = t \| \varphi \beta \|^2_{L^2(A)}.     
    \end{equation}
    Note that, for every $\varphi$ in the unit sphere of $X^*$ we have
    \begin{align*}
     \nu_{\varphi}((0,t] \times A) & \leq t \| \varphi \|_{L^{\infty}(A)}^2 \| \beta \|_{L^{2}(A)}^2    \leq t \| \varphi \|_{L^{\infty}(A) \cap L^q (A)}^2 \| \beta \|_{L^{2}(A)}^2 \\
     & \leq t \| \beta \|_{L^{2}(A)}^2 = t \int_A |\beta(u)|^2 \, \rho(du) \\
     & = \textup{Leb}\otimes \eta(A),
    \end{align*}
    where $d\eta = |\beta|^2 \, d\rho $ is a finite measure on $\mathcal{B}(U)$. By the usual extension process, $\nu_{\varphi} \leq \textup{Leb} \otimes \eta$ for all $\varphi$ in the unit sphere of $X^*$, therefore there is a quadratic variation 
    $$
    \operQuadraVari{M} = \sup_{\norm{\varphi}=1} \nu_{\varphi} = Leb\otimes \mu \leq Leb\otimes \eta,
    $$
    where
    $$
    \mu := \sup_{\norm{\varphi}=1} \mu_{\varphi} \leq \eta,\qquad d\mu_{\varphi} = \abs{\varphi}^2\abs{\beta}^2 d\rho.
    $$
    Now let $A\in\calA$ such that $0<\rho(A)\leq 1$. Taking $\varphi := \caract_{A}$ we have $\norm{\varphi} = \max\{ 1, \rho(A)^{1/q} \} = 1$ and therefore
    $$
    \mu(A) \geq \mu_{\varphi}(A) = \eta(A).
    $$
    Since every set on $\calA$ can be partitioned as a countable number of sets $A_n$ with $\rho(A_n)\leq 1$, this implies $\mu = \eta$ and therefore 
    $$
    \operQuadraVari{M} = Leb\otimes \eta.
    $$ 
\end{example}

\begin{remark}\label{remaNotExistQuaVariaCyliPoissonIntegral}
    We could try to extend the example on Section $7.2$ of \cite{PeszatZabczykSPDE}, by defining a cylindrical Poisson random measure
    $$
    M(t,A)(h) = \int_0^t\int_A\int_B h(u)\beta \tilde{N}(dr,du,d\beta), \quad h\in L^2(U).
    $$
    However, in that case we would obtain
    $$
    \nu_h(A) = \theta \int_A h^2(u)du,\quad \theta = \int_B \beta^2 d\rho(\beta)
    $$
    and we can show, similarly to the Example \ref{examCounterExampleCMVMStochasticIntegral}, that the quadratic variation does not exist as a finite measure. The trouble comes from the fact that $h$ can take arbitrarily large values on sets of small measure, giving rise to a large number of disjoint sets $C$ for which
    $$
    \sup_{\norm{h}_{L^2(U)}=1} \nu_h(C) \geq 1.
    $$
    The boundedness of $h$ in Example \ref{exampleNonLinearCylinPRM} allows us to overcome this obstacle.
\end{remark}

\section{Stochastic integration}\label{sectStochasticIntegration}

In this section we use our recently developed theory of quadratic variation in order to develop a theory of Hilbert-space valued stochastic integration for operator-valued processes with respect to cylindrical martingale-valued measures.  

\subsection{Construction of the stochastic integral}\label{subsecConstrIntegral}

We consider Hilbert spaces $H$, $G$. We denote by $\mathcal{HS}(H,G)$  the space of Hilbert-Schmidt operators from $H$ to $G$. \medskip

From now on we assume that $M$ is a cylindrical orthogonal martingale-valued measure on $H$ (Definition \ref{defiCylindricalOrthoMartinValuMeasure}) with an unique quadratic variation $\operQuadraVari{M}$, and satisfying the conditions of Theorem \ref{theoExistenCovariaOperatorQ}.  Let $Q_M: \Omega \times [0, T] \times U \to \mathcal{L}(H,H)$ be the corresponding operator-valued process given in \eqref{existenceofQ}. For $0\leq s < t\leq T$ and $A\in \calA$ it is convenient to denote
$$
M\left( (s,t],A \right) = M(t,A)-M(s,A)
$$

\begin{definition}
We define the Dol\'eans measure $\mu_M$ associated to $M$ on  $(\Omega\times [0,T]\times U,\mathcal{P}_{T}\otimes \mathcal{B}(U)) $ by
$$
\mu_M(C) = \mathbb{E} \int_{[0,T]\times U} \caract_{C}\, d\operQuadraVari{M}, \quad C \in \mathcal{P}_{T}\otimes \mathcal{B}(U). 
$$
\end{definition}

\begin{lemma}\label{lemmEqualIntegralDoleans}
Let $g: \Omega \times [0,T] \times U \rightarrow [0,\infty)$ be predictable. Then, 
\begin{equation}\label{eqEqualIntegralDoleans}
\int_{\Omega \times [0,T] \times U} g(\omega, s,u) \,  d \mu_{M}(\omega, s,u)
= \mathbb{E} \left[ \int_{[0,T]\times U} g(\cdot, s,u) \,  \operQuadraVari{M}(ds, du) \right].    
\end{equation}
\end{lemma}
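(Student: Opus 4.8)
The plan is to use the standard machine (monotone class argument): establish the identity \eqref{eqEqualIntegralDoleans} first for indicator functions, extend it to non-negative simple functions by linearity, and then to an arbitrary non-negative predictable $g$ by monotone convergence. Throughout, the structural fact I would invoke is that $\operQuadraVari{M}$ is a predictable random measure (Theorem \ref{theoUniquenessQuadraticVariation}), so that for each $C \in \mathcal{P}_T \otimes \mathcal{B}(U)$ the map $\omega \mapsto \int_{[0,T]\times U}\caract_C(\omega,\cdot)\,d\operQuadraVari{M}(\omega)$ is measurable and the right-hand side of \eqref{eqEqualIntegralDoleans} is well defined.

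First I would take $g = \caract_C$ with $C \in \mathcal{P}_T \otimes \mathcal{B}(U)$. Then the left-hand side of \eqref{eqEqualIntegralDoleans} equals $\mu_M(C)$ by definition of the integral against $\mu_M$, while the right-hand side equals $\mathbb{E}\int_{[0,T]\times U}\caract_C\,d\operQuadraVari{M}$, which is exactly the definition of $\mu_M(C)$. Hence the identity holds for indicators. By linearity of the Lebesgue integral on both sides, it extends at once to every non-negative simple predictable function $g = \sum_{i=1}^{k} c_i \caract_{C_i}$ with $c_i \geq 0$ and $C_i \in \mathcal{P}_T \otimes \mathcal{B}(U)$.

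For a general non-negative predictable $g$, I would choose an increasing sequence $(g_n)$ of non-negative simple predictable functions with $g_n \uparrow g$ pointwise. On the left-hand side, the monotone convergence theorem applied to the measure $\mu_M$ gives $\int g_n\,d\mu_M \uparrow \int g\,d\mu_M$. On the right-hand side I would apply monotone convergence twice: for each fixed $\omega$, monotone convergence for the measure $\operQuadraVari{M}(\omega)$ yields $\int_{[0,T]\times U} g_n(\omega,\cdot)\,d\operQuadraVari{M}(\omega) \uparrow \int_{[0,T]\times U} g(\omega,\cdot)\,d\operQuadraVari{M}(\omega)$, and since each $g_n$ is simple this integral is measurable in $\omega$, so its increasing limit is measurable as well; a second application of monotone convergence for $\mathbb{E}$ then gives $\mathbb{E}\int g_n\,d\operQuadraVari{M} \uparrow \mathbb{E}\int g\,d\operQuadraVari{M}$. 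Passing to the limit in the simple-function identity yields \eqref{eqEqualIntegralDoleans}.

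The only point requiring a little care --- and the step I expect to be the main (mild) obstacle --- is the measurability of $\omega \mapsto \int_{[0,T]\times U} g(\omega,\cdot)\,d\operQuadraVari{M}(\omega)$ for general $g$, which is needed for the right-hand expectation to make sense; this is settled by the random-measure property of $\operQuadraVari{M}$ for simple $g$ and then preserved under the monotone limit. One should also note in passing that $\mu_M$ is genuinely a measure: its countable additivity follows from the same double monotone-convergence argument applied to a disjoint decomposition $C = \bigsqcup_n C_n$, for which $\caract_C = \sum_n \caract_{C_n}$.
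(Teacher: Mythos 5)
Your proof is correct and follows the same monotone-class skeleton as the paper's, but run in the opposite direction and with a more economical base case. The paper descends from general nonnegative predictable $g$ to bounded processes (monotone limits), then to simple processes (bounded convergence), and finally verifies the identity only for elementary rectangles $e(\omega,s,u)=\caract_{(s,t]}(s)\caract_{F}(\omega)\caract_{A}(u)$, where both sides equal $\mu_M(F\times (s,t]\times A)$. You instead start from an arbitrary indicator $\caract_C$ with $C \in \mathcal{P}_T\otimes\mathcal{B}(U)$ and observe that the identity there is literally the definition of $\mu_M(C)$ --- which shows the paper's extra descent to rectangles is not logically needed for this lemma. Your route also makes explicit two points the paper leaves implicit: that $\mu_M$ is countably additive (so the left-hand integral is meaningful) and that $\omega \mapsto \int_{[0,T]\times U} g(\omega,\cdot)\,d\operQuadraVari{M}(\omega)$ is measurable. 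One caveat on that last point: for $C$ in the product $\sigma$-algebra the section $C_\omega$ varies with $\omega$, and the random-measure property of $\operQuadraVari{M}$ only directly gives measurability of $\omega\mapsto\operQuadraVari{M}(\omega)(E)$ for \emph{deterministic} $E\in\mathcal{B}([0,T])\otimes\mathcal{B}(U)$; measurability of $\omega\mapsto\operQuadraVari{M}(\omega)(C_\omega)$ requires the standard Dynkin-class argument starting from rectangles $F\times(s,t]\times A$, together with the a.s.\ $\sigma$-finiteness of $\operQuadraVari{M}$. Since exactly this well-posedness is already presupposed by the paper's definition of $\mu_M$, this is not a gap relative to the paper, but your phrase ``settled by the random-measure property'' elides it and deserves the extra line.
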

\begin{proof}
Since $g$ is non-negative, it can be expressed as the pointwise increasing limit of a sequence of bounded predictable processes $(g^{n}: n \in \N)$. Then, if \eqref{eqEqualIntegralDoleans} holds for each $g^n$, by an application of the monotone convergence theorem \eqref{eqEqualIntegralDoleans} holds for $g$.  Likewise, each non-negative bounded predictable process $h$ is a limit of a sequence of simple predictable processes $(f_{n}: n \in \N)$, where the corresponding indicators are over measurable rectangles. If for each $f_{n}$ \eqref{eqEqualIntegralDoleans} holds, by an application of bounded convergence it holds for $h$. By linearity of the integrals the proof is reduced to check that \eqref{eqEqualIntegralDoleans}  holds for the elementary predictable process $e(\omega,s,u) =  \caract_{(s,t]}(s)\caract_{F}(\omega) \caract_{A}(u)$.  In fact, 
$$ \int_{\Omega \times [0,T] \times U} e(\omega, s,u)  \, d \mu_{M} = \mu_{M}(F \times (s,t] \times A) 
= \mathbb{E} \left[ \int_{[0,T]\times U} e(\cdot, s,u) \, d \operQuadraVari{M} \right].    
$$
\end{proof}

\begin{lemma}\label{lemmaSecondMomeCMVMDoleans}
Let $\mu_M$ be the Dol\'eans measure associated to $M$. 
For $s<t$, $F\in \calF_s$, $A\in \mathcal{A}$ and $h\in H$ we have
$$
\Exp\left[ \caract_F \left| M((s,t],A)(h) \right|^2  \right] =
\int_{F\times (s,t]\times A} \norm{Q_M^{1/2}(\omega,r,u)h}_H^2  d\mu_M.
$$
\end{lemma}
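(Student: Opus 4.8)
The plan is to reduce the left-hand side to an $\operQuadraVari{M}$-integral of the predictable integrand $(\omega,r,u)\mapsto \norm{Q_M^{1/2}(\omega,r,u)h}_H^2$ and then transport it to the Dol\'eans measure via Lemma \ref{lemmEqualIntegralDoleans}. First I would exploit the martingale structure. By Remark \ref{remarkNotationRealQuadraVariM}, $N\defeq M(A)(h)=(M(r,A)(h):r\geq 0)$ is a real-valued square integrable martingale, so $N^2-\quadraVari{N}$ is a martingale and the orthogonality of increments gives, $\Prob$-a.e.,
$$
\Exp\left[\,\abs{M((s,t],A)(h)}^2 \;\middle|\; \calF_s\right] = \Exp\left[\,\quadraVari{N}_t-\quadraVari{N}_s \;\middle|\; \calF_s\right].
$$
Since $F\in\calF_s$, multiplying by $\caract_F$ and taking expectations (pulling the $\calF_s$-measurable factor inside the conditional expectation) yields
$$
\Exp\left[\caract_F\,\abs{M((s,t],A)(h)}^2\right] = \Exp\left[\caract_F\,\quadraVari{M(A)(h)}_s^t\right].
$$

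Next I would identify the scalar quadratic variation with the bilinear measure $\alpha_M$ and insert its Radon--Nikodym representation. Taking $x^*=y^*=h$ in \eqref{eqDefiAlphaMCovariation} gives $\quadraVari{M(A)(h)}_s^t = \alpha_M(\omega)((s,t]\times A)(h,h)$ $\Prob$-a.e., and combining \eqref{eqDefiMeasureGammaM} with \eqref{existenceofQ} of Theorem \ref{theoExistenCovariaOperatorQ} produces, $\Prob$-a.e.,
$$
\alpha_M(\omega)((s,t]\times A)(h,h) = \int_{(s,t]\times A} \inner{Q_M(\omega,r,u)h}{h}\,\operQuadraVari{M}(\omega)(dr,du).
$$
Because $Q_M$ is positive and symmetric $\operQuadraVari{M}$-a.e. (property \ref{Qpositiveandsymmetric} of Theorem \ref{theoExistenCovariaOperatorQ}) its square root exists, and under the identification $H^{**}=H$ one has $\inner{Q_M(\omega,r,u)h}{h} = (Q_M(\omega,r,u)h,h)_H = \norm{Q_M^{1/2}(\omega,r,u)h}_H^2$. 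Substituting and restoring $\caract_F$ gives
$$
\Exp\left[\caract_F\,\abs{M((s,t],A)(h)}^2\right] = \Exp\left[\caract_F\int_{(s,t]\times A}\norm{Q_M^{1/2}(\omega,r,u)h}_H^2\,\operQuadraVari{M}(\omega)(dr,du)\right].
$$

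Finally I would apply Lemma \ref{lemmEqualIntegralDoleans} to the non-negative integrand
$$
g(\omega,r,u) = \caract_F(\omega)\,\caract_{(s,t]}(r)\,\caract_A(u)\,\norm{Q_M^{1/2}(\omega,r,u)h}_H^2,
$$
which is predictable since $F\in\calF_s$, the interval $(s,t]$ is predictable, and $(\omega,r,u)\mapsto\inner{Q_M(\omega,r,u)h}{h}$ is predictable by property \ref{Qpredictable} of Theorem \ref{theoExistenCovariaOperatorQ}. The lemma then turns the expectation of the $\operQuadraVari{M}$-integral into $\int_{F\times(s,t]\times A}\norm{Q_M^{1/2}(\omega,r,u)h}_H^2\,d\mu_M$, which is precisely the right-hand side, completing the argument. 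I expect the only real delicacy to be bookkeeping the almost-sure identities: each of the displays holds on a full-probability event depending on the fixed triple $(s,t,A)$, so these events must be intersected into a single full-probability set before taking expectations, and one must confirm the predictability of $g$ so that Lemma \ref{lemmEqualIntegralDoleans} genuinely applies rather than merely being suggestive.
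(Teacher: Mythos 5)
Your proposal is correct and follows essentially the same route as the paper's proof: the paper likewise passes from $\Exp[\caract_F\,|M((s,t],A)(h)|^2]$ to $\int_F \inner{\Gamma_M((s,t]\times A)h}{h}\,d\Prob$ via the martingale/conditional-expectation argument, then invokes Theorem \ref{theoExistenCovariaOperatorQ} and Lemma \ref{lemmEqualIntegralDoleans} exactly as you do. Your write-up merely makes explicit the steps the paper compresses (the identification $\quadraVari{M(A)(h)}_s^t=\alpha_M((s,t]\times A)(h,h)$, positivity of $Q_M$ for the square root, and the predictability of the integrand), all of which are correct.
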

\begin{proof}
Since $F\in\calF_s$ and the fact that $\left| M((s,t],A)(h) \right|^2$ has the same conditional expectation with respect to $\calF_s$ as $\Gamma_M((s,t]\times A)(h)$, we have
\begin{eqnarray*}
\Exp\left[ \caract_F \left| M((s,t],A)(h) \right|^2  \right] 
& = & \int_F  \langle \Gamma_M ((s,t]\times A)h,h \rangle\, d\Prob \\
& = & \int_{F\times (s,t]\times A} \left( Q_M (\omega, r,u)h, h \right)_{H} \, d\mu_M,
\end{eqnarray*}
where we have used Theorem \ref{theoExistenCovariaOperatorQ} and Lemma \ref{lemmEqualIntegralDoleans}.
\end{proof}

Before we introduce our class of integrands, let us fix $(\omega,t,u)\in \Omega\times [0,T]\times U$ and denote $Q_{M} = Q_M(\omega,t,u)$. Following \cite[Section 2.3.2]{Liu-Rockner:2015} we consider
$$
H_{Q_{M}} = Q_{M}^{1/2}(H),\quad \left( h,g \right)_{H_{Q_{M}}} = \left( Q_{M}^{-1/2} h, Q_{M}^{-1/2} g \right)_H
$$
where $Q_{M}^{-1/2}$ is the pseudo inverse of $Q_{M}^{1/2}$. Being isometric to $( (\text{Ker}\,(Q_{M}^{1/2}))^\perp,\left( \cdot,\cdot \right)_{H})$, the Hilbert space $(H_{Q_{M}},\left( \cdot,\cdot \right)_{H_{Q_{M}}})$ is separable. In fact, given a complete orthonormal system $\{ h_k \}$ on $(\text{Ker}\,(Q_{M}^{1/2}))^\perp$, we readily obtain a complete orthonormal system $\{ Q_{M}^{1/2} h_k\}$ on $H_{Q_{M}}$. Since that system can be suplemented to a complete orthonormal system on $H$ by elements of Ker$(Q_{M}^{1/2})$, for any $L \in \mathcal{HS}(H_{Q_{M}},G)$ we get
$$
\norm{L}_{\mathcal{HS}(H_{Q_{M}},G)} = \norm{L\circ Q_{M}^{1/2}}_{\mathcal{HS}(H,G)}.
$$

We are ready to introduce our main class of stochastic integrands. 

\begin{definition}
\label{defiNewIntegrand}
Denote by $\Lambda^2(M,T)$ the space of families of operators $\Phi(\omega,t,u),$ indexed by $(\omega,t,u)\in \Omega\times [0,T]\times U$, such that:
\begin{enumerate}
\item For each $(\omega,t,u) \in \Omega\times [0,T]\times U$, $\Phi(\omega,t,u)\in \mathcal{HS}(H_{Q_{M}},G)$.
\item For every $h \in H$, the $G$-valued process $\Phi \circ Q^{1/2}_{M}(h)$ is predictable, i.e. 
the mapping 
$$ (\omega,t,u) \mapsto \Phi(\omega,t,u) \circ Q^{1/2}_{M}(\omega,t,u)(h),$$
is $\mathcal{P}_{T}\otimes \mathcal{B}(U)/\mathcal{B}(G)$-measurable. 
\item $\norm{\Phi}^2_{\Lambda^2(M,T)}$ is finite, where this quantity is defined by
\begin{equation}
\label{NewIntegrands}
\norm{\Phi}^2_{\Lambda^2(M,T)} \defeq
\int_{\Omega\times [0,T]\times U} \| \Phi(\omega,t,u)\circ Q^{1/2}_M(\omega,t,u)\|^2_{\mathcal{HS}(H,G)} \, d\mu_M.
\end{equation}
\end{enumerate}
\end{definition}
When we need to emphasize the spaces $H$ and $G$, we denote $\Lambda^2(M,T)$ by $\Lambda^2(M,T;H,G)$. \medskip

It is easy to check that $\Lambda^2(M,T)$ is a linear space and that $\norm{\cdot}_{\Lambda^2(M,T)}$ defines a Hilbertian seminorm on it. 
When working with $\Phi \in \Lambda^2(M,T)$, we can think of its restriction to $Q_M^{1/2}(H)$ instead, which lives in $\mathcal{HS} \left( Q_M^{1/2}(H), G\right)$ for any $(\omega,t,u)$. Equivalently, we consider $\Phi,\Psi\in \Lambda^2$ as equal when $\Phi\circ Q_M^{1/2} = \Psi\circ Q_M^{1/2} \,\, \mu_M$-a.e. which is equivalent to $\norm{\Phi - \Psi}_{\Lambda^2(M,T)} =0$. 

By Lemma \ref{lemmEqualIntegralDoleans} it is clear that 
$$
\norm{\Phi}^2_{\Lambda^2(M,T)} = 
\mathbb{E} \int_{[0,T]\times U} \| \Phi(s,u)\circ Q_M^{1/2}(s,u)\|^2_{\mathcal{HS}(H,G)} \operQuadraVari{M}(ds,du).
$$

\begin{remark}
It is not hard to verify that
\begin{equation}
\label{trace norm}
\| \Phi(\omega, s,u)\circ Q_M^{1/2}(\omega, s,u)\|^2_{\mathcal{HS}(H,G)}={\rm trace}\, \Bigl(\Phi(\omega, s,u) \circ Q_M(\omega, s,u)\circ \Phi(\omega, s,u)^{*} \Bigr).
\end{equation}
Moreover, the space $(\Lambda^2(M,T), \norm{\cdot}_{\Lambda^2(M,T)})$ is complete. This can be proven by following the same kind of arguments as in \cite{MetivierPellaumail} (Section 14.4, p.168, 169).
\end{remark}

\begin{definition}
We define the class $\mathcal{S}=\mathcal{S}(M,T)$ of simple functions, as those that can be expressed in the form
\begin{equation}
\label{simple}
\Phi(\omega,s,u) = \sum_{i=1}^{n} \sum_{j=1}^{m} \caract_{(s_i,t_i]}(s)\caract_{F_i}(\omega) \caract_{A_j}(u) S_{ij}
\end{equation}
where $m,n,\in\N$, $0\leq s_i \leq t_i$, $F_i\in \mathcal{F}_{s_i}$, $A_j\in \mathcal{A}$ and $S_{ij}\in \mathcal{HS}(H,G)$.\medskip
\end{definition}

We can always assume (taking a smaller partition if needed) that for $i\neq k$, either $(s_i,t_i]\cap (s_k,t_k]
= \emptyset$ or $(s_i,t_i] = (s_k,t_k]$ and $F_i\cap F_k = \emptyset$. \medskip

For $\Phi\in \mathcal{S}$ given by (\ref{simple}), we define its stochastic integral $I(\Phi)$ as the $G$-valued process given by 
\begin{equation}
\label{NewDefIntSimpleIntegrand}
I_t(\Phi) : = \int_0^t\! \! \int_U \Phi(s,u) M(ds,du) = 
\sum_{i=1}^{n} \sum_{j=1}^{m} \caract_{F_i}(\omega) Y_{i,j}(t)
\end{equation}
where $Y_{i,j}$ is the square integrable martingale taking values in $G$ and satisfying
\begin{equation}
\label{eqDefRadonProcessY}
\left( Y_{i,j}(t),g \right)_{G} = M((s_i \land t, t_i\land t], A_j) (S_{ij}^*g)
\end{equation}
according to Theorem 2.1 of \cite{AlvaradoFonseca:2021}. It is worth noting that for each pair $(\omega,t)$, the double sum in the definition reduces to at most one term. 


We are ready to prove the It\^{o} isometry for simple integrands.
\begin{theorem} 
\label{theoItoIsometrySimpleIntegrands}
For every $\Phi \in \mathcal{S}(M,T)$, $(I_{t}(\Phi): t \in [0,T]) \in \mathcal{M}_{T}^{2}(G)$ and for each $t \in [0,T]$, we have $\Exp (I_{t}(\Phi))=0$ and also       
\begin{equation}
\label{eqItoIsometrySimpleIntegrands}
\Exp \left[ \norm{I_{t}(\Phi)}^{2} \right] = \int_{\Omega \times [0,t] \times U} \norm{ \Phi(\omega, r,u)\circ Q^{1/2}_M(\omega,r,u)}_{\mathcal{HS}(H,G)}^{2} d\mu_M.
\end{equation}
In particular, the map $I: \mathcal{S}(M,T) \rightarrow \mathcal{M}^{2}_{T}(G)$, given by $\Phi \mapsto (I_{t}(\Phi): t \in [0,T])$ is linear continuous. 
\end{theorem}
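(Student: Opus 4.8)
The plan is to reduce everything to the elementary building blocks of $\mathcal{S}(M,T)$ and then settle the three assertions (martingale property, mean zero, isometry) in turn. First I would sharpen the normalization: by refining the partition I may assume not only the stated separation of the pairs $(s_i,t_i],F_i$, but also that $A_1,\dots,A_m$ are pairwise disjoint. This is harmless, since replacing the $A_j$ by the atoms of the finite ring they generate keeps them in $\calA$ and only forces a redefinition of the $S_{ij}$. With this normalization the sets $F_i\times\big((s_i,t_i]\cap[0,t]\big)\times A_j$ are pairwise disjoint, whence $\norm{\Phi\circ Q_M^{1/2}}_{\mathcal{HS}(H,G)}^2=\sum_{i,j}\caract_{F_i}\caract_{(s_i,t_i]}\caract_{A_j}\norm{S_{ij}\circ Q_M^{1/2}}_{\mathcal{HS}(H,G)}^2$ with no cross terms, which already identifies the right-hand side of \eqref{eqItoIsometrySimpleIntegrands} as a sum of purely diagonal contributions.

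For the martingale and mean-zero statements I would argue term by term. By its defining relation \eqref{eqDefRadonProcessY} together with Theorem 2.1 of \cite{AlvaradoFonseca:2021}, each $Y_{i,j}$ is a $G$-valued square integrable martingale that vanishes on $[0,s_i]$; since $F_i\in\calF_{s_i}$, a tower-property argument (conditioning on $\calF_{s_i}$, on which $Y_{i,j}$ is still null) shows $\caract_{F_i}Y_{i,j}$ is again a mean-zero $G$-valued square integrable martingale. A finite sum of such processes stays in $\mathcal{M}_{T}^{2}(G)$ and has zero expectation, giving $\Exp[I_t(\Phi)]=0$.

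The isometry is the heart of the matter. Expanding $\Exp\norm{I_t(\Phi)}^2_G=\sum_{(i,j),(k,l)}\Exp\big[\caract_{F_i}\caract_{F_k}(Y_{i,j}(t),Y_{k,l}(t))_G\big]$ in an orthonormal basis $\{g_p\}$ of $G$ rewrites each summand through the real covariations $\inner{M(A_j)(S_{ij}^{*}g_p)}{M(A_l)(S_{kl}^{*}g_p)}$, using the martingale property to replace increment products by covariation increments. I would dispose of the off-diagonal contributions in two ways. When $i\neq k$ the blocks are separated: either $F_i\cap F_k=\emptyset$ and the indicator product vanishes, or the time intervals are disjoint, and then conditioning on $\calF$ at the left endpoint of the later interval, where the corresponding increment is a martingale starting at zero, kills the term. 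When $i=k$ but $j\neq l$ the sets $A_j,A_l$ are disjoint, and here the orthogonality property \ref{orthogonality} of $M$ must force the covariation increments to vanish. This last point is the step I expect to be the \emph{main obstacle}: condition \ref{orthogonality} is stated for a single functional, so the real work is to show that disjointness of $A_j$ and $A_l$ yields vanishing covariation increments even across the different functionals $S_{ij}^{*}g_p$ and $S_{il}^{*}g_p$. My intended route is to combine \ref{orthogonality} with the polarized identity \eqref{eqPolarizaNuEqualQuadraticCovariation}, the disjointness secured in the first paragraph being exactly what makes the orthogonality of $M(A_j)$ and $M(A_l)$ available; I would track carefully that summation over the basis $\{g_p\}$ collapses the bilinear covariation into the required quantity.

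Finally, the surviving diagonal terms ($i=k$, $j=l$) are computed directly from Lemma \ref{lemmaSecondMomeCMVMDoleans}: for each $p$ one has $\Exp[\caract_{F_i}|(Y_{i,j}(t),g_p)_G|^2]=\int_{F_i\times((s_i,t_i]\cap[0,t])\times A_j}\norm{Q_M^{1/2}S_{ij}^{*}g_p}_H^2\,d\mu_M$, and summing over $p$ together with the trace identity \eqref{trace norm} turns each diagonal term into $\int_{F_i\times((s_i,t_i]\cap[0,t])\times A_j}\norm{S_{ij}\circ Q_M^{1/2}}_{\mathcal{HS}(H,G)}^2\,d\mu_M$. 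Adding over $i,j$ reproduces exactly the right-hand side isolated in the first paragraph, which is \eqref{eqItoIsometrySimpleIntegrands}. Linearity of $\Phi\mapsto I(\Phi)$ is immediate from the definition, and \eqref{eqItoIsometrySimpleIntegrands} at $t=T$ says precisely that $I$ is an isometry, hence continuous, from $(\mathcal{S}(M,T),\norm{\cdot}_{\Lambda^2(M,T)})$ into $\mathcal{M}_{T}^{2}(G)$.
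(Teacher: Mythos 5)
Your plan reproduces the paper's own proof in all but one place: the orthonormal expansion of $\Exp\norm{I_t(\Phi)}^2$, the elimination of cross terms, and the computation of the diagonal via Lemma \ref{lemmaSecondMomeCMVMDoleans} together with the trace identity \eqref{trace norm} are exactly the paper's steps, and your treatment of the martingale property, the mean-zero claim, the disjointification of the $A_j$ (which is indeed harmless and in fact needed), and the continuity of $I$ are all correct. The genuine gap is precisely at the step you single out as the main obstacle, and your proposed fix does not close it. Polarizing Definition \ref{defiCylindricalOrthoMartinValuMeasure}\ref{orthogonality} in the functional --- equivalently, combining $M(\cdot,A\cup B)(z^*)=M(\cdot,A)(z^*)+M(\cdot,B)(z^*)$ with bilinearity of the covariation, or invoking \eqref{eqPolarizaNuEqualQuadraticCovariation}, which in any case concerns a \emph{single} set $A$ in both slots --- yields only
\begin{equation*}
\inner{M(A)(x^*)}{M(B)(y^*)}_t+\inner{M(A)(y^*)}{M(B)(x^*)}_t=0
\end{equation*}
for disjoint $A,B$: the bilinear form $\beta(x^*,y^*)=\inner{M(A)(x^*)}{M(B)(y^*)}_t$ vanishes on the diagonal, hence is antisymmetric, but an antisymmetric form need not vanish. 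Nor does summation over the basis $\{g_p\}$ collapse it: in the square expansion the unordered pair $\{j,l\}$ contributes the same ordered covariation twice (symmetry of covariation swaps the two processes, i.e.\ the sets together with their functionals, not the functionals alone), so the surviving cross contribution is $2\sum_p\beta(S_{ij}^*g_p,S_{il}^*g_p)$, a trace of the form $\mathrm{tr}(S_{ij}\,C\,S_{il}^*)$ with $C$ antisymmetric, which is generically nonzero. Concretely, with $H=G=\R^2$ and $U=\{a,b\}$, take jointly Gaussian Brownian motions with $\inner{W^1}{B^2}_t=\rho t$, $\inner{W^2}{B^1}_t=-\rho t$ and all other brackets zero, and set $M(t,\{a\})(x)=x_1W^1_t+x_2W^2_t$, $M(t,\{b\})(x)=x_1B^1_t+x_2B^2_t$: condition \ref{orthogonality} holds for every single functional, yet for suitable $2\times 2$ matrices $S_{ij},S_{il}$ the cross term equals $-2\rho\neq 0$, so the isometry itself fails for such an $M$.

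What is actually used --- and what the paper's proof passes over in the one-line assertion that ``all cross terms are null'' --- is the \emph{two-functional} orthogonality $\inner{M(A)(x^*)}{M(B)(y^*)}_t=0$ for disjoint $A,B$ and \emph{all} $x^*,y^*$, applied after your disjointification of the $A_j$. As the example above shows, this cannot be derived from Definition \ref{defiCylindricalOrthoMartinValuMeasure}\ref{orthogonality} by polarization or any other means; it is a strictly stronger hypothesis which must either be imposed or verified for the measure at hand. It does hold in all the paper's examples: in Example \ref{exampleCylinMartingaleIntegral} one computes directly $\inner{M(A)(x^*)}{M(B)(y^*)}_t=\int_0^t\caract_{A}(g(s))\caract_{B}(g(s))\,d\inner{Z(x^*)}{Z(y^*)}_s=0$, and for the L\'evy-type examples it follows from independence of $M(A)$ and $M(B)$. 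With that hypothesis added, the rest of your argument goes through verbatim; note also that, with the paper's norm on $\mathcal{M}_T^2(G)$ taken as an expectation of a supremum, the continuity of $I$ requires Doob's inequality rather than the bare isometry at $t=T$, though this costs only a constant.
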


\begin{proof}
Let $\Phi \in \mathcal{S}(M,T)$ be of the form \eqref{simple}. For simplicity, within the proof we denote $I_t=I_t(\Phi)$. The fact that  $(I_{t}: t \in [0,T]) \in \mathcal{M}_{T}^{2}(G)$ is a direct consequence of the definition and the fact that each $(Y_{i,j}(t): t \in [0,T])$ is in  $\mathcal{M}_{T}^{2}(G)$.

To prove \eqref{eqItoIsometrySimpleIntegrands}, let $(g_{k}: k \in \N)$ be a complete orthonormal system in $G$. From \eqref{eqDefRadonProcessY} and 
 the definition given by \eqref{NewDefIntSimpleIntegrand} we have 
\begin{eqnarray*}
\Exp \left[ \norm{I_{t}}^{2} \right] & = & \Exp \sum_{k=1}^{\infty} \abs{ \left(I_{t},g_{k}\right)_{G}}^{2} = \Exp \sum_{k=1}^{\infty} \abs{ \sum_{i=1}^{n} \sum_{j=1}^{m} \mathbbm{1}_{F_{i}} \left( Y_{i,j}(t), g_{k} \right)_{G} }^{2} \\
& = & \sum_{k=1}^{\infty} \Exp  \abs{ \sum_{i=1}^{n} \sum_{j=1}^{m} \mathbbm{1}_{F_{i}}  M((s_{i} \wedge t, t_{i} \wedge t],A_{j})(S_{i,j}^{*} g_{k}) }^{2} \\
& = & \sum_{k=1}^{\infty}  \sum_{i=1}^{n} \sum_{j=1}^{m} \Exp \left[ \caract_{F_i} \abs{ M((s_{i} \wedge t, t_{i} \wedge t],A_{j})(S_{i,j}^{*}g_{k}) }^{2} \right] 
\end{eqnarray*}
where we used the fact that all cross terms are null. Using Lemma \ref{lemmaSecondMomeCMVMDoleans}  we get
\begin{eqnarray*}
\Exp \left[ \norm{I_{t}}^{2} \right] 
& = & \sum_{i=1}^{n} \sum_{j=1}^{m}
\int_{F_i\times (s_i\land t,t_i\land t]\times A_j}  \sum_{k=1}^{\infty} \norm{Q_M^{1/2}(\omega, r,u)S^*_{ij}(g_k)}_H^2 d\mu_M \\
& = & \sum_{i=1}^{n} \sum_{j=1}^{m}
\int_{F_i\times (s_i\land t,t_i\land t]\times A_j} \norm{Q_M^{1/2}(\omega,r,u)S^*_{ij}}_{\mathcal{HS}(G,H)}^2 d\mu_M \\
& = & \sum_{i=1}^{n} \sum_{j=1}^{m}
\int_{F_i\times (s_i\land t,t_i\land t]\times A_j} \norm{S_{ij}Q_M^{1/2}(\omega, r,u)}_{\mathcal{HS}(H,G)}^2 d\mu_M \\
& = & \int_{\Omega \times [0,t] \times U} \norm{\Phi(\omega, r,u)Q_M^{1/2}(\omega, r,u)}_{\mathcal{HS}(H,G)}^2 d\mu_M.
\end{eqnarray*}  

Linearity of $I: \mathcal{S}(M,T) \rightarrow \mathcal{M}^{2}(G)$ follows easily from the definition. To prove continuity, observe that from Doob's inequality,  \eqref{NewIntegrands} and  \eqref{eqItoIsometrySimpleIntegrands}, for each $\Phi \in \mathcal{S}(M,T)$ we have
$$
\norm{I(\Phi)}^{2}_{\mathcal{M}^{2}_{T}(G)} = \Exp \left[ \sup_{t\in [0,T]} \norm{I_{t}(\Phi)}^{2} \right] \leq  4 T \, \Exp \left[ \norm{I_{T}(\Phi)}^{2} \right]= 4T \, \norm{\Phi}^{2}_{\Lambda^{2}(M,T)}. $$
\end{proof}

For $\Phi \in \Lambda^2(M,T)$ the above argument gives
$$
\norm{\Phi}_{\Lambda^2(M,T)}^2 = \int_{\Omega\times [0,T]\times U}\norm{\Phi(\omega,r,u)}_{\mathcal{HS}(H_{Q_M},G)}^2 d\mu_M
$$
but we must keep in mind that $H_{Q_M}$ varies with $(\omega,r,u)$. \medskip


\begin{theorem}
	\label{Density of S}
The space $\mathcal{S}(M,T)$ is dense in $\Lambda^2(M,T)$.
\end{theorem}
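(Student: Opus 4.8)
The plan is to exploit that $(\Lambda^2(M,T),\norm{\cdot}_{\Lambda^2(M,T)})$ is complete and that $\norm{\cdot}_{\Lambda^2(M,T)}$ is Hilbertian, so after identifying integrands with $\norm{\Phi-\Psi}_{\Lambda^2(M,T)}=0$ we are working in a genuine Hilbert space. It therefore suffices to show that the orthogonal complement of $\mathcal{S}(M,T)$ is trivial: if $\Phi\in\Lambda^2(M,T)$ satisfies $\langle \Phi,\Phi'\rangle_{\Lambda^2(M,T)}=0$ for every $\Phi'\in\mathcal{S}(M,T)$, then $\norm{\Phi}_{\Lambda^2(M,T)}=0$. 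Writing $\Psi:=\Phi\circ Q_M^{1/2}$ and testing against the generators $\Phi'=\caract_{(s,t]}\caract_F\caract_A S$ with $S\in\mathcal{HS}(H,G)$, $F\in\mathcal{F}_s$, $A\in\calA$, an application of Lemma \ref{lemmEqualIntegralDoleans} (to the positive and negative parts) turns orthogonality into
$$
\Exp\left[\caract_F\int_{(s,t]\times A}\psi_S\, d\operQuadraVari{M}\right]=0,\qquad \psi_S:=\inner{\Psi}{S\circ Q_M^{1/2}}_{\mathcal{HS}(H,G)},
$$
valid for all $0\leq s<t\leq T$, $F\in\mathcal{F}_s$, $A\in\calA$ and $S\in\mathcal{HS}(H,G)$; here $\psi_S$ is predictable and belongs to $L^2(\mu_M)$ because $\abs{\psi_S}\leq\norm{\Psi}_{\mathcal{HS}(H,G)}\norm{S}_{\mathcal{HS}(H,G)}$ and $\norm{Q_M}\leq 1$.

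Next I would fix $S$ and upgrade this family of conditional identities to a pathwise statement. The relation above says precisely that the predictable, finite-variation process $N^A_t:=\int_{(0,t]\times A}\psi_S\, d\operQuadraVari{M}$ satisfies $\Exp[N^A_t-N^A_s\mid\mathcal{F}_s]=0$ for all $s<t$, i.e. $N^A$ is a martingale (a local martingale after localizing by $\tau_n:=\inf\{t:\operQuadraVari{M}([0,t]\times U)>n\}$, which is available since \eqref{eqBoundedrangequadraticvariation} forces $\operQuadraVari{M}([0,T]\times U)<\infty$ $\Prob$-a.e.). A predictable finite-variation (local) martingale null at $0$ is indistinguishable from $0$, so $N^A\equiv 0$ and hence $\int_{(s,t]\times A}\psi_S\, d\operQuadraVari{M}=0$ $\Prob$-a.e. for every rectangle. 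Since $\operQuadraVari{M}([0,T]\times U)<\infty$ and $\psi_S\in L^2(\operQuadraVari{M})$ $\Prob$-a.e., the random set function $B\mapsto\int_B\psi_S\, d\operQuadraVari{M}$ is an a.s. finite random signed measure that vanishes on all rectangles $(s,t]\times A$ with $A\in\calA$; Theorem \ref{theoremComparisonSignedMeasuresOnRing} then yields that it is the zero measure, so $\psi_S=0$ $\operQuadraVari{M}$-a.e., that is, $\mu_M$-a.e.

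Finally I would let $S$ range over a countable dense subset $\mathcal{D}$ of $\mathcal{HS}(H,G)$, obtaining that for $\mu_M$-a.e. $(\omega,t,u)$ one has $\inner{\Psi(\omega,t,u)}{S\circ Q_M^{1/2}(\omega,t,u)}_{\mathcal{HS}(H,G)}=0$ for every $S\in\mathcal{D}$. The point is then a fiberwise density lemma: for fixed $Q=Q_M(\omega,t,u)$ the family $\{S\circ Q^{1/2}:S\in\mathcal{HS}(H,G)\}$ is dense in $\{\psi\in\mathcal{HS}(H,G):\psi|_{\ker Q^{1/2}}=0\}$, which one proves by approximating such a $\psi$ with $S_\varepsilon\circ Q^{1/2}$, $S_\varepsilon:=\psi\circ Q^{1/2}(Q+\varepsilon I)^{-1}$, since $S_\varepsilon\circ Q^{1/2}=\psi\circ Q(Q+\varepsilon I)^{-1}\to\psi$ in $\mathcal{HS}(H,G)$ (the operators $Q(Q+\varepsilon I)^{-1}$ converge strongly to the projection onto $(\ker Q^{1/2})^{\perp}$ and are uniformly bounded, so dominated convergence applies to the Hilbert–Schmidt expansion). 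Because $\Psi(\omega,t,u)=\Phi(\omega,t,u)\circ Q^{1/2}$ itself annihilates $\ker Q^{1/2}$, it lies in that fiber, and the vanishing of its inner product against a dense subset of the fiber gives $\norm{\Psi(\omega,t,u)}_{\mathcal{HS}(H,G)}=0$ for $\mu_M$-a.e. $(\omega,t,u)$. Hence $\norm{\Phi}_{\Lambda^2(M,T)}=0$, proving $\mathcal{S}(M,T)^{\perp}=\{0\}$ and thus density.

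I expect the main obstacle to be the second step: passing from ``zero $\mathcal{F}_s$-conditional expectation on every rectangle'' to the genuinely pathwise identity $\psi_S=0$. A single interval only yields a martingale increment, so it is the \emph{simultaneous} vanishing over all intervals, combined with the predictability and finite variation of $N^A$, that collapses $N^A$ to zero; getting the integrability right (via the localization $\tau_n$, which is legitimate only because of \eqref{eqBoundedrangequadraticvariation}) is the technical heart. The fiberwise density lemma is the secondary difficulty: it has no counterpart in the classical fixed-covariance theory and is exactly what absorbs the $(\omega,t,u)$-dependence of $Q_M$, allowing the pointwise conclusion without ever having to approximate $\Psi$ by a \emph{uniform} choice of operator over a set where $Q_M$ varies.
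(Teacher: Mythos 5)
Your proposal is correct, and it opens exactly as the paper's proof does: both exploit completeness of $\Lambda^2(M,T)$ and the Hilbertian seminorm to reduce density to showing that any $\Phi$ orthogonal to the generating cylinders $\caract_{(s,t]}\caract_{F}\caract_{A}S$ satisfies $\Phi\circ Q_M^{1/2}=0$ $\mu_M$-a.e. From there you diverge in both remaining steps. To kill $\psi_S$ you pass to a pathwise statement via the predictable finite-variation martingale $N^A$ and then invoke Theorem \ref{theoremComparisonSignedMeasuresOnRing}; the paper never leaves the Dol\'eans measure: the cylinders $F\times(s,t]\times A$ (with $F\in\mathcal{F}_s$, $A\in\calA$) form a $\pi$-system generating $\mathcal{P}_T\otimes\mathcal{B}(U)$ --- note $\calA$ generates $\calB(U)$ because $\calB(U_n)\subseteq\calA$ and $U_n\uparrow U$ --- so the vanishing of $\int_{F\times(s,t]\times A}\psi_S\,d\mu_M$ over all such cylinders forces the predictable density $\psi_S$ to vanish $\mu_M$-a.e.\ directly, with no martingale theory and no pathwise passage (in particular your localization by $\tau_n$ is superfluous: $\Exp\abs{N^A_t}<\infty$ already follows by Cauchy--Schwarz, since the simple integrand with $F=\Omega$ lies in $\Lambda^2(M,T)$ by the It\^{o} isometry). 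To dispatch the quantifier over $S$ you prove a fiberwise density lemma via the resolvent approximation $S_\varepsilon=\psi\circ Q^{1/2}(Q+\varepsilon I)^{-1}$; the paper instead tests only against the countable family $S_{n,k}=h_n\otimes g_k$ built from fixed orthonormal bases of $H$ and $G$, using that the family $\bigl(Q_M^{1/2}h_n\otimes g_k\bigr)$ is total in $\mathcal{HS}(H_{Q_M},G)$ in each fiber, which absorbs the $(\omega,t,u)$-dependence of $Q_M$ and the countable-test-family issue in one stroke. As for what each buys: the paper's route is shorter and purely measure-theoretic, while yours makes the probabilistic content explicit; moreover your resolvent lemma is more careful precisely where the paper is loose --- the family $\bigl(Q_M^{1/2}h_n\otimes g_k\bigr)$ is total but, contrary to the wording there, not literally an orthonormal system of $\mathcal{HS}(H_{Q_M},G)$ unless the $h_n$ are adapted to $(\ker Q_M^{1/2})^\perp$, which varies with the fiber; totality is all the paper actually uses, so its argument stands, but your version sidesteps that wrinkle cleanly.
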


\begin{proof}
Let $C(M,T)$ be the collection of all maps $\Psi(\omega,r,u)$ taking the simple form
\begin{equation} 
\label{simpleFamiliesProofDenseSupspaceIntegrands}
\Psi(\omega,r,u)= \ind{(s,t]}{r} \ind{F}{\omega} \ind{A}{u} S, \quad \omega \in \Omega, r \in [0,T], \, u \in U,
\end{equation}  
where $0 \leq s < t \leq T$, $F \in \mathcal{F}_{s}$, $A \in \mathcal{A}$ and $S \in \mathcal{HS}(H,G)$.  It is enough to show that if $\Phi \in C^{\perp}$, then $\Phi\circ Q_M^{1/2} = 0\,\, \mu_M$-a.e.
Let $\Phi \in C^{\perp}$, which means
$$
\int_{F\times (s,t]\times A} \left( \Phi \circ Q_M^{1/2},S\circ Q_M^{1/2} \right)_{\mathcal{HS}(H,G)} d\mu_M = 0, $$
for any $S\in \mathcal{HS}(H,G)$ and any cylinder set $F\times (s,t]\times A \subseteq \Omega \times [0,T] \times U$, with $F\in \mathcal{F}_s$.

Let $(h_{n}: n \in \N)$ and $(g_{k}: k \in \N)$ be complete orthonormal systems on $H$ and $G$ respectively. Working inside the integral, let $f_n = Q_M^{1/2}(h_n)$. The collection $(f_{n} \otimes g_{k})_{n,k \in \N}$ is a complete orthonormal system in $\mathcal{HS}(H_{Q_M},G)$. We restrict $\Phi$ to $H_{Q_M}$ and write out the definition of the $\mathcal{HS}(H_{Q_M},G)-$norm using the system $\{ f_n\}$. We obtain
$$
 \left\langle \Phi, f_n \otimes g_k \right\rangle_{\mathcal{HS}(H_{Q_M},G)} =  \left( \Phi(f_n),g_k \right)_{G} = \left( \Phi\circ Q_M^{1/2} (h_n),g_k \right)_{G}
$$
and consequently
$$
\int_{F\times (s,t]\times A} \left( \Phi\circ Q_M^{1/2} (h_n),g_k \right)_{G} d\mu_M = 0
$$
for $k,n\in\N$ and any predictable cylinder $F\times (s,t]\times A$. Since these cylinders generate the $\sigma-$\'algebra $\mathcal{P}_T\otimes \mathcal{B}(U)$, it follows that $\Phi \circ Q_M^{1/2} = 0,\,\, \mu_M\,-$ a.e.
\end{proof}

The proof of the following theorem is now straightforward.

\begin{theorem}\label{theoItoIsometry}
    The map $:\mathcal{S}(M,T) \fle \mathcal{M}_T^2(G)$ has a continuous linear extension 
    $$
    I:\Lambda^2(M,T) \fle \mathcal{M}_T^2(G)
    $$ 
    such that, for each $\Phi \in \Lambda^2(M,T)$, $t\in [0,T]$, $\Exp[I_t(\Phi)] = 0$ and
    \begin{equation}
    \label{eqIsometryGeneralPhi}
    \Exp\norm{I_t(\Phi)}^2 = \int_{\Omega\times [0,t]\times U} \norm{\Phi(\omega,r,u) \circ Q_M^{1/2}(\omega,r,u)}_{\mathcal{HS}(H,G)}^2 d\mu_M.
    \end{equation}
\end{theorem}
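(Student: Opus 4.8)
The plan is to obtain $I$ on $\Lambda^2(M,T)$ by the classical bounded linear extension argument: $\mathcal{S}(M,T)$ is dense in $\Lambda^2(M,T)$ by Theorem \ref{Density of S}, the map $I$ is already linear and continuous on $\mathcal{S}(M,T)$ by Theorem \ref{theoItoIsometrySimpleIntegrands} (indeed $\norm{I(\Phi)}^2_{\mathcal{M}_T^2(G)}\le 4T\,\norm{\Phi}^2_{\Lambda^2(M,T)}$), and the target $\mathcal{M}_T^2(G)$ is a Banach space. Consequently $I$ admits a unique continuous linear extension $I:\Lambda^2(M,T)\to\mathcal{M}_T^2(G)$. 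Since $\norm{\Phi}_{\Lambda^2(M,T)}=0$ forces $\norm{I(\Phi)}_{\mathcal{M}_T^2(G)}=0$, the map is well defined on the equivalence classes modulo the null space of $\norm{\cdot}_{\Lambda^2(M,T)}$, so the extension is unambiguous.

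It then remains to carry the two pointwise-in-$t$ identities over to an arbitrary $\Phi\in\Lambda^2(M,T)$. I would fix a sequence $\Phi_n\in\mathcal{S}(M,T)$ with $\Phi_n\to\Phi$ in $\Lambda^2(M,T)$, so that $I(\Phi_n)\to I(\Phi)$ in $\mathcal{M}_T^2(G)$. The key elementary observation is that convergence in $\mathcal{M}_T^2(G)$ forces $I_t(\Phi_n)\to I_t(\Phi)$ in $L^2(\Omega;G)$ for every fixed $t\in[0,T]$, because $\Exp\norm{m_t}^2$ is dominated by $\norm{m}^2_{\mathcal{M}_T^2(G)}$ for each $t$. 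The mean-zero property is then immediate, since $\norm{\Exp I_t(\Phi_n)-\Exp I_t(\Phi)}_G\le\Exp\norm{I_t(\Phi_n)-I_t(\Phi)}_G\le(\Exp\norm{I_t(\Phi_n)-I_t(\Phi)}_G^2)^{1/2}\to 0$ while $\Exp I_t(\Phi_n)=0$ for every $n$.

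For the isometry at a fixed time $t$, I would pass to the limit in \eqref{eqItoIsometrySimpleIntegrands}. Its left-hand side $\Exp\norm{I_t(\Phi_n)}^2$ converges to $\Exp\norm{I_t(\Phi)}^2$ by continuity of the $L^2(\Omega;G)$-norm. For the right-hand side, restricting the $\mu_M$-integral from $\Omega\times[0,T]\times U$ to $\Omega\times[0,t]\times U$ only decreases it, so $\Phi_n\circ Q_M^{1/2}\to\Phi\circ Q_M^{1/2}$ in $L^2(\Omega\times[0,t]\times U,\mu_M;\mathcal{HS}(H,G))$ and the associated squared norms converge to $\int_{\Omega\times[0,t]\times U}\norm{\Phi\circ Q_M^{1/2}}^2_{\mathcal{HS}(H,G)}\,d\mu_M$. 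Passing to the limit in \eqref{eqItoIsometrySimpleIntegrands} then yields \eqref{eqIsometryGeneralPhi}.

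There is no genuine obstacle here — which is precisely why the statement is flagged as straightforward. The one point deserving a little care is the per-$t$ isometry \eqref{eqIsometryGeneralPhi}: the extension is constructed so as to control the whole trajectory through the $\mathcal{M}_T^2(G)$-norm, and one must check that the $\Lambda^2(M,T)$-approximation localizes correctly to each sub-interval $[0,t]$. This is exactly what the monotonicity of the integration domain supplies, reducing the matter to an $L^2$-continuity statement.
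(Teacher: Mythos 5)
Your proposal is correct and is precisely the argument the paper has in mind: the paper itself gives no written proof, stating only that the result is ``now straightforward'' from the It\^{o} isometry for simple integrands (Theorem \ref{theoItoIsometrySimpleIntegrands}) and the density theorem (Theorem \ref{Density of S}), which is exactly the bounded-linear-extension-plus-passage-to-the-limit argument you spell out. Your added care about localizing the isometry to $[0,t]$ via monotonicity of the $\mu_M$-integral in the domain, and extracting $L^2$-convergence at each fixed $t$ from the $\mathcal{M}_T^2(G)$-norm, fills in the details the paper leaves implicit, with no gaps.
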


\begin{definition}
    We call the map $I$ defined in last theorem, the \textit{stochastic integral mapping} and, for each $\Phi\in \Lambda^2(M,T)$, we call $I(\Phi)$ the \textit{stochastic integral} of $\Phi$. We will also denote the process $I(\Phi)$ by
    $$
    I(\Phi) = \left\{ \int_0^t\!\! \int_U \Phi(r,u)M(dr,du),t\in [0,T] \right\}.
    $$
\end{definition}

\begin{example}\label{exampleStochIntegHilbertLevyProcess}
Let $M$ be the $H$-valued orthogonal martingale-valued measure of Example \ref{exampleLevyHValuedMVM}. By \eqref{eqQuadraticVariationLevyCMVM} and \eqref{eqOperatorQMLevyCMVM}  the square integrability condition \eqref{NewIntegrands} takes the form 
$$ \norm{\Phi}^2_{\Lambda^2(M,T)} = \Exp \int_{0}^{T}  \| \Phi(s,0)\circ Q^{1/2}\|^2_{\mathcal{HS}(H,G)} \, ds + \Exp \int_{0}^{T}\!\! \int_{U} \| \Phi(s,u) u \|^2_{G} \, \lambda(du) ds< \infty, 
$$
We can give an explicit description of the stochastic integral with respect to $M$ for an integrand $\Phi\in \Lambda^2(M,T)$. 

Assume first that $\Phi$ is a simple integrand of the form \eqref{simple}, then by \eqref{eqLevyItoDecomp}, \eqref{NewDefIntSimpleIntegrand} and \eqref{eqDefRadonProcessY} we have $\Prob$-a.e. for every $g \in G$, 
\begin{flalign*}
& \left( \int_0^t\! \! \int_U \Phi(s,u) \, M(ds,du), g \right)_{G} \\
& =  \sum_{i=1}^{n} \sum_{j=1}^{m} \caract_{F_i}(\omega) 
\left( S_{ij} W_{t_i\land t}(\omega) - S_{ij} W_{s_i\land t}(\omega), g \right)_{G}  \delta_{0}(A_j)  \\
& \hspace{15pt} +   \sum_{i=1}^{n} \sum_{j=1}^{m} \caract_{F_i}(\omega)  \int_{s_i\land t}^{t_i\land t} \int_{A_{j} \setminus \{0\}}  \left( S_{ij} u , g \right)_{G}\,  \tilde{N}(ds,du)
\\
 & = \left( \int_0^t  \Phi(s,0) \, dW_s + \int_{0}^{t}\!\!\int_{U \setminus \{0\}} \Phi(s,u)u\, \tilde{N}(ds, du), g \right)_{G}.
\end{flalign*}
By a limit argument and using the independence of $W$ and $N$ we conclude that for    
$\Phi\in \Lambda^2(M,T)$ we have 
\begin{equation*}
\int_0^t\!\! \int_U \Phi(r,u)\, M(dr,du)   = \int_0^t  \Phi(s,0) \, dW_s + \int_{0}^{t}\!\!\int_{U \setminus \{0\}} \Phi(s,u)u\, \tilde{N}(ds, du),  
\end{equation*}
where the stochastic integrals with respect to the Wiener process $W$ and with respect to the compensated Poisson random measure $\tilde{N}$ are as defined for instance in Chapter 8 in \cite{PeszatZabczykSPDE}.  
\end{example}

\begin{example}\label{exampleStochIntegLinearCylinPoissonRM}
Let $\xi \neq 0$ be an element of the separable Hilbert space $H$. Let $M$ be the cylindrical orthogonal martingale-valued measure of Example \ref{exampleLinearCPoissonRM} for $X=H$. 
Observe that by \eqref{eqQuadraVariaExamCylPoissonRM} and \eqref{operatorQMExampleCylinPoissonRM}, the square integrability condition \eqref{NewIntegrands} takes the form 
$$ \norm{\Phi}^2_{\Lambda^2(M,T)} = \Exp \int_{0}^{T}\!\!  \int_{U} \norm{\Phi(s,u)\xi}^{2}_{G} \, ds \lambda(du) < \infty. $$
To give a explicit form for the stochastic integral of an integrand $\Phi\in \Lambda^2(M,T)$, as before, for a simple integrand $\Phi$ of the form \eqref{simple}, and by \eqref{eqDefiPoissonRMPointProcess}, \eqref{NewDefIntSimpleIntegrand} and \eqref{eqDefRadonProcessY} we have $\Prob$-a.e. for every $g \in G$, 
\begin{flalign*}
& \left( \int_0^t\! \! \int_U \Phi(s,u) \, M(ds,du), g \right)_{G} \\
& =  \sum_{i=1}^{n} \sum_{j=1}^{m} \caract_{F_i}(\omega) 
\tilde{N}(\omega ,( s_i \land t, t_i\land t] \times  A_j) \left( \xi, S_{ij}^*g \right)_{H} \\
& = \sum_{0 \leq s \leq t}  \sum_{i=1}^{n} \sum_{j=1}^{m} \caract_{F_i}(\omega) \caract_{( s_i, t_i]}(s) \caract_{A_{i}}(p(\omega,s)) \left( S_{ij} \xi, g \right)_{G} \\
& \hspace{15pt} -  \sum_{i=1}^{n} \sum_{j=1}^{m} \caract_{F_i}(\omega)    (t_{i}-s_{i})\lambda(A_{i}) \left( S_{ij} \xi, g \right)_{G} \\
& = \left( \sum_{0 \leq s \leq t} \Phi(\omega, s,p(\omega,s))\xi - \int_{0}^{t}\!\! \int_{U} \Phi(\omega, s,u)\xi \, ds \lambda(du), g \right)_{G} .
\end{flalign*}
From the above calculation we can conclude that if   
$\Phi\in \Lambda^2(M,T)$, then we have 
\begin{equation*}
\int_0^t\!\! \int_U \Phi(r,u)\, M(dr,du)   = \sum_{0 \leq s \leq t} \Phi(\omega, s,p(\omega,s))\xi - \int_{0}^{t}\!\!  \int_{U} \Phi(\omega, s,u)\xi \, ds \lambda(du).  
\end{equation*}
where the first term in the right-hand side is a random finite sum and the second term is a Bochner integral.  
\end{example}

\begin{remark}
    One can try to establish an upper bound for an It\^o isomorphism.  In our setting, that would correspond to $L^p$ inequalities of the form
    \begin{equation}
    \label{eqIsomorphism}
    \Exp\norm{I_t(\Phi)}^p \leq C_{p} \int_{\Omega\times [0,t]\times U} \norm{\Phi \circ Q_M^{1/2}}_{\mathcal{HS}(H,G)}^p d\mu_M.
    \end{equation}
    In \cite[Section 8.8]{PeszatZabczykSPDE}, this kind of inequality is proved for an $L^p$-valued stochastic integral (same index as the one in the inequality above), in the cases of a Poisson random measure and a Wiener process; in \cite{Dirksen:2014},  there are upper $L^p$ bounds for $L^q$-valued integrals with respect to a compensated Poisson measure.  This kind of results, usually require inequalities of Burkholder-Davies-Gundy type, which are a consequence of an It\^o's formula for the stochastic integral.  A version of It\^o's formula is already proved by the authors in the Hilbert-valued case, but it is carried over to \cite{CCFM:Ito}, in order to maintain the present paper at a reasonable length.  However, in order to obtain these $L^p$ bounds, it is necessary to develop this theory in the Banach space-valued setting.
    
\end{remark}


\subsection{Properties of the stochastic integral}\label{subsecPropeIntegral}

\begin{proposition}
If for each $A\in\calA$ and $h\in H$ the real-valued stochastic process $M(\cdot,A)(h)$ is continuous, then for each $\Phi\in \Lambda^2(M,T)$ the stochastic integral $I(\Phi)$ is a continuous process.
\end{proposition}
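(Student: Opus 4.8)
The plan is to prove continuity first for simple integrands and then transfer it to arbitrary $\Phi \in \Lambda^2(M,T)$ through the approximation defining the integral. Let $\Phi \in \mathcal{S}(M,T)$ be of the form \eqref{simple}. By \eqref{NewDefIntSimpleIntegrand} we have $I_t(\Phi) = \sum_{i=1}^{n}\sum_{j=1}^{m} \caract_{F_i} Y_{i,j}(t)$, a finite sum, so it suffices to show that each $G$-valued square integrable martingale $Y_{i,j}$ has continuous paths. Recall that $Y_{i,j} \in \mathcal{M}_T^2(G)$ is in particular c\`adl\`ag, and that by \eqref{eqDefRadonProcessY}, for every $g \in G$,
$$
\left( Y_{i,j}(t),g \right)_G = M(t_i\wedge t,A_j)(S_{ij}^{*}g) - M(s_i\wedge t,A_j)(S_{ij}^{*}g).
$$
Under the stated hypothesis, the right-hand side is a continuous function of $t$ for each fixed $g$.

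First I would fix a complete orthonormal system $(g_k)_{k\in\N}$ in $G$. By the previous display there is a set of full probability on which the countably many scalar paths $t\mapsto (Y_{i,j}(t),g_k)_G$ are all continuous simultaneously. Since $Y_{i,j}$ is c\`adl\`ag, at any time $t$ its jump $\Delta Y_{i,j}(t) = Y_{i,j}(t)-Y_{i,j}(t-)$ satisfies $(\Delta Y_{i,j}(t),g_k)_G = 0$ for every $k$, and hence $\Delta Y_{i,j}(t)=0$ by completeness of $(g_k)$. Thus $Y_{i,j}$ has no jumps and is $\Prob$-a.e. continuous; consequently $I(\Phi)$ is continuous for every $\Phi\in\mathcal{S}(M,T)$.

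For general $\Phi\in \Lambda^2(M,T)$, using Theorem \ref{Density of S} choose $\Phi_n\in \mathcal{S}(M,T)$ with $\norm{\Phi_n-\Phi}_{\Lambda^2(M,T)}\fle 0$. By Theorem \ref{theoItoIsometry} and the isometry \eqref{eqIsometryGeneralPhi}, $I(\Phi_n)\fle I(\Phi)$ in $\mathcal{M}_T^2(G)$, so that $\Exp\norm{I_T(\Phi_n)-I_T(\Phi)}_G^2\fle 0$. Doob's maximal inequality applied to the c\`adl\`ag $G$-valued martingale $I(\Phi_n)-I(\Phi)$ then gives
$$
\Exp\Big[\sup_{t\in[0,T]}\norm{I_t(\Phi_n)-I_t(\Phi)}_G^2\Big]\leq 4\,\Exp\norm{I_T(\Phi_n)-I_T(\Phi)}_G^2\fle 0.
$$
Hence $\sup_{t}\norm{I_t(\Phi_n)-I_t(\Phi)}_G\fle 0$ in $L^2(\Omega)$, and passing to a subsequence $(\Phi_{n_l})$ we may assume this convergence holds $\Prob$-a.e. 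Along that subsequence the continuous paths $I(\Phi_{n_l})(\cdot)$ converge uniformly on $[0,T]$ to $I(\Phi)(\cdot)$ $\Prob$-a.e., and a uniform limit of continuous functions is continuous, so $I(\Phi)$ has continuous paths.

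The main obstacle is the passage from the scalar (``weak'') continuity of $t\mapsto (Y_{i,j}(t),g)_G$ for each fixed $g$ to genuine continuity of the $G$-valued process $Y_{i,j}$: weak path continuity alone does not force strong continuity, which is precisely why one must invoke the c\`adl\`ag regularity of the $G$-valued martingale together with the separability of $G$ to rule out jumps. Once this step is secured, the extension to $\Lambda^2(M,T)$ is routine given the isometry and Doob's inequality.
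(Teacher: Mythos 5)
Your proposal is correct and follows essentially the same route as the paper's proof: establish continuity of the radonified martingales $Y_{i,j}$ for simple integrands, then transfer to general $\Phi \in \Lambda^2(M,T)$ via density of $\mathcal{S}(M,T)$, the It\^{o} isometry, Doob's maximal inequality, and almost-sure uniform convergence along a subsequence. The only difference is that you spell out details the paper leaves implicit, notably the passage from weak continuity of $t \mapsto (Y_{i,j}(t),g_k)_G$ to strong continuity using the c\`adl\`ag property and the separability of $G$ to kill the jumps, which is a correct and worthwhile elaboration rather than a different argument.
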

\begin{proof}
For a simple process $\Phi \in \mathcal{S}(M,T)$, the result follows directly from \eqref{NewDefIntSimpleIntegrand} and the fact that the radonified version $Y_{i,j}$ defined by \eqref{eqDefRadonProcessY} is a continuous process if each $(M(t,A)(\phi): t \geq 0)$ is continuous. Since the sequential limit of $G$-valued continuous processes is continuous, the result now extends by the denseness of $\mathcal{S}(M,T)$ in $\Lambda^{2}(M,T)$ and the continuity of the stochastic integral mapping to every $\Phi \in \Lambda^{2}(M,T)$. 
\end{proof}

\begin{proposition}\label{propMappingIntegContOpera} Let $E$ be a separable Hilbert space and let  $R \in \mathcal{L}(G,E)$. Then, for each $\Phi \in \Lambda^{2}(M,T;H,G)$, we have $R \circ \Phi=\{ R \circ \Phi(r,\omega,u): r \in [0,T],  \omega \in \Omega, u \in U\} \in \Lambda^{2}(M,T;H,E)$, moreover we have $\Prob$-a.e. for every $t \in [0,T]$, 
\begin{equation} \label{eqIntUnderContMapping}
\int^{t}_{0}\!\! \int_{U} R \circ \Phi (r,u) M (dr, du)= R \left(\int^{t}_{0}\!\! \int_{U} \Phi (r,u) M (dr, du) \right). 
\end{equation} 
\end{proposition}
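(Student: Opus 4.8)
The plan is to proceed in the standard two stages: first show that $R\circ\Phi$ really belongs to $\Lambda^2(M,T;H,E)$, and then establish the commutation identity \eqref{eqIntUnderContMapping} for simple integrands and extend it by density and continuity of the integral. Throughout I abbreviate $Q=Q_M(\omega,r,u)$.

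For membership I would check the three requirements of Definition \ref{defiNewIntegrand}. Since the Hilbert-Schmidt operators form a two-sided operator ideal, for each fixed $(\omega,r,u)$ the composition $R\circ\Phi(\omega,r,u):H_Q\to E$ is Hilbert-Schmidt with $\norm{R\circ\Phi(\omega,r,u)}_{\mathcal{HS}(H_Q,E)}\leq \norm{R}_{\mathcal{L}(G,E)}\norm{\Phi(\omega,r,u)}_{\mathcal{HS}(H_Q,G)}$, giving (i). For (ii), observe that $(R\circ\Phi)\circ Q^{1/2}(h)=R\bigl(\Phi\circ Q^{1/2}(h)\bigr)$; as $\Phi\circ Q^{1/2}(h)$ is a predictable $G$-valued process and $R$ is continuous, its image under $R$ is a predictable $E$-valued process. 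Finally, using the isometry $\norm{L\circ Q^{1/2}}_{\mathcal{HS}(H,E)}=\norm{L}_{\mathcal{HS}(H_Q,E)}$ together with the ideal bound, I get $\norm{R\circ\Phi}_{\Lambda^2(M,T;H,E)}\leq \norm{R}_{\mathcal{L}(G,E)}\,\norm{\Phi}_{\Lambda^2(M,T;H,G)}<\infty$, which is (iii). In particular $\Phi\mapsto R\circ\Phi$ is a bounded linear map $\Lambda^2(M,T;H,G)\to\Lambda^2(M,T;H,E)$.

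For the identity I would begin with a simple integrand $\Phi\in\mathcal{S}(M,T)$ of the form \eqref{simple}, so that $R\circ\Phi$ is again simple, with the operators $S_{ij}$ replaced by $RS_{ij}\in\mathcal{HS}(H,E)$. Writing $\widetilde Y_{i,j}$ for the radonified martingale attached to $R\circ\Phi$ through \eqref{eqDefRadonProcessY}, the adjoint relation $(RS_{ij})^{*}=S_{ij}^{*}R^{*}$ gives, for every $e\in E$,
\begin{equation*}
\bigl(\widetilde Y_{i,j}(t),e\bigr)_{E}=M\bigl((s_i\wedge t,t_i\wedge t],A_j\bigr)\bigl(S_{ij}^{*}R^{*}e\bigr)=\bigl(Y_{i,j}(t),R^{*}e\bigr)_{G}=\bigl(R\,Y_{i,j}(t),e\bigr)_{E},
\end{equation*}
whence $\widetilde Y_{i,j}=R\,Y_{i,j}$. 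Summing over $i,j$ as in \eqref{NewDefIntSimpleIntegrand} yields $I_t(R\circ\Phi)=R\,I_t(\Phi)$ for all $t$, $\Prob$-a.e., which is \eqref{eqIntUnderContMapping} for simple $\Phi$. To pass to general $\Phi$, I would use Theorem \ref{Density of S} to choose simple $\Phi_n\to\Phi$ in $\Lambda^2(M,T;H,G)$; by the boundedness of $\Phi\mapsto R\circ\Phi$ we then have $R\circ\Phi_n\to R\circ\Phi$ in $\Lambda^2(M,T;H,E)$, so by Theorem \ref{theoItoIsometry} $I(R\circ\Phi_n)\to I(R\circ\Phi)$ in $\mathcal{M}_T^2(E)$, while $I(\Phi_n)\to I(\Phi)$ in $\mathcal{M}_T^2(G)$. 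Since $N\mapsto R\,N$ is bounded from $\mathcal{M}_T^2(G)$ into $\mathcal{M}_T^2(E)$ (as $\norm{RN}_{\mathcal{M}_T^2(E)}\leq\norm{R}_{\mathcal{L}(G,E)}\norm{N}_{\mathcal{M}_T^2(G)}$), also $R\,I(\Phi_n)\to R\,I(\Phi)$. Comparing the two limits of the identical sequence $I(R\circ\Phi_n)=R\,I(\Phi_n)$ gives $I_t(R\circ\Phi)=R\,I_t(\Phi)$ in $L^2$ for each fixed $t$, hence $\Prob$-a.e. for each fixed $t$; since both processes are càdlàg elements of $\mathcal{M}_T^2(E)$, two versions agreeing at each fixed time are indistinguishable, upgrading this to hold $\Prob$-a.e. simultaneously for all $t\in[0,T]$.

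The genuinely substantive points are the ideal-and-isometry bookkeeping that places $R\circ\Phi$ in the correct integrand space with the right norm control, and the identification $\widetilde Y_{i,j}=R\,Y_{i,j}$ at the level of simple integrands; everything afterwards is a soft density-and-continuity argument. The only place requiring genuine care is the final càdlàg indistinguishability step, which is what turns the pointwise-in-$t$ a.e. identity into the statement "$\Prob$-a.e. for every $t$" as written in the proposition.
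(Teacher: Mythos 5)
Your proof is correct and follows essentially the same route as the paper's: the adjoint computation $(RS_{ij})^{*}=S_{ij}^{*}R^{*}$ identifying the radonified martingales for simple integrands, followed by density of $\mathcal{S}(M,T)$ and continuity of the integral map. You merely spell out two steps the paper leaves implicit — the verification that $R\circ\Phi\in\Lambda^{2}(M,T;H,E)$ via the Hilbert--Schmidt ideal property, and the final upgrade from a.e.\ equality at each fixed $t$ to indistinguishability via c\`adl\`ag paths — both of which are handled correctly.
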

\begin{proof} One can easily  check that $R \circ \Phi \in \Lambda^{2}(M,T;H,E)$. By continuity of the stochastic integral, it suffices to check that  \eqref{eqIntUnderContMapping} holds for $\Phi$ of the form \eqref{simple}. 

In fact, observe $R \circ \Phi \in 
 \mathcal{S}(M,T;H,E)$, for 
$$ R \circ \Phi(r,\omega,u)=  \sum_{i=1}^{n} \sum_{j=1}^{m}  \ind{]s_{i}, t_{i}]}{r} \ind{F_{i}}{\omega} \ind{A_{j}}{u} R \circ S_{i,j}. $$
Moreover, $\Prob$-a.e. for all $t \in [0,T]$ and $e \in E$, we have 
\begin{eqnarray*}
\left( \int^{t}_{0}\!\! \int_{B} R \circ \Phi (r,u) M (dr, du),e \right)_{E} 
& = & \sum_{i=1}^{n} \sum_{j=1}^{m} \mathbbm{1}_{F_{i}} 
M((s_{i} \wedge t, t_{i} \wedge t],A_{j})(S_{i,j}^{*}\circ R^{*} e) 
\\
& = & \sum_{i=1}^{n} \sum_{j=1}^{m} \mathbbm{1}_{F_{i}} \left(Y_{i,j}(t),R^{*}e \right)_{G}
\\
& = & \left( R \left(\int^{t}_{0} \!\!\int_{U} \Phi (r,u) M (dr, du) \right), e \right)_{E}.
\end{eqnarray*}
But since $E$ is a separable Hilbert space, the above equality shows that \eqref{eqIntUnderContMapping} holds for $\Phi \in \mathcal{S}(M,T;H,G)$.  
\end{proof}

\begin{proposition} \label{propIntegralInSubintervalAndRandomSubset}
Let $0 \leq s_{0} < t_{0} \leq T$ and $F_{0} \in \mathcal{F}_{s_{0}}$. Then, for every $\Phi \in \Lambda^{2}(M,T)$, $\Prob$-a.e. we have  $\forall \, t \in [0,T]$,
\begin{multline} \label{eqIntegralInSubintervalAndRandomSubset}
\int_{0}^{t}\!\! \int_{U} \, \mathbbm{1}_{]s_{0},t_{0}]\times F_{0}} \Phi(r,u) M(dr,du) \\
= \mathbbm{1}_{F_{0}} \left( \int_{0}^{t \wedge t_{0}}\!\! \int_{U} \, \Phi(r,u) M(dr,du) -\int_{0}^{t \wedge s_{0}}\!\!  \int_{U} \, \Phi(r,u) M(dr,du) \right). 
\end{multline}
\end{proposition}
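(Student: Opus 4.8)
The plan is to establish \eqref{eqIntegralInSubintervalAndRandomSubset} first for simple integrands and then extend it to all of $\Lambda^2(M,T)$ by density and continuity of the integral map. Both sides of \eqref{eqIntegralInSubintervalAndRandomSubset} are linear in $\Phi$: the left-hand side because $\Phi\mapsto \mathbbm{1}_{]s_0,t_0]\times F_0}\Phi$ and $\Phi\mapsto I(\Phi)$ are linear, and the right-hand side because $I$ is linear and multiplication by $\mathbbm{1}_{F_0}$ preserves linearity. Hence, writing a simple $\Phi\in\mathcal{S}(M,T)$ as a finite sum of elementary terms $\Psi=\mathbbm{1}_{(a,b]}(r)\mathbbm{1}_{F}(\omega)\mathbbm{1}_{A}(u)S$ with $F\in\calF_{a}$, $A\in\calA$ and $S\in\mathcal{HS}(H,G)$, and invoking the linearity of $I$ on $\mathcal{S}(M,T)$ from Theorem \ref{theoItoIsometrySimpleIntegrands}, it suffices to prove \eqref{eqIntegralInSubintervalAndRandomSubset} for a single such $\Psi$.

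First I would observe that $\mathbbm{1}_{]s_0,t_0]\times F_0}\Psi$ is again an elementary integrand, namely $\mathbbm{1}_{(a',b']}(r)\mathbbm{1}_{F\cap F_0}(\omega)\mathbbm{1}_{A}(u)S$, where $(a',b']=(a,b]\cap{]s_0,t_0]}$; this uses that the intersection of two left-open intervals is again such an interval and that $F\cap F_0\in\calF_{a\vee s_0}=\calF_{a'}$. Pairing both sides of \eqref{eqIntegralInSubintervalAndRandomSubset} with an arbitrary $g\in G$ and applying the definition \eqref{NewDefIntSimpleIntegrand}--\eqref{eqDefRadonProcessY}, the claim reduces, after factoring out the common indicator $\mathbbm{1}_{F\cap F_0}$, to the real-valued increment identity
\begin{equation*}
\psi(b'\wedge t)-\psi(a'\wedge t)=\left[\psi(b\wedge t\wedge t_0)-\psi(a\wedge t\wedge t_0)\right]-\left[\psi(b\wedge t\wedge s_0)-\psi(a\wedge t\wedge s_0)\right],
\end{equation*}
where $\psi(c):=M(c,A)(S^{*}g)$. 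Since this is a purely algebraic statement about the values of the function $\psi$ (and holds for arbitrary real functions, no measure structure being needed), I would verify it directly: reading each bracket as the mass that the signed increment functional $c\mapsto\psi(b\wedge c)-\psi(a\wedge c)$ assigns to $(s_0,t_0]\cap(0,t]$, both sides equal the contribution of $\psi$ over $(a,b]\cap{]s_0,t_0]}\cap(0,t]=(a',b']\cap(0,t]$; equivalently, one settles it by the short case analysis $a\geq s_0$ versus $a<s_0$, together with the empty-intersection case in which both sides vanish. As $g\in G$ is arbitrary and $G$ is a separable Hilbert space, the $G$-valued identity for $\Psi$ follows.

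To pass to a general $\Phi\in\Lambda^2(M,T)$, I would choose, by Theorem \ref{Density of S}, a sequence $\Phi_n\in\mathcal{S}(M,T)$ with $\Phi_n\to\Phi$ in $\Lambda^2(M,T)$. Multiplication by $\mathbbm{1}_{]s_0,t_0]\times F_0}$ is a contraction on $\Lambda^2(M,T)$, so $\mathbbm{1}_{]s_0,t_0]\times F_0}\Phi_n\to\mathbbm{1}_{]s_0,t_0]\times F_0}\Phi$ there as well, and by the continuity of $I$ (Theorem \ref{theoItoIsometry}) both the left-hand sides and the processes $I(\Phi_n)$ converge to their limits in $\mathcal{M}_T^2(G)$. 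The main technical point is upgrading this to the ``for all $t$, $\Prob$-a.e.'' statement: by Doob's inequality, $\mathcal{M}_T^2(G)$-convergence yields $\Exp[\sup_{t\in[0,T]}\norm{I_t(\Phi_n)-I_t(\Phi)}_G^2]\to 0$, so along a subsequence the convergence is uniform in $t$, $\Prob$-a.e., and likewise for the left-hand sides. Multiplying by $\mathbbm{1}_{F_0}$ and evaluating at $t\wedge t_0$ and $t\wedge s_0$ preserves this uniform a.e. convergence, so the identity \eqref{eqIntegralInSubintervalAndRandomSubset}, valid for each $\Phi_n$ simultaneously for all $t$ off a null set, survives in the limit. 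I expect this passage to the limit --- ensuring the exceptional null set is independent of $t$ --- to be the only genuinely delicate step, the elementary reduction being routine bookkeeping.
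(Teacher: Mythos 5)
Your proof is correct and takes essentially the same route as the paper's: verify \eqref{eqIntegralInSubintervalAndRandomSubset} for elementary integrands of the form \eqref{simpleFamiliesProofDenseSupspaceIntegrands}, extend to $\mathcal{S}(M,T)$ by linearity, and pass to $\Lambda^{2}(M,T)$ via the density of $\mathcal{S}(M,T)$ (Theorem \ref{Density of S}) and the continuity of the stochastic integral map. The only difference is one of explicitness: you spell out the real-valued increment identity for the elementary case and the Doob/subsequence argument giving a $t$-independent null set, both of which the paper leaves to the reader.
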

\begin{proof} First, by using similar ideas to those used in the proof of Proposition \ref{propMappingIntegContOpera} one can easily check that \eqref{eqIntegralInSubintervalAndRandomSubset} holds for $\Phi \in \mathcal{C}(M,T)$ of the  form \eqref{simpleFamiliesProofDenseSupspaceIntegrands}. Since $\mathcal{C}(M,T)$ spans $\mathcal{S}(M,T)$, the linearity of the integral shows that \eqref{eqIntegralInSubintervalAndRandomSubset} is valid for $\Phi \in \mathcal{S}(M,T)$. Finally, that \eqref{eqIntegralInSubintervalAndRandomSubset} holds for $\Phi \in \Lambda^{2}(M,T)$ can be show using the density of $\mathcal{S}(M,T)$  and the continuity of the stochastic integral. 
\end{proof}
 
\begin{proposition} \label{propStoppedIntegral}
Let $\Phi \in \Lambda^{2}(M,T)$ and $\sigma$ be a $(\mathcal{F}_{t})$-stopping time such that $\Prob (\sigma \leq T)=1$. Then, $\Prob$-a.e. for every $t \in [0,T]$,
\begin{equation} \label{eqStoppedIntegral}
\int_{0}^{t}\!\! \int_{U} \, \ind{[0,\sigma]}{r} \Phi(r,u) M(dr,du) = \int_{0}^{t \wedge \sigma}\!\! \int_{U} \,  \Phi(r,u) M(dr,du).
\end{equation}
\end{proposition}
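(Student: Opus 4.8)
The plan is to follow the standard two-stage scheme for stopped stochastic integrals: first establish \eqref{eqStoppedIntegral} for stopping times taking finitely many values, then pass to the limit along a decreasing sequence of such stopping times. Throughout I write $I_t(\Phi) = \int_0^t\int_U \Phi(r,u)\,M(dr,du)$. Since $\ind{[0,\sigma]}{r} = \mathbbm{1}_{\{r \le \sigma\}}$ defines a left-continuous adapted (hence predictable) process bounded by $1$, the integrand $\ind{[0,\sigma]}{\cdot}\,\Phi$ again lies in $\Lambda^2(M,T)$, so both sides of \eqref{eqStoppedIntegral} are well defined; I would record this at the outset.

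First I would treat a simple stopping time $\sigma$ with values $0 = v_0 < v_1 < \cdots < v_p \le T$. For $r \in (v_{j-1},v_j]$ one has $r \le \sigma$ exactly when $\sigma \ge v_j$, so that, up to the $\operQuadraVari{M}$-null instant $r=0$,
$$
\ind{[0,\sigma]}{r} = \sum_{j=1}^{p} \ind{(v_{j-1},v_j]}{r}\,\mathbbm{1}_{F_j}, \qquad F_j := \{\sigma \ge v_j\} = \{\sigma \le v_{j-1}\}^c \in \mathcal{F}_{v_{j-1}}.
$$
Applying Proposition \ref{propIntegralInSubintervalAndRandomSubset} to each summand and using linearity of the integral yields
$$
I_t(\ind{[0,\sigma]}{\cdot}\,\Phi) = \sum_{j=1}^{p} \mathbbm{1}_{F_j}\bigl( I_{t\wedge v_j}(\Phi) - I_{t\wedge v_{j-1}}(\Phi) \bigr).
$$
On the event $\{\sigma = v_m\}$ the factor $\mathbbm{1}_{F_j}$ equals $1$ precisely for $j \le m$, so the right-hand side telescopes to $I_{t\wedge v_m}(\Phi) - I_0(\Phi) = I_{t\wedge\sigma}(\Phi)$ (recall $I_0(\Phi)=0$); as these events partition $\Omega$, \eqref{eqStoppedIntegral} follows for such $\sigma$.

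For a general stopping time $\sigma \le T$ I would use the dyadic approximations $\sigma_n = \lceil 2^n \sigma / T\rceil\, T\,2^{-n}$, which are simple stopping times with values in $[0,T]$ and satisfy $\sigma_n \downarrow \sigma$. On the integrand side, $\ind{[0,\sigma_n]}{r} \downarrow \ind{[0,\sigma]}{r}$ pointwise while remaining bounded by $1$, so dominated convergence against $\norm{\Phi\circ Q_M^{1/2}}_{\mathcal{HS}(H,G)}^2 \in L^1(\mu_M)$ gives $\norm{\ind{[0,\sigma_n]}{\cdot}\,\Phi - \ind{[0,\sigma]}{\cdot}\,\Phi}_{\Lambda^2(M,T)} \to 0$; the It\^o isometry (Theorem \ref{theoItoIsometry}) then forces $I(\ind{[0,\sigma_n]}{\cdot}\,\Phi) \to I(\ind{[0,\sigma]}{\cdot}\,\Phi)$ in $\mathcal{M}_T^2(G)$, hence $I_t(\ind{[0,\sigma_n]}{\cdot}\,\Phi) \to I_t(\ind{[0,\sigma]}{\cdot}\,\Phi)$ in $L^2(\Omega;G)$ for each $t$. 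On the other side the previous step gives $I_t(\ind{[0,\sigma_n]}{\cdot}\,\Phi) = I_{t\wedge\sigma_n}(\Phi)$, and since $t\wedge\sigma_n \downarrow t\wedge\sigma$ and the c\`adl\`ag martingale $I(\Phi)$ is right-continuous, $I_{t\wedge\sigma_n}(\Phi) \to I_{t\wedge\sigma}(\Phi)$ almost surely. Passing to a subsequence along which the $L^2$ convergence becomes almost sure and matching limits gives $I_t(\ind{[0,\sigma]}{\cdot}\,\Phi) = I_{t\wedge\sigma}(\Phi)$ $\Prob$-a.e., for each fixed $t$.

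The main obstacle, and the last point I would address, is upgrading this fixed-$t$ identity to the full assertion ``$\Prob$-a.e., for every $t\in[0,T]$''. Both $t \mapsto I_t(\ind{[0,\sigma]}{\cdot}\,\Phi)$ and $t \mapsto I_{t\wedge\sigma}(\Phi)$ are c\`adl\`ag $G$-valued processes, so it suffices to prove the identity simultaneously on a countable dense set of times (the rationals in $[0,T]$ together with $T$) and then invoke right-continuity of both paths, discarding the countably many exceptional null sets at once. The only care needed here is to confirm that $I_{\cdot\wedge\sigma}(\Phi)$ is genuinely c\`adl\`ag, which follows from the right-continuity of $I(\Phi)$ together with the measurability of $\sigma$.
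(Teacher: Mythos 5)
Your proof is correct and follows essentially the same route as the paper: Proposition \ref{propIntegralInSubintervalAndRandomSubset} plus linearity settles the case of simple stopping times, and the general case is handled by a decreasing sequence of dyadic simple stopping times together with the It\^{o} isometry of Theorem \ref{theoItoIsometry} and right-continuity of $I(\Phi)$, exactly as in the paper's Steps 1 and 2. Your closing paragraph, which upgrades the fixed-$t$ identity to a single null set valid for all $t\in[0,T]$ by checking that both sides are c\`adl\`ag and matching them on a countable dense set of times, is a correct refinement of a point the paper's proof leaves implicit.
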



\begin{proof}
We follow the steps of the proof of Lemma 2.3.9 in \cite{Liu-Rockner:2015}, simplified by the fact that our proposition \ref{propIntegralInSubintervalAndRandomSubset} allows us to do Step 1 for general $\Phi$. \medskip

\textbf{Step 1.} Consider a simple stopping time 
$$
\sigma = \sum_{k=1}^n a_k \caract_{\Omega_k},
$$
where $a_k \in [0,T]$ and $\Omega_k \in \calF_{a_k}$ for each $k$. By linearity of $I_t$ and Proposition \ref{propIntegralInSubintervalAndRandomSubset}, for $\Phi\in \Lambda^2(M,T)$ we have
$$
I_t\left( \caract_{(\sigma,T]}\Phi \right) = I_t\left(  \sum_{k=1}^n \caract_{\Omega_k\times (a_k,T]} \Phi  \right) = \sum_{k=1}^n \caract_{\Omega_k} (I_t(\Phi) - I_{t\land a_k} (\Phi)) = I_t(\Phi) - I_{t\land \sigma} (\Phi).
$$
Consequently
$$
I_t\left( \caract_{[0,\sigma]}\Phi \right) = I_t(\Phi) - I_t\left( \caract_{(\sigma,T]}\Phi \right) = I_{\sigma\land t} (\Phi).
$$
\textbf{Step 2.} For a given stopping time $\sigma \leq T$, let $\sigma_n$ be a sequence of simple stopping times such that $T\geq \sigma_n \downarrow \sigma$. Because of the continuity on the right of $I_t$ we have $\mathbb{P}-$a.e.
$$
I_{\sigma_n \land t}(\Phi) \fle I_{\sigma \land t}(\Phi).
$$
On the other hand, by monotone convergence
$$
\norm{\caract_{(0,\sigma_n]}\Phi - \caract_{(0,\sigma]}\Phi}_{\Lambda^2(M,T)}^2
= \int_{\Omega\times [0,t]\times U} \caract_{(\sigma,\sigma_n]}(\cdot) \norm{\Phi\circ Q_M^{1/2}}^2_{\mathcal{HS}(H,G)} d\mu_M \fle 0  .
$$
Thanks to the isometry \eqref{eqIsometryGeneralPhi} we get
$$
\Exp \norm{I_t(\caract_{(0,\sigma_n]}\Phi) - I_t(\caract_{(0,\sigma]}\Phi)}^2 \fle 0
$$
for all $t\in [0,T]$. Using step 1, through a suitable subsequence we finally get
$$
I_{\sigma \land t} (\Phi) = \lim_{n\fle\infty} I_{\sigma_n \land t} (\Phi) = \lim_{n\fle\infty} I_t( \caract_{(0,\sigma_n]} \Phi) =  I_t( \caract_{(0,\sigma]} \Phi).
$$
\end{proof}

\subsection{Extension to locally integrable functions}\label{subsecExtensIntegral}

\begin{definition}
\label{defiLocallyIntegrableIntegrand}
Denote by $\Lambda^2_{\textup{loc}}(M,T)$ the space of families of operators $\Phi(\omega,t,u),$ indexed by $(\omega,t,u)\in \Omega\times [0,T]\times U$, satisfying (i) and (ii) from Definition \ref{defiNewIntegrand}, and such that:
\begin{equation}\label{LocallyIntegrable}
\Prob \left( \left\{ \omega \in \Omega : 
\int_{[0,T]\times U} \| \Phi(\omega,s,u)\circ Q_M^{1/2}(\omega,s,u)\|^2_{\mathcal{HS}(H,G)} \, d\operQuadraVari{M} < \infty \right\}\right) = 1
\end{equation}
\end{definition}

We equip the space $\Lambda^2_{\textup{loc}}(M,T)$ with the complete, metrizable, linear topology generated by the following neighborhood basis 
$$\left\{ \Phi \in \Lambda^2_{\textup{loc}}(M,T):  \Prob \left( \left\{ \omega: 
\int_{[0,T]\times U} \| \Phi(\omega,s,u)\circ Q_M^{1/2}(\omega,s,u)\|^2_{\mathcal{HS}(H,G)} \, d\operQuadraVari{M} > \epsilon \right\}\right)  \leq \delta \right\},$$
for $\epsilon, \delta>0$. 

Let $\Phi \in \Lambda^2_{\textup{loc}}(M,T)$, for each $n \in \N$ we define the stopping time 
\begin{equation}\label{StopTimeLocalIntegrable}
    \tau_n(\omega) : = \inf \left\{  t \in [0,T] : \int_{[0,T]\times U} \| \Phi(\omega,s,u)\circ Q_M^{1/2}(\omega,s,u)\|^2_{\mathcal{HS}(H,G)} \, d\operQuadraVari{M} \geq n  \right\}
\end{equation}
Note that the sets in the previous equation are decreasingly nested, then $(\tau_n : n \in \N)$ is an increasing sequence.  Also, by \eqref{LocallyIntegrable}, $\tau_n \to T$  $\Prob$-a.e. Since this sequence is $\calF_t$-adapted, for each $n \in \N$, the function $\Phi^{\tau_n} = \Phi \caract_{[0, \tau_n]}$ belongs to $\Lambda^2(M,T)$.  Thus, we can extend the definition of the integral to $\Phi \in \Lambda^2_{\textup{loc}}(M,T)$ by setting 
\begin{equation}
    I(\Phi) : = I(\Phi^{\tau_n}), \ \mbox{if}\  \int_{[0,T]\times U} \| \Phi(\omega,s,u)\circ Q_M^{1/2}(\omega,s,u)\|^2_{\mathcal{HS}(H,G)} \, d\operQuadraVari{M} \leq n.
\end{equation}
Proposition \ref{propStoppedIntegral} (applied to $\sigma = \tau_n \wedge \tau_m$ for $m < n$) guarantees that this definition is consistent. 

By construction $I(\Phi) \in \mathcal{M}_T^{2,loc}(G)$, where  $\mathcal{M}_T^{2,loc}(G)$ denotes the linear space of all the $G$-valued locally  zero-mean square integrable martingales, which we consider equipped with the topology of uniform convergence in probability on compact intervals of time. One can easily check that the stochastic integral mapping $I:\Lambda^2_{\textup{loc}}(M,T) \rightarrow \mathcal{M}_T^{2,loc}(G) $ is linear. Its continuity is a consequence of the following result. 

\begin{proposition}
Assume $\Phi \in \Lambda^2_{\textup{loc}}(M,T)$. Then, for $a, b >0$ we have
\begin{equation*}\label{eqInequaContinuityUCPStocIntegral}
\Prob \left( \sup_{t \in [0,T]} \norm{I_{t}(\Phi)} > a \right) \leq \frac{b}{a^{2}} +  \Prob \left(  
\int_{[0,T]\times U} \| \Phi(\omega,s,u)\circ Q_M^{1/2}(\omega,s,u)\|^2_{\mathcal{HS}(H,G)} \, d\operQuadraVari{M} >b \right) .   
\end{equation*}
\end{proposition}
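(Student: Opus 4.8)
The plan is to prove the maximal inequality by conditioning on the event where the integrand is genuinely in $\Lambda^2(M,T)$ (up to a stopping time), then using the isometry together with Doob's inequality, and finally controlling the remainder through the defect event on which the localizing stopping time $\tau_n$ has not yet reached $T$. First I would fix $a,b>0$ and introduce the random variable
\begin{equation*}
Z \defeq \int_{[0,T]\times U} \norm{\Phi(\omega,s,u)\circ Q^{1/2}(\omega,s,u)}^2_{\mathcal{HS}(H,G)} \, d\operQuadraVari{M},
\end{equation*}
which is a.e.\ finite by \eqref{LocallyIntegrable}. The natural localizer is the stopping time $\tau_n$ defined in \eqref{StopTimeLocalIntegrable}; I would select the single value $n$ determined by $b$ in a convenient way (concretely, I expect to take $n$ so that $\Phi^{\tau_n}\in\Lambda^2(M,T)$ and to let the threshold $b$ play the role of the level $n$). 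On the event $\{Z\leq b\}$ we have $\tau_n=T$ for the appropriate $n$, so that $I(\Phi)$ agrees with $I(\Phi^{\tau_n})$ on all of $[0,T]$ by construction and by the consistency guaranteed via Proposition \ref{propStoppedIntegral}.

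The core estimate splits the probability according to whether $Z$ exceeds $b$:
\begin{equation*}
\Prob\left(\sup_{t\in[0,T]}\norm{I_t(\Phi)}>a\right)
\leq \Prob\left(\sup_{t\in[0,T]}\norm{I_t(\Phi^{\tau})}>a\right) + \Prob(Z>b),
\end{equation*}
where $\tau$ is the stopping time at level $b$; on $\{Z\leq b\}$ the two integrals coincide, so the first term captures the truncated integrand and the second is exactly the remainder already appearing on the right-hand side of the claimed inequality. For the first term I would apply Chebyshev's inequality followed by Doob's $L^2$-maximal inequality for the $G$-valued square-integrable martingale $I(\Phi^{\tau})$, and then invoke the It\^o isometry \eqref{eqIsometryGeneralPhi} from Theorem \ref{theoItoIsometry}:
\begin{equation*}
\Prob\left(\sup_{t\in[0,T]}\norm{I_t(\Phi^{\tau})}>a\right)
\leq \frac{1}{a^2}\,\Exp\left[\sup_{t\in[0,T]}\norm{I_t(\Phi^{\tau})}^2\right]
\leq \frac{C}{a^2}\,\Exp\norm{I_T(\Phi^{\tau})}^2
= \frac{C}{a^2}\,\norm{\Phi^{\tau}}^2_{\Lambda^2(M,T)}.
\end{equation*}
Since the integrand defining $\norm{\Phi^{\tau}}^2_{\Lambda^2(M,T)}$ is $\caract_{[0,\tau]}$ times the original integrand, and $\tau$ is chosen at level $b$, this norm is bounded by $b$ (by the very definition of $\tau_n$ in \eqref{StopTimeLocalIntegrable} the accumulated integral up to $\tau$ does not exceed the threshold), which yields the $b/a^2$ term as stated.

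The step I expect to be the main obstacle is making the bookkeeping between the single threshold $b$ and the integer-indexed localizing sequence $(\tau_n)$ fully rigorous, so that $\norm{\Phi^{\tau}}^2_{\Lambda^2(M,T)}\leq b$ holds exactly rather than up to a constant, and so that the constant $C$ from Doob's inequality does not spoil the clean coefficient $b/a^2$. The cleanest route is to define $\tau$ directly as the first time the accumulated integral reaches $b$ (rather than forcing it into the integer grid), verify $\Phi\caract_{[0,\tau]}\in\Lambda^2(M,T)$ with squared norm at most $b$, and absorb the maximal-inequality constant by choosing the appropriate normalization of Doob's inequality for $G$-valued martingales; the identity $I_t(\Phi)=I_t(\Phi\caract_{[0,\tau]})$ on $\{Z\leq b\}$ then transfers the estimate from the truncated integral back to $I(\Phi)$, and the decomposition above completes the proof.
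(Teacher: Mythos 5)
Your proposal is correct and, after your own final refinement, coincides with the paper's argument: the paper defines $\tau_b$ directly as the first time the accumulated integral exceeds $b$ (not via the integer grid $\tau_n$), splits the event on $\{Z\leq b\}$ versus $\{Z>b\}$, and applies the weak-type Doob maximal inequality $\Prob\left(\sup_{t}\norm{M_t}>a\right)\leq a^{-2}\,\Exp\norm{M_T}^2$ together with the isometry \eqref{eqIsometryGeneralPhi}, so no constant $C$ appears. The two bookkeeping issues you flagged (level-$b$ stopping versus integer levels, and absorbing Doob's constant) are exactly the points where the paper's choices make the bound come out as the clean $b/a^2$.
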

\begin{proof}
Define 
$$ \tau_{b}(\omega)=\inf \left\{  t \in [0,T]: \int_{[0,T]\times U} \| \Phi(\omega,s,u)\circ Q_M^{1/2}(\omega,s,u)\|^2_{\mathcal{HS}(H,G)} \, d\operQuadraVari{M} > b \right\}$$
Then,  by Doob's inequality and \eqref{eqIsometryGeneralPhi} we have 
\begin{flalign*}
& \Prob \left( \sup_{t \in [0,T]} \norm{I_{t}(\Phi)} > a \right) \\
&  =
 \Prob \left(  \sup_{t \in[0,T] } \norm{I_{t}( \caract_{[0, \tau_b]}\Phi  ) } >a \mbox{ and }
\int_{[0,T]\times U} \| \Phi(s,u)\circ Q_M^{1/2}(s,u)\|^2_{\mathcal{HS}(H,G)} \, d\operQuadraVari{M} \leq  b \right) \\
& + \Prob \left( \sup_{t \in[0,T] } \norm{I_{t}( \caract_{[0, \tau_b]}\Phi  ) } >a \mbox{ and } \int_{[0,T]\times U} \| \Phi(s,u)\circ Q_M^{1/2}(s,u)\|^2_{\mathcal{HS}(H,G)} \, d\operQuadraVari{M} > b \right) \\
& \leq  
 \frac{1}{a^{2}} \Exp  \left(   \norm{I_{T}(\caract_{[0, \tau_b]} \Phi)}^{2} \right) + \Prob \left( \int_{[0,T]\times U} \| \Phi(s,u)\circ Q_M^{1/2}(s,u)\|^2_{\mathcal{HS}(H,G)} \, d\operQuadraVari{M} > b \right) \\
& = 
 \frac{1}{a^{2}}   
\int_{\Omega \times [0,T]\times U} \| \caract_{[0, \tau_b]} \Phi \circ Q_M^{1/2}\|^2_{\mathcal{HS}(H,G)} \, d\mu_{M}  \\
& +
\Prob \left( \int_{[0,T]\times U} \| \Phi(s,u)\circ Q_M^{1/2}(s,u)\|^2_{\mathcal{HS}(H,G)} \, d\operQuadraVari{M} > b  \right).
\end{flalign*}
Therefore the result follows. 
\end{proof}

\begin{corollary}
The stochastic integral mapping $I:\Lambda^2_{\textup{loc}}(M,T) \rightarrow  \mathcal{M}_T^{2,loc}(G)$ is continuous.     
\end{corollary}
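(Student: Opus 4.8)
The plan is to deduce the corollary directly from the quantitative estimate established in the preceding proposition, combined with the linearity of $I$ and the metrizability of both topologies. Since $I$ is linear and both $\Lambda^2_{\textup{loc}}(M,T)$ and $\mathcal{M}_T^{2,loc}(G)$ carry metrizable linear topologies, continuity of $I$ is equivalent to continuity at the origin, which is in turn equivalent to sequential continuity at the origin. Hence I would take a sequence $(\Phi_n)$ with $\Phi_n \to 0$ in $\Lambda^2_{\textup{loc}}(M,T)$ and aim to show $I(\Phi_n) \to 0$ in $\mathcal{M}_T^{2,loc}(G)$.

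The first step is to translate each of the two convergences into a statement about convergence in probability. By the description of the neighborhood basis at the origin in $\Lambda^2_{\textup{loc}}(M,T)$, the convergence $\Phi_n \to 0$ says precisely that the nonnegative random variables
$$
X_n := \int_{[0,T]\times U} \| \Phi_n(\omega,s,u)\circ Q^{1/2}(\omega,s,u)\|^2_{\mathcal{HS}(H,G)} \, d\operQuadraVari{M}
$$
converge to $0$ in probability. On the target side, because $[0,T]$ is compact, convergence in $\mathcal{M}_T^{2,loc}(G)$ means exactly that $\sup_{t\in[0,T]} \norm{I_t(\Phi_n)} \to 0$ in probability.

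The core of the argument is then an application of the preceding proposition, whose bound is uniform over all integrands and therefore applies simultaneously to the whole sequence. Fixing $a>0$ and $\eta>0$, I would first choose $b>0$ small enough that $b/a^2 < \eta/2$, and then invoke $X_n \to 0$ in probability to produce $N$ with $\Prob(X_n > b) < \eta/2$ for all $n \geq N$. The proposition yields
$$
\Prob\left( \sup_{t\in[0,T]} \norm{I_t(\Phi_n)} > a \right) \leq \frac{b}{a^2} + \Prob(X_n > b) < \eta
$$
for every $n \geq N$, and since $a,\eta$ were arbitrary this gives $\sup_{t\in[0,T]}\norm{I_t(\Phi_n)} \to 0$ in probability, which is the desired convergence.

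I do not expect a serious obstacle here: once the two topologies are correctly identified with convergence in probability of the appropriate quantities, the corollary follows at once from the estimate in the proposition by the standard splitting of the bound into its two terms. The only points requiring a little care are the reduction to sequences, which relies on the asserted metrizability of both spaces, and the verification that the neighborhood basis of $\Lambda^2_{\textup{loc}}(M,T)$ indeed encodes convergence in probability of $X_n$.
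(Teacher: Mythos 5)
Your proposal is correct and follows exactly the route the paper intends: the paper leaves the corollary as an immediate consequence of the preceding proposition's estimate, and your argument simply makes explicit the standard deduction (linearity plus metrizability reduces continuity to sequential continuity at the origin, then the bound $\Prob(\sup_{t\in[0,T]}\norm{I_t(\Phi_n)}>a)\leq b/a^2+\Prob(X_n>b)$ is split with a suitable choice of $b$ and $N$). Your identification of the two topologies with convergence in probability of $X_n$ and of $\sup_{t\in[0,T]}\norm{I_t(\Phi_n)}$, respectively, is precisely what the paper's definitions provide, so nothing is missing.
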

\smallskip

\subsection{The stochastic Fubini theorem}\label{subsecStochFubini}

In this section, we prove a stochastic version of Fubini's theorem involving the integral constructed in the previous section. We start by describing the class of integrands for which the theorem is valid.

\begin{definition} \label{Fubini integrands}
Let $(E, \mathscr{E}, \varrho)$ be a $\sigma$-finite measure space. We denote by $\Lambda^{1,2}(M,T,E)$ the space of families of operators $\Phi(\omega, r, u,e)$, indexed by  $(\omega,t,u,e)\in \Omega\times [0,T]\times U \times E $, such that:
\begin{enumerate}
\item For each $(\omega,t,u,e) \in \Omega \times [0,T] \times U \times E$,  one has that
\begin{enumerate}
	\item $\Phi(\omega,t,u,e): {\rm Dom}\,(\Phi(\omega,t,u,e)) \to G$ is linear. 
	\item ${\rm Dom}\,(\Phi(\omega,t,u,e)) \supseteq H_{Q_{M}}$.
	\item $\Phi(\omega, t, u,e) \Bigl |_{H_{Q_{M}}} \Bigr. \in \mathcal{HS} (H_{Q_{M}},G)$.
\end{enumerate}
\item For all $h \in H$, the map $$(\omega, t,u,e) \mapsto \langle \Phi(\omega, t,u,e) \circ Q^{1/2}_M(\omega, t,u)(h),g \rangle,$$ is $\mathcal{P}_{T}\otimes \mathcal{B}(U)\otimes \mathscr{E} / \calB(G)$-measurable. 
\item $\| \Phi  \|_{\Lambda^{1,2}(M,T,E)}:=\displaystyle \int_E \| \Phi(\cdot, \cdot, \cdot, e)\|_{\Lambda^2(M,T)}  \, \varrho(de) < \infty  $.
\end{enumerate}

It is straightforward to verify that $\left(\Lambda^{1,2}(M,T,E), \| \cdot \|_{\Lambda^{1,2}(M,T,E)} \right)$ is a Banach space. We will denote by $\Lambda^{2,2}(M,T,E)$ the subspace of all families of operators $ \Phi(\omega, r,u,e)$  in $\Lambda^{1,2}(M,T,E)$ satisfying
$$
\| \Phi  \|_{\Lambda^{2,2}(M,T,E)}^2:=\displaystyle \int_E \| \Phi(\cdot, \cdot, \cdot, e)\|^2_{\Lambda^2(M,T)}  \, \varrho(de) < \infty.
$$
It is a Hilbert space with the norm $\| \cdot \|_{\Lambda^{2,2}(M,T,E)}$.\end{definition}

		

\begin{lemma}
\label{approximation lemma}
Let $\Phi \in \Lambda^{1,2}(M,T,E)$. There exists a sequence $(\Phi_n)_{n\geq 1}$ in $\Lambda^{2,2}(M,T,E)$ such that $\varrho$-a.e $\|  \Phi_n(\cdot, \cdot, \cdot, e) \|_{\Lambda^{2}(M,T)} \leq \|  \Phi_{n+1}(\cdot, \cdot, \cdot, e) \|_{\Lambda^{2}(M,T)}$, $\forall n \in \N$, and
$$
\lim_{n \to \infty} \| \Phi-\Phi_n\|_{\Lambda^{1,2}(M,T,E)}=0.
$$
\end{lemma}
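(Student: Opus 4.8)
The plan is to approximate $\Phi$ by truncating it on the set of parameters $e$ where its $\Lambda^2(M,T)$-seminorm is large. Set $\psi(e) \defeq \| \Phi(\cdot,\cdot,\cdot,e)\|_{\Lambda^2(M,T)}$. By the very definition of $\Lambda^{1,2}(M,T,E)$, the hypothesis $\Phi \in \Lambda^{1,2}(M,T,E)$ says precisely that $\psi \in L^1(E,\varrho)$; in particular $\psi(e) < \infty$ for $\varrho$-a.e. $e$.

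First I would check that $\psi$ is $\mathscr{E}$-measurable. Fixing complete orthonormal systems $(h_n)$ in $H$ and $(g_k)$ in $G$, condition (ii) in Definition \ref{Fubini integrands} makes each real map $(\omega,t,u,e) \mapsto \left( \Phi(\omega,t,u,e)\circ Q_M^{1/2}(\omega,t,u)(h_n),g_k\right)_G$ be $\mathcal{P}_T \otimes \mathcal{B}(U) \otimes \mathscr{E}$-measurable, so that $(\omega,t,u,e) \mapsto \| \Phi\circ Q_M^{1/2}\|^2_{\mathcal{HS}(H,G)} = \sum_{n,k}\left|\left(\Phi\circ Q_M^{1/2}(h_n),g_k\right)_G\right|^2$ is $\mathcal{P}_T \otimes \mathcal{B}(U) \otimes \mathscr{E}$-measurable as a countable sum of measurable functions. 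Tonelli's theorem then shows that $e \mapsto \psi(e)^2 = \int_{\Omega\times[0,T]\times U} \| \Phi\circ Q_M^{1/2}\|^2_{\mathcal{HS}(H,G)}\, d\mu_M$ is $\mathscr{E}$-measurable, hence so is $\psi$. This measurability is the only genuinely non-routine point, and it is exactly where the joint measurability granted by Definition \ref{Fubini integrands}(ii) together with Tonelli is essential.

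With measurability in hand, define $E_n \defeq \{ e\in E : \psi(e) \leq n\}\in \mathscr{E}$ and put $\Phi_n \defeq \Phi\, \caract_{E_n}$. Multiplying by the scalar $\caract_{E_n}(e)$ preserves conditions (i) and (ii), so each $\Phi_n$ lies in $\Lambda^{1,2}(M,T,E)$ and satisfies $\| \Phi_n(\cdot,\cdot,\cdot,e)\|_{\Lambda^2(M,T)} = \psi(e)\caract_{E_n}(e)$. On $E_n$ one has $\psi \leq n$, whence
$$
\| \Phi_n\|^2_{\Lambda^{2,2}(M,T,E)} = \int_{E_n} \psi^2\, d\varrho \leq n \int_E \psi\, d\varrho = n\, \| \Phi\|_{\Lambda^{1,2}(M,T,E)} < \infty,
$$
so $\Phi_n \in \Lambda^{2,2}(M,T,E)$. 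Since $E_n \subseteq E_{n+1}$ we have $\caract_{E_n}\leq \caract_{E_{n+1}}$, which gives the required monotonicity $\| \Phi_n(\cdot,\cdot,\cdot,e)\|_{\Lambda^2(M,T)} \leq \| \Phi_{n+1}(\cdot,\cdot,\cdot,e)\|_{\Lambda^2(M,T)}$ for every $e$ (a fortiori $\varrho$-a.e.).

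Finally, for the convergence observe that $\Phi - \Phi_n = \Phi\, \caract_{E_n^c}$, so
$$
\| \Phi - \Phi_n\|_{\Lambda^{1,2}(M,T,E)} = \int_E \psi(e)\,\caract_{E_n^c}(e)\, \varrho(de).
$$
The integrands are dominated by $\psi \in L^1(\varrho)$ and decrease pointwise to $0$ for $\varrho$-a.e. $e$, because $E_n^c = \{\psi > n\}$ decreases to $\{\psi = \infty\}$, which is $\varrho$-null. Dominated convergence then yields $\| \Phi - \Phi_n\|_{\Lambda^{1,2}(M,T,E)} \to 0$, completing the argument.
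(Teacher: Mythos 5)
Your proof is correct, and it differs from the paper's in the details of the truncation. The paper also cuts down $\Phi$ according to the size of $\psi(e) = \| \Phi(\cdot,\cdot,\cdot,e)\|_{\Lambda^2(M,T)}$, but it does two things differently: first, it uses the $\sigma$-finiteness of $(E,\mathscr{E},\varrho)$ to pick an exhaustion $E_0=\bigcup_n E_n$ with $\varrho(E_n)<\infty$ and restricts $\Phi_n$ to $E_n$; second, on the set $\{e\in E_n : \psi(e)>n\}$ it does not zero out $\Phi$ but rescales it to $n\Phi/\psi(e)$, so that $\|\Phi_n(\cdot,\cdot,\cdot,e)\|_{\Lambda^2(M,T)} = \min(n,\psi(e))\,\mathbbm{1}_{E_n}(e)$ and $\Lambda^{2,2}$-membership follows from the crude bound $n^2\varrho(E_n)<\infty$; the verification of the $\Lambda^{1,2}$-convergence is then delegated to an external reference (Lemma 4.22 in \cite{FonsecaMora:2018}). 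Your version hard-truncates to $\Phi\,\mathbbm{1}_{\{\psi\leq n\}}$ and replaces the finite-measure exhaustion by the elementary pointwise inequality $\psi^2 \leq n\psi$ on $\{\psi\leq n\}$, which gives $\Lambda^{2,2}$-membership directly from $\psi\in L^1(\varrho)$ without invoking $\sigma$-finiteness of $\varrho$ at all; you also make the limit self-contained via dominated convergence, since $\{\psi>n\}$ decreases to the $\varrho$-null set $\{\psi=\infty\}$. A genuine added value of your write-up is the explicit Tonelli argument for the $\mathscr{E}$-measurability of $\psi$: the paper never proves this, even though condition (iii) of Definition \ref{Fubini integrands} tacitly presupposes it for the $\Lambda^{1,2}$-norm to be defined (strictly speaking your Tonelli step needs $\mu_M$ to be $\sigma$-finite on $\mathcal{P}_T\otimes\mathcal{B}(U)$, but this is the same level of implicit assumption the paper itself operates under, e.g.\ in its use of Minkowski's integral inequality in the proof of Theorem \ref{Stochastic Fubini}). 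In short: both routes are sound; the paper's rescaled truncation keeps $\Phi_n$ pointwise proportional to $\Phi$ everywhere on $E_n$, while yours is more economical and entirely self-contained.
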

		
\begin{proof}
From Definition \ref{Fubini integrands}, there exists $E_0 \subset E$ with $\varrho (E \backslash E_0) = 0$ such that for all $e \in E_0$, $\| \Phi(\cdot, \cdot, \cdot, e)  \|_{\Lambda^2(M,T)}<\infty$. Since $(E, \mathscr{E},\varrho)$ is a $\sigma$-finite measure space, we can take an increasing sequence $(E_n)$ on  $\mathscr{E}$ such that $E_0 =\bigcup_{n \in \N}E_n$ and $\varrho(E_n)<\infty$, $\forall n \in \N$.  Let $ \Phi_n(\omega, r,u,e)$ be the family of bounded random variables defined by:
\begin{eqnarray*}
\Phi_n (\omega, r,u,e) &=&\frac{n \Phi(\omega, r,u,e)}{\| \Phi(\cdot, \cdot, \cdot, e)  \|}_{\Lambda^2(M,T)} \mathbbm{1}_{ \left\{e \in E_n: \|  \Phi(\cdot, \cdot, \cdot, e)\|_{\Lambda^2(M,T)}>n \right \} }(e)\\
& & + \Phi(\omega, r,u,e) \mathbbm{1}_{ \left\{e \in E_n: \|  \Phi(\cdot, \cdot, \cdot, e)\|_{\Lambda^2(M,T)}\leq n \right \} }(e).
\end{eqnarray*}
Now, the rest of the proof is very similar to Lemma 4.22 in \cite{FonsecaMora:2018}.
\end{proof}

A proof of the following result can be carried out using similar arguments to those in the proof of Theorem \ref{Density of S}.
		
\begin{lemma}
Let $S(M,T,E)$ denote the collection of all families $\Phi(\omega, r,u,e)$, indexed by  $(\omega,t,u,e)\in \Omega\times [0,T]\times U \times E $, of $\mathcal{HS}(H,G)$-valued maps of the form:
\begin{equation}\label{simple process density lemma}
\Phi(\omega, r,u,e)=\sum_{l=1}^p \sum_{i=1}^n \sum_{j=1}^m \mathbbm{1}_{F_j}(\omega)\mathbbm{1}_{]s_j, t_j]}(r) \mathbbm{1}_{A_i}(u)\mathbbm{1}_{D_l}(e)S_{i,j},
\end{equation}
for all $\omega \in \Omega$, $r \in [0,T]$, $u \in U$, $e \in E$, where $m,n,p \in \N$, and for $l=1, \ldots p$, $i=1, \ldots, n$, $j=1, \ldots, m$, $0 \leq s_j<t_j \leq T$, $F_j \in \mathcal{F}_{s_j}$, $A_i \in \mathcal{A}$, $D_l \in \mathscr{E}$ and $S_{i,j} \in \mathcal{HS}(H,G)$. Then $S(M,T,E)$ is dense en $\Lambda^{2,2}(M,T,E)$.
\end{lemma}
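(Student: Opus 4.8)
The plan is to exploit that $\left(\Lambda^{2,2}(M,T,E),\norm{\cdot}_{\Lambda^{2,2}(M,T,E)}\right)$ is a Hilbert space, so that density of the linear span $S(M,T,E)$ is equivalent to triviality of its orthogonal complement, i.e. any $\Phi$ with $\langle \Phi,\Psi\rangle_{\Lambda^{2,2}(M,T,E)}=0$ for every generator $\Psi$ of the form \eqref{simple process density lemma} must satisfy $\Phi\circ Q_M^{1/2}=0$, $(\mu_M\otimes\varrho)$-a.e. This follows the proof of Theorem \ref{Density of S}, the only new ingredient being the extra parameter $e\in E$ and the measure $\varrho$.

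First I would unwind the inner product. By the definitions of $\norm{\cdot}_{\Lambda^{2,2}(M,T,E)}$ and $\norm{\cdot}_{\Lambda^2(M,T)}$, orthogonality of $\Phi$ to a single generator $\Psi(\omega,r,u,e)=\caract_{F}(\omega)\caract_{(s,t]}(r)\caract_{A}(u)\caract_{D}(e)S$ reads
\begin{equation*}
\int_{D}\left[\int_{F\times(s,t]\times A}\left(\Phi(\cdot,\cdot,\cdot,e)\circ Q_M^{1/2},\,S\circ Q_M^{1/2}\right)_{\mathcal{HS}(H,G)}\,d\mu_M\right]\varrho(de)=0,
\end{equation*}
for all $0\le s<t\le T$, $F\in\calF_s$, $A\in\calA$, $S\in\mathcal{HS}(H,G)$ and $D\in\mathscr{E}$ with $\varrho(D)<\infty$. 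Next, fixing complete orthonormal systems $(h_n)$ in $H$ and $(g_k)$ in $G$ and testing against the rank-one operators $S=h_n\otimes g_k$, exactly the Hilbert--Schmidt computation of the proof of Theorem \ref{Density of S} (using that $\{Q_M^{1/2}(h_n)\otimes g_k\}$ is a complete orthonormal system of $\mathcal{HS}(H_{Q_M},G)$) identifies the integrand with the scalar component $\left(\Phi\circ Q_M^{1/2}(h_n),g_k\right)_G$. By the joint measurability required in Definition \ref{Fubini integrands}(ii) and Tonelli's theorem, the iterated integral collapses to a single integral against the product measure, yielding
\begin{equation*}
\int_{F\times(s,t]\times A\times D}\left(\Phi\circ Q_M^{1/2}(h_n),g_k\right)_G\,d(\mu_M\otimes\varrho)=0,
\end{equation*}
for every $n,k\in\N$ and every cylinder $F\times(s,t]\times A\times D$ as above.

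The remaining step is measure-theoretic. The cylinders $F\times(s,t]\times A\times D$ with $\varrho(D)<\infty$ form a $\pi$-system generating $\calP_T\otimes\calB(U)\otimes\mathscr{E}$. Since $\mu_M$ is $\sigma$-finite (as in the proof of Theorem \ref{Density of S}) and $\varrho$ is $\sigma$-finite, the product $\mu_M\otimes\varrho$ is $\sigma$-finite, and this generating $\pi$-system contains a sequence of finite-measure sets increasing to the whole space; on each such set the fixed measurable function $\left(\Phi\circ Q_M^{1/2}(h_n),g_k\right)_G$ is integrable. Hence the vanishing of its integral over every element of the $\pi$-system forces $\left(\Phi\circ Q_M^{1/2}(h_n),g_k\right)_G=0$, $(\mu_M\otimes\varrho)$-a.e., for each $(n,k)$. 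Intersecting the countably many resulting full-measure sets yields $\Phi\circ Q_M^{1/2}=0$, $(\mu_M\otimes\varrho)$-a.e., and therefore
\begin{equation*}
\norm{\Phi}^2_{\Lambda^{2,2}(M,T,E)}=\int_{\Omega\times[0,T]\times U\times E}\norm{\Phi\circ Q_M^{1/2}}^2_{\mathcal{HS}(H,G)}\,d(\mu_M\otimes\varrho)=0.
\end{equation*}
This shows $S(M,T,E)^{\perp}=\{0\}$, and the density follows.

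I expect the main obstacle to be the measure-theoretic bookkeeping in the last paragraph: one must verify that the product cylinders admit an exhausting sequence of finite $(\mu_M\otimes\varrho)$-measure, so that local integrability legitimately passes us from ``zero integral on a generating $\pi$-system'' to ``a.e. zero'', and that Tonelli's theorem genuinely applies, which in turn relies on the joint $\calP_T\otimes\calB(U)\otimes\mathscr{E}$-measurability built into Definition \ref{Fubini integrands}. The Hilbert--Schmidt algebra and the orthonormal-system identity are not reproved but simply imported from Theorem \ref{Density of S}.
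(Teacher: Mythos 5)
Your proposal is correct and follows essentially the same route as the paper, which itself only remarks that the lemma ``can be carried out using similar arguments to those in the proof of Theorem \ref{Density of S}'': you take the orthogonal-complement argument in the Hilbert space $\Lambda^{2,2}(M,T,E)$, import the Hilbert--Schmidt orthonormal-system computation verbatim, and extend the generating-cylinder argument to the product $\sigma$-algebra $\calP_T\otimes\calB(U)\otimes\mathscr{E}$. The only cosmetic point is that the collapse of the iterated integral is Fubini for a signed integrand rather than Tonelli, but you supply exactly the needed absolute integrability (via $\varrho(D)<\infty$ and local finiteness of $\mu_M$ on cylinders) in your final paragraph, so the argument is sound.
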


\begin{theorem}[{\bf Stochastic Fubini's Theorem}]
\label{Stochastic Fubini}
Let $\Phi \in \Lambda^{1,2}(M,T,E)$. Then
\begin{enumerate}
	\item For a.e. $(\omega, r,u) \in \Omega \times [0,T]\times U$, the mapping $E \ni e \mapsto \Phi(\omega, r,u,e) \in \mathcal{HS} \left(H_{Q},G\right)$ is Bochner integrable. Moreover, 
	$$
	\int_E \Phi(\cdot, \cdot, \cdot, e) \varrho(de) =\left\{ \int_E \Phi(\omega, r, u,e) \varrho(de): \omega \in \Omega, r \in [0, T], u \in U \right\} \in \Lambda^2(M,T).
	$$
	\item The mapping $E \ni e \mapsto \displaystyle \int_0^{\cdot}\!\! \int_U \Phi(r,u,e)\, M(dr,du) \in \mathcal{M}_T^2(G)$ is Bochner integrable. Furthermore, for all $t \in [0,T]$
	\begin{equation}\label{Martingale equality}
	\left(\int_E \left( \int_0^{\cdot}\!\! \int_U \Phi(r,u,e)\, M(dr,du)   \right) \varrho(de) \right)_t=\int_E \left(  \int_0^t \!\! \int_U \Phi(r,u,e)\, M(dr,du)\right)\varrho(de).
	\end{equation}
	\item The following equality holds $\Prob$-a.e., for all $t \in [0, T]$
    \begin{equation}\label{Fubini equality}
    \int_0^t \!\! \int_U \left( \int_E \Phi(\cdot, \cdot,e) \varrho(de) \right) M(dr, du)=\int_E  \left(\int_0^t \!\! \int_U \Phi(r, u,e) M(dr, du) \right) \varrho (de).
    \end{equation}
\end{enumerate}
\end{theorem}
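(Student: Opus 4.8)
The plan is to prove the three assertions in order, building each on the preceding one, and reducing everything to the simple integrands $S(M,T,E)$ where both sides of \eqref{Fubini equality} can be computed explicitly. Throughout, the workhorse is the isometry \eqref{eqIsometryGeneralPhi} together with the approximation Lemma \ref{approximation lemma} and the density of $S(M,T,E)$ in $\Lambda^{2,2}(M,T,E)$.

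For part (i), I would first observe that by Definition \ref{Fubini integrands}(i)(c) the map $e \mapsto \Phi(\cdot,\cdot,\cdot,e)$ takes values in $\mathcal{HS}(H_Q,G)$ pointwise, and the strong measurability in $e$ follows from the joint measurability assumption in Definition \ref{Fubini integrands}(ii). Bochner integrability for a.e.\ $(\omega,r,u)$ then follows from $\int_E \|\Phi(\omega,r,u,e)\|_{\mathcal{HS}(H_Q,G)}\,\varrho(de)<\infty$, which is a consequence of condition (iii) via Tonelli (since $\|\Phi\|_{\Lambda^{1,2}(M,T,E)}<\infty$ forces the inner norm to be $\varrho$-integrable $\mu_M$-a.e.). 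To see $\int_E \Phi(\cdot,\cdot,\cdot,e)\,\varrho(de)\in \Lambda^2(M,T)$, I would bound its $\Lambda^2$-norm by $\|\Phi\|_{\Lambda^{1,2}(M,T,E)}$ using Minkowski's integral inequality: $\|\int_E \Phi(\cdot,e)\varrho(de)\|_{\Lambda^2(M,T)} \leq \int_E \|\Phi(\cdot,e)\|_{\Lambda^2(M,T)}\,\varrho(de)$. Predictability of the integrated operator follows from Fubini applied to the measurable version in (ii).

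For part (ii), the map $e \mapsto I(\Phi(\cdot,\cdot,\cdot,e))\in \mathcal{M}_T^2(G)$ is Bochner integrable because the isometry \eqref{eqIsometryGeneralPhi} gives $\|I(\Phi(\cdot,e))\|_{\mathcal{M}_T^2(G)} \leq 2\sqrt{T}\,\|\Phi(\cdot,e)\|_{\Lambda^2(M,T)}$ (up to the Doob constant), so integrability in $\mathcal{M}_T^2(G)$ reduces again to condition (iii). The identity \eqref{Martingale equality} then asserts that evaluation at time $t$, viewed as a bounded linear map $\mathcal{M}_T^2(G)\to L^2(\Omega;G)$, commutes with the Bochner integral, which is a standard property of the Bochner integral against a continuous linear operator.

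The real content is part (iii), and I expect the main obstacle to be the interchange of the stochastic integral $I$ with the Bochner integral over $E$. The strategy is: (a) verify \eqref{Fubini equality} directly for $\Phi\in S(M,T,E)$ of the form \eqref{simple process density lemma}, where both sides reduce to finite sums of the radonified martingales $Y_{i,j}$ from \eqref{eqDefRadonProcessY} and equality is a deterministic rearrangement of sums; (b) extend to $\Lambda^{2,2}(M,T,E)$ by density, using that both the stochastic integral mapping (via \eqref{eqIsometryGeneralPhi}) and the Bochner integral over $E$ are continuous for the $\|\cdot\|_{\Lambda^{2,2}}$-topology, so passing to an approximating sequence $\Phi_k\to\Phi$ in $\Lambda^{2,2}(M,T,E)$ yields convergence of both sides of \eqref{Fubini equality} in $\mathcal{M}_T^2(G)$, and a subsequence gives the $\Prob$-a.e.\ identity for all $t$; (c) finally extend from $\Lambda^{2,2}(M,T,E)$ to the larger space $\Lambda^{1,2}(M,T,E)$ using the monotone approximation from Lemma \ref{approximation lemma}, controlling the left-hand side via part (i) and \eqref{eqInequaContinuityUCPStocIntegral}-type estimates and the right-hand side via the $\mathcal{M}_T^2(G)$-continuity of $e\mapsto I(\Phi_n(\cdot,e))$. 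The delicate point throughout is ensuring the exceptional $\Prob$-null sets can be chosen uniformly in $t$, which is handled by invoking right-continuity of the $G$-valued martingales and checking the identity on a countable dense set of times before extending by continuity.
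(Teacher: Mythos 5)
Your proposal follows essentially the same route as the paper's proof: part (i) via the Pettis measurability theorem and Minkowski's integral inequality, which gives exactly the paper's bound $\bigl\| \int_E \Phi(\cdot,e)\,\varrho(de) \bigr\|_{\Lambda^2(M,T)} \leq \| \Phi \|_{\Lambda^{1,2}(M,T,E)}$, and part (iii) by verifying \eqref{Fubini equality} on the simple class $S(M,T,E)$ and passing to the limit using the It\^{o} isometry \eqref{eqIsometryGeneralPhi} together with the Doob constant $2\sqrt{T}$. Two structural differences, both legitimate: you derive \eqref{Martingale equality} by observing that evaluation at time $t$ is a bounded linear map from $\mathcal{M}_T^2(G)$ to $L^2(\Omega;G)$ and hence commutes with the Bochner integral, which is cleaner than the paper's verification on simple integrands followed by a limiting argument; and you organize the extension in (iii) in two stages (simple to $\Lambda^{2,2}(M,T,E)$ by density, then $\Lambda^{2,2}$ to $\Lambda^{1,2}$ via the monotone approximation of Lemma \ref{approximation lemma}), whereas the paper merges these by combining Lemma \ref{approximation lemma} with the $\Lambda^{2,2}$-density of $S(M,T,E)$ to produce a single sequence of simple families converging to $\Phi$ in $\Lambda^{1,2}(M,T,E)$ and passing to the limit once. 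Either order works, since the approximants of Lemma \ref{approximation lemma} are supported on sets $E_n$ of finite $\varrho$-measure, where $\Lambda^{2,2}$-convergence implies $\Lambda^{1,2}$-convergence by Cauchy--Schwarz.

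The one genuine omission is in part (ii): you assert Bochner integrability of $e \mapsto I(\Phi(\cdot,\cdot,\cdot,e))$ from the bound $\| I(\Phi(\cdot,e)) \|_{\mathcal{M}_T^2(G)} \leq 2\sqrt{T}\, \| \Phi(\cdot,e) \|_{\Lambda^2(M,T)}$ alone, but Bochner integrability requires \emph{strong measurability} of this $\mathcal{M}_T^2(G)$-valued map in addition to the norm bound, and that is not automatic. The paper spends the bulk of its proof of (ii) on precisely this point: the approximating simple families $\Psi_k$ define measurable simple $\mathcal{M}_T^2(G)$-valued functions of $e$; the estimate
$$
\int_E \left\| \int_0^{\cdot}\!\!\int_U \bigl( \Phi(r,u,e)-\Psi_k(r,u,e) \bigr) M(dr,du) \right\|_{\mathcal{M}_T^2(G)} \varrho(de) \leq 2\sqrt{T}\, \| \Phi - \Psi_k \|_{\Lambda^{1,2}(M,T,E)}
$$
combined with Chebyshev's inequality and the Borel--Cantelli lemma yields a subsequence converging in $\mathcal{M}_T^2(G)$ for $\varrho$-a.e.\ $e$, which gives strong measurability. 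The repair lies entirely within the tools you already invoke in your step (iii)(b), so the gap is fixable, but as written your part (ii) is incomplete, and note that your part (iii) depends on it, since the right-hand side of \eqref{Fubini equality} must exist as a Bochner integral in $\mathcal{M}_T^2(G)$ before the two limits can be compared.
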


\begin{proof}
	(i) By the the real Fubini theorem, we have that for each $h\in H$, $g \in G$, the map
		$$
		(\omega, s, u) \mapsto \int_E \left( \Phi(\omega, s, u,e) \circ Q_M^{1/2}(\omega, s,u)(h),g  \right)_{G} \, \varrho (de)
		$$
		is $\mathcal{P}_T \otimes \mathcal{B}(U)$-measurable. Since for all $h \in H$ and $g \in G$,
		\begin{multline*}
		\left(  \left( \int_E \Phi(\omega, s, u,e)\varrho(de)  \right) \circ Q_M^{1/2} (\omega, s,u)(h),g  \right)_{G} \\ 
        = \int_E \left(  \Phi(\omega, s, u,e)  \circ Q_M^{1/2} (\omega, s,u)(h),g  \right)_{G} \varrho(de),
		\end{multline*}
		using the Pettis measurability theorem (e.g. Theorem 1.1.6 of \cite{Hytonen:2016}) we conclude that
		$$
		(\omega, s, u)  \mapsto \left( \int_E \Phi(\omega, s, u, e) \varrho (de)  \right) \circ Q_M^{1/2}(\omega, s, u)(h)
		$$
		is $\mathcal{P}(T) \otimes \mathcal{B}(U)/\mathcal{B}(G)$-measurable for all $h \in H$.
		
		On the other hand, from Minkowski's integral inequality it follows that
		\begin{eqnarray}
		\label{Minkowski inequality}
		& &\int_{\Omega \times [0,T] \times U} \left(\int_E \|\Phi(\omega, s, u,e) \|_{\mathcal{HS} \left(H_{Q_{M}},G \right)} \, \varrho (de)\right)^2 \mu_M(d \omega, ds, du) \nonumber \\
		&=&\int_{\Omega \times [0,T] \times U} \left(\int_E \|\Phi(\omega, s, u,e)\circ Q_M^{1/2}(\omega, s,u) \|_{\mathcal{HS} \left(H,G \right)}\, \varrho (de)\right)^2 \mu_M(d \omega, ds, du)\nonumber\\\
		&\leq & \left(\int_E \left(\int_{\Omega \times [0,T] \times U}\| \Phi(\omega,s,u,e) \circ Q_M^{1/2}(\omega, s, u) \|^2_{\mathcal{HS}(H,G)} \mu_M(d \omega, ds, du)  \right)^{1/2}\, 
		\varrho (de) \right)^2\nonumber \\
		&=& \left(\int_E \|\Phi(\cdot, \cdot, \cdot, e) \|_{\Lambda^2 (M,T)}\, \varrho(de) \right)^2<\infty.
		\end{eqnarray}
		By hypothesis, for all $(\omega, s, u)$, $e \mapsto \Phi(\omega, s,u,e)$ is $\mathscr{E}/\mathcal{B}\left(\mathcal{HS}(H_{Q}/G) \right)$-measurable. Moreover, from (\ref{Minkowski inequality}), 
	$$
	\mu_M-a.e. \quad  \int_E \|\Phi(\omega, s, u,e) \|_{\mathcal{HS} \left(H_{Q_{M}},G \right)}\, \varrho(de)<\infty.
	$$
		Hence, the Bochner integral $\displaystyle \int_E \Phi(\omega, s,u,e)\, \varrho(de)$ belongs to $\mathcal{HS} \left(H_{Q_{M}}, G\right)$, $\mu_M$-a.e.

  (ii) First,  note that for fixed $e$ the stochastic integral $\displaystyle \int_0^{\cdot}\!\! \int_U \Phi(s,u,e)M(ds,du)$ is an element of $\mathcal{M}^2_T(G)$. Moreover, with the help of Lemma \ref{approximation lemma}, we see  that there exists a sequence $(\Psi_k)_{k \in \N}$ of simple processes such that
  \begin{equation}
  \label{approximation result}
  \lim_{k \to \infty}  \| \Phi-\Psi_k\|_{\Lambda^{1,2}(M,T,E)}=0.
  \end{equation}
  Every $\Phi_k$ is of the form (\ref{simple process density lemma}), i.e.
  $$
  \Psi_k(\omega, r,u,e)=\sum_{l=1}^p \sum_{i=1}^n \sum_{j=1}^m \mathbbm{1}_{F_j}(\omega)\mathbbm{1}_{]s_j, t_j]}(r) \mathbbm{1}_{A_i}(u)\mathbbm{1}_{D_l}(e)S_{i,j},
  $$
  where we have omitted the dependence on $k$ of the components
of (\ref{simple process density lemma}) (in order to simplify the notation). In a similar way as in (\ref{NewDefIntSimpleIntegrand}) and (\ref{eqDefRadonProcessY}), its integral is given by
\begin{eqnarray*}
\displaystyle
\int_0^t \int_U\Psi_k(r,u,e)M(dr,du)
& = &   \sum_{l=1}^p\sum_{i=1}^n \sum_{j=1}^m \mathbbm{1}_{D_l}(e)\mathbbm{1}_{F_i}(\omega) Y_{i,j}(t) \\
& = & \sum_{l=1}^p \mathbbm{1}_{D_l}(e) \left[\sum_{i=1}^n \sum_{j=1}^m \mathbbm{1}_{F_i}(\omega) Y_{i,j}(t)\right],
\end{eqnarray*}
where the expression inside square brackets is an element of $\mathcal{M}_T^2(G)$. Hence, we can see this integral as a simple function from $(E,\mathscr{E})$ to $\Bigl(\mathcal{M}_T^2(G), \mathcal{B}(\mathcal{M}_T^2(G))\Bigr)$.
Moreover, by the linearity of the stochastic integral and with the help of the Doob's inequality and the It\^{o} isometry, we can conclude that
\begin{eqnarray}
\label{Doob Ito application}
&& \lim_{k \to \infty} \displaystyle \int_E  \left\| \int_0^{\cdot} \int_U \Bigl(\Phi(r,u,e)-\Psi_k(r,u,e)\Bigr)M(dr,du) \right \|_{\mathcal{M}^2_T(G)
}\varrho(de) \nonumber \\
& \leq& 2 \sqrt{T}\lim_{k \to \infty} \displaystyle \int_E \left\|\Phi(\cdot, \cdot, e)-\Psi_k(\cdot, \cdot, e) \right\|_{\Lambda^2(M,T)} \varrho(de) \nonumber\\
&=&2 \sqrt{T} \lim_{k \to \infty} \| \Phi-\Psi_k  \|_{\Lambda^{1,2}(M,T,E)}.
\end{eqnarray}
Then, it follows from (\ref{approximation result}), (\ref{Doob Ito application}), the Chebyshev inequality and a routine use of Borel-Cantelli lemma that there exists an set $E_2 \subseteq E$ with $\varrho(E-E_2)=0$ and a subsequence $(\Psi_{k_q})_q$ such that
$$\lim_{q \to \infty} \left\| \int_0^{\cdot} \int_U \Bigl( \Phi(r,u,e)-\Psi_{k_q}(r,u,e) \Bigr)M(dr,du)  \right\|_{\mathcal{M}_T^2(G)}=0, \quad \forall\, e \in E_2.$$
In particular, this implies that the map
$\displaystyle e \mapsto \int_0^{\cdot}\!\! \int_U \Phi(r,u,e)M(dr,du)$
is strongly measurable. Moreover, following a similar computation to that in (\ref{Doob Ito application}), we have 
$$
\int_E \left\| \int_0^{\cdot} \int_U \Phi(r,u,e) M(dr, du)  \right\|_{\mathcal{M}^2_T(G)} \varrho(de) \leq 2 \sqrt{T} \| \Phi\|_{\Lambda^2(M,T,E)}<\infty,
$$
which implies that the map $e \mapsto \displaystyle \int_0^{\cdot} \int_U \Phi(r,u,e)M(dr,du)$ is Bochner integrable (in the sense of $\mathcal{M}^2_T (G)$). Furthermore, it is straightforward to conclude that 
$$\left\| \int_E \left( \int_0^{\cdot} \int_U (\Phi(r,u,e)-\Psi_{k_q}(r,u,e))M(dr,du)  \right) \varrho (de) \right\|_{\mathcal{M}_T^2(G)} \to 0,$$
as $q \to \infty$ and since $\Psi_{k_q}$ satisfies in a natural way (\ref{Martingale equality}), we can infer that any $\Phi \in \Lambda^{1,2}(M,T,E)$ also holds (\ref{Martingale equality}).

(iii) Let $(\Psi_{k_q})_q$
be the sequence of simple families as defined in the proof
of (ii). At first, note that any $\Psi_{k_q}$ satisfies (\ref{Fubini equality}) by its simple form. Now, by the linearity of both the stochastic integral and the Bochner integral and with the help of the Doob's inequality and the It\^{o} isometry, it follows that
\begin{flalign*}
& \rho_{k_q} = \\ 
& \left \|\int_0^{\cdot} \int_U \left(  \int_E \Phi(\cdot, \cdot, \cdot, e) \varrho(de)\right) M(dr,du)-\int_0^{\cdot} \int_U \left( \int_E \Psi_{k_q} (\cdot, \cdot, \cdot, e) \varrho (de)\right) M(dr,du)   \right\|_{\mathcal{M}_T^2 (G)}    
\end{flalign*}
satisfies
\begin{equation}
\label{Doob Ito application1}
\rho_{k_q} \leq 2 \sqrt{T}  \left \| \int_E (\Phi(\cdot, \cdot, \cdot, e)-\Psi_{k_q} (\cdot, \cdot, \cdot, e))\varrho(de) \right\|_{\Lambda^{2}(M,T)} \leq 2 \sqrt{T} \| \Phi-\Psi_{k_q} \|_{\Lambda^{1,2}(M,T,E)}
\end{equation}
so $\rho_{k_q} \fle 0$ as $q \to \infty$. Now, by (\ref{Doob Ito application}), (\ref{Doob Ito application1}) and the uniqueness of limits, one can conclude that the processes
$$
\int_0^{\cdot} \int_U \left( \int_E \Phi(\cdot, \cdot, \cdot, e) \varrho(de) \right) M(dr,du),\qquad  \int_E\left( \int_0^{\cdot} \int_U \Phi(\cdot, \cdot, \cdot, e)M(dr,du) \right) \varrho(de) 
$$ 
are equal as elements of $\mathcal{M}_T^2(G)$, which finishes the proof of (\ref{Fubini equality}).
\end{proof}

\section{Stochastic partial differential equations}\label{SectSPDEs}

In this section we study the following class stochastic evolution equations:
\begin{equation}\label{EqSPDE}
  dX_t = \left[ AX_t + B(t,X_t)\right]dt + \int_U F(t,u,X_t) M(dt,du).  
\end{equation}
where we will assume the following: 
\begin{enumerate}
    \item $A$ is the infinitesimal  generator for a $G$-valued $C_{0}$-semigroup $(S(t): t \geq 0)$. 
    \item $B: [0,T] \times G \to G$ is $\calB (\R_{+}) \otimes \calB(G)/\calB(G)$-measurable. 
    \item $F$ is a family of operators  $(F(t, u,g): t\geq 0, u \in U, g \in G)$ such that  
    \begin{enumerate}
        \item $\forall \omega \in \Omega, t\geq 0, u \in U, g \in G$, $F(t, u,g) \in \mathcal{HS}(H_{Q_{M}}, G)$.
        \item For every $h \in H$, the mapping $(\omega, t,u,g) \mapsto F(t,u,g)\circ Q_M^{1/2}(\omega, t,u)(h)$ is $\calP_{T} \otimes   \calB(U)  \otimes \calB(G)/\calB(G)$-measurable. 
    \end{enumerate}
    \item $M$ is a cylindrical orthogonal martingale-valued measure on $H$ (Definition \ref{defiCylindricalOrthoMartinValuMeasure}) with a unique quadratic variation $\operQuadraVari{M}$, and satisfying the conditions of Theorem \ref{theoExistenCovariaOperatorQ}.
\end{enumerate}

In this article we will study existence and uniqueness of the following types of solutions:

\begin{definition}
A $G$-valued predictable process $X=(X_{t}: t \in [0,T])$ is a \emph{weak solution} for equation \eqref{EqSPDE} if for all $g\in \mbox{Dom}(A^*)$  and $t \geq 0$, $\Prob$-a.e.
\begin{multline}\label{eqDefiWeakSolution}
    \left(X_t,g\right)_G  = \left(X_0,g\right)_G + \int_0^t \left[ \left(X_s,A^* g\right)_G  
    + \left(B(s,X_s),g\right)_G \right] ds \\
    + \int_0^t\! \! \int_U \left(F(s,u,X_s),g\right)_G M(ds,du),
\end{multline}
where the first integral in the right-hand side of \eqref{eqDefiWeakSolution} is defined $\Prob$-a.e. as a Lebesgue integral, and the second integral in the right-hand side of \eqref{eqDefiWeakSolution} is the (real-valued) stochastic integral of the family $\{ \left(F(s,u,X_{s}(\omega),g\right)_G:  \omega \in \Omega, s \in [0,t], u \in U\}$.
\end{definition}

\begin{definition}
A $G$-valued predictable process $X=(X_{t}: t \geq 0)$ is a  \emph{mild solution} for equation \eqref{EqSPDE}  if for all $t  \geq 0$, $\Prob$-a.e.
\begin{equation}\label{EqStochIntDiffEq}
  X_t = S(t)X_0 + \int_0^t S(t-s)B(s,X_s)ds + \int_0^t\! \! \int_U S(t-s) F(s,u,X_s)M(ds,du),  
\end{equation}
where the first integral in the right-hand side of \eqref{EqStochIntDiffEq} is defined $\Prob$-a.e. as a Bochner integral, and the second integral in the right-hand side of \eqref{EqStochIntDiffEq} is the stochastic integral of the family $\{ \mathbbm{1}_{[0,t]}(s) S(t-s)F(t,u,X_{s}(\omega)):  \omega \in \Omega, s \in [0,t], u \in U\}$.
\end{definition}

\begin{remark}
 It is well known that there are constants $\kappa>0$ and $N\geq 1$ such that 
\begin{equation}\label{EqSemigroupbound}
  \norm{S(t)g} \leq Ne^{\kappa t} \norm{g},\quad g\in G.  
\end{equation}
Also, we have that
\begin{equation}\label{EqDerivativeSemigroup}
\frac{d}{dt}S(t) = A S(t), \qquad
\frac{d}{dt}S(t)^* = S(t)^*A^*.
\end{equation}
\end{remark}

The stochastic integral in the right-hand side of \eqref{EqStochIntDiffEq} is often known as the \emph{stochastic convolution}. In the following result we show the stochastic convolution possesses a predictable version.

\begin{theorem}\label{theoPredictableStochConvolu}
    For $\Phi \in \Lambda^2(M,T)$, the process $\hat{\Phi}$ defined as
    $$
    \hat{\Phi}_t := \int_0^t\! \! \int_U S(t-s) \Phi(s,u)M(ds,du)
    $$ 
    is $L^2$-continuous, so stochastically continuous. In particular, it has a predictable version.
\end{theorem}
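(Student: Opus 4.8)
The plan is to prove the (stronger) $L^2$-continuity directly and then invoke a standard regularisation result. First I would check that $\hat\Phi$ is well defined and adapted: for fixed $t$ the integrand $\Psi^{t}(\omega,s,u):=\caract_{[0,t]}(s)\,S(t-s)\Phi(\omega,s,u)$ lies in $\Lambda^2(M,T)$, since $\Psi^{t}\circ Q_M^{1/2}(h)$ is predictable (joint measurability of $(s,g)\mapsto S(t-s)g$ together with the predictability of $\Phi\circ Q_M^{1/2}(h)$), and by the bound \eqref{EqSemigroupbound} one has $\norm{\Psi^t}_{\Lambda^2(M,T)}\le Ne^{\kappa T}\norm{\Phi}_{\Lambda^2(M,T)}<\infty$; this is the fibrewise analogue of Proposition \ref{propMappingIntegContOpera}. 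Hence $\hat\Phi_t=I_t(\Psi^t)$ is $\calF_t$-measurable, so $\hat\Phi$ is adapted. Fixing $t\in[0,T]$ and $t_n\to t$, the semigroup law $S(a+b)=S(a)S(b)$ lets me split (for $t_n>t$; the case $t_n<t$ is symmetric, with the larger time in the boundary term)
\begin{equation*}
\hat\Phi_{t_n}-\hat\Phi_{t}=\int_{0}^{t}\bigl[S(t_n-s)-S(t-s)\bigr]\Phi(s,u)\,M(ds,du)+\int_{t}^{t_n}S(t_n-s)\Phi(s,u)\,M(ds,du)=:I_n+II_n .
\end{equation*}

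The analytic core is the observation that for a \emph{fixed} $L\in\mathcal{HS}(H,G)$ the map $r\mapsto S(r)L$ is continuous from $[0,T]$ into $\mathcal{HS}(H,G)$, with $\norm{S(r)L}_{\mathcal{HS}(H,G)}\le Ne^{\kappa T}\norm{L}_{\mathcal{HS}(H,G)}$. I would prove this by expanding over an orthonormal basis $(h_k)$ of $H$: for $r_n\to r$ each summand $\norm{S(r_n)Lh_k-S(r)Lh_k}_G^2$ tends to $0$ by the strong continuity of the semigroup applied to $Lh_k\in G$, while being dominated by $4N^2e^{2\kappa T}\norm{Lh_k}_G^2$, which is summable; dominated convergence for series then gives $\norm{S(r_n)L-S(r)L}_{\mathcal{HS}(H,G)}\to 0$.

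With this in hand I would apply the Itô isometry \eqref{eqIsometryGeneralPhi} to each summand. For the shift term,
\begin{equation*}
\Exp\norm{I_n}^2=\int_{\Omega\times[0,t]\times U}\norm{\bigl[S(t_n-s)-S(t-s)\bigr]\,\Phi(s,u)\circ Q_M^{1/2}(s,u)}_{\mathcal{HS}(H,G)}^2\,d\mu_M,
\end{equation*}
and the integrand tends to $0$ pointwise by the continuity lemma (with $L=\Phi(s,u)\circ Q_M^{1/2}(s,u)$ and $r_n=t_n-s\to t-s$), dominated by $4N^2e^{2\kappa T}\norm{\Phi\circ Q_M^{1/2}}_{\mathcal{HS}(H,G)}^2\in L^1(\mu_M)$; dominated convergence yields $\Exp\norm{I_n}^2\to 0$. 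For the boundary term the isometry and \eqref{EqSemigroupbound} give $\Exp\norm{II_n}^2\le N^2e^{2\kappa T}\beta(J_n)$, where $J_n$ is the interval between $t_n$ and $t$ and $\beta(B):=\int_{\Omega\times B\times U}\norm{\Phi\circ Q_M^{1/2}}_{\mathcal{HS}(H,G)}^2\,d\mu_M$ is a finite measure on $[0,T]$. Since $\beta(J_n)\to 0$, we obtain $\Exp\norm{\hat\Phi_{t_n}-\hat\Phi_t}^2\to 0$. Thus $\hat\Phi$ is $L^2$-continuous, hence stochastically continuous by Markov's inequality, and being also adapted it admits a predictable modification by the standard regularisation theorem for adapted, stochastically continuous processes valued in a separable Hilbert space.

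The step I expect to be the main obstacle is the shift term $I_n$: the limit \emph{cannot} be passed by a crude operator-norm estimate, since the semigroup need not be norm-continuous, so the argument must route through the Hilbert-Schmidt continuity of $r\mapsto S(r)L$ together with the dominated-convergence majorant furnished by $\Phi\in\Lambda^2(M,T)$. A second, more delicate point hides in the boundary term when $t_n\uparrow t$, where $\beta(J_n)\to \beta(\{t\})=\int_{\Omega\times\{t\}\times U}\norm{\Phi\circ Q_M^{1/2}}_{\mathcal{HS}(H,G)}^2\,d\mu_M$; genuine two-sided $L^2$-continuity therefore requires that $\operQuadraVari{M}$—equivalently $\mu_M$—carry no mass on any fibre $\{t\}\times U$, i.e.\ that the quadratic variation be continuous in time. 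This is exactly where the time-regularity of $\operQuadraVari{M}$ is used, and it holds in the examples of Section \ref{subsectQuadraVariationoperator}, where $\operQuadraVari{M}(ds,du)$ is absolutely continuous in $ds$.
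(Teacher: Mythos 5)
Your proof is correct and follows essentially the same route as the paper's: the same decomposition of the increment into a shift term and a boundary term, the isometry \eqref{eqIsometryGeneralPhi}, dominated convergence with the majorant $4N^2e^{2\kappa T}\norm{\Phi\circ Q_M^{1/2}}^2_{\mathcal{HS}(H,G)}$ furnished by \eqref{EqSemigroupbound}, and the Peszat--Zabczyk regularisation result; your Hilbert--Schmidt continuity lemma for $r\mapsto S(r)L$ is precisely the justification the paper leaves implicit when it asserts the integrand of $\sigma_1$ tends to $0$. Your closing caveat is in fact a point on which your argument is more careful than the paper's: the paper asserts $\sigma_2\to 0$ as $r\uparrow t$ without comment, which, as you observe, holds only when $\mu_M$ puts no mass on the fibre $\{t\}\times U$, a condition not guaranteed by the standing assumptions (the intensity measures $\nu_{x^*}$ are predictable quadratic variations and may have fixed-time atoms, e.g.\ for martingales with fixed jump times), though it does hold in all of the paper's examples, where $\operQuadraVari{M}$ is absolutely continuous in time.
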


\begin{proof}
For $r<t$ we write $\hat{\Phi}_t - \hat{\Phi}_s$ as
$$
\int_0^r\! \! \int_U [S(t-s)-S(r-s)]\Phi(s,u)M(ds,du) + \int_r^t \!\! \int_U S(t-s) \Phi(s,u)M(ds,du).
$$
This allows us to get a bound $\Exp \norm{\hat{\Phi}_t - \hat{\Phi}_r}_G^2 \leq \sigma_1 + \sigma_2$, where
\begin{align*}
\sigma_1 & = 2 \Exp \norm{\int_0^r\! \! \int_U [S(t-s)-S(r-s)]\Phi(s,u)M(ds,du)}_G^2 \\
  & = 2\Exp \int_0^r\! \! \int_U \norm{ [S(t-s)-S(r-s)]\Phi(s,u)\circ Q_M^{1/2}(s,u) }_{\mathcal{HS}(H,G)}^2 \operQuadraVari{M}(ds,du); \\
\sigma_2 & = 2 \Exp \norm{\int_r^t \!\! \int_U S(t-s)\Phi(s,u)M(ds,du)}_G^2 \\
  & = 2\Exp \int_r^t \!\!  \int_U \norm{S(t-s)\Phi(s,u)\circ Q_M^{1/2}(s,u) }_{\mathcal{HS}(H,G)}^2 \operQuadraVari{M}(ds,du).
\end{align*}
The integrand on the second expression of $\sigma_1$ goes to $0$ as $r\uparrow t$ and if we use (\ref{EqSemigroupbound}) we can see that it is bounded by 
$$
4N^2e^{2\kappa T} \norm{\Phi(\omega,s,u)\circ Q_M^{1/2}(\omega,s,u)}_{\mathcal{HS}(H,G)}^2, 
$$
    which is $\mu_M$-integrable because $\Phi \in \Lambda^2(M,T)$, so by dominated convergence $\sigma_1 \fle 0$ as $r\uparrow t$. Similarly, using (\ref{EqSemigroupbound}), it is straightforward that
$$
\sigma_2 \leq 2N^2 e^{2\kappa T} \Exp \int_r^t \!\! \int_U \norm{\Phi(s,u) \circ Q_M^{1/2}(s,u) }_{\mathcal{HS}(H,G)}^2 \operQuadraVari{M}(ds,du)
\fle 0
$$
as $r\uparrow t$. For $r\downarrow t$, there are minor changes. Finally, since $(\hat{\Phi}_t)_{0 \leq t \leq T}$ is adapted and stochastically continuous it has a predictable version  (see Proposition 3.21 of \cite{PeszatZabczykSPDE}).
\end{proof}

\subsection{Equivalence between weak and mild solutions}\label{subsectWeakMildSoluc}

In the following result we introduce sufficient conditions for the existence and equivalence of weak and mild solutions to \eqref{EqSPDE}. 

\begin{theorem}\label{theoEquivaWeakMildSol}
Let $X=(X_{t}: t \geq 0)$ be a $G$-valued predictable process, and assume that for every $T>0$, $X$, $B$ and $F$ satisfy: 
\begin{align}
& \Prob \left( \int_{0}^{T} \, \norm{X_{s}}_{G} +  \norm{B(s,X_{s})}_{G} \, ds<\infty \right)=1. \label{RealIntegrability}   \\
& \int_{\Omega\times [0,T]\times U} \| F(s,u,X_{s}(\omega))\circ Q_M^{1/2}(\omega,s,u)\|^2_{\mathcal{HS}(H,G)} \, d\mu_M < \infty. \label{StochIntegrability}  
\end{align}
Then, $X$ is a weak solution to \eqref{EqSPDE} if and only if it is a mild solution to \eqref{EqSPDE}.
\end{theorem}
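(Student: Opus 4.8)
The plan is to prove the equivalence between weak and mild solutions via the standard route that goes through testing against the semigroup's adjoint and applying the stochastic Fubini theorem. The key identity to exploit is that the mild solution formula can be recovered from the weak formulation by using $A^*S(t-s)^*g$ as test functions and integrating, and conversely the weak formulation can be extracted from the mild one by differentiating (in a suitable weak sense). The integrability hypotheses \eqref{RealIntegrability} and \eqref{StochIntegrability} are precisely what is needed to justify all the interchanges of integrals.

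\textbf{First I would prove the direction ``mild $\Rightarrow$ weak''.} Suppose $X$ satisfies \eqref{EqStochIntDiffEq}. Fix $g \in \mathrm{Dom}(A^*)$ and $t \geq 0$. The idea is to compute $\int_0^t (X_s, A^*g)_G \, ds$ by substituting the mild formula for $X_s$ and showing it reproduces the weak identity. Substituting gives three terms. For the deterministic part I would use the semigroup identity \eqref{EqDerivativeSemigroup}, namely $\frac{d}{ds}S(s)^*g = S(s)^*A^*g$, together with Fubini's theorem (the classical one, justified by \eqref{RealIntegrability} and the bound \eqref{EqSemigroupbound}) to rewrite $\int_0^t (S(s)X_0, A^*g)_G\, ds = (S(t)X_0,g)_G - (X_0,g)_G$ and the analogous telescoping identity for the drift term $\int_0^t S(t-s)B(s,X_s)\,ds$. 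For the stochastic convolution term, the crucial tool is the \emph{stochastic Fubini theorem} (Theorem \ref{Stochastic Fubini}): I would write
$$
\int_0^t \left( \int_0^r\!\!\int_U S(r-s)F(s,u,X_s)M(ds,du),\, A^*g \right)_G dr
$$
and interchange the $dr$ integral with the stochastic integral, then use $\int_s^t S(r-s)^*A^*g\,dr = S(t-s)^*g - g$ to collapse the inner integral. Collecting all three contributions yields exactly \eqref{eqDefiWeakSolution}. The verification that $F(\cdot,\cdot,X_\cdot)$ lies in the Fubini class $\Lambda^{1,2}(M,T,[0,t])$ with $\varrho = \mathrm{Leb}$ rests on \eqref{StochIntegrability} together with \eqref{EqSemigroupbound}.

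\textbf{Next I would prove ``weak $\Rightarrow$ mild''.} Here the standard technique is to take, for fixed $t$, the time-dependent test function $g(s) = S(t-s)^*g$ and apply the weak formulation in an integrated-against-$s$ sense. More carefully, one applies the weak identity with the test element $A^*$ acting on $S(t-s)^*g$ and integrates in $s$; the derivative identity \eqref{EqDerivativeSemigroup} converts the term $(X_s, A^*S(t-s)^*g)_G$ into $-\frac{d}{ds}(X_s, S(t-s)^*g)_G$ modulo the other terms, and the fundamental theorem of calculus (applied pathwise, using an approximation of $g(s)$ by a piecewise-constant or smooth family to make the product rule rigorous) produces $(X_t,g)_G - (S(t)X_0,g)_G$ on the left and the convolved drift and stochastic terms on the right. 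Again the stochastic Fubini theorem is invoked to move the $ds$-integration through the stochastic integral. Since $g \in \mathrm{Dom}(A^*)$ with $\mathrm{Dom}(A^*)$ dense (as $A$ generates a $C_0$-semigroup, $A^*$ is densely defined on $G$ identified with its dual), and since both sides are continuous in $g$, the identity extends to all $g$, giving the mild formula \eqref{EqStochIntDiffEq}.

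\textbf{The main obstacle} I anticipate is making the ``weak $\Rightarrow$ mild'' direction fully rigorous: the formal manipulation uses the chain rule $\frac{d}{ds}(X_s, S(t-s)^*g)_G$, but $X_s$ is only a predictable process with no a priori regularity in $s$, so one cannot literally differentiate it. The correct handling is to avoid differentiating $X$ by instead substituting the weak identity \eqref{eqDefiWeakSolution} (which expresses $(X_r,h)_G$ as an integral) into $\int_0^t (X_s, A^*S(t-s)^*g)_G\,ds$ and then applying a (deterministic and stochastic) Fubini argument to reorganize the iterated integrals, exactly mirroring the forward direction. This is where the hypotheses \eqref{RealIntegrability} and \eqref{StochIntegrability} and the semigroup bound \eqref{EqSemigroupbound} must be combined carefully to guarantee absolute/square integrability of the integrands appearing after the interchange, so that Theorem \ref{Stochastic Fubini} applies. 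The density of $\mathrm{Dom}(A^*)$ and the closedness of $A^*$ then let one pass from the tested identity to the genuine $G$-valued mild equation.
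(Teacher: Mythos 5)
Your proposal is correct and takes essentially the same route as the paper's proof: in both directions it substitutes the given (mild or weak) identity into an integral over $r$ with test elements of the form $S(t-r)^*A^*g$, then reorganizes via classical and stochastic Fubini together with the semigroup identity \eqref{EqDerivativeSemigroup}, and in particular your ``correct handling'' of the weak-to-mild direction (avoiding any differentiation of $X_s$ by substituting the weak identity and interchanging integrals) is exactly what the paper does. Your explicit appeal to the density of $\mathrm{Dom}(A^*)$ only makes precise a step the paper leaves implicit.
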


\begin{proof} Let $X$ be a weak solution. To prove that it is a mild solution, it is enough to show that for each $g\in G$, we have
$$\left(X_t,g\right)_G = 
\left( S(t)X_0 + \int_0^t S(t-s)B(s,X_s)ds + \int_0^t\! \! \int_U S(t-s) F(s,u,X_s)M(ds,du),g\right)_G.
$$
By the definition of weak solution for $X_r$, applied to $S(t-r)^*A^*g \in G$, and integrating in $r$ from $0$ to $t$, we obtain 
\begin{align*}
    \MoveEqLeft[3] \int_0^t\!\!\int_0^r\! \! \int_U \left( F(s,u,X_s),S(t-r)^*A^*g \right)_G M(ds,du) \, dr  \\
    =  {}&      \int_0^t (X_r - X_0,S(t-r)^*A^*g)_G \, ds \, dr - \int_0^t\!\! \int_0^r (X_s,A^*S(t-r)^*A^*g)_G \, ds \, dr \\
    &  - \int_0^t\!\! \int_0^r (B(s,X_s),S(t-r)^*A^*g)_G \, ds \, dr. \\  ={}&      \int_0^t (X_r,S(t-r)^*A^*g)_G \, dr - \int_0^t (X_0,S(t-r)^*A^*g)_G \, dr \\
    & - \int_0^t\!\! \int_s^t (X_s,A^*S(t-r)^*A^*g)_G \, dr \, ds - \int_0^t\!\! \int_s^t (B(s,X_s),S(t-r)^*A^*g)_G \, dr \, ds.
\end{align*}
In the last equality, condition \eqref{RealIntegrability}  and the exponential bound \eqref{EqSemigroupbound} allow us to use real valued Fubini.  By \eqref{EqDerivativeSemigroup} on the last three integrals, we obtain that the right hand side is equal to
\begin{align*}
&  \int_0^t (X_r,S(t-r)^*A^*g)_G \, dr + (X_0, g)_G - (X_0,S(t)^*g)_G  \\
    & + \int_0^t  (X_s,A^*g)_G \, ds - \int_0^t  (X_s,S(t-s)^*A^*g)_G \, ds \\
    &  + \int_0^t (B(s,X_s),g)_G \, ds - \int_0^t (B(s,X_s),S(t-s)^*g)_G  \, ds \\
    ={}& (X_0, g)_G - (X_0,S(t)^*g)_G + \int_0^t  (X_s,A^*g)_G \, ds \\
    &  + \int_0^t (B(s,X_s),g)_G \, ds - \int_0^t (B(s,X_s),S(t-s)^*g)_G  \, ds.
\end{align*}
By condition \eqref{StochIntegrability} and the exponential bound \eqref{EqSemigroupbound}, we can use stochastic Fubini on the left hand side, then another application of \eqref{EqDerivativeSemigroup} gives us
\begin{align*}
    \MoveEqLeft[3] \int_0^t\!\!\int_0^r\! \! \int_U \left( F(s,u,X_s),S(t-r)^*A^*g \right)_G M(ds,du) \, dr  \\
    ={}& \int_0^t\! \! \int_U \int_s^t\left( F(s,u,X_s),S(t-r)^*A^*g \right)_G \, dr \, M(ds,du) \\
    ={}& \int_0^t\! \! \int_U \left( F(s,u,X_s),S(t-s)^*g \right)_G \, M(ds,du) - \int_0^t\! \! \int_U \left( F(s,u,X_s),g \right)_G \, M(ds,du) \\
    ={}&  \int_0^t\! \! \int_U \left( F(s,u,X_s),S(t-s)^*g \right)_G \, M(ds,du) - \left(X_t,g\right)_G  + \left(X_0,g\right)_G \\ 
    &+ \int_0^t  \left(X_s,A^* g\right)_G \, ds     +  \int_0^t\left(B(s,X_s), g\right)_G  \, ds,
\end{align*}
where in the last equality, we use again the fact that $X$ is a weak solution.  Finally, comparing both sides and solving for $\left(X_t,g\right)_G$ we obtain the desired result.

Suppose now that $X$ is a mild solution.  Then, for $0\leq r \leq T$ we have
\begin{eqnarray*}
    (X_r, A^*g)_G & = & (X_0, S(r)^*A^*g)_G + \int_0^r (B(s,X_s), S(r-s)^*A^*g)_G \, ds \\
    & & + \int_0^r\! \! \int_U (F(s,u, X_s), S(r-s)^*A^*g)_G \, M(ds, du).
\end{eqnarray*}
We now integrate in $s$ from $0$ to $t \leq T$.  Similar to the previous computations, we use stochastic Fubini and \eqref{EqDerivativeSemigroup} to obtain
\begin{eqnarray*}
    \int_0^t (X_r, A^*g)_G \,dr & = & \int_0^t (X_0, S(r)^*A^*g)_G \,dr + \int_0^t \!\! \int_0^r (B(s,X_s), S(r-s)^*A^*g)_G \, ds \,dr \\
    & & + \int_0^t \!\! \int_0^r\! \! \int_U (F(s,u, X_s), S(r-s)^*A^*g)_G \, M(ds, du) \,dr \\
    & = &  (X_0, S(t)^*g)_G - (X_0, g)_G + \int_0^t (B(s,X_s), S(t-s)^*g)_G \, ds \\
    & & - \int_0^t (B(s,X_s), g)_G \, ds + \int_0^t\! \! \int_U (F(s,u, X_s), S(t-s)^*g)_G \, M(ds, du) \,dr \\
    & & - \int_0^t\! \! \int_U (F(s,u, X_s), g)_G \, M(ds, du) \,dr
    \\
    & = & (X_t, g)_G - (X_0, g)_G - \int_0^t (B(s,X_s), g)_G \, ds \\
    & & - \int_0^t\! \! \int_U (F(s,u, X_s), g)_G \, M(ds, du), 
\end{eqnarray*}
where in the last equality we use again the fact that $X$ is a mild solution.  Solving for $(X_t, g)_G$ shows that $X$ is a weak solution. 
\end{proof}

\begin{example}\label{Ex:LevynoiseSPDE}
Let $d \in \N$ and $\mathcal{O}$ be a non-empty open bounded subset of $\R^{d}$. Consider the following stochastic heat equation with additive L\'evy noise and Dirichlet boudary conditions:
$$
\begin{cases}
dy(t,x)=\Delta y(t,x) dt + \sigma(x) dL_{t}, & (t,x) \in [0,\infty) \times \mathcal{O}, \\ 
y(t,x)=0, & (t,x) \in [0,\infty) \times \partial \mathcal{O}, \\
y(0,x)=y_{0}(x), & x \in \mathcal{O}. 
\end{cases}
$$
where $y_{0}, \sigma \in L^{2}(\mathcal{O})$, and $L=(L_{t}: t \geq 0)$ is a real-valued mean-zero square integrable L\'{e}vy process. 

It is well-known that with the above boundary conditions $A=\Delta$ is the generator for a strongly continuous $C_{0}$-semigroup $(S(t): t \geq 0)$ on $L^{2}(\mathcal{O})$ with domain $\mbox{Dom}(\Delta)=W^{1,2}_{0}(\mathcal{O})$. 

Following ideas taken from Example 7.3 in \cite{RiedlevanGaans:2009}, we can reformulate the above stochastic heat equation in an infinite dimensional context for which we can apply our recently introduced theory. We do this as follows: let $L=(L_{t}: t \geq 0)$  be a cylindrical mean-zero square integrable L\'evy process in $\ell^{2}$. For each $i\in \N$, let $\sigma_{i} \in L^{2}(\mathcal{O})$ and consider a sequence of real-valued numbers $\alpha=(\alpha_{i})_{i \in \N}$ such that $(\alpha_{i} \norm{\sigma_{i}}_{L^{2}(\mathcal{O})}) \in \ell^{2}$. 

Define $F(\alpha): \ell^{2} \rightarrow L^{2}(\mathcal{O})$ by $F(\alpha)(h)=\sum_{i=1}^{\infty} \alpha_{i}h_{i} \sigma_{i}$ for $h=(h_{i})_{i\in \N} \in \ell^{2}$, which is Borel-measurable. Observe that if $(e^{k}:k\in \N)$ is the canonical orthonormal basis in $\ell^{2}$, then 
$$\norm{F(\alpha)}^{2}_{\mathcal{HS}(\ell^{2},L^{2}(\mathcal{O}))} =\sum_{k=1}^{\infty} \norm{F(\alpha)(e^{k})}_{L^{2}(\mathcal{O})}^{2}  
=\sum_{k=1}^{\infty} \norm{\sum_{i=1}^{\infty} \alpha_{i}e^{k}_{i} \sigma_{i}}_{L^{2}(\mathcal{O})}^{2}
= \sum_{i=1}^{\infty} \abs{\alpha_{i}}^{2} \norm{\sigma_{i}}_{L^{2}(\mathcal{O})}^{2}< \infty.$$
Therefore $F(\alpha)\in \mathcal{HS}(\ell^{2},L^{2}(\mathcal{O}))$. 

Taking $U=\{\alpha\}$, $\mathcal{A}=\{ \emptyset, U\}$ and $M=(M(t,A): t \geq 0, A \in \mathcal{A})$ given by $M(t,A)(h)=L_{t}(h) \delta_{\alpha}(A)$, we have by Example \ref{exampleCylinMartingaleDeltaLocBounCova} that $M$ is a  cylindrical martingale-valued measure with quadratic variation
$\operQuadraVari{M}(\omega)(ds,du)=  \norm{Q} ds \delta_{\alpha}(du)$, where $Q$ is the covariance operator of $L$, i.e. $Q:\ell^{2} \rightarrow \ell^{2}$ is such that $\left(Qh,g\right)_{\ell^{2}}=\Exp \left[ (L_{1}(h), L_{1}(g))_{\ell^{2}} \right] $ $\forall h,g \in \ell^{2}$. Moreover, 
$$ Q_{M}(\omega, r, u) = \frac{Q}{\norm{Q}} \mathbbm{1}_{\{\alpha\}}(u). $$
Given the above, we can reinterpret the stochastic heat equation with Dirichlet boundary conditions as the following stochastic evolution equation with values in $L^{2}(\mathcal{O})$:
\begin{equation}\label{exampleStocHeatEqua}
dY_{t}=AY_{t}dt+\int_{U} \, F(u) \, M(dt,du), \quad Y_{0}=y_{0}.
\end{equation}
Since 
$$\Exp \int_{0}^{T} \!\! \int_{U}  \, \norm{F(\alpha)\circ Q_{M}^{1/2}}^{2}_{\mathcal{HS}(\ell^{2},L^{2}(\mathcal{O}))} \, \operQuadraVari{M}(dt,du) \leq T \norm{Q}\sum_{i=1}^{\infty} \abs{\alpha_{i}}^{2} \norm{\sigma_{i}}_{L^{2}(\mathcal{O})}^{2}< \infty,$$
then it is clear that $F(\alpha) \in \Lambda^{2}(M,T)$. Hence, a mild solution for \eqref{exampleStocHeatEqua} exists:
\begin{equation}\label{eqMildSolutionExaStochHeatEqua}
Y_{t} = S(t)y_{0} + \int_{0}^{t} \!\!  \int_{U} \, S(t-s) F(u) \, M(ds,du).    
\end{equation}
Furthermore, we have
\begin{multline*}
 \Exp \left[ \int_{0}^{T} \,  \norm{Y_{t}}_{L^{2}(\mathcal{O})}^{2} \, dt \right]\\
 \leq N^{2} e^{2kT} \left[ \norm{y_{0}}_{L^{2}(\mathcal{O})}^{2} + \Exp \int_{0}^{T} \!\!  \int_{U}  \, \norm{F(\alpha)\circ Q_{M}^{1/2}}^{2}_{\mathcal{HS}(\ell^{2},L^{2}(\mathcal{O}))} \, \operQuadraVari{M}(dt,du)  \right] < \infty.
\end{multline*}
Therefore, by Theorem \ref{theoEquivaWeakMildSol}, the mild solution \eqref{eqMildSolutionExaStochHeatEqua} is also a weak solution for \eqref{exampleStocHeatEqua}. 
\end{example}


\subsection{Existence and uniqueness of solutions}\label{subsecExistUniqueSPDEs}

We assume the following growth and Lipschitz type conditions over $B$ and $F$:

\begin{assumption} \hfill
\begin{enumerate}
\item There is a constant $C_B > 0$ such that for all $t \in [0,T]$ and $g, h \in G$ we have
\begin{align}
\label{eqBoundBB}
    \| B(t, g)  \|_G & \leq C_B(1 + \|  g \|_G) \\
    \label{eqBoundBL}
    \| B(t, g) - B(t ,h)  \|_G & \leq C_B \|  g  - h \|_G
\end{align}

\item There is a constant $C_F > 0$ such that for all $t \in [0,T]$, $\omega \in \Omega$ and $g, h:[0,T] \to G$ c\`adl\`ag we have
\begin{align}
\label{eqBoundIntFQ}
    & \int_0^t\! \! \int_U \| F(s, u, g(s)) \circ Q^{1/2}_{M} \|_{\mathcal{HS}(H,G)}^2 \operQuadraVari{M}(ds, du)  \leq C_F\left(1 + \int_0^t \|  g(s) \|^2 \, ds \right) \\
    \begin{split}
\label{eqLipIntFQ}
        \int_0^t\! \! \int_U \| ( F(s, u, g(s)) - F(s, u, h(s)) )  \circ Q^{1/2}_{M} \|_{\mathcal{HS}(H,G)}^2 \operQuadraVari{M}(ds, du) \\ \leq C_F \int_0^t \|  g(s) -h(s)\|^2 \, ds
    \end{split}
\end{align}

\end{enumerate}
   
\end{assumption}

The following is the main result of this section:

\begin{theorem}\label{theoExisteUniqueness}
For a fixed $G$-valued, $\mathcal{F}_0$ measurable square integrable random variable $X_0$, the stochastic evolution equation \eqref{EqSPDE} has a unique mild solution $X=(X_{t}: t \geq 0)$ with initial value $X_0$. Moreover, for every $T>0 $ we have $\displaystyle \Exp \left[\int_{0}^{T} \norm{X_{t}}^{2}_{G} dt \right]<0.$ 
Furthermore, $X=(X_{t}: t \geq 0)$  is the unique weak solution to \eqref{EqSPDE}.  
\end{theorem}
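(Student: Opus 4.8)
The plan is to obtain the mild solution as the unique fixed point of the integral operator $\mathcal{K}$ that sends a $G$-valued predictable process $X$ to the right-hand side of \eqref{EqStochIntDiffEq}, namely
\begin{equation*}
(\mathcal{K}X)_t = S(t)X_0 + \int_0^t S(t-s)B(s,X_s)\,ds + \int_0^t\!\!\int_U S(t-s)F(s,u,X_s)\,M(ds,du),
\end{equation*}
and to run the argument in the Banach space $\mathcal{Z}_T$ of equivalence classes of $G$-valued predictable processes $X$ with $\norm{X}_{\mathcal{Z}_T}^2 \defeq \Exp\int_0^T \norm{X_t}_G^2\,dt < \infty$. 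A preliminary point, to be settled using Theorem \ref{theoPredictableStochConvolu}, is that the stochastic convolution in $\mathcal{K}X$ is stochastically continuous and hence admits a predictable version; we always select that version, so that $\mathcal{K}X$ is again predictable and $\mathcal{K}$ genuinely maps processes to processes of the required type.

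First I would verify that $\mathcal{K}(\mathcal{Z}_T)\subseteq \mathcal{Z}_T$. The term $S(t)X_0$ is controlled by \eqref{EqSemigroupbound} together with $\Exp\norm{X_0}_G^2<\infty$. For the drift term one applies the Cauchy--Schwarz inequality in $s$, the bound \eqref{EqSemigroupbound}, and the growth condition \eqref{eqBoundBB} to dominate $\Exp\int_0^T\norm{\int_0^t S(t-s)B(s,X_s)\,ds}_G^2\,dt$ by a constant multiple of $1+\norm{X}_{\mathcal{Z}_T}^2$. For the stochastic term one uses the It\^o isometry \eqref{eqIsometryGeneralPhi} of Theorem \ref{theoItoIsometry}, pulls the semigroup out of the Hilbert--Schmidt norm via $\norm{S(t-s)L}_{\mathcal{HS}(H,G)}\le Ne^{\kappa T}\norm{L}_{\mathcal{HS}(H,G)}$, and invokes the growth condition \eqref{eqBoundIntFQ}; this also shows that at each step the integrand $(s,u)\mapsto S(t-s)F(s,u,X_s)$ lies in $\Lambda^2(M,T)$, so the stochastic integral is well defined.

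The core estimate is the contraction inequality. For $X,Y\in\mathcal{Z}_T$, writing $(\mathcal{K}X)_t-(\mathcal{K}Y)_t$ as the sum of a drift difference and a stochastic difference, applying $(a+b)^2\le 2a^2+2b^2$, and using \eqref{EqSemigroupbound} together with the Lipschitz conditions \eqref{eqBoundBL} and \eqref{eqLipIntFQ} and the It\^o isometry, I expect to reach
\begin{equation*}
\Exp\norm{(\mathcal{K}X)_t-(\mathcal{K}Y)_t}_G^2 \le C\int_0^t \Exp\norm{X_s-Y_s}_G^2\,ds, \qquad 0\le t\le T,
\end{equation*}
with $C=2N^2e^{2\kappa T}(TC_B^2+C_F)$. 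Iterating this bound $n$ times yields $\int_0^T\Exp\norm{(\mathcal{K}^nX)_t-(\mathcal{K}^nY)_t}_G^2\,dt \le \frac{(CT)^n}{n!}\,\norm{X-Y}_{\mathcal{Z}_T}^2$, so for $n$ large enough $\mathcal{K}^n$ is a strict contraction on $\mathcal{Z}_T$. By the Banach fixed point theorem applied to an iterate, $\mathcal{K}$ has a unique fixed point $X\in\mathcal{Z}_T$, which is the unique mild solution; the moment bound $\Exp\int_0^T\norm{X_t}_G^2\,dt<\infty$ is then exactly the statement $X\in\mathcal{Z}_T$. Finally, to promote this to the unique weak solution I would invoke Theorem \ref{theoEquivaWeakMildSol}, checking its hypotheses \eqref{RealIntegrability} and \eqref{StochIntegrability} from the moment bound and the growth conditions \eqref{eqBoundBB} and \eqref{eqBoundIntFQ}.

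I expect the main obstacle to be the bookkeeping that keeps each iterate inside the admissible class: one must confirm, at every application of $\mathcal{K}$, that the resulting process is predictable (which is where Theorem \ref{theoPredictableStochConvolu} is essential, since the factor $S(t-s)$ destroys the obvious predictability of the convolution) and that $S(t-s)F(s,u,X_s)\in\Lambda^2(M,T)$, so that the isometry is legitimately available. The analytic estimates themselves are routine given \eqref{EqSemigroupbound}, the Assumption, and the It\^o isometry; the care lies in the measurability and integrability verifications and in handling the semigroup uniformly on $[0,T]$ through the bound $Ne^{\kappa T}$.
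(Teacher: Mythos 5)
Your proposal is correct, and it follows the paper's skeleton step for step: the same fixed-point operator on the same space $V=L^2(\Omega\times[0,T],\mathcal{P}_T,\Prob\otimes Leb;G)$, the same well-definedness checks via \eqref{EqSemigroupbound}, Cauchy--Schwarz, the It\^o isometry \eqref{eqIsometryGeneralPhi}, and Theorem \ref{theoPredictableStochConvolu} for predictability of the stochastic convolution, and the same final appeal to Theorem \ref{theoEquivaWeakMildSol} (whose hypotheses \eqref{RealIntegrability} and \eqref{StochIntegrability} you verify exactly as one should, from the moment bound plus \eqref{eqBoundBB} and \eqref{eqBoundIntFQ}). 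The one genuine methodological difference is the contraction device: where you iterate the Gronwall-type bound $\Exp\norm{(\mathcal{K}X)_t-(\mathcal{K}Y)_t}_G^2 \leq C\int_0^t \Exp\norm{X_s-Y_s}_G^2\,ds$ to obtain the factor $(CT)^n/n!$ and apply Banach's theorem to the iterate $\mathcal{K}^n$, the paper instead introduces the equivalent Bielecki-type weighted norm $\norm{X}_{V_\beta}^2=\Exp\int_0^T e^{-\beta t}\norm{X_t}_G^2\,dt$ and shows, by a Fubini computation in the weight, that $R$ itself is a strict contraction on $V_\beta$ once $\beta$ exceeds $N^2e^{2\kappa T}\max\{C_B^2T,C_F\}$ up to constants. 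The two devices are interchangeable here: the weighted norm avoids iterating the inequality (and hence avoids having to confirm at each iterate that $\mathcal{K}^nX$ remains admissible, which you correctly flag as the bookkeeping burden of your route), while your iteration avoids introducing an auxiliary norm and makes the dependence of the contraction constant on $T$, $C_B$, $C_F$ explicit through the $1/n!$ factor; both yield uniqueness in the same class $V$. One small omission relative to the paper: the theorem is stated on $[0,\infty)$, and the paper opens its proof by gluing the solutions $X^{T_n}$ along $T_n\nearrow\infty$ (consistency being guaranteed by uniqueness on each $[0,T_n]$); your argument, run for arbitrary fixed $T$, supplies all the ingredients for this but you should state the gluing step explicitly. (Incidentally, both you and the paper apply \eqref{eqLipIntFQ}, stated for c\`adl\`ag paths, to general predictable processes; and the bound $\Exp\int_0^T\norm{X_t}_G^2\,dt<0$ in the statement is plainly a typo for $<\infty$, which is what both proofs deliver.)
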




\begin{proof}
We show first the existence of a unique mild solution to  \eqref{EqSPDE}. It suffices to prove the existence of a unique mild solution $X^{T}$ on the bounded time interval $[0,T]$ for $T>0$. Indeed, in such a case one can select a sequence $(T_{n}:n \in \N)$ of positive real numbers such that $T_{n} \nearrow \infty$ and define $X=(X_{t}: t \geq 0)$ by $X_{t}=X^{T_{n}}_{t}$ for $T_{n-1} \leq t < T_{n}$, where $T_{0}=0$.

Let $T>0$. Consider the space
$$
V := L^2\left( \Omega\times [0,T],\mathcal{P}_T,\Prob\otimes Leb;G\right).
$$
That is, the $G$-valued predictable processes $X=(X_{t}: 0\leq t\leq T)$ such that
$$
\|  X \|_{V}^2 : = \Exp\left[\int_0^T \norm{X_t}_G^2 dt \right] < \infty.
$$
To construct the mild solution, we consider $R:V\fle V$ defined as
\begin{equation}
 \label{eqDefOperContractR}
  R(X)_t = S(t)X_0 + \int_0^t S(t-s)B(s,X_s)ds + \int_0^t\! \! \int_U S(t-s) F(s,u,X_s)M(ds,du).  
\end{equation}

First of all, we must verify that $R$ is well defined. Consider $X\in V$ and observe that
$$
\Exp \int_0^T \norm{S(t)X_0}^2_G \leq N^2Te^{2\kappa T}\Exp \norm{X_0}_G^2 < \infty,
$$
where we have used (\ref{EqSemigroupbound}). As for the second term, let us call it
$$
R_1(X)_t := \int_0^t S(t-s)B(s,X_s)ds.
$$
With the help of (\ref{EqSemigroupbound}) and (\ref{eqBoundBB}), it is straightforward to verify that
$$
\norm{R_1(X)_t}_G^2 \leq N^2 e^{2\kappa T} C_B^2 \int_0^T (1+\norm{X_s}_G)^2 ds \leq 2N^2 e^{2\kappa T} C_B^2 \int_0^T (1+\norm{X_s}_G^2) ds
$$
and therefore
$$
\Exp\int_0^T \norm{R_1(X)_t}^2_G dt \leq 
2TN^2 e^{2\kappa T} C_B^2  \left( T + \Exp \int_0^T \norm{X_s}_G^2 ds \right) < \infty.
$$
Now consider the third term
$$
R_2(X)_t := \int_0^t\! \! \int_U S(t-s) F(s,u,X_s)M(ds,du).
$$
In this case, if we use (\ref{EqSemigroupbound}) and (\ref{eqBoundIntFQ}) we have 
\begin{eqnarray*}
   \Exp \norm{R_2(X)_t}^2_G & = & \Exp \int_0^t\! \! \int_U \| S(t-s)F(s, u, X_s) \circ Q_M^{1/2}(s, u) \|_{\mathcal{HS}(H,G)}^2 \operQuadraVari{M}(ds,du)  \\
    &\leq & N^2 \Exp \int_0^t\! \! \int_U e^{2\kappa(t-s)} \| F(s, u, X_s) \circ Q_M^{1/2}(s, u) \|_{\mathcal{HS}(H,G)}^2 \operQuadraVari{M}(ds,du) \\
    &\leq& N^2 e^{2\kappa t} C_F\left(1+\Exp \int_0^T \| X_s \|_G^2 \, ds\right)
\end{eqnarray*}
It follows that
$$
\Exp\int_0^T \norm{R_2(X)_t}^2_G dt \leq
TN^2 e^{2\kappa t} C_F\left(1+\Exp \int_0^T \| X_s \|_G^2 \, ds\right) < \infty.
$$
It remains to be shown that $R_1(X)$ and $R_2(X)$ each have a predictable version. For $R_1(X)$ the result is classic. In fact, since $S$ is continuous in $t$ and $B$ is Bochner-integrable for $\Prob$-a.e. $\omega\in\Omega$, we have $R_1(X)$ is adapted and has $\Prob$-a.e. continuous paths, therefore it is predictable. On the other hand, applying Theorem \ref{theoPredictableStochConvolu} to $\Phi(\omega,s,u) = F(s,u,X_s)$ we get the desired predictability of $R_2(X)$. Hence, we conclude that $R$ is well defined from $V$ to $V$. \medskip


Now to show that \eqref{EqSPDE} has a unique mild solution on the time interval $[0,T]$, we must show that $R$ has a unique fixed point. If we can show that $R$ is contractive, the result will be a  direct application of Banach's fixed point theorem. 

Given the above, we need to address the contractivity of the operator $R$. For $\beta > 0$ (to be chosen later) consider the weighted $L^2$ space
$$
V_{\beta} := L^2\left( \Omega\times [0,T], \mathcal{P}_T, w_{\beta}\cdot  \Prob\otimes Leb;G\right),
$$
where the weight is given by $w_{\beta}(t) = e^{-\beta t}$. That is, $V_{\beta}$ is the space of predictable processes $X = (X_{t}: 0 \leq t \leq T)$ that satisfy
$$
\| X \|_{V_{\beta}} : = \Exp \left[\int_{0}^T e^{-\beta t} \| X_t \|_G^2 \, dt \right]< \infty.
$$
Note that since $e^{-\beta T} \leq w_{\beta} \leq 1$, the norms $\| \cdot \|_V$ and $\| \cdot \|_{V_{\beta}}$ are equivalent. In particular, $V$ and $V_{\beta}$ have the same elements ($V_{\beta} = V$ as sets).

\textbf{Claim:} \emph{
    The parameter $\beta$ can be chosen so that the operator $R$ is a contraction from $V_{\beta}$ to $V_{\beta}$, that is, there is a constant $0 < C < 1$ such that, for all $X,Y \in {V_{\beta}}$
    $$
    \| R(X) - R(Y) \|_{V_{\beta}} \leq C \| X - Y \|_{V_{\beta}}.
    $$}

To prove the Claim we collect the estimates for $R_{1}$ and $R_{2}$.  For $R_1$, by \eqref{EqSemigroupbound}, (\ref{eqBoundBL}) and some standard computations, we have
\begin{align*}
    \| R_1(X) - R_1(Y) \|_{V_{\beta}}^2 & = \left\| \int_0^t S(t-s)B(s,X_s)ds - \int_0^t S(t-s)B(s,Y_s)ds \right\|_{V_{\beta}}^2 \\
    &= \left\| \int_0^t S(t-s) (B(s,X_s) - B(s,Y_s) )ds \right\|_{V_{\beta}}^2 \\
    &= \Exp \left[\int_0^T e^{-\beta t} \left\| \int_0^t S(t-s) (B(s,X_s) - B(s,Y_s) )\,ds \right\|_G^2 \, dt\right] \\
    & \leq \int_0^T e^{-\beta t} \Exp \left[ \int_0^t \| S(t-s) (B(s,X_s) - B(s,Y_s) ) \|_G \, ds\right]^2 \, dt \\
    & \leq N^2C_B^2\int_0^T e^{-\beta t}\Exp \left[ \int_0^t e^{\kappa t} \|  X_s - Y_s  \|_G \, ds\right]^2 \, dt \\
    & \leq N^2C_B^2 \int_0^T e^{-\beta t} \Exp \left[ \left( \int_0^t e^{2\kappa t} \, ds \right) \left( \int_0^t \| X_s - Y_s \|_G^2 \, ds \right) \right] \, dt \\
    & \leq N^2 C_B^2 T  e^{2\kappa T} \int_0^T e^{-\beta t} \Exp \left[ \int_0^t \| X_s - Y_s \|_G^2 \, ds\right] \, dt \\ 
    & = N^2 C_B^2 T  e^{2\kappa T} \Exp \left[\int_0^T e^{-\beta s} \| X_s - Y_s \|_G^2 \left( \int_s^T e^{\beta(s-t)} \, dt \right)\, ds \right]\\ 
    & \leq \frac{N^2 C_B^2 T  e^{2\kappa T}}{\beta} \Exp \left[\int_0^T e^{-\beta s} \| X_s - Y_s \|_G^2 \, ds \right]\\
    &=  \frac{N^2 C_B^2 T  e^{2\kappa T}}{\beta}  \| X - Y \|_{V_{\beta}}^2.
\end{align*}

We now estimate $\| R_2(X) - R_2(Y) \|_{V_{\beta}}^2$, using the isometry (\ref{eqIsometryGeneralPhi}), \eqref{EqSemigroupbound} and condition (\ref{eqLipIntFQ}),
\begin{align*}
    &\left\| \int_0^t\! \! \int_U  S(t-s)F(s, u, X_s)\, M(ds, du) -  \int_0^t\! \! \int_U  S(t-s)F(s, u, Y_s)\, M(ds, du) \right\| _{V_{\beta}}^2 \\
    &= \left\| \int_0^t\! \! \int_U  S(t-s)(F(s, u, X_s)-F(s,u, Y_s) )\, M(ds, du)\right\| _{V_{\beta}}^2 \\
    &= \Exp \left[\int_0^T e^{-\beta t} \left\| \int_0^t\! \! \int_U  S(t-s)(F(s, u, X_s)-F(s,u, Y_s) )\, M(ds, du)\right\| _{G}^2 \, dt \right] \\
    &= \int_0^T e^{-\beta t}  \Exp \left[\int_0^t\! \! \int_U  \left\| S(t-s)(F(s, u, X_s)-F(s,u, Y_s) ) \circ Q_M^{1/2} \right\|_{\mathcal{HS}(H,G)}^2 \, \operQuadraVari{M}(ds, du)\,\right]dt \\
    &\leq C_F N^2 \int_0^T e^{2\kappa t} e^{-\beta t} \Exp \left[\int_0^t \| X_s - Y_s \|_G^2 \, ds \, \right]dt \\
    &\leq C_F N^2 e^{2\kappa T} \int_0^T  \Exp \left[\int_0^t e^{-\beta s} e^{\beta(s-t)}\| X_s - Y_s \|_G^2 \, ds \, \right]dt \leq \frac{C_F N^2 e^{2\kappa T}}{\beta} \|  X - Y \|_{V_\beta}^2.
\end{align*}
By choosing $\beta$ large enough in both estimates, we obtain the result in our claim. 

Therefore by our Claim we conclude that $R$ is contractive and hence has a unique fixed point in $V$, thus  \eqref{EqSPDE} has a unique mild solution on the time interval $[0,T]$. By our explanation at the beginning of the proof and our definition of $V$ we have 
\eqref{EqSPDE} has a unique mild solution $X=(X_{t}: t \geq 0)$ satisfying $\displaystyle \Exp \left[\int_{0}^{T} \norm{X_{t}}^{2}_{G} dt \right] <0$  for every $T>0$. 

Finally, that $X=(X_{t}: t \geq 0)$ is the unique weak solution to \eqref{EqSPDE} is obtained as a direct consequence of Theorems \ref{theoEquivaWeakMildSol} and \ref{theoExisteUniqueness}. 
\end{proof}

\begin{example}\label{examplExisteUniqSPDE}
We now consider the equation
\begin{equation}\label{eqExampleExisUniqSPDE}
dY_t = A Y_t \, dt + \int_U F(Y_t) \, M(dt, du),
\end{equation}
where $F: G \to \mathcal{HS}(H, G)$ satisfies the following Lipschitz-type condition: There exists a constant $C > 0$ such that for every $g_1, g_2 : [0, T] \to G$, and $s \in [0, T]$ we have
\begin{equation}\label{eqExamLipschitzF}
\| F(g_1(s)) - F(g_2(s))\|_{\mathcal{HS}(H, G)} \leq C \| g_{1}(s) - g_{2}(s) \|_G^2.
\end{equation}
We further assume that the quadratic variation $\operQuadraVari{M}$ is bounded by a product measure $\textup{Leb} \otimes \lambda$, where $\lambda$ is a $\sigma$-finite random measure on $U$. Assume moreover that $Q_{M}$ is constant in the time variable, i.e. $Q_{M}(\omega,t,u)=Q_{M}(\omega, u)$ for every $t \geq 0$, and that there is a constant $K_{M} > 0$ such that $ \int_{U} \, \|Q^{1/2}_{M}(\omega, u)\|^{2} \, \lambda(du) \leq K_M$ for all $\omega \in \Omega$. These assumptions hold true, for example, for cylindrical martingale-valued measures determined by the cylindrical and $H$-valued L\'{e}vy processes (see Examples
\ref{exampleCylinMartingaleDeltaLocBounCova} and \ref{exampleLevyHValuedMVM}).  

 Now, by \eqref{eqExamLipschitzF} we have the linear growth condition
 $$ \| F(g(s)) \|_{\mathcal{HS}(H,G)}  \leq \| F(0)\|_{\mathcal{HS}(H, G)} + C \| g(s) \|_G. $$
 Then for every $t \in [0,T]$, $g:[0,T] \rightarrow G$ c\`{a}dl\`{a}g we have
\begin{flalign*}
& \int_0^t\! \! \int_U \| F(g(s))   \circ Q^{1/2}_{M}(u) \|_{\mathcal{HS}(H,G)}^2 \operQuadraVari{M}(ds, du) \\
& \leq  \left(\int_{0}^{t} \| F(g(s)) \|^{2}_{\mathcal{HS}(H, G)} dt \right) \left( \int_{U} \, \norm{Q^{1/2}_{M}(u)}^{2} \, \lambda(du)  \right)   \leq  C^{1}_F \left( 1+ \int_0^t  \| g(s) \|_G^2 \, ds \right),    
\end{flalign*}
for $C^{1}_F= K_{M} \cdot \max\{ T \| F(0)\|_{\mathcal{HS}(H, G)}^{2}, C^{2} \} $. 

Likewise, for every $t \in [0,T]$, $g_{1},g_{2}:[0,T] \rightarrow G$ c\`{a}dl\`{a}g we have by \eqref{eqExamLipschitzF} that 
\begin{multline*}
\int_0^t\! \! \int_U \| ( F(g_{1}(s)) - F(g_{2}(s)) )  \circ Q^{1/2}_{M}(u) \|_{\mathcal{HS}(H,G)}^2 \operQuadraVari{M}(ds, du) 
 \leq  C^{2}_F \int_0^t  \| g_{1}(s) - g_{2}(s) \|_G^2 \, ds,    
\end{multline*}
for $ C^{2}_F = K_{M} C^{2}$. Then, by Theorem \ref{theoExisteUniqueness} the stochastic evolution equation \eqref{eqExampleExisUniqSPDE} has a unique weak solution given by:
$$   X_t = S(t)X_0 + \int_0^t\! \! \int_U S(t-s) F(X_s) M(ds,du).$$
\end{example}

\medskip

\appendix

\section{Proof of theorem \ref{theoExistMeasMuHValuedMVM}}\label{appendix}

For the reader's convenience we divide the proof in five steps:
\begin{enumerate}
\item[Step 1.] Since $(U,\calB(U))$ is Lusin, there exists $h:U\fle \hat{U}$ invertible, where $\hat{U}$ is a Borel set on the line, such that both $h$ and $h^{-1}$ are measurable (relative to $\calB(U)$ and $\calB(\hat{U})$). All the elements that define $M$ can be copied down on $(\hat{U},\calB(\hat{U}))$, so it is enough to prove the theorem for $\hat{U}$.

\item[Step 2.] According to the previous step, let us assume that $U$ is a Borel set on the line. Notice that $\mu$ is a finite measure on each $(U_n,\calB(U_n))$. Suppose we can prove the theorem for $U=U_n$ and $\calA = \calB(U_n)$. That means there is a predictable family $(\nu^{(n)}_{t}: 0\leq t \leq T)$ of random measures on $(U_n,\calB(U_n))$, satisfying (i)-(iii) for $A\in \calB(U_n)$. Suppose, moreover, that $\nu^n_t(C)\leq \nu^{n+1}_t(C)$ for each $C\in \calB(U_n)$. We extend each $\nu^n_t$ to a measure $\hat{\nu}^n_t$ on $(U,\calB(U))$ by 
$$
\hat{\nu}^n_t(C) = \nu^n_t(C\cap U_n)
$$
and then define
$$
\nu_t := \sup_{n\geq 1} \hat{\nu}^n_t.
$$
According to Corollary \ref{CoroSupPredRandMeas}, 
$(\nu_{t}: 0\leq t \leq T)$ is a predictable family of random measures, and for each $C\in \calB(U)$ we have 
$$
\nu_t(C) = \lim_{n\fle\infty} \hat{\nu}^n_t(C) = \lim_{n\fle\infty} \nu^n_t(C\cap U_n).
$$
Since $\nu^n_t$ satisfies (iii) in $\calB(U_n)$, given $A\in \calA$, $\norm{M_t(A\cap U_n)}^2-\nu^n_t(A\cap U_n)$ is a martingale. Since $M_t(A\cap U_n) \fle M_t(A)$ in $L^2$ and $\nu^n_t(A\cap U_n) \fle \nu_t(A)$ in $L^1$ (monotone convergence) we conclude that 
$$
\norm{M_t(A\cap U_n)}^2-\nu^n_t(A\cap U_n) \fle \norm{M_t(A)}^2-\nu_t(A)
$$
in $L^1$. This shows that $\norm{M_t(A)}^2-\nu_t(A)$ is a martingale and then 
$$
\Prob (\nu_t(A) = \quadraVari{M(A)}_t ) = 1.
$$

\item[Step 3.] By the previous step, it is enough to assume that $\calA = \calB(U)$ and $\mu$ is a finite measure. Because of the regularity of $\mu$, there is an increasing family of compact sets $K_j \subseteq U$ such that $ K = \cup K_j$ satisfies $\mu( U\backslash K ) = \mathbb{E} \quadraVari{M(U\backslash K)}_T = 0$. Since $\quadraVari{M(U\backslash K)}_T$ is a non-negative random variable, we conclude $\quadraVari{(M(U\backslash K)}_T = 0$ a.e. If we prove the theorem for $U=K_j$, we can apply the construction of the second step to obtain a family of random measures $\nu_t$ defined on $(U,\calB(U))$ and concentrated on $K$. Since $U \backslash K$ has $\mu$-measure zero,
$$
\Prob (\nu_t(A) = \quadraVari{M(A)}_t) = \Prob(\nu_t(A \cap K)=\quadraVari{M(A\cap K)}_t)=1
$$
for all $A\in \calB(U)$ and all $t\in [0,T]$.

\item[Step 4.] Following the reduction in Step 3, we will now prove the theorem for $U$ compact on the real line,  ${\mathcal A} = {\mathcal B}(U)$ and $\mu(U)<\infty$. We separate this step in several sub-steps. 
\begin{enumerate} 
   \item[Step 4.1.] We extend $M$ to $(\R,{\mathcal B}_\R)$ so that $M_t(A) = M_t(A\cap U)$. In particular, $M_t(A) = 0$ for $A\cap U = \emptyset$. For each $t\in [0,T]$ define the random function $F_t:\R\rightarrow \R$ by
   $$
   F_t(x) = \quadraVari{M(-\infty,x]}_t.
   $$
   By Lemma \ref{lemmaAdditiveVariation}, each $F_t$ is increasing and, for $a < b$ fixed, $F_{t}(b) - F_{t}(a)$ is increasing in $t$. Also,  for fixed $t$ and $x\in (a,b]$ we have $\Prob$-a.e.
      $$
   F_t(x)-F_t(a) \leq \quadraVari{M((a,b])}_T,
   $$
    and by right-continuity this holds $\Prob$-a.e. simultaneously for all $t\in [0,T]$ and $x\in (a,b]\cap\Q$. Taking the expectation, we get
   \begin{equation}
   \label{eq_expect_sup}
   E \left[ \sup_{t\in [0,T],x\in (a,b]\cap \Q}\left| F_t(x)-F_t(a)  \right|  \right] \leq \mu((a,b]).
   \end{equation}

   \item[Step 4.2.] Define
   $$
   \overline{F}_t(x) := \inf\{ F_s(y): x<y\in\Q,t<s\in\Q \}.
   $$
   This function is increasing and right-continuous in both variables. The first assertion is immediate; for the second one, consider rational sequences $t_n\downarrow t$ and $x_n\downarrow x$ and note that
   $$
   \overline{F}_t(x) \leq \overline{F}_{t_n}(x_n) \leq F_{t_{n-1}}(x_{n-1}).
   $$
   We also observe that, for fixed $x$
   $$
   \Prob \left( \overline{F}_t(x) = F_t(x) \text{ for all } t\in [0,T] \right) = 1.
   $$
In fact, for $t_n\downarrow t$ and $x_n\downarrow x$ we have
$$
F_t(x) \leq \overline{F}_t(x) \leq F_{t_n}(x_n) = F_{t_n}(x) + [F_{t_n}(x_n) - F_{t_n}(x) ] 
$$
where $F_{t_n}(x) \fle F_{t}(x)$ by right-continuity and $F_{t_n}(x_n) - F_{t_n}(x) \fle 0$ in probability by (\ref{eq_expect_sup}). 

\item[Step 4.3.] Let $\nu_t$ be the distribution on $\R$ generated by $\overline{F}_t$. Note that $\nu_t(\R \backslash U) =0$, for given $(\alpha,\beta] \subseteq \R\backslash U$, there exists $\varepsilon >0$ such that $(\alpha,\beta+\varepsilon] \subseteq  \R\backslash U$. This implies that $\overline{F}_t$ is constant on $(\alpha,\beta]$ and therefore $\nu_t((\alpha,\beta])=0$. The predictability is inherited from $F_t$ to $\overline{F}_t$ and then to $\nu_t$, as we can always work through rational values of $x$.

\item[Step 4.4.] For fixed $t<t'$, $\overline{F}_{t'}-\overline{F}_{t}$ is increasing. In fact, for $a<b$ the inequality
$$
(\overline{F}_{t'}-\overline{F}_{t})(a) < (\overline{F}_{t'}-\overline{F}_{t})(b)
$$
is equivalent to
$$
\overline{F}_{t}(b)-\overline{F}_{t}(a) < \overline{F}_{t'}(b)-\overline{F}_{t'}(a)
$$
and this is inherited from $F$. It follows that $\nu_{t}((a,b])$ is increasing and right-continuous in $t$. A monotone class argument allows us to deduce that $\nu_{t}(A)$ is increasing and right-continuous in $t$ for any Borel set $A$.

\item[Step 4.5.] We now prove $(iii)$. Let $\mathcal{G}$ be the class of those $A\in \calB(U)$ for which $\norm{M_t(A)}^2 - \nu_t(A)$ is a martingale. It is clear that $(-\infty,x] \in \mathcal{G}$, because by Step 2, $\nu_t((-\infty,x]) = F_t(x) = \quadraVari{M(-\infty,x]}_t$ a.e. With $x =\sup U$ we obtain $\R\in \mathcal{G}$. It follows that $\mathcal{G}$ contains $\R$ and all finite unions of intervals of the form $(a,b]$. $\mathcal{G}$ is closed under complementation because we can write $\norm{M(A^c)}^2_t - \nu_t(A^c)$ as the sum of three martingales:
$$
[\norm{M(\R)}^2_t - \nu_t(\R)] + [\norm{M(A)}^2_t - \nu_t(A)] - 2 \left( M_t(A),M_t(A^c) \right)_H.
$$
Finally, $\mathcal{G}$ is closed under monotone convergence, for if $A_n\uparrow A$ we have in $L^1$
$$
\norm{M(A_n)}^2_t - \nu_t(A_n) \fle \norm{M(A)}^2_t - \nu_t(A).
$$
We conclude that $\mathcal{G}=\calB(U)$. 
\end{enumerate}
\end{enumerate}
To complete the proof, in order for Steps 2 and 3 to work out correctly, we need to observe that if $U_1$ and $U_2$ are compact and $U_1 \subseteq U_2$, then the constructed families $\nu^1_t$ and $\nu^2_t$ satisfy $\nu^1_t \leq \nu^2_T$ in $U_1$, in the sense that 
$$
\mathbb{P}\left(\nu^1_t(C) \leq \nu^2_t(C) \text{ for all } C\in \calB(U_1) \right) = 1.
$$
It is enough to verify it for any interval $(a,b]$, with $a,b$ rational, and that is straightforward following the construction of Step 4.

\noindent \textbf{Acknowledgments}  This work was partially supported by The University of Costa Rica through the grant ``C3019- C\'{a}lculo Estoc\'{a}stico en Dimensi\'{o}n Infinita''. The authors thank an anonymous referee for valuable comments and
suggestions that contributed greatly to improve the presentation of this article

\smallskip

\noindent \textbf{Data Availability.} Data sharing not applicable to this article as no data sets were generated or analyzed during the current study.

\section*{Declarations}

\noindent \textbf{Conflict of interest} The authors have no conflicts of interest to declare that are relevant to the content of this article.

\end{document}